\newtheorem{defi}{Definition}[section]
\newtheorem{theo}[defi]{Theorem}
\newtheorem{lem}[defi]{Lemma}
\newtheorem{csq}[defi]{Consequence}
\newtheorem{rem}[defi]{Remark}
\newtheorem{example}[defi]{Example}
\newcommand{\SP}{\ensuremath{{\mathcal{SP}}}}
\newcommand{\M}{\ensuremath{{\mathcal M}}}
\newcommand{\U}{\ensuremath{{\mathcal U}}}
\newcommand{\A}[1]{\ensuremath{{\mathcal A}(#1)}}
\newcommand{\AC}[1]{\ensuremath{{\mathcal {AC}}(#1)}}
\def \O {\mathcal O}
\newcommand{\PhiP}[2]{\ensuremath{\chi^{(#1)}_{#2}}}
\newcommand{\Astar}{\ensuremath{A^{\star}}}
\newcommand{\Qplus}{\ensuremath{Q^{+}}}
\newcommand{\SHplus}{\ensuremath{S^{+}_H}}
\newcommand{\SVplus}{\ensuremath{S^{+}_V}}
\newcommand{\Qminus}{\ensuremath{Q^{-}}}
\newcommand{\SHminus}{\ensuremath{S^{-}_H}}
\newcommand{\SVminus}{\ensuremath{S^{-}_V}}
\newcommand{\F}[1]{\ensuremath{\mathfrak{P}^{(1)}_{(#1)}}}
\newcommand{\G}[1]{\ensuremath{\mathfrak{P}^{(3)}_{(#1)}}}
\newcommand{\quadrantell}{\ensuremath{h}} 
\newcommand{\fint}{\ensuremath{P}}
\newcommand{\fell}{\ensuremath{P^{(\quadrantell)}}}
\newcommand{\fmix}{\ensuremath{P^{\mathit mix}}}
\newcommand{\calE}{{\mathcal E}}
\newcommand{\calN}{{\mathcal N}}
\newcommand{\horse}[2]{\ensuremath{\mathcal (#1H#2)}}
\newcommand{\setH}{\ensuremath{\mathcal{H}}}
\newcommand{\epi}{oscillation\xspace}
\newcommand{\epis}{oscillations\xspace}
\newcommand{\snl}{strong numeral-led }
\newcommand{\fois}{pieces\xspace}
\tikzstyle{simple}=[inner sep=2pt]
\tikzstyle{linear}=[circle, draw=none, fill=none, text centered, text=black,inner sep=0pt]
\tikzstyle{leaf}=[circle, draw=none, fill=black, text centered, text=white,inner sep=0, minimum size = 4pt]
\tikzstyle{boite} = [very thick]
\newenvironment{tikzDiagramme}[1]{\begin{tikzpicture}[scale=#1]}{\end{tikzpicture}}
\newenvironment{tikzArbre}[1]{\begin{tikzpicture}[scale=#1,inner
  sep=0]}{\end{tikzpicture}}
\newcommand{\inputState}[2]{
\draw[fill] (#1,#2) circle (2pt);
}
\newcommand{\outputState}[2]{
\draw (#1,#2) circle (1.5pt);
\draw (#1,#2) circle (3pt);
}
\newcommand{\patateTriangle}[4]{
\begin{scope}[shift={(#1,#2)}]
\draw [drop shadow,rounded corners,thick, fill=black!10] (0,0) -- (2,0) -- (0,2) -- cycle;
\draw (0.7,0.5) node {\footnotesize ${\mathcal A}(\xi_{#3},\xi_{#4})$};
\outputState{0.15}{1.65};
\outputState{1.65}{0.15};
\inputState{0.15}{0.15};
\end{scope}
}
\newcommand{\patateTriangleUnlabelled}[2]{
\begin{scope}[shift={(#1,#2)}]
\draw [thick, fill=black!10] (0,0) -- (2,0) -- (0,2) -- cycle;
\outputState{0.15}{1.65};
\outputState{1.65}{0.15};
\inputState{0.15}{0.15};
\end{scope}
}
\newcommand{\pin}[2]{
\fill (#1,#2) circle (.2);
}
\newcommand{\pinU}[2]{
\fill (#1,#2) circle (.2);
\draw [thick] (#1,#2) -- (#1,#2-1.3);
}
\newcommand{\pinD}[2]{
\fill (#1,#2) circle (.2);
\draw [thick] (#1,#2) -- (#1,#2+1.3);
}
\newcommand{\nameUnder}[3]{
\draw (#1-1,#2-1) node {#3};
}
\newcommand{\pinL}[2]{
\fill (#1,#2) circle (.2);
\draw [thick] (#1,#2) -- (#1+1.3,#2);
}
\newcommand{\pinR}[2]{
\fill (#1,#2) circle (.2);
\draw [thick] (#1,#2) -- (#1-1.3,#2);
}
\newcounter{indice}
\newcommand{\permutation}[1]{
\setcounter{indice}{0};
\foreach \i in {#1}
\addtocounter{indice}{1};

\addtocounter{indice}{1}
\draw [help lines] (1,1) grid (\theindice,\theindice);

\setcounter{indice}{1};

\foreach \i in { #1 } {
\draw (\theindice+.5,\i+.5) [fill] circle (.2);
\addtocounter{indice}{1};
}
\addtocounter{indice}{-1};

}
\newcommand{\permutationNomKnight}[2]{
\setcounter{indice}{0};
\foreach \i in {#1}
\addtocounter{indice}{1};

\addtocounter{indice}{1}
\draw [help lines] (1,1) grid (\theindice,\theindice);
\setcounter{indice}{1};

\foreach \i in { #1 } {
\draw (\theindice+.5,\i+.5) [fill] circle (.2);
\draw (\theindice+.5,0) node {\tiny \i};
\addtocounter{indice}{1};
}
\addtocounter{indice}{-1};
\foreach \xi/\yi/\xj/\yj in {#2} {
	\draw (\xi+.5,\yi+.5) -- (\xj+.5,\yj+.5);
}
}
\newcommand{\permutationNom}[1]{
\setcounter{indice}{0};
\foreach \i in {#1}
\addtocounter{indice}{1};

\addtocounter{indice}{1}
\draw [help lines] (1,1) grid (\theindice,\theindice);
\setcounter{indice}{1};

\foreach \i in { #1 } {
\draw (\theindice+.5,\i+.5) [fill] circle (.2);
\draw (\theindice+.5,-0.5) node {\tiny \i};
\addtocounter{indice}{1};
}
}
\newcommand{\permutationNomGros}[1]{
\setcounter{indice}{0};
\foreach \i in {#1}
\addtocounter{indice}{1};

\addtocounter{indice}{1}
\draw [help lines] (1,1) grid (\theindice,\theindice);
\setcounter{indice}{1};

\foreach \i in { #1 } {
\draw (\theindice+.5,\i+.5) [fill] circle (.2);
\draw (\theindice+.5,-0.5) node {\scriptsize \i};
\addtocounter{indice}{1};
}
}
\begin{document}

\title{An algorithm for deciding the finiteness of the
  number of simple permutations in permutation classes
  \footnote{This work was completed with the support of the ANR
   (project ANR BLAN-0204\_07  MAGNUM).}}
\author[fb]{Fr\'ed\'erique Bassino}
\address[fb]{Universit\'e Paris 13, Sorbonne Paris Cité,
LIPN, CNRS UMR 7030, F-93430, Villetaneuse, France.}
\author[mb]{Mathilde Bouvel\footnote{Corresponding author. \texttt{mathilde.bouvel@math.uzh.ch}, \texttt{+41 44 63 55 852 }.}$^,$}
\address[mb]{LaBRI UMR 5800, Univ. Bordeaux and
 CNRS, 351, cours de la Libération, 33405 Talence cedex, France; 
 and Institut f\"ur Mathematik, Uni. Z\"urich, Zurich, Switzerland. }
\author[ap]{Adeline Pierrot}
\address[ap]{LIAFA UMR 7089, Univ. Paris Diderot and
 CNRS, Case 7014, 75205 Paris cedex 13, France; 
 and Institut f\"ur Diskrete Mathematik und Geometrie, TU Wien, Vienna, Austria;
 and LRI UMR 8623, Univ. Paris Sud and CNRS, 91405 Orsay Cedex, France.}
\author[dr]{Dominique Rossin}
\address[dr]{LIX UMR 7161, Ecole Polytechnique and CNRS, 91128  Palaiseau, France.
}

\date{}

\begin{abstract}
  In this article, we describe an algorithm to determine whether a
  permutation class $\mathcal{C}$ given by a finite basis $B$ of
  excluded patterns contains a finite number of simple permutations.
  This is a continuation of the work initiated in [Brignall, Ru{\v{s}}kuc, Vatter, \emph{Simple permutations: decidability and unavoidable substructures}, 2008],
  and shares several aspects with it.
  Like in this article, the main difficulty is to decide whether $\mathcal{C}$ contains a finite number of proper pin-permutations,
  and this decision problem is solved using automata theory.
  Moreover, we use an encoding of proper pin-permutations by words over a finite alphabet, introduced by Brignall \emph{et al}.
  However, unlike in their article, our construction of automata is fully algorithmic and efficient.
  It is based on the study of pin-permutations in [Bassino, Bouvel, Rossin, \emph{Enumeration of pin-permutations}, 2011].
  The complexity of the overall algorithm is  ${\mathcal O}(n \log n + s^{2k} )$
  where $n$ denotes the sum of the sizes of permutations in the basis $B$,
  $s$ is the maximal size of a pin-permutation in $B$
  and $k$ is the number of pin-permutations in $B$.
\end{abstract}

\maketitle

\noindent\textit{Keywords: } Permutation class; finite basis; simple permutation; algorithm; automaton; pin-permutation.

\noindent\textit{Mathematics Subject Classification: } 05A05; 68R05; 05-04. 

\section{Introduction}

Since the definition of the pattern relation among permutations
by Knuth in the 70's~\cite{Knuth:ArtComputerProgramming:1:1973},
the study of permutation patterns and permutation classes in combinatorics
has been a quickly growing research field,
and is now well-established.
Most of the research done in this domain concerns {\em enumeration}
questions on permutation classes. Another line of research on
permutation classes has been emerging for about a decade: it is
interested in properties or results that are less precise but apply to
{\em families} of permutation classes.
Examples of such general results may regard
enumeration of permutation classes that fall into general frameworks,
properties of the corresponding generating functions,
growth rates of permutation classes,
order-theoretic properties of permutation classes\ldots
This second point of view is
not purely combinatorial but instead is intimately linked with
algorithms. Indeed, when stating general structural results on
families of permutation classes, it is natural to associate to an
existential theorem an algorithm that {\em tests} whether a class
given in input falls into the family of classes covered by the
theorem, and in this case to {\em compute} the result whose existence
is assessed by the theorem.

Certainly the best illustration of this paradigm that can be found in
the literature is the result of Albert and Atkinson~\cite{AA05},
stating that every permutation class containing a finite number of
simple permutations has a finite basis and an algebraic generating
function, and its developments by Brignall {\em et al.} in
\cite{BHV06b,BHV06a,BRV06}. A possible interpretation of this result
is that the simple permutations that are contained in a class somehow
determine how structured the class is. Indeed, the algebraicity of the
generating function is an echo of a deep structure of the class that
appears in the proof of the theorem of~\cite{AA05}: the permutations of the class (or
rather their decomposition trees) can be described by a context-free
grammar.
In this theorem, as well as in other results obtained in this
field, it appears
that {\em simple permutations} play a crucial role. They can be seen
as encapsulating most of the difficulties in the study of permutation
classes considered in their generality, both in algorithms and
combinatorics.

Our work is about these general results that can be obtained for large
families of permutation classes, and is resolutely turned towards
algorithmic considerations. It takes its root in the theorem of Albert
and Atkinson that we already mentioned, and follows its developments
in~\cite{BRV06} and~\cite{BBR09}.

In~\cite{BRV06}, Brignall, Ru{\v{s}}kuc and Vatter provide a criterion
on a finite basis $B$ for deciding whether a permutation class
$\mathcal{C} = Av(B)$ contains a finite number of simple
permutations. We have seen from~\cite{AA05} that this is a sufficient
condition for the class to be well-structured.
To this criterion, \cite{BRV06} associates a decision procedure testing
from a finite basis $B$ whether $\mathcal{C} = Av(B)$ contains a
finite number of simple permutations.
Both in the criterion and in the
procedure, the set of {\em proper pin-permutations} introduced in
\cite{BRV06} plays a crucial part. The procedure is based on the
construction of automata that accept languages of words on a finite
alphabet (that are called {\em pin words}) that encode such permutations
that do not belong to the class.
This procedure is however not fully algorithmic,
and its complexity is a double exponential,
as we explain in Subsection~\ref{ssec:procedure_of_BRV_2}.

Our goal is to solve the decision problem of~\cite{BRV06} with an actual algorithm,
whose complexity should be kept as low as possible.
For this purpose, we heavily rely on~\cite{BBR09} where
we perform a detailed study of the class of {\em
pin-permutations}, which contains the proper pin-permutations of~\cite{BRV06}.
These results allow us to precisely characterize the pin words
corresponding to any given pin-permutation, and to subsequently modify the automata
construction of~\cite{BRV06},
leading to our algorithm deciding
whether a permutation class given by a finite basis $B$ contains a
finite number of simple permutations. 
Figure~\ref{fig:algo} gives an overview of the general structure of our algorithm 
(the notations it uses will however be defined later, in Sections~\ref{sec:strict pw} and~\ref{sec:algos_finite_number_proper_pinperm}). 

\begin{figure}[ht]
\begin{framed}
\begin{enumerate}
 \item Check if $\mathcal C$ contains finitely many parallel alternations and wedge simple permutations.
 \item Check if $\mathcal C$ contains finitely many proper pin-permutations:
\begin{enumerate}
 \item Determine the set $PB$ of pin-permutations of $B$;
 \item For each $\pi$ in $PB$, build an automaton $\mathcal{A}_{\pi}$ recognizing the language $\overleftarrow{{\mathcal L}_{\pi}}$ 
 (or a variant of this language);
 \item From the automata $\mathcal{A}_{\pi}$, build an automaton $\mathcal{A}_{\mathcal{C}}$ recognizing the language
$\overleftarrow{\M \setminus \cup_{\pi \in B}{\mathcal L}_\pi}$;
 \item Check if the language recognized by $\mathcal{A}_{\mathcal{C}}$ is finite.
\end{enumerate}
\end{enumerate}
\end{framed}
\caption{Our algorithm testing if the number of simple permutations in $\mathcal{C} = Av(B)$ is finite.}
\label{fig:algo}
\end{figure}
As can be seen in Theorem~\ref{thm:main_result} (p.\pageref{thm:main_result}), the resulting algorithm is efficient:
it is polynomial w.r.t.~the sizes of the patterns in $B$ and simply
exponential w.r.t.~their number, which is a significant improvement
to the first decidability procedure of~\cite{BRV06}. 
Notice that we described in~\cite{BBPR10} an algorithm solving the same problem on
substitution-closed permutation classes, that is to say the classes of permutations whose bases
contain only simple permutations.
The complexity of our algorithm in this special framework is
${\mathcal O}(n \log n)$ where $n$ is the sum of the size of the patterns in $B$. 

The article is organized as follows.
Section~\ref{sec:preliminaries} starts with a reminder of previous definitions and results about permutation patterns, decomposition trees and pin-permutations. 
It also recalls from~\cite{BRV06} the characterization of classes with a finite number of simple permutations, where proper pin-permutations enter into play. 
Section~\ref{sec:strict pw} establishes our criterion for deciding whether a permutation class contains a finite number of proper pin-permutations: 
this is the condition tested by the second step of the algorithm of Figure~\ref{fig:algo}.
Stating this criterion requires that we review the encoding of pin-permutations by pin words used by \cite{BRV06}
and that we go further into the interpretation of the pattern order between pin-permutations in terms of words and languages.
In Section~\ref{sec:algos_finite_number_proper_pinperm},
we describe and compare two methods for testing whether a class contains finitely many proper pin-permutations. 
We start with the procedure of~\cite{BRV06}, and proceed with our method.
Then we outline in Subsection~\ref{ssec:idees_construction_automates} the most technical part of our algorithm: 
building an automaton $\mathcal{A}_{\pi}$ associated to every pin-permutation $\pi$ of the basis of the class.
Details and proofs for this step can be found in Appendices.
Finally, Section~\ref{sec:polynomial} describes and gives the complexity of our whole algorithm to decide, given a finite basis $B$, 
whether the class $\mathcal{C} = Av(B)$ contains a finite number of simple permutations. 
To conclude, we put this result in the context of previous and future research in
Section~\ref{sec:ccl}.


\section{Preliminaries on permutations}
\label{sec:preliminaries}
We recall in this section a few definitions and results about permutation classes, 
substitution decomposition and decomposition trees, pin
representations and pin-permutations. 
We also recall the characterization of classes with finitely many simple permutations. 
More details can be found in~\cite{AA05,BBR09, BHV06b,BRV06}.

\subsection{Permutation classes and simple permutations}

The topic of this paper is to answer algorithmically the question of 
whether a \emph{permutation class} contains finitely many \emph{simple permutations}, 
thereby ensuring that the generating function of the class is algebraic~\cite{AA05}. 
We naturally start by the definitions of this terminology. 

\smallskip

A permutation $\sigma \in S_n$ is a bijective function from
$\{1,\ldots ,n\}$ onto $\{1,\ldots ,n\}$. We represent a permutation $\sigma$ 
either by the word $\sigma_1 \sigma_2 \ldots \sigma_n$ where
$\sigma_i = \sigma(i)$ for every $i \in \{1, \ldots, n\}$, or by its {\it diagram}
consisting in the set of points at coordinates $(i,\sigma_i)$ drawn in the plane.
Figure~\ref{fig:definitions} (p.\pageref{fig:definitions}) shows for example
the diagram of $\sigma=4 \,7 \,2 \,6 \,3 \,1 \,5$.

A permutation $\pi = \pi_1 \pi_2 \ldots \pi_k$ is a {\it pattern} of a
permutation $\sigma = \sigma_1 \sigma_2 \ldots \sigma_n$ and we write
$\pi \leq \sigma$ if and only if there exist $1 \leq i_1 < i_2 <
\ldots < i_k \leq n$ such that $\pi$ is isomorphic to
$\sigma_{i_1}\ldots \sigma_{i_k}$ 
(see an example in Figure~\ref{fig:definitions}). 
We also say that $\sigma$ \emph{involves} or \emph{contains} $\pi$. 
If $\pi$ is not a pattern of $\sigma$ we say that $\sigma$ {\it avoids} $\pi$.

Let $B$ be a finite or infinite antichain of permutations -- {\it i.e.}, a set of
permutations that are pairwise incomparable for $\leq$.  The permutation
class of {\it basis} $B$ denoted $Av(B)$ is the set of all
permutations avoiding every element of $B$.

The reader familiar with the permutation patterns literature 
will notice that we do not adopt the (equivalent) point of view of defining 
permutation classes as downward closed sets for $\leq$. 
Indeed, in this article,
permutation classes are always given by their bases. 
We will further restrict our attention to classes having finite bases, 
since otherwise from \cite{AA05}, they contain infinitely many simple permutations. 

\smallskip

A \emph{block} (or {\em interval}) of a permutation $\sigma$ of size
$n$ is a subset $\{i,\ldots ,(i+\ell-1)\}$ of consecutive integers of
$\{1,\ldots ,n\}$ whose images under $\sigma$ also form an interval of
$\{1,\ldots ,n\}$. A permutation $\sigma$ is \emph{simple} when it is
of size at least $4$ and it contains no block, except the trivial
ones: those of size $1$ (the singletons) or of size $n$ ($\sigma$
itself). The only permutations of size smaller than $4$ that have only
trivial blocks are $1$, $12$ and $21$, nevertheless they are
\emph{not} considered to be simple in this article.

\subsection{Substitution decomposition and decomposition trees}

Let $\sigma$ be a permutation of $S_k$ and
$\pi_{1},\ldots, \pi_{k}$ be $k$ permutations of
$S_{\ell_1}$, $\ldots$, $S_{\ell_k}$ respectively.  The {\em
  substitution} $\sigma[\pi_{1}, \pi_{2} ,\ldots, \pi_{k}]$ of
$\pi_{1},\pi_{2} , \ldots, \pi_{k}$ in $\sigma$ (also called
\emph{inflation} in~\cite{AA05}) is defined as the permutation whose diagram
is obtained from the one of $\sigma$ by replacing each
point $\sigma_i$ by a block containing the diagram of
$\pi_{i}$. 
Alternatively, 
$\sigma[\pi_{1}, \pi_{2} ,\ldots, \pi_{k}]$ is the permutation of size $\sum \ell_i$ 
which is obtained as the concatenation $p_1 p_2 \ldots p_k$ of sequences $p_i$ of integers 
such that each $p_i$ is isomorphic to $\pi_{i}$ 
and all integers in $p_i$ are smaller than those in $p_j$
as soon as $\sigma_i < \sigma_j$. 
For example $ 1\, 3\, 2 [2\, 1, 1\, 3\, 2, 1] = 2\, 1\, \, 4\, 6\, 5\,  \, 3$.

Permutations can be decomposed using substitutions, as described in
Theorem~\ref{thm:decomp_perm} below. For this purpose, we now introduce
some definitions and notations. For any $k \geq 2$,
let $I_k$ be the permutation $1\ 2 \ldots k$ and $D_k$ be $k\ (k-1)
\ldots 1$. Denote by $\oplus$ and $\ominus$ respectively $I_k$ and
$D_k$. Notice that in inflations of the form $\oplus[\pi_1, \pi_2,
\ldots, \pi_k] = I_k[\pi_1, \pi_2, \ldots, \pi_k]$ or $\ominus[\pi_1,
\pi_2, \ldots, \pi_k] = D_k[\pi_1, \pi_2, \ldots, \pi_k]$, the integer
$k$ is determined without ambiguity by the number of permutations
$\pi_i$ of the inflation.

\begin{defi}
  A permutation $\sigma$ is \emph{$\oplus$-decomposable}
  (resp. \emph{$\ominus$-decomposable}) if it can be written as
  $\oplus[\pi_1,\pi_2 ,\ldots, \pi_k]$ (resp. $\ominus[\pi_1,\pi_2
  ,\ldots, \pi_k]$), for some $k \geq 2$.
  Otherwise, it is \emph{$\oplus$-indecomposable}
  (resp. \emph{$\ominus$-indecomposable}).
\end{defi}

\begin{theo} 
For any $n\geq 2$, every permutation $\sigma \in S_n$ can be uniquely decomposed as either:
\begin{itemize}
\item $\oplus[\pi_1,\pi_2,\ldots,\pi_k]$, with $k \geq 2$ and
  $\pi_1,\pi_2,\ldots,\pi_k$ $\oplus$-indecomposable,
\item $\ominus[\pi_1,\pi_2,\ldots,\pi_k]$, with $k \geq 2$ and
  $\pi_1,\pi_2,\ldots,\pi_k$ $\ominus$-indecomposable,
  \item $\alpha[\pi_1,\ldots,\pi_k]$ with $\alpha$ a simple permutation and $k = |\alpha|$ (so that $k \geq 4$).
\end{itemize}
\label{thm:decomp_perm}
\end{theo}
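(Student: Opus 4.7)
The statement is the classical substitution (modular) decomposition theorem for permutations. My plan is to establish existence by a case analysis on $\sigma$ and obtain uniqueness via a structural lemma on the blocks (intervals) of $\sigma$.

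For existence, I would distinguish three cases. If $\sigma$ is $\oplus$-decomposable, write $\sigma = \oplus[\pi_1, \ldots, \pi_k]$ for some $k \geq 2$ and iteratively refine each $\pi_i$ that is itself $\oplus$-decomposable, using the associativity of $\oplus$-inflation; sizes strictly decrease, so the process terminates and yields the desired form. The $\ominus$-decomposable (but not $\oplus$-decomposable) case is symmetric. If $\sigma$ is neither, I would consider the collection of maximal non-trivial blocks of $\sigma$: using the block lemma below, these form a partition $B_1, \ldots, B_k$ of $\{1, \ldots, n\}$ with $k \geq 2$. Letting $\pi_i$ be the pattern of $\sigma$ on $B_i$ and $\alpha$ be the quotient permutation on this partition, one obtains $\sigma = \alpha[\pi_1, \ldots, \pi_k]$, and it remains to verify that $\alpha$ is simple of size $\geq 4$.

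The key tool is the \emph{block lemma}: if $I$ and $J$ are blocks of $\sigma$ with $I \cap J \neq \emptyset$, then $I \cap J$ and $I \cup J$ are also blocks. From this, two overlapping maximal non-trivial blocks would have as their union a strictly larger non-trivial block, contradicting maximality, so maximal non-trivial blocks are pairwise disjoint; a further argument shows they cover $\{1, \ldots, n\}$ precisely when $\sigma$ is neither $\oplus$- nor $\ominus$-decomposable. To see that $\alpha$ is simple, note that any non-trivial block of $\alpha$ would lift to a non-trivial block of $\sigma$ strictly containing some $B_j$, contradicting their maximality; and $|\alpha| \geq 4$ because $\alpha \in \{12, 21\}$ would directly force $\sigma$ to be $\oplus$- or $\ominus$-decomposable. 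Uniqueness in each case follows from the same lemma: the finest $\oplus$- or $\ominus$-decomposition into indecomposable parts is canonical, and the partition into maximal non-trivial blocks is also canonical, which fixes $\alpha$ and the $\pi_i$. The three cases are mutually exclusive because $\sigma = \alpha[\pi_1, \ldots, \pi_k]$ with $\alpha$ simple cannot be $\oplus$- or $\ominus$-decomposable, and $\oplus$- versus $\ominus$-decomposability are separated by the orientation of the corresponding blocks.

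The main obstacle is the block lemma and its consequence that maximal non-trivial blocks partition $\{1, \ldots, n\}$ exactly when $\sigma$ is neither $\oplus$- nor $\ominus$-decomposable. This requires a careful analysis of how the restriction of $\sigma$ to two overlapping blocks behaves, and is the source of the subtlety distinguishing the three cases. Once this structural lemma is in place, the remaining steps, including the termination of the refinement process and the verification that $\alpha$ is genuinely simple, are essentially formal.
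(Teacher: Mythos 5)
The paper does not actually prove Theorem~\ref{thm:decomp_perm}: it states it as a classical result and refers to Albert--Atkinson, Gallai, and Heber--Stoye, so there is no in-paper argument to compare yours with; your proposal has to be judged against the classical modular-decomposition proof, whose general outline (block lemma, maximal proper blocks, canonical finest $\oplus$/$\ominus$-decomposition) you do follow.

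Two steps, as you state them, would fail. First, your disjointness argument -- ``two overlapping maximal non-trivial blocks would have as their union a strictly larger non-trivial block, contradicting maximality'' -- is not valid: the union of two overlapping maximal blocks can be all of $\{1,\ldots,n\}$, which is a trivial block, so maximality is not contradicted. This is exactly the configuration that characterizes the linear cases: if $I$ and $J$ are overlapping proper blocks with $I\cup J=\{1,\ldots,n\}$, then $I\setminus J$, $I\cap J$ and $J\setminus I$ are blocks, consecutive in position, whose value intervals are forced to be consecutive and monotonically arranged, so $\sigma$ is $\oplus$- or $\ominus$-decomposable. Hence disjointness of the maximal proper blocks needs the hypothesis that $\sigma$ is neither $\oplus$- nor $\ominus$-decomposable just as much as the covering statement does; you invoke that hypothesis only for covering. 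Second, with the paper's convention that trivial blocks are the singletons and $\sigma$ itself, maximal non-trivial blocks need not cover $\{1,\ldots,n\}$ even in the prime case: for $\sigma = 2\,5\,4\,1\,3 = 2413[1,21,1,1]$ the only non-trivial proper block is $\{2,3\}$, and when $\sigma$ is itself simple there are no non-trivial blocks at all, so your construction never reaches the third case $\sigma=\alpha[1,\ldots,1]$. The standard fix is to take the $B_i$ to be the maximal blocks properly contained in $\{1,\ldots,n\}$, singletons allowed, and to prove their pairwise disjointness using the non-$\oplus$/$\ominus$-decomposability as above; with those corrections the rest of your outline (simplicity of the quotient $\alpha$, $|\alpha|\geq 4$, canonicity giving uniqueness, mutual exclusivity of the three cases) goes through as you describe.
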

Theorem~\ref{thm:decomp_perm} appears in~\cite{AA05} under a form that is trivially equivalent. 
The reader can also refer to~\cite{Gallai} for a historical reference, or to~\cite{Heber01findingall} for a reference in an algorithmic context. 

\begin{rem}
  Any block of $\sigma = \alpha[\pi_1,\ldots,\pi_k]$ (with $\alpha$ a
  simple permutation) is either $\sigma$ itself, or is included in one
  of the $\pi_i$.
\label{fact:blockintopii}
\end{rem}

Theorem~\ref{thm:decomp_perm} can be applied recursively on each
$\pi_i$ leading to a complete decomposition where each permutation is
either $I_k,D_k$ (denoted by $\oplus, \ominus$ respectively) or a
simple permutation. This complete decomposition is called the
\emph{substitution decomposition} of a permutation. It is accounted for by
a tree, called the \emph{substitution decomposition tree},
where a substitution $\alpha[\pi_1,\ldots,\pi_k]$ is represented by a node
labeled $\alpha$ with $k$ ordered children representing the $\pi_i$.

\begin{defi}
\label{defn:deccompositionTrees}
The substitution decomposition tree $T$ of the permutation $\sigma$ is
the unique labeled ordered tree encoding the substitution
decomposition of $\sigma$, where each internal node is either labeled
by $\oplus,\ominus$ -- those nodes are called {\em linear} -- or by a
simple permutation $\alpha$ -- {\em prime} nodes. Each node labeled
by $\alpha$ has $|\alpha|$ children. 
See Figure~\ref{fig:decompTree} for an example. 
\end{defi}


Notice that in substitution decomposition trees, there are no edges
between two nodes labeled by $\oplus$, nor between two nodes labeled
by $\ominus$, since the $\pi_i$ are $\oplus$-indecomposable
(resp. $\ominus$-indecomposable) in the first (resp. second) item of
Theorem~\ref{thm:decomp_perm}.

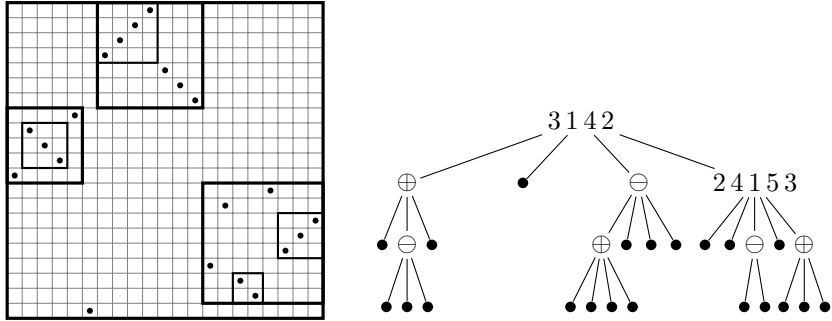
\begin{figure}[ht]
\begin{center}
\begin{tikzpicture}[level distance=15mm]
\begin{scope}[scale=.2,xshift=-700,yshift=-140]
\draw [help lines] (0,0) grid (21,21);
\pin{0.5}{9.5};
\pin{1.5}{12.5};
\pin{2.5}{11.5};
\pin{3.5}{10.5};
\pin{4.5}{13.5};
\pin{5.5}{0.5};
\pin{6.5}{17.5};
\pin{7.5}{18.5};
\pin{8.5}{19.5};
\pin{9.5}{20.5};
\pin{10.5}{16.5};
\pin{11.5}{15.5};
\pin{12.5}{14.5};
\pin{13.5}{3.5};
\pin{14.5}{7.5};
\pin{15.5}{2.5};
\pin{16.5}{1.5};
\pin{17.5}{8.5};
\pin{18.5}{4.5};
\pin{19.5}{5.5};
\pin{20.5}{6.5};

\draw (0,0) [very thick] rectangle (21,21);
\draw (0,9) [very thick] rectangle (5,14);
\draw (1,10) [thick] rectangle (4,13);
\draw (6,14) [very thick] rectangle (13,21);
\draw (6,17) [thick] rectangle (10,21);
\draw (13,1) [very thick] rectangle (21,9);
\draw (15,1) [thick] rectangle (17,3);
\draw (18,4) [thick] rectangle (21,7);
\end{scope} 
\begin{scope}[scale=0.55,xshift=140,yshift=85]
\tikzstyle{level 1}=[sibling distance=28mm]
\tikzstyle{level 2}=[sibling distance=6mm]
\tikzstyle{level 3}=[sibling distance=5mm]

 \node[simple]{$3\,1\,4\,2 $}
   child {node[linear] {$\oplus$}
     child {node[leaf] {}}
     child {node[linear] {$\ominus$}
         child {node[leaf] {}}
         child {node[leaf] {}}
         child {node[leaf] {}}
         }
     child {node[leaf] {}}
   }
   child {node[leaf] {}}
   child {node[linear] {$\ominus$}
     child {node[linear] {$\oplus$}
         child {node[leaf] {}}
         child {node[leaf] {}}
         child {node[leaf] {}}
         child {node[leaf] {}}
         }
     child {node[leaf] {}}
     child {node[leaf] {}}
     child {node[leaf] {}}
   }
   child {node[simple] {$2\,4\,1\,5\,3$}
     child {node[leaf] {}}
     child {node[leaf] {}}
     child {node[linear] {$\ominus$}
         child {node[leaf] {}}
         child {node[leaf] {}}
     }
     child {node[leaf] {}}
     child {node[linear] {$\oplus$}
         child {node[leaf] {}}
         child {node[leaf] {}}
         child {node[leaf] {}}
     }
   };
 \end{scope}
\end{tikzpicture}
\caption{The diagram and the substitution decomposition tree
  $T$ of the permutation $ \sigma = 10\,
  13\, 12\, 11\, 14\, 1\, 18\, 19\, 20\, 21\, 17\, 16\, 15\, 4\, 8\,
  3\, 2\, 9\, 5\, 6\, 7$. The internal nodes of $T$ correspond to the blocks of $\sigma$
  marked by rectangles. \label{fig:decompTree}}
\end{center}
\end{figure}


\begin{rem}
Permutations are bijectively characterized by their substitution
decomposition trees.
\label{thm:decompTree}
\end{rem}

In the sequel, when writing \emph{a child of a node $V$} we mean
the permutation corresponding to the subtree rooted at this child of
node $V$.

\subsection{Pin-permutations and pin representations}

In this article, the \emph{pin-permutations} (and their decomposition trees) play a central role. 
The remaining of this preliminary section recalls their definition 
and explains how they are related to our problem of testing whether a permutation class contains finitely many simple permutations. 


A \emph{pin} is a point in the plane. A pin $p$ {\em
  separates} -- horizontally or vertically -- the set of pins $P$ from
the set of pins $Q$ if and only if a horizontal -- resp. vertical --
line drawn across $p$ separates the plane into two parts, one
containing $P$ and the other one containing $Q$.
The \emph{bounding box} (also known as the \emph{rectangular hull}) 
of a set of points $P$ is the smallest axis-parallel
rectangle containing the set $P$. A \emph{pin sequence} is a
sequence $(p_1,\ldots,p_k)$ of pins in the plane such that no two
points are horizontally or vertically aligned
and for all $i \geq 2$, $p_i$
lies outside the bounding box of $\{p_1,\ldots ,p_{i-1}\}$ and
satisfies one of the following conditions:
\begin{itemize}
\item {\it separation condition}: $p_i$ separates $p_{i-1}$ from $\{p_1,\ldots,p_{i-2}\}$;
 \item {\it independence condition}: $p_i$ is independent from $\{p_1,\ldots,p_{i-1}\}$, {\it i.e.},
 it does not separate this set into two non-empty sets.
\end{itemize}

A pin sequence represents a permutation $\sigma$ if and only if it is
isomorphic to its diagram. We say that a permutation $\sigma$ is
a \emph{pin-permutation} if it can be represented by a pin sequence,
which is then called a \emph{pin representation} of $\sigma$ (see
Figure~\ref{fig:definitions}). Not all permutations are
pin-permutations (see for example the permutation $\sigma$ of
Figure~\ref{fig:definitions}).

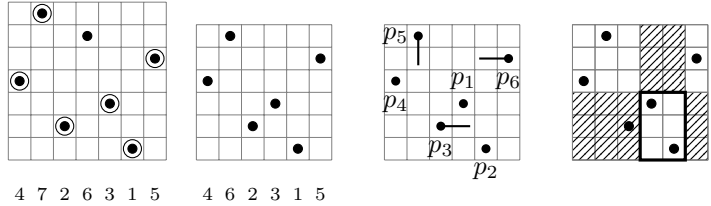
\begin{figure}[htbp]
\begin{center}
\begin{tikzpicture}
 \begin{scope}[scale=.3]
  \permutationNomGros{4,7,2,6,3,1,5}
\draw (1.5,4.5) circle (.4 cm);
\draw (2.5,7.5) circle (.4 cm);
\draw (3.5,2.5) circle (.4 cm);
\draw (5.5,3.5) circle (.4 cm);
\draw (6.5,1.5) circle (.4 cm);
\draw (7.5,5.5) circle (.4 cm);
 \end{scope}
\hspace{0.3 cm}
\begin{scope}[xshift=2.5cm,scale=.3]
\draw [help lines] (1,1) grid (7,7);
\draw (1.5,4.5)  [fill] circle (.2);
\draw (0.5,-0.5) node {\scriptsize 4};
\draw (2.5,6.5)  [fill] circle (.2);
\draw (1.5,-0.5) node {\scriptsize 6};
\draw (3.5,2.5)  [fill] circle (.2);
\draw (2.5,-0.5) node {\scriptsize 2};
\draw (4.5,3.5)  [fill] circle (.2);
\draw (3.5,-0.5) node {\scriptsize 3};
\draw (5.5,1.5)  [fill] circle (.2);
\draw (4.5,-0.5) node {\scriptsize 1};
\draw (6.5,5.5)  [fill] circle (.2);
\draw (5.5,-0.5) node {\scriptsize 5};
\end{scope}
\begin{scope}[xshift=5cm,scale=.3]
\draw (1,1) [help lines] grid +(6,6);
\pin{4.5}{3.5}
\nameUnder{4.5}{5.5}{$p_1$}
\pin{5.5}{1.5}
\nameUnder{5.5}{1.5}{$p_2$}
\pinL{3.5}{2.5}
\nameUnder{3.5}{2.5}{$p_3$}
\pin{1.5}{4.5}
\nameUnder{1.5}{4.5}{$p_4$}
\pinU{2.5}{6.5}
\nameUnder{1.5}{7.5}{$p_5$}
\pinR{6.5}{5.5}
\nameUnder{6.5}{5.5}{$p_6$}
\end{scope}
\begin{scope}[xshift=7.5cm,scale=.3]
\draw (1,1) [help lines] grid +(6,6);
\permutation{4,6,2,3,1,5}
\draw [gray, pattern=north east lines] (1,1) rectangle (4,4);
\draw [gray, pattern=north east lines] (4,4) rectangle (6,7);
\draw [gray, pattern=north east lines] (6,1) rectangle (7,4);
\draw[very thick] (4,1) rectangle (6,4);
\end{scope}
\end{tikzpicture}
\caption{The permutation $\sigma=4\,7\,2\,6\,3\,1\,5$, its pattern
  $\pi = 4\,6\,2\,3\,1\,5$, a pin representation $p$ of $\pi$,
  and the bounding box of $\{p_1,p_2\}$ with its sides shaded.}
\label{fig:definitions}
\end{center}
\end{figure}

Lemma 2.17 of~\cite{BBR09} is used several times in our proofs, and we state it here:
\begin{lem} \label{lem:[7]2.17}
Let $(p_1, \ldots, p_n)$ be a pin representation of $\sigma \in S_n$. Then for each
$i \in \{2, \ldots, n-1\}$, if there exists a point $x$ of $\sigma$ on the sides of the bounding box of $\{p_1, \ldots, p_i\}$,
then it is unique and $x = p_{i+1}$.
\end{lem}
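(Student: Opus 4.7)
The plan is to interpret ``on the sides of the bounding box of $\{p_1,\ldots,p_i\}$'' as lying in one of the four open strips immediately to the north, south, east or west of that bounding box --- e.g.\ the north strip is $(X^{\min}_i,X^{\max}_i)\times(Y^{\max}_i,+\infty)$, where $X^{\min}_i, X^{\max}_i, Y^{\min}_i, Y^{\max}_i$ denote the extreme coordinates of $\{p_1,\ldots,p_i\}$. Equivalently, a pin lies on a side exactly when it sits outside the box and one of the two axis-parallel lines through it cuts $\{p_1,\ldots,p_i\}$ into two non-empty parts. Under this reading, any pin $p_{i+1}$ satisfying the separation condition lies in such a strip, a $p_{i+1}$ satisfying the independence condition sits in one of the four ``corner'' regions, and every pin $p_k$ with $k\le i$ belongs to the closed box and hence to no strip. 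The whole content of the lemma therefore reduces to ruling out $p_j$ with $j>i+1$ from the four side strips.

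For that I plan to argue by contradiction. Suppose some $p_j$ with $j>i+1$ lies in, say, the north strip of the bounding box of $\{p_1,\ldots,p_i\}$; the three remaining strips are handled by the obvious symmetries. Then $y_j>Y^{\max}_i$, and the no-alignment condition sharpens this into $X^{\min}_i<x_j<X^{\max}_i$ strictly. Because $\{p_1,\ldots,p_i\}\subseteq\{p_1,\ldots,p_{j-1}\}$, the $x$-range of the larger set still contains $x_j$ strictly in its interior, so the requirement that $p_j$ lie outside the bounding box of $\{p_1,\ldots,p_{j-1}\}$ forces $y_j>Y^{\max}_{j-1}$, and $p_j$ is in the north strip of the larger box as well. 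The vertical line through $p_j$ then splits $\{p_1,\ldots,p_{j-1}\}$ non-trivially, so the independence condition fails for $p_j$ and the separation condition must hold instead. As the horizontal line through $p_j$ has every earlier pin strictly below it, only the vertical line can separate $p_{j-1}$ from $\{p_1,\ldots,p_{j-2}\}$. Consequently $x_j$ lies strictly between $x_{j-1}$ and $\max_{k\le j-2}x_k$ (or between $x_{j-1}$ and $\min_{k\le j-2}x_k$, in the symmetric subcase where $p_{j-1}$ is the leftmost pin so far). But because $j\ge i+2$ we have $\{p_1,\ldots,p_i\}\subseteq\{p_1,\ldots,p_{j-2}\}$, whence $\max_{k\le j-2}x_k\ge X^{\max}_i>x_j$, the desired contradiction.

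The main obstacle will be keeping the bookkeeping straight between the three nested sets $\{p_1,\ldots,p_i\}$, $\{p_1,\ldots,p_{j-2}\}$ and $\{p_1,\ldots,p_{j-1}\}$, and invoking at each step either the no-alignment condition or the separation/independence dichotomy from the definition of a pin sequence. Once that is in place, both the uniqueness of $x$ and the identification $x=p_{i+1}$ follow at once.
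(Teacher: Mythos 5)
Your proof is correct. One preliminary remark: this paper does not itself prove the statement --- it is imported verbatim as Lemma~2.17 of~\cite{BBR09} --- so there is no in-paper argument to compare yours against; your write-up is a valid self-contained proof. Your reading of ``on the sides of the bounding box'' as the four side strips is the intended one (it matches the shaded regions in Figure~\ref{fig:definitions} and the way the lemma is invoked in the proofs of Lemmas~\ref{lem:onlyOneBlockUnfinishedReading} and~\ref{lem:oneBlockReadInSeveralTimes}), and the reduction to excluding $p_j$ with $j\geq i+2$ from the strips is exactly right, since the pins $p_1,\ldots,p_i$ lie in the closed box and the points of $\sigma$ are precisely $p_1,\ldots,p_n$. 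The core argument is sound: such a $p_j$ in the north strip of the box of $\{p_1,\ldots,p_i\}$ is forced into the north strip of the box of $\{p_1,\ldots,p_{j-1}\}$, hence cannot satisfy the independence condition, and the horizontal line through $p_j$ leaves all earlier pins on one side, so the separation must be by the vertical line; that places $x_j$ strictly outside the $x$-range of $\{p_1,\ldots,p_{j-2}\}\supseteq\{p_1,\ldots,p_i\}$, contradicting $X^{\min}_i<x_j<X^{\max}_i$. The only step you leave implicit is why $p_j$ cannot instead lie \emph{below} the larger box (the case $y_j<Y^{\min}_{j-1}$ when ruling positions outside that box): it is excluded by $y_j>Y^{\max}_i\geq Y^{\min}_i\geq Y^{\min}_{j-1}$, and deserves one explicit line, but this is a cosmetic omission rather than a gap.
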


A {\it proper} pin representation is a pin representation in which
every pin $p_i$, for $i\geq 3$, separates $p_{i-1}$ from $\{p_1,
\ldots, p_{i-2}\}$. A {\it proper} pin-permutation is a permutation
that admits a proper pin representation.

\begin{rem}\label{rem:simplepin}
  A pin representation of a simple pin-permutation is always proper as
  any independent pin $p_i$ with $i\geq 3$ creates a block
  corresponding to $\{p_1, \ldots, p_{i-1}\}$.
\end{rem}

%

\subsection{Characterization of classes with finitely many simple permutations} \label{ssec:procedure_of_BRV}

In~\cite{BRV06}, Brignall {\it et al.} provide a criterion characterizing when a class
contains a finite number of simple permutations. 
They show that it is equivalent to the class containing a finite number of permutations of three simpler kinds, which they define. 
Among the three new kinds of permutations that they introduce are the proper pin-permutations that we have already seen, but also 
the \emph{parallel alternations} and the \emph{wedge simple permutations}. 
The definition of these families of permutations is not crucial to our work, 
hence we refer the reader to~\cite{BRV06} for more details, and to Figure~\ref{fig:alternations} for examples.

\begin{theo}\cite{BHV06b,BRV06}\label{thm:brignall}
 A permutation class $Av(B)$ contains a finite number of simple
 permutations if and only if it contains:
\begin{itemize}
\item a finite number of wedge simple permutations, and
\item a finite number of parallel alternations, and
\item a finite number of proper pin-permutations.
\end{itemize}
\end{theo}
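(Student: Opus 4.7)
The plan is to prove the two implications separately, deferring the combinatorial heart of the argument to a Ramsey-type unavoidable substructure result from \cite{BHV06b}.

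For the $(\Leftarrow)$ direction, I would argue the contrapositive: if $Av(B)$ contains infinitely many permutations of any one of the three distinguished types, then it contains infinitely many simple permutations. Wedge simple permutations are simple by definition, so that case is immediate. For parallel alternations, I would check from their definition that every parallel alternation of size $\geq 4$ either is itself simple or contains a simple pattern whose size grows with that of the alternation, so that infinitely many in $Av(B)$ force infinitely many simples in $Av(B)$ via pattern-closure. For proper pin-permutations, the cleanest route is to invoke the fact already established in \cite{BRV06} that every proper pin-permutation of size $n$ contains a simple pin-permutation of size at least some increasing function of $n$; since $Av(B)$ is closed under taking patterns, infinitely many proper pin-permutations in $Av(B)$ produce simple pin-permutations of unbounded size in $Av(B)$.

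For the $(\Rightarrow)$ direction, again via contrapositive, the plan is to assume $Av(B)$ contains infinitely many simple permutations and deduce that one of the three families is infinite in $Av(B)$. The essential tool here is the unavoidable substructure theorem of Brignall--Huczynska--Vatter: there exists a function $f : \mathbb{N} \to \mathbb{N}$ such that every simple permutation of size at least $f(N)$ contains, as a pattern, a parallel alternation of size $N$, a wedge simple permutation of size $N$, or a proper pin-permutation of size $N$. Granting this lemma, the argument is short: for every $N$ the class $Av(B)$ contains a simple permutation of size $\geq f(N)$ (since it has infinitely many simples), hence by pattern-closure it contains a member of one of the three families of size $\geq N$. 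Since $N$ is arbitrary and there are only three families, by the pigeonhole principle at least one family must contribute patterns of arbitrarily large size in $Av(B)$, and hence infinitely many elements in $Av(B)$.

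The main obstacle is, unsurprisingly, the unavoidable substructure lemma itself. Its proof is not a one-line Ramsey application: one extracts long monotone or alternating subsequences from a simple permutation and then analyzes how these subsequences interact with the simplicity of the ambient permutation, repeatedly passing to subpatterns to reach one of the three prescribed shapes. I would quote this lemma from \cite{BHV06b} rather than reprove it, since the two implications above reduce Theorem~\ref{thm:brignall} to that single combinatorial statement together with the elementary properties of the three families and the pattern-closure of $Av(B)$.
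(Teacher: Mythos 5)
The paper does not prove Theorem~\ref{thm:brignall} at all: it is imported verbatim from \cite{BHV06b,BRV06}, so there is no internal argument to compare yours against. Your outline is essentially the proof given in those references, and it is sound as a reduction: the backward direction rests on the facts that wedge simple permutations are simple, that every long parallel alternation contains a simple parallel alternation obtained by deleting at most a bounded number of entries, and that every long proper pin sequence contains a large simple permutation (a lemma of \cite{BRV06}); together with the downward closure of $Av(B)$ these give infinitely many simple permutations from any infinite family. The forward direction is exactly the unavoidable substructure theorem of Brignall--Huczynska--Vatter (every simple permutation of length at least $f(N)$ contains a parallel alternation, a wedge simple permutation, or a proper pin sequence of length $N$), plus pigeonhole, as you say. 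One minor point worth keeping straight: the cited theorem is naturally stated for proper pin \emph{sequences}, whereas the statement here uses proper pin-\emph{permutations}; as the paper remarks just after the theorem, the encoding of proper pin-permutations by proper pin sequences is finite-to-one, so finiteness of one family is equivalent to finiteness of the other, and your argument goes through unchanged once this identification is made explicit.
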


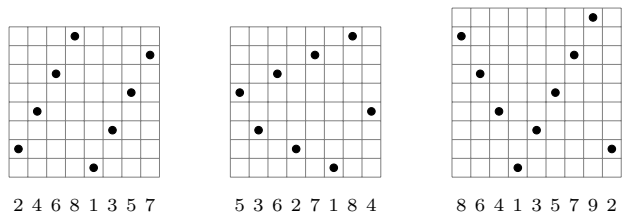
\begin{figure}[ht]
\begin{center}
\begin{tikzpicture}[scale=0.25]
\permutationNomGros{2,4,6,8,1,3,5,7}
\end{tikzpicture} \qquad 
\begin{tikzpicture}[scale=0.25]
\permutationNomGros{5,3,6,2,7,1,8,4}
\end{tikzpicture} \qquad 
\begin{tikzpicture}[scale=0.25]
\permutationNomGros{8,6,4,1,3,5,7,9,2}
\end{tikzpicture}
\end{center}
\caption{From left to right: a parallel alternation, and two wedge simple permutations (of type $1$ and $2$ respectively).}
\label{fig:alternations}
\end{figure}

Notice also that in Theorem~\ref{thm:brignall} above, the proper pin sequences of~\cite{BRV06} have been replaced by proper pin-permutations. 
But containing a finite number of proper pin-permutations is equivalent to containing a finite number of proper pin sequences. 
Indeed, the encoding of proper pin-permutations by proper pin sequences provides a finite-to-one correspondence. 
%
%

Whereas the exact definitions of the wedge simple permutations and the parallel
alternations have been omitted here, 
it is however essential for our purpose to be able to test whether a class given by a finite basis
contains a finite number of parallel alternations and wedge simple
permutations. 
Parallel alternations and wedge simple permutations, that can be of type $1$ or
$2$, are well characterized in~\cite{BRV06}.
This characterization leads to the following lemmas:

\begin{lem}\cite{BRV06}\label{lem:alternation}
The permutation class $Av(B)$ contains only finitely many parallel
alternations if and only if $B$ contains an element of every symmetry of
the class $Av(123, 2413,$ $3412)$.
\end{lem}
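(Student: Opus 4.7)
The plan is to prove the lemma by reducing it, for each symmetry type of parallel alternations, to a single identification of a downward-closed family. Since the dihedral group acts on parallel alternations with four orbits, it suffices to fix one orbit and reason about it; the other three cases follow by applying the appropriate symmetry simultaneously to the basis $B$ and to the forbidden patterns. I denote a representative orbit $\mathcal{A}_{X_1}$, with typical element the parallel alternation $\alpha_n$ of size $2n$ illustrated by $2,4,6,8,1,3,5,7$ in the left picture of Figure~\ref{fig:alternations}.

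The first observation is that $(\alpha_n)_{n\geq 1}$ forms a chain for $\leq$: removing a suitable pair of points from $\alpha_{n+1}$ (one from each of the two increasing runs) yields a copy of $\alpha_n$. Consequently, the set $\mathcal{C}_{X_1}$ of permutations that arise as patterns in \emph{some} $\alpha_n$ is a permutation class (it is downward closed by definition), and moreover $\sigma \in \mathcal{C}_{X_1}$ if and only if $\sigma$ is a pattern of $\alpha_n$ for \emph{all} sufficiently large $n$. From this I would immediately derive the logical equivalence: $Av(B)$ contains only finitely many alternations in $\mathcal{A}_{X_1}$ iff $B\cap\mathcal{C}_{X_1}\neq\emptyset$. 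Indeed, if some $b\in B$ lies in $\mathcal{C}_{X_1}$ then $b\leq\alpha_n$ for all $n\geq N$, so those $\alpha_n$ are not in $Av(B)$; conversely, if $B\cap\mathcal{C}_{X_1}=\emptyset$, then no $b\in B$ is a pattern of any $\alpha_n$, hence all of $\mathcal{A}_{X_1}$ lies in $Av(B)$.

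The core of the proof is then to identify $\mathcal{C}_{X_1}$ with the appropriate symmetry of $Av(123, 2413, 3412)$. The containment $\mathcal{C}_{X_1}\subseteq Av(\tau_1,\tau_2,\tau_3)$, where $\tau_1,\tau_2,\tau_3$ are the images of $123, 2413, 3412$ under the symmetry sending $X_1$ to the orbit representative used in the lemma, follows by a direct case analysis: the points of $\alpha_n$ split into two increasing runs placed in specific relative positions, and one verifies by inspection that each of the three forbidden patterns cannot be realized using points from those two runs simultaneously. The reverse inclusion $Av(\tau_1,\tau_2,\tau_3)\subseteq \mathcal{C}_{X_1}$ requires an embedding: given $\sigma$ avoiding the three patterns, one exploits the structure forced by these avoidances to write $\sigma$ as an interleaving of two monotone subsequences whose relative geometry is compatible with $\alpha_n$ for $n\geq|\sigma|$, and then locates $\sigma$ as a subpattern inside $\alpha_n$ by choosing the appropriate positions in each of the two runs.

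Combining the equivalence of the second paragraph with the identification of the third, and running the same argument for each of the four orbits under the dihedral action, yields the lemma: $Av(B)$ contains only finitely many parallel alternations iff $B$ meets $\mathcal{C}_X$ for every orbit $X$, iff $B$ contains an element of each of the symmetries of $Av(123, 2413, 3412)$. The main obstacle is the reverse inclusion in the identification step, where one has to leverage the exact avoidance conditions to produce the explicit embedding into a long alternation; the first containment and the chain argument are essentially mechanical.
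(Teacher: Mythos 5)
First, a point of comparison: the paper does not prove this lemma at all — it is quoted verbatim from~\cite{BRV06} — so there is no internal proof to measure your argument against, and I assess it on its own terms. Your reduction skeleton is correct and is the natural one: split parallel alternations into the four orientations exchanged by the symmetries of the square, use that for a fixed orientation the canonical alternations $\alpha_n$ form a chain under $\leq$, and conclude that $Av(B)$ has finitely many alternations of that orientation if and only if $B$ meets the downward closure $\mathcal{C}_{X_1}$ of that family. One small repair is needed: parallel alternations of a given orientation are not only the even-size $\alpha_n$, so you should note that any such alternation of size $m$ contains $\alpha_{\lfloor m/2\rfloor}$ and is contained in $\alpha_m$, which makes the finiteness question for the full family equivalent to that for the chain.

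The genuine gap is the identification $\mathcal{C}_{X_1} = Av(\tau_1,\tau_2,\tau_3)$, and specifically the inclusion $Av(\tau_1,\tau_2,\tau_3)\subseteq\mathcal{C}_{X_1}$, which you yourself flag as the main obstacle but only describe as ``write $\sigma$ as an interleaving of two monotone subsequences whose relative geometry is compatible with $\alpha_n$''. As stated, that is not the right invariant: being a merge of two monotone subsequences is strictly weaker than being a pattern of $\alpha_n$. For your representative $2\,4\,6\,8\,1\,3\,5\,7$ the relevant symmetrized basis is $\{321,\,3142,\,2143\}$, and $2143$ is itself a merge of two increasing subsequences while being one of the forbidden patterns — so no argument based merely on such a merge can work. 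What is actually needed, and missing, is the structural fact that avoiding $321$, $3142$ and $2143$ forces $\sigma$ to have at most one descent, i.e.\ to be a concatenation \emph{in position} of two increasing sequences: if $\sigma$ had descents at positions $i<j$, the four points $\sigma_i,\sigma_{i+1},\sigma_j,\sigma_{j+1}$ realize $321$, $3142$ or $2143$ according to the relative order of their values (a short case analysis you never carry out). Once this is established, the greedy embedding of a two-increasing-block permutation into a long alternation is indeed routine, as is the containment $\mathcal{C}_{X_1}\subseteq Av(\tau_1,\tau_2,\tau_3)$; but without it, the heart of the lemma — the pattern-avoidance characterization of the subpermutations of long parallel alternations — is asserted rather than proved.
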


\begin{lem}\cite{BRV06}\label{lem:wedge1}
The permutation class $Av(B)$ contains only finitely many wedge simple
permutations of type $1$ if and only if $B$ contains an element of every
symmetry of the class $Av(1243, 1324, 1423, 1432, 2431, 3124, 4123,
4132, 4231, 4312)$.
\end{lem}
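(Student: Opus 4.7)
The plan is to prove both directions of the equivalence by relating the combinatorial structure of wedge simple permutations of type $1$ to the class $\mathcal{C} := Av(1243, 1324, 1423, 1432, 2431, 3124, 4123, 4132, 4231, 4312)$ and its images under the symmetries of the square. Two structural facts encapsulate the content of the lemma. First, $\mathcal{C}$ coincides with the downward closure (for the pattern order) of the set of wedge simple permutations of type $1$ in one fixed orientation; by symmetry, each image $s(\mathcal{C})$ is the downward closure of the type-$1$ wedges in the corresponding orientation, and every wedge simple of type $1$ belongs to at least one such $s(\mathcal{C})$. Second, for each fixed orientation the type-$1$ wedges form an infinite chain in the pattern order: one can enlarge such a wedge by two well-chosen points to obtain a larger type-$1$ wedge containing the previous one as a pattern.

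I would establish the first fact by inspecting the explicit description of type-$1$ wedges from~\cite{BRV06} and performing a case analysis on the $24$ permutations of size $4$ to verify that the basis of the downward closure of one fixed orientation is exactly the stated list of ten forbidden patterns; the general case then follows by induction on the size, using the recursive nature of wedges. The second fact is a direct consequence of that same description, since wedges can be built by iteratively appending points on the boundary of the current configuration.

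Granted these two facts, the equivalence is immediate. For the $(\Rightarrow)$ direction, I argue by contraposition: if $B \cap s(\mathcal{C}) = \emptyset$ for some symmetry $s$, then $s(\mathcal{C}) \subseteq Av(B)$, so by the second fact $Av(B)$ already contains the infinite chain of type-$1$ wedges in orientation $s$. For the $(\Leftarrow)$ direction, suppose $B$ contains an element $b_s \in s(\mathcal{C})$ for every symmetry $s$; by the first fact $b_s$ is a pattern of some type-$1$ wedge in orientation $s$, and by the chain structure it is then a pattern of all but finitely many such wedges. Hence $Av(B)$ contains only finitely many type-$1$ wedges in each orientation, and since there are only finitely many orientations (at most $8$), only finitely many type-$1$ wedges in total.

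The main obstacle is the verification of the first structural fact, and in particular obtaining exactly the prescribed list of ten size-$4$ forbidden patterns: this is a finite but nontrivial case analysis starting from the definition of wedge simples of type $1$ in~\cite{BRV06}. Everything else in the proof is a formal consequence of the two structural facts combined with the chain argument, and would follow the same template as the analogous statement for parallel alternations (Lemma~\ref{lem:alternation}).
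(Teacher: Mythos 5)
The paper you are working from does not actually prove this lemma: it is imported verbatim from \cite{BRV06}, where the characterization of wedge simple permutations is established, so there is no in-paper proof to compare against. Your skeleton is essentially the argument of \cite{BRV06}: identify the stated ten-pattern class (up to symmetry) with the downward closure of the type-$1$ wedges of a fixed orientation, show that any fixed element of that class sits inside all sufficiently long such wedges, and then the equivalence follows by the bookkeeping over the finitely many orientations, exactly as for parallel alternations in Lemma~\ref{lem:alternation}. The reduction logic in your last paragraph (contraposition for one direction, ``all but finitely many'' for the other) is sound.

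The gap is that the two ``structural facts'' carry all of the mathematical content, and your sketch of them is too thin in two places. First, for fact (1) the easy direction is checking that no type-$1$ wedge contains any of the ten length-$4$ patterns; the hard direction is that \emph{every} permutation avoiding those ten patterns embeds in a type-$1$ wedge of the given orientation. That is an unavoidable-substructure argument about the shape of permutations in $Av(1243,\dots,4312)$, not something that follows from a case analysis of the $24$ permutations of size $4$ together with an unspecified ``induction using the recursive nature of wedges''; as stated, the induction step is exactly what is missing. Second, your fact (2) is stated as ``the type-$1$ wedges of a fixed orientation form an infinite chain'', but your $(\Leftarrow)$ direction uses the stronger statement that containment in \emph{one} wedge of that orientation forces containment in all but finitely many of them. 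If there is more than one type-$1$ wedge of a given length and orientation (or if they are not totally ordered), exhibiting an increasing chain does not suffice; what is needed, and what \cite{BRV06} proves, is that every sufficiently long type-$1$ wedge of a fixed orientation contains any prescribed member of the class. Either verify that the wedges of a fixed orientation really are totally ordered by containment, or replace fact (2) by this uniform containment statement and prove it directly.
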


\begin{lem}\cite{BRV06}\label{lem:wedge2}
 The permutation class $Av(B)$ contains only finitely many wedge simple
 permutations of type $2$ if and only if $B$ contains an element of every
 symmetry of the class $Av(2134, 2143, 3124, 3142, 3241, 3412, 4123,
 4132, 4231, 4312)$.
\end{lem}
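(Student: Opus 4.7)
The plan is to reduce the statement to an explicit description of the class of patterns of wedge simple permutations of type $2$. Up to the eight symmetries of the square, the wedge simple permutations of type $2$ form a single parametric family indexed by their size; fix a canonical orientation and let $\mathcal{W}$ denote the set of wedge simple permutations of type $2$ having this orientation.

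The central step is to establish the identity
\[
\mathrm{Sub}(\mathcal{W}) = Av(2134,\, 2143,\, 3124,\, 3142,\, 3241,\, 3412,\, 4123,\, 4132,\, 4231,\, 4312),
\]
where $\mathrm{Sub}(\mathcal{W}) = \{\tau : \tau \leq w \text{ for some } w \in \mathcal{W}\}$ is the downward closure of $\mathcal{W}$ in the pattern order. The inclusion $\subseteq$ is a finite verification: drawing the generic diagram of an element of $\mathcal{W}$ (two monotone staircases meeting at a wedge vertex with one distinguished extra point), one checks that none of the ten listed patterns can be embedded in such a permutation. The reverse inclusion is the combinatorial heart of the argument and is shown constructively: given a permutation $\sigma$ avoiding the ten patterns, I would split $\sigma$ according to the positions of its extremal points, distribute its remaining points along the two staircases of a canonical wedge simple of sufficient size, and use each of the ten forbidden-pattern conditions in turn to justify that the resulting assignment is compatible with the wedge shape.

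Once the identity is proved, the rest is formal. The family $\mathcal{W}$ is infinite and, by construction, every element of $\mathrm{Sub}(\mathcal{W})$ occurs as a pattern of arbitrarily large elements of $\mathcal{W}$. Hence $Av(B) \cap \mathcal{W}$ is finite if and only if some $b \in B$ lies in $\mathrm{Sub}(\mathcal{W})$, i.e.\ in $Av(2134, \ldots, 4312)$. Applying this to each of the eight symmetric copies $\tau(\mathcal{W})$ and using that $\tau(\mathrm{Sub}(\mathcal{W})) = Av(\tau(2134), \ldots, \tau(4312))$ for any symmetry $\tau$ of the square, we conclude that $Av(B)$ contains only finitely many wedge simple permutations of type $2$ if and only if $B$ contains an element of every symmetry of $Av(2134, 2143, 3124, 3142, 3241, 3412, 4123, 4132, 4231, 4312)$.

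The main obstacle is the inclusion $Av(\ldots) \subseteq \mathrm{Sub}(\mathcal{W})$: the ten forbidden patterns are chosen precisely to block every obstruction to an embedding into a canonical wedge shape, and verifying that their joint absence is sufficient requires a careful case analysis on the relative positions of the points of $\sigma$ with respect to its maximum and minimum entries.
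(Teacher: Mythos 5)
First, note that the paper does not prove this lemma at all: it is quoted verbatim from~\cite{BRV06}, so the only possible comparison is with the argument in that source. Your plan is essentially that argument: reduce the statement to the identity $\mathrm{Sub}(\mathcal{W})=Av(2134,\dots,4312)$ for a canonical orientation $\mathcal{W}$ of the type-$2$ wedge simple permutations, then conclude by the standard finiteness argument and the action of the eight symmetries. So the route is the right one; the problem is that, as written, the proposal does not actually prove the lemma. The inclusion $Av(2134,\dots,4312)\subseteq\mathrm{Sub}(\mathcal{W})$ is the entire content of the statement, and you only describe in general terms how you would ``distribute the points along the two staircases'' and appeal to the ten excluded patterns; none of the case analysis is carried out, and it is exactly there that the specific list of ten length-$4$ patterns has to be justified. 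Until that verification is done (or the corresponding statement of~\cite{BRV06} is invoked, which is what the present paper does), the argument is a plan rather than a proof.

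There is also a small but genuine logical slip in the ``formal'' part. From ``every element of $\mathrm{Sub}(\mathcal{W})$ occurs as a pattern of arbitrarily large elements of $\mathcal{W}$'' you cannot conclude that $Av(B)\cap\mathcal{W}$ is finite when some $b\in B$ lies in $\mathrm{Sub}(\mathcal{W})$: a pattern occurring in arbitrarily large members could still be avoided by infinitely many members. What you need, and what holds here, is that every element of $\mathrm{Sub}(\mathcal{W})$ is contained in \emph{all sufficiently large} elements of $\mathcal{W}$; this follows because the canonical type-$2$ wedge simples form a chain under pattern containment (each one is a pattern of the next larger one), and that observation should be stated explicitly, since it is what converts ``$B\cap\mathrm{Sub}(\mathcal{W})\neq\emptyset$'' into ``only finitely many elements of $\mathcal{W}$ survive in $Av(B)$''. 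The symmetry bookkeeping at the end, and the easy inclusion $\mathrm{Sub}(\mathcal{W})\subseteq Av(2134,\dots,4312)$ by a finite check on the generic wedge diagram, are fine.
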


Using these lemmas together with a result of~\cite{AAAH01} we have:

\begin{lem}
\label{lem:complexity_nlogn}
Testing whether a finitely based class $Av(B)$ contains a finite number of wedge simple permutations and parallel alternations
can be done in $\mathcal{O}(n \log n)$ time, where $n = \sum_{\pi \in B} |\pi|$.
\end{lem}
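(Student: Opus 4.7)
The plan is to reduce the test to a bounded number of pattern-avoidance checks on the elements of $B$, each of which can be performed efficiently using the algorithm of~\cite{AAAH01}.

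By Lemmas~\ref{lem:alternation}, \ref{lem:wedge1} and~\ref{lem:wedge2}, the desired test reduces to verifying three conditions, each of the form: ``$B$ contains an element of every symmetry of $Av(R)$'' for some explicit finite set $R$ of patterns of size at most $4$. Each permutation class $Av(R)$ has at most $8$ symmetries (obtained by composing reverse, complement and inverse), and each symmetry is itself the avoidance class $Av(R')$ of a fixed set $R'$ of patterns, where $|R'| = |R|$ and every pattern in $R'$ has size at most $4$. Altogether, we obtain a constant number (independent of $B$) of subroutines, each of which answers a question of the form: \emph{does there exist $\pi \in B$ such that $\pi$ avoids every pattern in $R'$?}

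To answer one such question, we iterate over $\pi \in B$ and, for each $\pi$, check whether $\pi$ contains any pattern from $R'$. Since $R'$ is a fixed finite set of patterns of bounded size (at most $4$), applying the pattern-matching algorithm of~\cite{AAAH01} (which decides containment of a pattern of size $k$ inside a permutation of size $m$ in time polynomial in $m$, and in $\mathcal{O}(m \log m)$ when $k \leq 4$) gives a test in time $\mathcal{O}(|\pi| \log |\pi|)$ for each $\pi$. Summing over $\pi \in B$ yields a bound of $\mathcal{O}\bigl(\sum_{\pi \in B} |\pi| \log |\pi|\bigr) = \mathcal{O}(n \log n)$ time per subroutine. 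Since the total number of subroutines is a constant, the whole test runs in $\mathcal{O}(n \log n)$ time.

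The only delicate point is invoking the correct complexity bound from~\cite{AAAH01} for patterns of size at most $4$; once this is in hand, the rest is a routine aggregation of a constant number of sub-tests.
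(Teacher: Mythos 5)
Your proposal is correct and follows essentially the same route as the paper: reduce via Lemmas~\ref{lem:alternation}--\ref{lem:wedge2} to a constant number of tests, each checking for every $\pi \in B$ whether it involves a fixed set of patterns of size at most $4$, and apply the $\mathcal{O}(|\pi|\log|\pi|)$ pattern-involvement algorithm of~\cite{AAAH01} before summing over $B$. Your treatment merely spells out the symmetries and the constant number of subroutines more explicitly than the paper does.
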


\begin{proof}
From Lemmas~\ref{lem:alternation} to~\ref{lem:wedge2}, deciding if $Av(B)$ contains
a finite number of wedge simple permutations and parallel alternations is equivalent to checking
if elements of its basis $B$ involve patterns of size at most $4$.
From~\cite{AAAH01} checking whether a permutation $\pi$ involves a fixed set of patterns of size at most $4$
can be done in ${\mathcal O}(|\pi| \log |\pi|)$.
As we check for each permutation of $B$ the involvement of fixed sets of permutations of size at most $4$,
this leads to a ${\mathcal O}(n \log n)$ algorithm for deciding whether the number of parallel alternations
and of wedge simple permutations in the class is finite.
\end{proof}

  In~\cite{BRV06} Brignall {\it et al.} also proved that it is decidable whether
  $\mathcal{C} = Av(B)$ contains a finite number of proper
  pin-permutations. 
  Their proof heavily relies on an encoding of proper pin-permutations 
  by words over a finite alphabet (called \emph{pin words}), and on language theoretic arguments. 
  In the next section, we review and further develop the theory of pin words. 
  Then, in Section~\ref{sec:algos_finite_number_proper_pinperm} we will review the decision procedure of~\cite{BRV06}, 
  and explain how we could modify it into an efficient algorithm. 

\section{Characterization of classes with finitely many proper pin-permutations}\label{sec:strict pw}

Our goal in this section is to provide a criterion (that can be tested algorithmically, in the next sections) 
for a permutation class $\mathcal{C} = Av(B)$ given by its finite basis $B$ 
to contain finitely many proper pin-permutations. 
The encoding of pin-permutations by their pin words -- to be reviewed in Subsection~\ref{ssec:pinwords} -- has an essential property that can be used in establishing such a criterion: 
it allows to interpret the pattern order $\leq$ on pin-permutations as an order relation $\preccurlyeq$ on their pin words. 

This property is already at the core of~\cite{BRV06}, and we recall it below as Lemma~\ref{csq ordre}. 
In~\cite{BRV06}, it is used to derive a first criterion on $\mathcal{C}$ to contain a finite number of proper pin-permutations: 
\begin{quote}
$\mathcal{C}$ contains finitely many proper pin-permutations if and only if 
the language $\SP \setminus \bigcup \{\text{strict pin word }w \mid u \preccurlyeq w \}$ is finite, 
where the union is taken over all pin words $u$ that encode a permutation $\pi \in B$ 
and $\SP$ denote the language of all strict pin words (see Definition~\ref{def:strict_pin_words}). 
\end{quote}
This criterion may then be decided using automata theory, as explained in~\cite{BRV06} and reviewed in Subsection~\ref{ssec:procedure_of_BRV_2}. 

In what follows, we go further into the encoding of pin-permutations by words, 
and into the interpretation of the pattern order $\leq$ in terms of words and languages. 
This allows us to associate a language ${\mathcal L}_{\pi}$ to every pin-permutation $\pi$ 
in such a way that if $\pi \leq \sigma$ then ${\mathcal L}_{\sigma} \subseteq {\mathcal L}_{\pi}$. 
Subsequently, these languages ${\mathcal L}_{\pi}$ can be used to characterize
when $\mathcal{C}$ contains a finite number of proper pin-permutations -- see Theorem~\ref{thm:nbInfiniEtLangages}: 
\begin{quote}
$\mathcal{C}$ contains finitely many proper pin-permutations if and only if 
the language $\M \setminus \bigcup{\mathcal L}_\pi$ is finite, 
where the union is taken over all pin-permutations $\pi \in B$ 
and $\M$ denotes the set of words on the alphabet $\{L,R, U, D\}$ with no factor in $\{UU, UD, DU, DD, LL, LR, RL,RR\}$ (see p.\pageref{defi:m}).
\end{quote}
As we shall see in Section~\ref{sec:algos_finite_number_proper_pinperm}, this new criterion can be tested by an algorithm, 
far more efficiently than the first criterion above. 

\subsection{Pin words}
\label{ssec:pinwords}
Pin representations can be encoded on the alphabet
$\{1,2,3,4,U,D,L,R\}$  by words called {\it pin words}. Consider a pin
representation $(p_1,\ldots,p_n)$ and choose an origin $p_0$ in the
plane such that $(p_0,p_1,\ldots,p_n)$ is a pin sequence. Then every
pin $p_1,\ldots,p_n$ is encoded by a letter according to the following
rules:
\begin{itemize}
\item The letter associated with $p_i$ is $U$ -- resp. $D,L,R$ -- if
  $p_i$ separates $p_{i-1}$ and $\{p_0,p_1,\ldots,p_{i-2}\}$
  from the top -- resp. bottom, left, right.
\item The letter associated with $p_i$ is $1$ -- resp. $2,3,4$ -- if
  $p_i$ is independent from $\{p_0,p_1,\ldots,p_{i-1}\}$ and
  is located in the up-right -- resp. up-left, bottom-left,
  bottom-right -- corner of the bounding box of
  $\{p_0,p_1,\ldots,p_{i-1}\}$.
\end{itemize}
This encoding is summarized by Figure~\ref{fig:quadrant}.  The region
encoded by $1$ is called the first {\em quadrant} with respect to the box
\tikz\fill (0,0) rectangle (0.3,0.3);. The same goes for
$2,3,4$.  The letters $U,D,L,R$ are called {\em directions}, while
$1,2,3$ and $4$ are {\em numerals}.

\begin{figure}[htbp]
\hspace{.02\linewidth}
  \begin{minipage}[t]{.3\linewidth}
    \begin{center}
      \begin{tikzpicture}[scale=.7]
        \useasboundingbox (0,0) rectangle (4,3);
        \draw[help lines] (0,0) grid
        +(3,3); \fill (1,1) rectangle +(1,1); \draw (0.5,0.5) node
        {3}; \draw (1.5,0.5) node {D}; \draw (2.5,0.5) node {4}; \draw
        (2.5,1.5) node {R}; \draw (2.5,2.5) node {1}; \draw (1.5,2.5)
        node {U}; \draw (0.5,2.5) node {2}; \draw (0.5,1.5) node {L};
      \end{tikzpicture}
      \caption{Encoding \newline of pins by letters.}
      \label{fig:quadrant}
    \end{center}
  \end{minipage}
  \begin{minipage}[t]{.66\linewidth}
    \begin{center}
      \begin{tikzpicture}[scale=.37]
      \useasboundingbox (0,0) rectangle (6,6);
        \draw (2,2) [fill] circle (.2); \draw
        (4,4) [fill] circle (.2); \draw (1.5,1.5) node
        {\begin{small}$p_1$\end{small}}; \draw (4.5,4.5)
        node {\begin{small}$p_2$\end{small}}; \draw (1,1) node
        {\begin{small}$11$\end{small}}; \draw (1,3) node
        {\begin{small}$41$\end{small}}; \draw (1,5) node
        {\begin{small}$4R$\end{small}}; \draw (3,1) node
        {\begin{small}$21$\end{small}}; \draw (3,3) node
        {\begin{small}$31$\end{small}}; \draw (3,5) node
        {\begin{small}$3R$\end{small}}; \draw (5,1) node
        {\begin{small}$2U$\end{small}}; \draw (5,3) node
        {\begin{small}$3U$\end{small}}; \draw[thick] (0,2) -- (6,2);
        \draw[thick] (0,4) -- (6,4); \draw[thick] (2,0) -- (2,6);
        \draw[thick] (4,0) -- (4,6);
      \end{tikzpicture}
      \caption{The two letters in each cell indicate the first two
        letters of the pin word encoding $(p_1, \ldots, p_n)$ when $p_0$ is
        taken in this cell.}
      \label{fig:origine}
    \end{center}
  \end{minipage}
\hspace{.02\linewidth}
\end{figure}

\begin{example} \label{ex:encoding_p0}
$14L2UR$ (if $p_{0}$ is between $p_{3}$ and $p_{1}$) and 
$3DL2UR$ (if $p_0$ is horizontally between $p_1$ and $p_4$ and vertically between $p_2$ and $p_6$)
are pin words corresponding to the pin representation of $\pi = 4\,6\,2\,3\,1\,5$ shown in Figure~\ref{fig:definitions} (p.\pageref{fig:definitions}). 
\end{example}

Example~\ref{ex:encoding_p0} shows in particular that
several pin words encode the same pin representation,
depending on the choice of the origin $p_0$.
We may actually describe the number of these pin words:

\begin{rem}\label{rem:nb_pin_words}
Because of the choice of the origin $p_0$,
each pin-permutation of size greater than $1$ has at least $6$ pin words.
More precisely each pin representation $p$ is encoded by $6$
pin words if $p_3$ is a separating pin and $8$ pin words otherwise (see Figure~\ref{fig:origine}).
Indeed, once a pin representation $p$ is fixed,
the letters encoding $p_i$ for $i\geq 3$ in a pin word encoding $p$ are uniquely determined.
\end{rem}

Conversely, pin words indeed encode pin-permutations
since to each pin word corresponds a unique pin representation, hence a
unique permutation.

\smallskip

\begin{rem}\label{rem:UU,UD...}
The definition of pin sequences implies that pin words do not contain
any of the factors $UU, UD, DU, DD, LL, LR, RL$ and $RR$.
\end{rem}

\begin{defi}\label{def:strict_pin_words}
A {\em strict} (resp. {\em quasi-strict}) pin word is a pin word that
begins with a numeral (resp. two numerals) followed only by
directions. 
We denote by $\SP$ the set of all strict pin words. 
\end{defi}

\begin{rem}[Proper pin representations, strict and quasi-strict pin words] \label{rem:proper_strict}
~
Every pin word encoding a proper pin representation
is either strict or quasi-strict.
Conversely if a pin word is strict or quasi-strict,
then the pin representation it encodes is proper.
Finally a pin-permuta\-tion is proper if and only if it admits a strict pin word.
\end{rem}

\subsection{Pattern containment and piecewise factor relation}

Recall the definition of the partial order $\preccurlyeq $ on pin
words introduced in~\cite{BRV06}.

\begin{defi}
  Let $u$ and $w$ be two pin words.  We decompose $u$ in terms of its
  strong numeral-led factors as $u = u^{(1)} \ldots u^{(j)}$, {\em a
    strong numeral-led factor} being a strict pin word. We then write
  $u \preccurlyeq w$ if $w$ can be chopped into a sequence of factors
  $w=v^{(1)}w^{(1)} \ldots v^{(j)}w^{(j)}v^{(j+1)}$ such that for all
  $i \in \{1,\ldots, j\}$:
\begin{itemize}
\item if $w^{(i)}$ begins with a numeral then $w^{(i)} = u^{(i)}$, and
\item if $w^{(i)}$ begins with a direction, then $v^{(i)}$ is
  non-empty, the first letter of $w^{(i)}$ corresponds to a point lying
  in the quadrant -- w.r.t.~the origin of the encoding $w$ --
  specified by the first letter of $u^{(i)}$, and all other letters in
  $u^{(i)}$ and $w^{(i)}$ agree.
\end{itemize}
\label{def:preceq}
\label{defn:snl}
\end{defi}

\begin{example}
The strong numeral-led factor decomposition of $u=14L2UR$ is $u=1 \cdot 4L \cdot 2UR$.
Moreover, $u \preccurlyeq w = 2RU4LULURD4L$, because $w$ may be decomposed as $w= 2R\mathbf{U} \cdot \mathbf{4L} \cdot U\mathbf{LUR} \cdot D4L $,
where the factors $w^{(i)}$ satisfying the conditions of Definition~\ref{def:preceq} are emphasized by bold letters.
\end{example}

As we mentioned already, the essential property of this order 
is that it is closely related to the pattern containment order $\leq$ on permutations.
\begin{lem}\cite{BRV06}\label{csq ordre}
If the pin word $w$ encodes the permutation $\sigma$ and $\pi
\leq \sigma$ then there is a pin word $u$ encoding $\pi$ with
$u \preccurlyeq w$.  Conversely if $u \preccurlyeq w$ then the permutation
corresponding to $u$ is contained in the one corresponding to
$w$.
\end{lem}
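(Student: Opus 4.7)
We prove the two implications separately.

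For direction $(\Leftarrow)$, assume $u \preccurlyeq w$ via the decompositions $u = u^{(1)} \cdots u^{(j)}$ and $w = v^{(1)} w^{(1)} \cdots v^{(j)} w^{(j)} v^{(j+1)}$ from Definition~\ref{def:preceq}, and let $(q_0, q_1, \ldots, q_n)$ be the pin sequence encoded by $w$. Retain the pins of $w$ whose letters occur in some factor $w^{(i)}$, preserving their order inherited from $w$, and call this subset $P$. By induction on $i$, these retained pins form a pin representation whose underlying permutation is exactly the one encoded by $u$: in each factor, the leading letter of $u^{(i)}$ is a numeral picking out the correct quadrant---either directly (case (a), when $w^{(i)}$ begins with the same numeral) or via the explicit geometric constraint (case (b))---and the remaining direction letters of $u^{(i)}$ and $w^{(i)}$ coincide, so the corresponding separating pins match. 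Since $P$'s diagram is a sub-diagram of $\sigma$'s, the permutation encoded by $u$ is a pattern of $\sigma$.

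For direction $(\Rightarrow)$, fix an occurrence of $\pi$ inside $\sigma$, which selects a subset of pins $\{q_{j_1}, \ldots, q_{j_k}\}$ (with $j_1 < \cdots < j_k$) from the pin sequence of $w$. Build $u$ by walking through these pins in pin order: copy the letter of $w$ at each selected position, except that whenever the letter is a direction but the pin no longer satisfies the separation condition with respect to the reduced prefix $\{q_0, q_{j_1}, \ldots, q_{j_{t-1}}\}$, replace it by the numeral designating the corner into which the pin now falls. The skipped letters of $w$---those between consecutive $j_t$'s---constitute the intermediate factors $v^{(i)}$, and the strong numeral-led factor decomposition of the resulting $u$ naturally induces the $u^{(i)}$'s and the matching $w^{(i)}$'s. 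The two conditions of Definition~\ref{def:preceq} then hold by construction.

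The main difficulty lies in the substitution step of direction $(\Rightarrow)$: one must prove that the construction always produces a valid pin word encoding $\pi$. Since pin-permutations form a class closed under pattern containment~\cite{BBR09}, the existence of some pin word for $\pi$ is guaranteed; what requires work is checking that \emph{this} construction yields one. This entails a careful case analysis comparing the bounding box of the selected sub-sequence to that of the full pin sequence of $w$, invoking Lemma~\ref{lem:[7]2.17} to control the region into which each newly kept pin may fall relative to the restricted bounding box, and thereby justifying that every direction-to-numeral replacement is geometrically meaningful.
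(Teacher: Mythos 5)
This lemma is quoted from~\cite{BRV06} and the paper itself gives no proof, so there is nothing internal to compare against; your proposal reconstructs the original Brignall--Ru{\v{s}}kuc--Vatter argument, and as a sketch it is essentially sound. Both directions rest on the same elementary monotonicity facts, which you gesture at but never state: a point that lies outside (resp.\ in quadrant $q$ of, resp.\ separates a point from) the bounding box of a set of pins retains that property with respect to any subset of those pins. In the direction $(\Rightarrow)$ this gives exactly the clean dichotomy your construction needs: if the $w$-predecessor of a selected pin is also selected, the direction letter remains valid for the reduced prefix (same direction, separation inherited from the superset); if the predecessor is skipped, then \emph{all} earlier selected pins lie on one side of the separating line, so the pin is automatically independent of the reduced prefix and falls into a well-defined corner, whose numeral moreover coincides with the quadrant required by Definition~\ref{def:preceq} because quadrant membership is inherited by contained boxes. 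This also shows each strong numeral-led factor of the constructed $u$ occupies consecutive positions of $w$, yielding the chopping $v^{(1)}w^{(1)}\cdots v^{(j)}w^{(j)}v^{(j+1)}$ with $v^{(i)}$ non-empty exactly when case (b) applies. The same transfer facts justify the inductive claim in your $(\Leftarrow)$ direction that the retained pins realize the permutation encoded by $u$.

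The only real criticism is your closing paragraph: the ``main difficulty'' is not a case analysis driven by Lemma~\ref{lem:[7]2.17} (uniqueness of a pin on the sides of a bounding box plays no role here), but precisely the short verifications above, which you left implicit. Writing out those three or four one-line geometric observations would turn the sketch into a complete proof; as it stands the argument is correct in outline but asserts its key steps (``by induction'', ``by construction'') rather than proving them, and mislocates where the work lies.
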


The relation $u \preccurlyeq w$ on pin words is nearly a piecewise factor relation,
the factors being determined by the strong numeral-led factors of $u$. 
Our purpose in the following is to adapt the relation $\preccurlyeq$ into an actual piecewise factor relation. 
This is achieved in Theorem~\ref{thm:ordreMots}, using a further encoding of pin words that we introduced in~\cite{BBPR10} and recall hereafter. 

\medskip

Recall that $\SP$ denotes the set of strict pin words, and 
let $\SP_{\geq 2}= \SP \setminus \{1,2,3,4\}$ be the set of strict pin words of length at least $2$.  
Further denote by $\M$ \label{defi:m} (resp. $\M_{\geq 3}$) the set of words of length at
least $2$ (resp. at least $3$) over the alphabet $\{L,R, U, D\}$ such that ${L,R}$ is
followed by ${U,D}$ and conversely.
We define below a bijection that sends strict pin words to words of $\M$.
It consists of replacing the only numeral in a strict pin word by two directions.
Intuitively, given a numeral $q$ and a box \tikz\fill (0,0) rectangle (0.3,0.3);,
inserting two pins in the two directions prescribed by the bijection
ends up in a pin lying in quadrant $q$ with respect to the box \tikz\fill (0,0) rectangle (0.3,0.3);.

\begin{defi}\label{def:phi}
We define a bijection $\phi$ from $\SP_{\geq 2}$ to $\M_{\geq 3}$ as follows. For any
strict pin word $u \in \SP_{\geq 2}$ such that $u=u' u''$ with $|u'|=2$,
we set $\phi (u) = \varphi(u') u''$ where $\varphi$ is given by:
\begin{center}
\begin{tabular}{|c||c||c||c|}
\hline
$1R\mapsto RUR$ & $2R\mapsto LUR$   & $3R\mapsto LDR$   & $4R\mapsto RDR$ \\
 $1L\mapsto RUL$ & $2L\mapsto LUL$ & $3L\mapsto LDL$  & $4L\mapsto RDL$\\
 $1U\mapsto URU$ & $2U\mapsto ULU$ & $3U\mapsto DLU$ &  $4U\mapsto DRU$ \\
$1D\mapsto URD$ & $2D\mapsto ULD$ & $3D\mapsto DLD$ & $4D\mapsto DRD$ \\
\hline
\end{tabular}
\end{center}
\end{defi}

For any $n \geq 2$, the map $\phi$ is a bijection from the set
$\SP_{n}$ of strict pin words of length $n$ to the set ${\mathcal
  M}_{n+1}$ of words of $\M$ of length $n+1$.  Furthermore,
it satisfies, for any $u = u_1u_2\ldots \in \SP_{\geq 2}$, $u_{i}=\phi (u)_{i+1}$ for
any $i\geq 2$.

In the above table, we can notice that, for any $u \in \SP_{\geq 2}$, the first
two letters of $\phi (u)$ are sufficient to determine the first letter
of $u$ (which is a numeral). Thus it is natural to extend the
definition of $\phi$ to $\SP$ by setting for words of length $1$: $ \phi(1) =
\{UR,RU\}, \phi(2) = \{UL,LU\}, \phi(3) = \{DL,LD\}$ and $\phi(4) =
\{RD,DR\}$, and by defining consistently $\phi^{-1}(v) \in
\{1,2,3,4\}$ for any $v$ in $\{LU,LD,RU,RD,UL,UR,DL,DR\}$.

Lemma~\ref{lem:quadrant} below shows that for
each pin word $w$, we know in which quadrant (w.r.t.~the origin of the encoding) lies every pin of
the pin representation $p$ corresponding to $w$. More precisely for each $i \leq |w|$, knowing only $w_i$ and $w_{i-1}$, we can determine in which quadrant $p_i$ lies.

\begin{lem} \label{lem:quadrant}
  Let $w$ be a pin word and $p$ be the pin representation
  corresponding to $w$. For any $i\geq 2$, the numeral indicating 
  the quadrant in which $p_{i}$ lies with respect to 
  $\{p_0, \ldots, p_{i-2}\}$ is
$\begin{cases}
  w_i \textrm{ if } w_i \textrm{ is a numeral; }\\
  \phi^{-1}(w_{i-1}w_i) \textrm{ if } w_{i-1} \textrm{ and } w_i \textrm{ are directions; }\\
  \phi^{-1}(BC) \textrm{ otherwise, with } \phi(w_{i-1}w_i) = ABC. \end{cases}
$
\end{lem}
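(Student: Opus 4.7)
The plan is to proceed by a direct case analysis on the shapes of $w_{i-1}$ and $w_i$, following the three cases of the statement. Throughout I write $B_j$ for the bounding box of $\{p_0,\ldots,p_j\}$; the key geometric fact used all along is that $B_{i-2}\subseteq B_{i-1}$, which lets one transfer information about the position of $p_i$ from one bounding box to the other.

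In the first case (where $w_i$ is a numeral), I would simply apply the encoding rule: $p_i$ is independent from $\{p_0,\ldots,p_{i-1}\}$ and lies in the corner quadrant $w_i$ of $B_{i-1}$, which means $p_i$ is strictly outside $B_{i-1}$ along both axes in the directions specified by that quadrant. Since $B_{i-2}\subseteq B_{i-1}$, the same holds with $B_{i-2}$ in place of $B_{i-1}$, so $p_i$ also lies in the corner quadrant $w_i$ of $B_{i-2}$.

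In the second case (both $w_{i-1}$ and $w_i$ are directions), Remark~\ref{rem:UU,UD...} forces the two directions to be perpendicular, leaving eight ordered pairs to examine. In each pair, the separation rule for $p_{i-1}$ places it strictly beyond the $w_{i-1}$-side of $B_{i-2}$, while the encoding of $w_i$ places $p_i$ strictly beyond the $w_i$-side of $B_{i-1}$, hence also of $B_{i-2}$. Moreover, the separation line through $p_i$ (perpendicular to $w_i$, hence parallel to $w_{i-1}$) separates $p_{i-1}$ from $\{p_0,\ldots,p_{i-2}\}$, which forces the coordinate of $p_i$ along the $w_{i-1}$-axis to lie strictly between $p_{i-1}$ and the $w_{i-1}$-side of $B_{i-2}$, and thus beyond that side. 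So $p_i$ occupies a definite corner of $B_{i-2}$, and a direct inspection of Definition~\ref{def:phi} identifies it with $\phi^{-1}(w_{i-1}w_i)$: for instance, $w_{i-1}w_i = UR$ yields $p_i$ above and to the right of $B_{i-2}$, namely quadrant $1 = \phi^{-1}(UR)$.

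In the third case ($w_{i-1}$ a numeral and $w_i$ a direction), the independence of $p_{i-1}$ from $\{p_0,\ldots,p_{i-2}\}$ places it in the corner quadrant $w_{i-1}$ of $B_{i-2}$, while the separation condition for $p_i$ puts it beyond the $w_i$-side of $B_{i-1}$ (hence of $B_{i-2}$) and constrains its coordinate along the perpendicular axis to lie strictly between $p_{i-1}$ and the corresponding side of $B_{i-2}$; combining these two facts yields a definite corner of $B_{i-2}$ for $p_i$. I would then verify in each of the sixteen pairs $(w_{i-1},w_i)$ that this corner is $\phi^{-1}(BC)$ where $\phi(w_{i-1}w_i) = ABC$, by reading off Definition~\ref{def:phi}. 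The main obstacle is purely clerical, namely the $8+16$ sub-cases in items 2 and 3; however the dihedral symmetry of the square acts compatibly on $\{U,D,L,R\}$, on $\{1,2,3,4\}$, and on the map $\phi$ itself, so a single representative check per orbit suffices and the remaining cases follow by symmetry.
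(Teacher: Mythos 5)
Your argument is correct and is essentially the paper's own: the paper disposes of this lemma with a one-line proof ("similar to the proof of Lemma 3.4 of~\cite{BBPR10}, adapted to the case where $w$ is not necessarily strict"), and that underlying proof is exactly the geometric case check you describe — in the numeral case the transfer of the quadrant from the bounding box of $\{p_0,\dots,p_{i-1}\}$ to the smaller box of $\{p_0,\dots,p_{i-2}\}$, and in the two direction cases the pair of strict inequalities coming from the side of the box on which $p_i$ lies together with the separating line through $p_i$, compared against the table defining $\phi$. So your write-up is a correct, self-contained version of the adaptation the paper leaves to the reader, and the symmetry reduction you invoke is legitimate since $\phi$ is equivariant under the symmetries of the square.
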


Notice that in the third case $w_{i-1}$ is a numeral and $w_i$ is a direction;
consequently, $ABC \in {\mathcal M}_{3}$ is given by the table of Definition~\ref{def:phi}.

\begin{proof}
Similar to the proof of Lemma 3.4 of~\cite{BBPR10}, adapted to the case where
$w$ is any pin word, \emph{i.e.}, is not necessarily strict.
\end{proof}

Lemma~\ref{lem:quadrant} is used in the proofs of Lemma~\ref{prop phi} and Theorem~\ref{thm:ordreMots}.
Their statement also requires that we extend some definitions from $\SP$ to $\M$.

\begin{rem} \label{rem:EtendrePourM}
Words of $\M$ may also be seen as encodings of pin sequences
(as in Subsection~\ref{ssec:pinwords}),
taking the origin $p_0$ to be a box instead of a point.
Moreover, the relation $u \preccurlyeq w$ can be extended to $w \in \M$,
and the map $\phi$ can be defined on words of $\M$ as the identity map
(although this extension of $\phi$ to the domain $\SP \cup \M$ is not a bijection anymore).
\end{rem}

By definition, strong numeral-led factors of any pin word $u$ are
strict pin words. Therefore we first study how the relation $u \preccurlyeq
w$ is mapped on $\phi(u), \phi(w)$ when $u$ is a strict pin word.

\begin{lem} \label{prop phi}
Let $u$ be a strict pin word and $w$ be a word of $\SP \cup \M$.
If $|u| \geq 2$ then $u \preccurlyeq w$ if and only if $\phi(u)$ is a factor of $\phi(w)$.
If $|u| = 1$ then $u \preccurlyeq w$ if and only $\phi(w)$ has a factor in $\phi(u)$.
\end{lem}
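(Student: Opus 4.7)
The core tool will be Lemma~\ref{lem:quadrant}, which reads the quadrant of any pin $p_i$ off two consecutive letters of the pin word. Combined with the defining constraint of $\M$ (no $UU,UD,DU,DD,LL,LR,RL,RR$ as factors), it yields the following key fact that I will use throughout: for any numeral $q$ and direction $d$, exactly one of the two elements of $\phi(q)$ can be immediately followed by $d$ while staying in $\M$, and that element is precisely the length-$2$ prefix of $\varphi(qd)$. I will also use the immediate observations, read off the table of Definition~\ref{def:phi}, that $\varphi(qd)_3 = d$ and $\varphi(qd)_{1,2} \in \phi(q)$.

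For the case $|u| \geq 2$, the proof splits according to the form of the factor $w^{(1)}$ given by Definition~\ref{def:preceq}, applied to the unique strong numeral-led factor $u$ of $u$. If $w^{(1)}$ begins with a numeral then $w^{(1)} = u$, which forces $w \in \SP$ and $w^{(1)}$ to be a prefix of $w$; then $\phi(u) = \varphi(u_1u_2)\,u_3\cdots u_n$ is plainly the length-$(n{+}1)$ prefix of $\phi(w)$. If $w^{(1)}$ begins with a direction at position $i\geq 2$ of $w$, I rewrite $\phi(u)$ on the $w$-side: the last $n-2$ letters $u_3\cdots u_n$ correspond to letters $w_{i+2}\cdots w_{i+n-1}$ that sit in $\phi(w)$ unchanged (at shifted indices depending on whether $w\in\M$ or $w\in\SP$, with the only subtlety being the case $i\leq 3$ where the image meets the block $\varphi(w_1w_2)$ of $\phi(w)$); the third letter of $\varphi(u_1u_2)$ equals $u_2 = w_{i+1}$; and the first two letters of $\varphi(u_1u_2)$ must equal the pair of letters of $\phi(w)$ just before position~$i+1$. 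Lemma~\ref{lem:quadrant} tells me this pair lies in $\phi(u_1)$ (because the pin at position $i$ is in quadrant $u_1$), and the key fact above forces that pair to be $\varphi(u_1u_2)_{1,2}$, since it is followed by $u_2$ in $\phi(w) \in \M$. Reading these equalities the other way gives the converse: a factor of $\phi(w)$ equal to $\phi(u)$ determines a position $i\geq 2$ in $w$, a direction $w_i$ whose associated pin lies in quadrant $u_1$ (via Lemma~\ref{lem:quadrant}), and an agreement of the subsequent letters with $u_2,\dots,u_n$, hence a valid decomposition $w=v^{(1)}w^{(1)}v^{(2)}$.

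The case $|u|=1$ is entirely analogous and shorter. Here $\phi(u)=\phi(u_1)$ is a pair of two-letter words. The two sub-cases of $u\preccurlyeq w$ in Definition~\ref{def:preceq} are: either $w^{(1)}=u_1$ is a numeral in $w$, which forces $w\in\SP$ and $w_1=u_1$ (so $\phi(w)_1\phi(w)_2 \in \phi(w_1) = \phi(u_1)$), or $w^{(1)}$ is a direction at some position $i\geq 2$ whose pin lies in quadrant $u_1$ (so by Lemma~\ref{lem:quadrant} the pair $\phi(w)_i\phi(w)_{i+1}$, with suitable reindexing when the pair meets the prefix $\varphi(w_1w_2)$, belongs to $\phi(u_1)$). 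Both directions of the equivalence are straightforward from these identifications, noting that the sets $\phi(1),\phi(2),\phi(3),\phi(4)$ are pairwise disjoint, so a factor of $\phi(w)$ lying in $\phi(u_1)$ unambiguously identifies $u_1$.

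The main obstacle will be the bookkeeping in the $|u|\geq 2$ case when the occurrence of $\phi(u)$ in $\phi(w)$ overlaps the three-letter block $\varphi(w_1w_2)$ that sits at the beginning of $\phi(w)$ (for $w\in\SP$), since there the index correspondence between positions of $w$ and positions of $\phi(w)$ is shifted by one. Once one observes that $\varphi(w_1w_2)_3 = w_2$ (so that the correspondence $\phi(w)_{j} = w_{j-1}$ for $j\geq 4$ extends to $j=3$), this small-index case merges cleanly with the generic one, and the whole proof reduces to the key fact stated at the outset.
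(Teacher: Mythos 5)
Your proof is correct, and it is essentially the argument the paper has in mind: the paper simply cites Lemma~3.5 of~\cite{BBPR10} for the case $|u|\geq 2$ with $w$ strict and says the remaining cases follow in the same way from Lemma~\ref{lem:quadrant}, which is exactly the quadrant-reading argument you write out (translating the quadrant condition of Definition~\ref{def:preceq} into the two letters of $\phi(w)$ preceding the occurrence, with the forbidden-factor condition of $\M$ pinning down the unique element of $\phi(u_1)$ as the prefix of $\varphi(u_1u_2)$). Your handling of the boundary case where the occurrence meets the block $\varphi(w_1w_2)$, via $\varphi(w_1w_2)_3=w_2$ and the third case of Lemma~\ref{lem:quadrant}, matches the intended generalization, so nothing further is needed.
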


\begin{proof}
Note that if $|u| \geq 2$ then $\phi(u)$ is a word but if $|u|=1$ then $\phi(u)$ is a set of two words. 
The case where $|u| \geq 2$ and $w$ is a strict pin word corresponds exactly to Lemma~3.5 of~\cite{BBPR10}. 
Other cases are proved in a similar way, using, instead of Lemma~3.4 of~\cite{BBPR10}, 
its generalization provided by our current Lemma~\ref{lem:quadrant}.
\end{proof}

In the statement of Lemma~\ref{prop phi}, we have distinguished the cases $|u| \geq 2$ and $|u| = 1$
since $\phi(u)$ is a word or a set of two words in these respective cases.
However, to avoid such uselessly heavy statements,
we do not make this distinction in the sequel, and we write indifferently
``$\phi(u)$ is a factor of $w$'' or ``$w$ has a factor in $\phi(u)$'' meaning that
$\begin{cases}
\text{if } |u| = 1, w \text{ has a factor in } \phi(u) \\
\text{if } |u| \geq 2, \phi(u) \text{ is a factor of } w.
\end{cases}$

When the pin word $u$ is not strict, Lemma~\ref{prop phi} can be
extended formalizing the idea of piecewise factors mentioned at the
beginning of this section.

\begin{theo}\label{thm:ordreMots}
Let $u$ and $w$ be two pin words and $u = u^{(1)} \ldots u^{(j)}$ be the strong numeral-led factors decomposition of $u$.
Then $u \preccurlyeq w$ if and only if $w$ can be chopped into a sequence of factors $w=v^{(1)}w^{(1)} \ldots v^{(j)}w^{(j)}v^{(j+1)}$ such that for all $i \in \{1,\ldots, j\}$, $w^{(i)} \in \SP \cup \M$ and $\phi(w^{(i)})$ has a factor in $\phi(u^{(i)})$. 
\end{theo}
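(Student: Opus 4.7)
My plan is to derive Theorem~\ref{thm:ordreMots} by combining Definition~\ref{def:preceq} with Lemma~\ref{prop phi} applied to each strong numeral-led factor $u^{(i)}$ of $u$ separately. The intuition is that Definition~\ref{def:preceq} already chops $w$ into pieces aligned with the factors $u^{(i)}$, and on each such piece the condition in Definition~\ref{def:preceq} is exactly what Lemma~\ref{prop phi} translates into an assertion about $\phi$-factors.

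For the direction $(\Rightarrow)$, the plan is to start from the factorization $w=\tilde v^{(1)}\tilde w^{(1)}\cdots\tilde w^{(j)}\tilde v^{(j+1)}$ given by Definition~\ref{def:preceq} and to build from it the factorization required by the theorem. When $\tilde w^{(i)}$ begins with a numeral I simply set $w^{(i)}=\tilde w^{(i)}$: by Definition~\ref{def:preceq}, $w^{(i)}=u^{(i)}$ is a strict pin word, so $w^{(i)}\in\SP$, and the factor condition is satisfied trivially since $\phi(w^{(i)})=\phi(u^{(i)})$. When $\tilde w^{(i)}$ begins with a direction, Definition~\ref{def:preceq} guarantees that $\tilde v^{(i)}$ is non-empty, so I prepend its last letter to $\tilde w^{(i)}$ to form $w^{(i)}$. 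This $w^{(i)}$ is either a numeral followed by directions (hence in $\SP$) or a sequence of directions (hence in $\M$), depending on the nature of the last letter of $\tilde v^{(i)}$.

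The substantial step in this direction is to verify the $\phi$-factor condition in the second case. By Definition~\ref{def:preceq}, the pin encoded by the first letter of $\tilde w^{(i)}$ lies in the quadrant $q_i$ specified by the first letter of $u^{(i)}$. By Lemma~\ref{lem:quadrant}, this quadrant can be recovered from the pair of letters made of the last letter of $\tilde v^{(i)}$ and the first letter of $\tilde w^{(i)}$—i.e., from the first two letters of $w^{(i)}$—and it equals $\phi^{-1}$ applied to the appropriate two-letter block. A short case check using the table defining $\varphi$ then forces this two-letter block to coincide with the first two letters of $\phi(u^{(i)})$: both lie in $\phi(q_i)$; both have their second letter on the axis opposite to that of the second letter of $u^{(i)}$ (because consecutive directions in pin words alternate axes); and $\phi(q_i)$ contains exactly one element of each axis type. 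Combined with the fact that the remaining letters of $\tilde w^{(i)}$ and $u^{(i)}$ already agree, this gives $\phi(u^{(i)})$ as a factor of $\phi(w^{(i)})$, starting at position~$2$.

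The converse direction $(\Leftarrow)$ is then Lemma~\ref{prop phi} applied piecewise: given the factorization from the theorem, Lemma~\ref{prop phi} yields $u^{(i)}\preccurlyeq w^{(i)}$ for each~$i$, Definition~\ref{def:preceq} unfolded inside each $w^{(i)}$ produces a sub-factorization, and concatenating these gives a factorization of $w$ that witnesses $u\preccurlyeq w$. The main obstacle I anticipate is exactly the geometry/combinatorics translation in the forward direction: the quadrant in Definition~\ref{def:preceq} is a geometric condition relative to $p_0^w$, while the $\phi$-factor condition is purely word-theoretic, and Lemma~\ref{lem:quadrant} is the bridge that makes these viewpoints compatible (since, as one should note, for a direction-pin the "quadrant w.r.t.~$p_0$" and the "quadrant w.r.t.~$\{p_0,\ldots,p_{i-2}\}$" always agree). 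The corner case $|u^{(i)}|=1$, in which $\phi(u^{(i)})$ is a set of two two-letter words, is handled by the convention adopted just after Lemma~\ref{prop phi}, and the same two-letter-block argument as above shows that the unique relevant factor of $\phi(w^{(i)})$ belongs to $\phi(q_i)$.
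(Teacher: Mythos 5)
Your proposal is correct and follows essentially the same route as the paper's proof: the same chopping of $w$ (keeping $\bar w^{(i)}$ when it is numeral-led and otherwise absorbing the last letter of the preceding $v$-block), the same use of Lemma~\ref{lem:quadrant} as the bridge between the quadrant condition of Definition~\ref{def:preceq} and the word-level condition, and the same piecewise unfolding of Definition~\ref{def:preceq} concatenated back together for the converse. The only difference is cosmetic: in the forward direction you verify the $\phi$-factor condition directly by the case check on the table defining $\varphi$, whereas the paper first establishes $u^{(i)}\preccurlyeq w^{(i)}$ for each piece and then invokes Lemma~\ref{prop phi}.
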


\begin{proof}
We prove that $u \preccurlyeq w$ if and only if $w$ can be chopped into a sequence of factors $w=v^{(1)}w^{(1)} \ldots v^{(j)}w^{(j)}v^{(j+1)}$ such that for all $i \in \{1,\ldots, j\}$, $w^{(i)} \in \SP \cup \M$ and $u^{(i)} \preccurlyeq w^{(i)}$.
Then the result follows using Lemma~\ref{prop phi}.

If $u \preccurlyeq w$, then $w$ can be chopped into $w=\bar{v}^{(1)}\bar{w}^{(1)} \ldots \bar{v}^{(j)}\bar{w}^{(j)}\bar{v}^{(j+1)}$ as in Definition~\ref{def:preceq}.
We set $w^{(i)}= \bar{w}^{(i)}$ if $\bar{w}^{(i)}$ begins with a numeral, and we take $w^{(i)}$ to be the suffix of $\bar{v}^{(i)}\bar{w}^{(i)}$ of length $|\bar{w}^{(i)}| + 1$ otherwise.
Then for all $i \in \{1,\ldots, j\}$, $w^{(i)} \in \SP \cup \M$ and we have $u^{(i)} \preccurlyeq w^{(i)}$.
Indeed, if $\bar{w}^{(i)}$ begins with a direction, 
by Lemma~\ref{lem:quadrant} the point corresponding to the first letter of $\bar{w}^{(i)}$ lies in the quadrant determined by the last letter of $\bar{v}^{(i)}$ and the first letter of $\bar{w}^{(i)}$
(w.r.t.~the origin of the encoding $w$ and also 
of the encoding $w^{(i)}$).

Conversely if $w$ can be chopped into $w=v^{(1)}w^{(1)} \ldots v^{(j)}w^{(j)}v^{(j+1)}$ such that for all $i \in \{1,\ldots, j\}$, $u^{(i)} \preccurlyeq w^{(i)}$ then from Definition~\ref{def:preceq} we can decompose $w^{(i)}$ as $y^{(i)} \bar{w}^{(i)}z^{(i)}$ and thanks to Lemma~\ref{lem:quadrant} it is sufficient to set $\bar{v}^{(i)} = z^{(i-1)} v^{(i)} y^{(i)}$ to have $w=\bar{v}^{(1)}\bar{w}^{(1)} \ldots \bar{v}^{(j)}\bar{w}^{(j)}\bar{v}^{(j+1)}$ as in Definition~\ref{def:preceq}.
\end{proof}

\subsection{Pattern containment and set inclusion}

Recall that our goal is to characterize when there are finitely many proper pin-permutations in a class. 
By Remark~\ref{rem:proper_strict}, for any proper pin-permutation $\sigma$,
there exists a \textbf{strict} pin word $w_{\sigma}$ that encodes $\sigma$. 
And from Lemma~\ref{csq ordre},
checking whether a permutation $\pi$ is a pattern of $\sigma$ is equivalent to
checking whether there exists a pin word $u$ corresponding to $\pi$ with $u \preccurlyeq w_{\sigma}$. 
Consequently, to study the proper pin-permutations not in $\mathcal{C} = Av(B)$, 
\emph{i.e.} those containing some pattern $\pi$ in $B$, 
it is enough to study the \textbf{strict} pin words containing some pin word $u$ encoding $\pi$, for $\pi$ in $B$. 
This is the reason why we introduce the languages ${\mathcal L}(u)$ and ${\mathcal L}_{\pi}$ below. 

\begin{defi}\label{def:lu}
Let $u$ be a pin word and $u = u^{(1)} \ldots u^{(j)}$ be its strong numeral-led factor
decomposition. We set
  $${\mathcal L}(u) = A^\star \phi(u^{(1)}) A^\star \phi(u^{(2)}) \ldots A^\star \phi(u^{(j)}) A^\star \text{ where } A =  \{U,D,L,R\}\text{.}$$

\noindent Let $\pi$ be a permutation, and $P(\pi)$ be the set of pin words that encode $\pi$. We set
  $${\mathcal L}_{\pi} = \cup_{u \in P(\pi)}{\mathcal L}(u)$$
\end{defi}

As we shall see in Lemma~\ref{lem:patterns}, the languages ${\mathcal L}_{\pi}$ 
allow to describe the strict pin words of proper pin-permutations that contain $\pi$ (or rather their image by $\phi$). 
Note however that not all words of ${\mathcal L}_{\pi}$ are in the image by $\phi$ of the strict pin words of proper pin-permutations containing $\pi$. 
For instance, there are words starting with $LLLLL \phi(u^{(1)})$ that belong to ${\mathcal L}_{\pi}$, 
and these are not even in ${\mathcal M}$ (\emph{i.e.}, are not the image of a strict pin word by $\phi$).
But Lemma~\ref{lem:patterns} proves that such trivial ``bad words'' not belonging to ${\mathcal M}$ are the only ones that we should exclude from ${\mathcal L}_{\pi}$.
Indeed ${\mathcal L}_{\pi} \cap {\mathcal M}$ is in one-to-one correspondence with strict pin words encoding proper pin-permutations that contain $\pi$, via $\phi^{-1}$. 

\medskip

These languages ${\mathcal L}_{\pi}$ further have the interesting property 
of somehow translating the pattern involvement between pin-permutations into set inclusion, as expressed by Theorem~\ref{thm:lienLangagesMotif}. 

Note that ${\mathcal L}_{\pi}$ is non-empty if and only if $P(\pi)$ is
non-empty or equivalently $\pi$ is a pin-permutation. 
So when $\pi$ is not a pin-permutation, the results of
Theorem~\ref{thm:lienLangagesMotif} and Lemma~\ref{lem:patterns}
follow easily from the following statement (see for instance Lemma 3.3 of~\cite{BBR09}):
if $\pi \leq \sigma$ and $\sigma$ is a pin-permutation, then $\pi$ is a pin-permutation.

\begin{theo} \label{thm:lienLangagesMotif}
Let $\pi$ and $\sigma$ be permutations, such that $\pi \leq \sigma$. 
Then ${\mathcal L}_{\sigma} \subseteq {\mathcal L}_{\pi}$. 
\end{theo}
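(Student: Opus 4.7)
The plan is to prove directly that $\mathcal{L}_\sigma \subseteq \mathcal{L}_\pi$. I would first dispose of the degenerate case in which $\pi$ is not a pin-permutation: then by Lemma~3.3 of~\cite{BBR09} $\sigma$ is not a pin-permutation either (patterns of pin-permutations are pin-permutations), so both ${\mathcal L}_\pi$ and ${\mathcal L}_\sigma$ are empty and the inclusion is trivial. In the main case, pick any $w \in {\mathcal L}_\sigma$; by Definition~\ref{def:lu} there exists $v \in P(\sigma)$ with $w \in {\mathcal L}(v)$, and by Lemma~\ref{csq ordre} there exists $u \in P(\pi)$ with $u \preccurlyeq v$. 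The goal then reduces to showing $w \in {\mathcal L}(u)$.

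Write $u = u^{(1)} \ldots u^{(j)}$ and $v = v^{(1)} \ldots v^{(k)}$ for the strong numeral-led factorizations, and apply Theorem~\ref{thm:ordreMots} to $u \preccurlyeq v$ to obtain a chopping $v = a^{(1)} b^{(1)} \ldots a^{(j)} b^{(j)} a^{(j+1)}$ with $b^{(i)} \in \SP \cup \M$ and $\phi(b^{(i)})$ containing (an element of) $\phi(u^{(i)})$ as a factor. The key step is to lift this picture from $v$ to the word $\Phi := \phi(v^{(1)}) \phi(v^{(2)}) \ldots \phi(v^{(k)})$. Since the only numerals of $v$ are the initial letters of the $v^{(r)}$, each $b^{(i)}$ lives inside a single $v^{(r_i)}$: as a prefix when $b^{(i)} \in \SP$, and as a factor of its direction suffix when $b^{(i)} \in \M$. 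The table of Definition~\ref{def:phi} shows that the third letter of $\varphi(\alpha\beta)$ is always equal to $\beta$, so the direction suffix of $v^{(r)}$ starting from position~$2$ appears unchanged inside $\phi(v^{(r)})$ starting from position~$3$. Together, these facts give $\phi(b^{(i)})$ as a factor of $\phi(v^{(r_i)})$ at a position determined by that of $b^{(i)}$ inside $v^{(r_i)}$, and hence $\phi(u^{(i)})$ as a factor of $\phi(v^{(r_i)})$.

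It then remains to check that the embeddings $\phi(u^{(i)}) \hookrightarrow \Phi$ are non-overlapping and appear in order. When $r_i < r_{i+1}$ they sit in different blocks of $\Phi$, and this is immediate; when $r_i = r_{i+1}$, the position correspondence above converts the non-overlap of $b^{(i)}$ and $b^{(i+1)}$ inside $v^{(r_i)}$ into non-overlap of their $\phi$-images inside $\phi(v^{(r_i)})$. Finally, $w \in {\mathcal L}(v)$ writes as $w = w^{(0)} \phi(v^{(1)}) w^{(1)} \ldots \phi(v^{(k)}) w^{(k)}$ with each $w^{(r)} \in \{U,D,L,R\}^\star$, so each $\phi(u^{(i)})$ inherits a position inside the corresponding $\phi(v^{(r_i)})$-block of $w$. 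Absorbing the extra letters of each $\phi(v^{(r)})$-block and the $w^{(r)}$'s as $\{U,D,L,R\}^\star$-padding yields $w \in {\mathcal L}(u) \subseteq {\mathcal L}_\pi$.

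I expect the main obstacle to be the positional bookkeeping when two consecutive $b^{(i)}, b^{(i+1)}$ lie inside the same $v^{(r)}$, together with the length-one edge cases $|u^{(i)}|=1$ or $|b^{(i)}|=1$, in which $\phi$ produces a pair of words rather than a single one and the phrase ``$\phi(u^{(i)})$ is a factor of $\phi(b^{(i)})$'' must be read through the convention fixed just after Lemma~\ref{prop phi}.
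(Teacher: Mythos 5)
Your proposal is correct and follows essentially the same route as the paper's proof: reduce to $u \preccurlyeq v$ for some $v \in P(\sigma)$ via Lemma~\ref{csq ordre}, apply Theorem~\ref{thm:ordreMots} to chop $v$, observe that each factor $b^{(i)} \in \SP \cup \M$ sits inside a single strong numeral-led factor of $v$, and lift the factor occurrences through $\phi$ (using that $\phi$ shifts the direction suffix and acts as the identity on $\M$) to conclude $w \in {\mathcal L}(u)$. Your extra remarks (the degenerate non-pin-permutation case, the third-letter property of the table, and the flagged length-one/positional edge cases) only make explicit details that the paper's proof passes over with ``by definition of $\phi$,'' so there is no substantive difference.
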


In the following, we write $m =  v^{(1)} \phi(u^{(1)}) v^{(2)} \phi(u^{(2)}) \ldots v^{(j)} \phi(u^{(j)}) v^{(j+1)}$ for $m \in A^\star$, meaning that $m =  v^{(1)} w^{(1)} v^{(2)} w^{(2)} \ldots v^{(j)} w^{(j)} v^{(j+1)}$ with $w^{(i)} \in \phi(u^{(i)})$ if $u^{(i)}$ has length $1$ and $w^{(i)} = \phi(u^{(i)})$ otherwise.

\begin{proof}
Suppose that $\pi \leq \sigma$. 
If ${\mathcal L}_{\sigma}$ is empty, the statement trivially holds. 
Otherwise, let $m \in {\mathcal L}_{\sigma}$. 
We want to show that $m \in {\mathcal L}_{\pi}$. By definition of
${\mathcal L}_{\sigma}$, there exists $w \in P(\sigma)$ such that $m
= v^{(1)} \phi(w^{(1)}) v^{(2)} \ldots \phi(w^{(i)})
v^{(i+1)}$ where $w = w^{(1)} w^{(2)}\ldots w^{(i)}$ is the strong
numeral-led factor decomposition of $w$.  From Lemma~\ref{csq ordre}
there is $u \in P(\pi)$ such that $u \preccurlyeq w$. Let $u = u^{(1)}
\ldots u^{(j)}$ be the \snl factor decomposition of $u$. From
Theorem~\ref{thm:ordreMots}, $w=\bar{v}^{(1)}\bar{w}^{(1)} \ldots
\bar{v}^{(j)}\bar{w}^{(j)}\bar{v}^{(j+1)}$ where for all $k \in
\{1,\ldots, j\}$, $\bar{w}^{(k)} \in \SP \cup \M$ and
$\phi(\bar{w}^{(k)})$ has a factor in $\phi(u^{(k)})$. But $w^{(1)}
w^{(2)}\ldots w^{(i)}$ is the strong numeral-led factor decomposition
of $w=\bar{v}^{(1)}\bar{w}^{(1)} \ldots$
$\bar{v}^{(j)}\bar{w}^{(j)}\bar{v}^{(j+1)}$. Therefore the factors
$\bar{w}^{(1)}$, $\bar{w}^{(2)}$, $\ldots \bar{w}^{(j)}$ appear in
this order in $w^{(1)} w^{(2)} \dots w^{(i)}$, being non-overlapping
and each inside one $w^{(\ell)}$, since each $w^{(\ell)}$ begins with a
numeral and each $\bar{w}^{(k)}$ is in $\SP \cup \M$. Thus by
definition of $\phi$, 
the factors $\phi(\bar{w}^{(1)})$, $\phi(\bar{w}^{(2)})$, $\ldots$, $
\phi(\bar{w}^{(j)})$ appear in this order in $\phi(w^{(1)})
\phi(w^{(2)}) \ldots \phi(w^{(i)})$, being non-overlapping and each
inside one $\phi(w^{(\ell)})$. So $m = v^{(1)} \phi(w^{(1)}) v^{(2)}
\phi(w^{(2)}) \ldots v^{(i)} \phi(w^{(i)}) v^{(i+1)} \in A^\star
\phi(\bar{w}^{(1)}) A^\star \phi(\bar{w}^{(2)}) \ldots A^\star
\phi(\bar{w}^{(j)}) A^\star$. But for all $k \in \{1,\ldots, j\}$,
$\phi(\bar{w}^{(k)})$ has a factor in $\phi(u^{(k)})$, thus $m \in
    {\mathcal L}(u) = A^\star \phi(u^{(1)}) A^\star$ $ \phi(u^{(2)})
    \ldots A^\star \phi(u^{(j)}) A^\star$ and so $m \in {\mathcal
      L}_\pi$.
\end{proof}

\begin{rem}\label{rem:diese}
Although it is not necessary for our purpose, we would find interesting to have 
a stronger version of Theorem~\ref{thm:lienLangagesMotif} which would state 
the equivalence between $\pi \leq \sigma$ and ${\mathcal L}_{\sigma} \subseteq {\mathcal L}_{\pi}$, 
when $\pi$ and $\sigma$ are pin-permutations.
We do not know if this equivalence holds. 
However, we do know that, with a small modification of ${\mathcal L}(u)$ to allow for an extra symbol in $A$ 
(which plays the role of a separator), then we have, for all pin-permutations $\pi$ and $\sigma$, 
$\pi \leq \sigma$ if and only if ${\mathcal L}_{\sigma} \subseteq {\mathcal L}_{\pi}$.
\end{rem}

\subsection{Characterizing when a class has a finite number of proper pin-permutations}

We conclude Section~\ref{sec:strict pw} by putting together the above
definitions and results to answer to our original problem: 
providing a criterion that characterizes 
when a permutation class contains finitely many proper pin-permutations.

\begin{lem} \label{lem:patterns}
  Let $\sigma$ be a proper pin-permutation, $\pi$ be a permutation and
  $w$ be a strict pin word encoding $\sigma$. Then $\pi \leq \sigma$
  if and only if $\phi(w) \in {\mathcal L}_\pi$.
\end{lem}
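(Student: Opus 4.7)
The plan is to prove the two implications separately. The forward direction is immediate: if $\pi \leq \sigma$, Theorem~\ref{thm:lienLangagesMotif} gives ${\mathcal L}_\sigma \subseteq {\mathcal L}_\pi$, and since $w$ is strict its strong numeral-led factor decomposition has $w$ itself as its only factor, so by Definition~\ref{def:lu} one has ${\mathcal L}(w) = A^\star \phi(w) A^\star \ni \phi(w)$; hence $\phi(w) \in {\mathcal L}(w) \subseteq {\mathcal L}_\sigma \subseteq {\mathcal L}_\pi$.

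For the converse, suppose $\phi(w) \in {\mathcal L}_\pi$. Then there is $u \in P(\pi)$ with $\phi(w) \in {\mathcal L}(u)$, and writing $u = u^{(1)} \cdots u^{(j)}$ for its strong numeral-led factor decomposition, one obtains $\phi(w) = v^{(1)} \phi(u^{(1)}) v^{(2)} \cdots v^{(j)} \phi(u^{(j)}) v^{(j+1)}$ with $v^{(i)} \in A^\star$. By Lemma~\ref{csq ordre} it suffices to show $u \preccurlyeq w$, and by Theorem~\ref{thm:ordreMots} it suffices to chop $w = \bar{v}^{(1)} \bar{w}^{(1)} \cdots \bar{v}^{(j)} \bar{w}^{(j)} \bar{v}^{(j+1)}$ with each $\bar{w}^{(i)} \in \SP \cup \M$ such that $\phi(\bar{w}^{(i)})$ contains a factor in $\phi(u^{(i)})$.

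The construction of this chopping exploits the rigid structure of a strict pin word $w$: $w_1$ is a numeral, $w_2, \ldots, w_n$ are directions, $\phi(w)_k = w_{k-1}$ for $k \geq 3$, and $\phi(w)_1 \phi(w)_2 \in \phi(w_1)$ is computed via the table of Definition~\ref{def:phi}. Denoting by $p_i$ the starting position of $\phi(u^{(i)})$ in $\phi(w)$, I choose $\bar{w}^{(i)}$ as follows. If $p_i \geq 3$, I take $\bar{w}^{(i)}$ to be the factor of $w$ at positions $p_i - 1, \ldots, p_i - 1 + |u^{(i)}|$, which consists entirely of directions, lies in $\M$, and satisfies $\phi(\bar{w}^{(i)}) = \bar{w}^{(i)}$, matching $\phi(u^{(i)})$ by construction. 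If $p_i \in \{1, 2\}$ --- which, by the spacing of the $\phi(u^{(i)})$ inside $\phi(w)$, can occur only for $i = 1$ --- I take $\bar{w}^{(1)} \in \SP$ to be the prefix of $w$ of length $|u^{(1)}|$ or $|u^{(1)}| + 1$ respectively, and a direct computation via Definition~\ref{def:phi} shows that $\phi(\bar{w}^{(1)})$ equals the corresponding prefix of $\phi(w)$ and hence contains $\phi(u^{(1)})$. Disjointness and correct ordering of the $\bar{w}^{(i)}$ in $w$ follow from those of the $\phi(u^{(i)})$ in $\phi(w)$.

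The main obstacle is the case analysis at the interface between positions $\{1,2\}$ and positions $\geq 3$ of $\phi(w)$, in particular the subcase $|u^{(1)}| = 1$ with $p_1 = 1$ where $\phi(u^{(1)})$ is a pair of $2$-letter words and the naive choice $\bar{w}^{(1)} = w_1 w_2$ would overlap with $\bar{w}^{(2)}$. This is resolved by observing that the sets $\phi(q)$ for $q \in \{1,2,3,4\}$ are pairwise disjoint, so $\phi(w)_1 \phi(w)_2 \in \phi(w_1) \cap \phi(u^{(1)})$ forces $u^{(1)} = w_1$ and allows the choice $\bar{w}^{(1)} = w_1$. Apart from this bookkeeping, the proof is routine: the conceptual work has already been done in Theorem~\ref{thm:ordreMots} and Lemma~\ref{prop phi}.
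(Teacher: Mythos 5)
Your proof is correct and follows essentially the same route as the paper's: the forward direction via Theorem~\ref{thm:lienLangagesMotif} together with $\phi(w)\in\mathcal{L}(w)$, and the converse by exhibiting a chopping of $w$ satisfying the hypotheses of Theorem~\ref{thm:ordreMots} and then invoking Lemma~\ref{csq ordre}. The only difference is bookkeeping in the converse: the paper absorbs the $A^\star$ slack into blocks $t^{(i)}=v^{(i)}\phi(u^{(i)})\in\M$ of $\phi(w)$ and peels off a strict prefix $t$ with $\phi(t)=t^{(1)}$, whereas you locate the occurrences of $\phi(u^{(i)})$ and shift indices by one, treating the boundary cases $p_1\in\{1,2\}$ (including $|u^{(1)}|=1$) by hand.
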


\begin{proof}
Assume that $\pi \leq \sigma$, then from Theorem~\ref{thm:lienLangagesMotif} ${\mathcal L}_\sigma \subseteq {\mathcal L}_\pi$.
As $w$ is a strict pin word, $\phi(w) \in {\mathcal L}(w)$ thus $\phi(w) \in {\mathcal L}_\sigma$ and so $\phi(w) \in {\mathcal L}_\pi$.

Conversely, assume that $\phi(w) \in {\mathcal L}_\pi$. Then
there exists a pin word $u$ encoding $\pi$ such that
$\phi(w) \in \mathcal{L}(u)$.
Let us denote by $u = u^{(1)} \ldots u^{(j)}$ the strong numeral-led factor decomposition of $u$.
By definition of $\mathcal{L}(u)$, $\phi(w)$ can be decomposed into
$t^{(1)} \ldots t^{(j+1)}$, with $t^{(i)} \in A^\star\phi(u^{(i)}) \cap
\M$ for $i \in \{1, \ldots, j\}$.
By definition of $\phi$ and since $w$ is a strict pin word, there exists a strict pin word $t$ such that $w=t \, t^{(2)} \ldots t^{(j+1)}$.
Then $\phi(t) = t^{(1)}$ and $\phi(u^{(1)})$ is a factor of $\phi(t)$.
Furthermore, for $i \in \{2, \ldots, j\}$, $\phi(u^{(i)})$ is a factor of $\phi(t^{(i)})=t^{(i)}$ (this equality holds because $t^{(i)} \in \mathcal{M}$).
Consequently, from Theorem~\ref{thm:ordreMots}, $u \preccurlyeq w$. Finally from Lemma~\ref{csq ordre}, we conclude that $\pi \leq \sigma$.
\end{proof}

By $\phi^{-1}$, each word of $\M$ is turned into a strict pin word
and hence into a proper pin-permutation.
As a consequence of Lemma~\ref{lem:patterns}, $\mathcal{L}_\pi \cap \M$ is the image by $\phi$ of the language of strict pin words encoding proper pin-permutations $\sigma$
that contain $\pi$ as a pattern: $\mathcal{L}_\pi \cap \M = \{ \phi(w)
\mid \exists \sigma \text{ such that } \pi \leq \sigma \text{ and } w \in \SP \cap P(\sigma)\}$. 
With the same idea we have the following theorem, which provides the criterion announced at the beginning of Section~\ref{sec:strict pw}: 

\begin{theo} \label{thm:nbInfiniEtLangages}
A permutation class $Av(B)$ contains a finite number of proper
pin-permutations if and only if the set 
$\M \setminus \cup_{\pi \in B}{\mathcal L}_\pi$ is finite.
\end{theo}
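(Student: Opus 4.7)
The plan is to set up a finite-to-one correspondence between the proper pin-permutations of $Av(B)$ and the words in $\M \setminus \cup_{\pi \in B}{\mathcal L}_\pi$, via the map $\phi$ of Definition~\ref{def:phi}, so that finiteness of one set is equivalent to finiteness of the other. The whole argument rests on Lemma~\ref{lem:patterns}, which has already been proved; the present theorem is essentially a clean packaging of that lemma.

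For the forward implication (if $\M \setminus \cup_{\pi \in B}{\mathcal L}_\pi$ is finite then there are finitely many proper pin-permutations in $Av(B)$), I would associate to each proper pin-permutation $\sigma$ of $Av(B)$ at least one strict pin word $w_\sigma$ encoding $\sigma$, which exists by Remark~\ref{rem:proper_strict}. Then $\phi(w_\sigma) \in \M$, and since $\sigma$ avoids every element of $B$, Lemma~\ref{lem:patterns} gives $\phi(w_\sigma) \notin {\mathcal L}_\pi$ for all $\pi \in B$, hence $\phi(w_\sigma) \in \M \setminus \cup_{\pi \in B}{\mathcal L}_\pi$. Two different permutations cannot share a pin word (the word determines the pin representation and therefore the permutation), so this produces an injection from the proper pin-permutations of $Av(B)$ into $\M \setminus \cup_{\pi \in B}{\mathcal L}_\pi$.

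For the converse, I would describe the inverse correspondence. Pick any $m \in \M \setminus \cup_{\pi \in B}{\mathcal L}_\pi$; let $w = \phi^{-1}(m) \in \SP$ be the (unique) strict pin word whose image under $\phi$ contains $m$, and let $\sigma_m$ be the proper pin-permutation encoded by $w$ (again via Remark~\ref{rem:proper_strict}). By Lemma~\ref{lem:patterns}, for every $\pi \in B$ we have $\pi \not\leq \sigma_m$ (otherwise $m = \phi(w) \in {\mathcal L}_\pi$, contradiction), so $\sigma_m \in Av(B)$. The map $m \mapsto \sigma_m$ has finite fibers: a permutation admits only finitely many pin representations (their length equals $|\sigma|$), each pin representation is encoded by at most $8$ pin words (Remark~\ref{rem:nb_pin_words}), and $\phi$ is at most $2$-to-$1$. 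Thus if $\M \setminus \cup_{\pi \in B}{\mathcal L}_\pi$ is infinite, the set of proper pin-permutations in $Av(B)$ must be infinite as well.

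There is no genuine obstacle beyond careful bookkeeping. The only mildly delicate point is handling strict pin words of length $1$, for which $\phi$ takes two-element set values and $\phi^{-1}$ is $2$-to-$1$ on $\M_2$; this affects the fiber size by a constant factor and plays no role in the finiteness equivalence. Once the two directions above are assembled, the equivalence of the theorem follows immediately.
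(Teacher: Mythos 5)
Your proposal is correct and follows essentially the same route as the paper: both hinge on Lemma~\ref{lem:patterns} together with the (near-)bijectivity of $\phi$ between strict pin words and words of $\M$, and the fact that each proper pin-permutation has at least one and at most finitely many strict pin words. The paper merely packages this as a single bijection from the strict pin words of permutations of size at least $2$ in $Av(B)$ onto $\M_{\geq 3} \setminus \cup_{\pi \in B}{\mathcal L}_\pi$, with the same handling of the degenerate length-$1$/length-$2$ cases that you flag.
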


\begin{proof}
Let $S_B$ be the set of strict pin words encoding permutations of
size at least $2$ in $Av(B)$. Then $\phi$ is a bijection from $S_B$ to
$\M_{\geq 3} \setminus \cup_{\pi \in B}{\mathcal L}_\pi$.
Indeed for any strict pin word $w$ of length at least $2$,
let $\sigma$ be the permutation encoded by $w$.
Then $\sigma$ is a proper pin-permutation and
Lemma~\ref{lem:patterns} implies that $\sigma \in Av(B)$ if and only
if $\phi(w) \notin \cup_{\pi \in B}{\mathcal L}_\pi$. We conclude the
proof observing that every proper pin-permutation $\sigma$ of size $n$
is associated to at least $1$ and (very loosely) at most $8^n$ strict pin words, as
any pin word encoding $\sigma$ is a word of length $n$ over an
$8$-letter alphabet.
\end{proof}

\section{Algorithm(s) testing if a class contains a finite number of proper pin-permutations}
\label{sec:algos_finite_number_proper_pinperm}

Section~\ref{sec:strict pw} (and specifically Theorem~\ref{thm:nbInfiniEtLangages}) 
provides us with a characterization of classes $Av(B)$
which contain finitely many proper pin-permutations: 
they are those such that $\M \setminus \cup_{\pi \in B}{\mathcal L}_\pi$ is finite. 
Our goal is now to find an algorithm, as efficient as possible, checking this condition. 

The general structure of this algorithm will be explained in Subsection~\ref{ssec:structure_algo}. 
It involves the use of some automata ${\mathcal A}_{\pi}$ 
recognizing ${\mathcal L}_\pi$ for any pin-permutation $\pi \in B$. 
The construction of these automata is rather technical. 
Subsection~\ref{ssec:idees_construction_automates} will give an overview of it, 
while the technical details are postponed to Appendices~\ref{sec:pinwords} and~\ref{sec:buildingAutomata}. 

Before we get to our algorithm, we first review the decision procedure of~\cite{BRV06}, 
which also answers the question of whether a class $Av(B)$ contains finitely many proper pin-permutations. 
This review will serve two purposes: one is to help the reader see the common aspects and the differences 
between this procedure and our algorithm; 
the other is to be able to compare their complexities.

\subsection{The decision procedure of Brignall, Ru{\v{s}}kuc and Vatter} \label{ssec:procedure_of_BRV_2}

As reviewed at the beginning of Section~\ref{sec:strict pw}, Brignall {\it et al.} provided in~\cite{BRV06} 
a first characterization of classes containing finitely many proper pin-permutations. 
Namely, they proved that $\mathcal{C} = Av(B)$ contains a finite number of proper pin-permutations 
if and only if the set $ \mathcal{L} = \SP \setminus \bigcup_{u \in P(B)} \{\text{strict pin word }w \mid u \preccurlyeq w \}$ is finite, 
where $P(B)$ denotes the set of pin words encoding a permutation of $B$. 
The main point of the procedure of~\cite{BRV06} (which will be similar in our algorithm) 
is then to decide the finiteness of the language $\mathcal{L}$ using automata theory. 

Given a pin word $u$, the authors of~\cite{BRV06} explain how to build an automaton $\mathcal{A}^{(u)}$
recognizing a language $\mathcal{L}^{(u)}$ such that $\SP \cap \mathcal{L}^{(u)} = \{\text{strict pin word }w \mid u \preccurlyeq w \}$. 
Then, they notice that $\SP$ is a recognizable language, and conclude -- with classical theorems of automata theory -- that 
it is decidable whether the language $ \mathcal{L}$ is finite, \emph{i.e.}, whether 
$\mathcal{C}$ contains a finite number of proper pin-permutations.

\medskip

The proof of~\cite{BRV06} is constructive and establishes that 
deciding whether $\mathcal{C}$ contains a finite number of proper pin-permutations 
\emph{may} be done algorithmically.
However the authors do not give an actual algorithm since many steps are not given explicitly. 
More precisely, if we turn into an actual algorithm the procedure of~\cite{BRV06}, 
the main steps would be:
\begin{enumerate}
\item Compute the set $P(B)$ of pin words encoding permutations of $B$;
\item For each $u \in P(B)$, build the automaton $\mathcal{A}^{(u)}$ recognizing $\mathcal{L}^{(u)}$;
\item Build an automaton $\mathcal{A}$ recognizing $\mathcal{L} = \SP \setminus \bigcup\limits_{u \in P(B)}\mathcal{L}^{(u)}$;
\item Test whether the language accepted by $\mathcal{A}$ is finite.
\end{enumerate}

In~\cite{BRV06}, the authors focus on the second step (which is indeed the main one), even though the complexity of building $\mathcal{A}^{(u)}$ is not analyzed.  
The first step is not addressed in~\cite{BRV06}, and the third (resp. fourth) step is solved applying an existential (resp. decidability) theorem of automata theory -- 
in particular, the complexity of the corresponding construction (resp. decision) is not studied. 
Analyzing the above four-step procedure, 
we prove in the following that it has a doubly exponential complexity
due to the resolution of a co-finiteness problem for a regular language given by a non-deterministic automaton. 
Let us first introduce some notations: denote by $n$ the sum of the sizes of permutations in the basis $B$, by
$s'$ (resp. $s$) the maximal size of a permutation (resp. \emph{pin}-permutation) in $B$ and by $k$ the number of pin-permutations in $B$.

\smallskip

Even though~\cite{BRV06} does not study the first step of the above procedure, 
there is a naive algorithm to solve it: 
for each permutation
$\pi$ in $B$, for each pin word $u$ of length $|\pi|$, check if the
permutation encoded by $u$ is $\pi$, and add $u$ to $P(B)$ in the affirmative.  
This is performed in ${\mathcal O}(n \cdot 8^{s'})$ time. 
In our work we explain how to replace this step by a
step solved in $\O(n)$ time.

For the second step,~\cite{BRV06} explains how to build automata $\mathcal{A}^{(u)}$ recognizing the languages $\mathcal{L}^{(u)}$. 
We will not detail the analysis of the complexity of building $\mathcal{A}^{(u)}$, 
but let us notice that these automata are non deterministic and have $\O(|u|)$ (and at least $|u|$) states.

About the third step,~\cite{BRV06} refers to automata theory without detail. 
The most direct way to achieve this step is 
to build by juxtaposition an intermediate automaton $\mathcal{A}^{P(B)}$ recognizing $\bigcup\limits_{u \in P(B)}\mathcal{L}^{(u)}$, to determinize this automaton in order to complement it, and then to compute the intersection with an automaton recognizing $\SP$.
But the determinization of an automaton is exponential w.r.t. the number of states of the automaton. 
Since the number of states of $\mathcal{A}^{P(B)}$ is $\O(\sum_{u \in 
P(B)} |u|)$ and is indeed at least $\sum_{u \in P(B)} |u|$,
the complexity of this algorithm for the third step is $\O(2^{\sum_{u \in P(B)} |u|})$. 
Moreover, $\sum_{u \in P(B)} |u| \leq k \cdot 8^s \cdot s$. 
Even if this bound may not be tight, there exist pin-permutations encoded by an exponential number of pin words. 
For instance, the identity of size $s$ is encoded by at least $2^s$ pin words,
since any word on the alphabet $\{1,3\}$ is suitable.
Therefore the complexity of the above algorithm for the third step is of order at least $\mathcal{O}(2^{k\cdot s \cdot 2^s})$,
which is doubly exponential w.r.t.~$s$.

Finally,~\cite{BRV06} refers to a classical algorithm for the fourth step. 
It consists in testing whether the automaton $\mathcal{A}$ obtained at the end of the third step 
contains a cycle that can be reached from an initial state and can lead to a final state. 
This is linear w.r.t.~the size of the automaton $\mathcal{A}$ 
(and we will detail why in Subsection~\ref{ssec:includes_looking_for_a_cycle_in_automaton}, as our algorithm ends with a similar step).

\subsection{A more efficient alternative} 
\label{ssec:structure_algo}

Reviewing the procedure of~\cite{BRV06} and analyzing its complexity, we have seen why this procedure is not efficient. 
The main issue is the determinization of the automaton $\mathcal{A}^{P(B)}$ recognizing $\bigcup\limits_{u \in P(B)}\mathcal{L}^{(u)}$ (so that it may be complemented). 
A secondary issue is that this automaton $\mathcal{A}^{P(B)}$ is built by juxtaposition of a large number of automata: 
one for each pin word in $P(B)$. 
In this subsection, we explain how to overcome these two issues. 

\smallskip

First, we do not start from the same characterization of classes ${\mathcal C} = Av(B)$ containing a finite number of proper pin-permutations. 
Instead of the one of~\cite{BRV06}, we take our alternative criterion provided by Theorem~\ref{thm:nbInfiniEtLangages}, 
and provide an algorithm testing whether the language $\M \setminus \cup_{\pi \in B}{\mathcal L}_\pi$ is finite. 

In this second characterization, the languages ${\mathcal L}_\pi$ 
play the same role as the languages $\mathcal{L}^{(u)}$ in the first one. 
There is however only one language for each $\pi \in B$, 
that somehow accounts for all the languages $\mathcal{L}^{(u)}$ 
for $u \in P(\pi)$. 
This solves the issue of the number of automata. 

Moreover, this saves us the trouble of computing $P(B)$, 
a step whose complexity was ${\mathcal O}(n \cdot 8^{s'})$ 
in the procedure of~\cite{BRV06}. 
Instead, since the definition of ${\mathcal L}_\pi$ relies on $P(\pi)$, 
we only need to compute a description of $P(\pi)$ for $\pi \in B$. 
The detailed study of pin words done in Appendix~\ref{sec:pinwords} shows that
this is possible from some quite simple tests on the decomposition tree of $\pi$, 
which will be detailed in Section~\ref{sec:polynomial}. 
These tests are performed in $\mathcal{O}(\pi)$, 
so that the total cost of computing the description of $P(\pi)$ for all $\pi \in B$
is $\mathcal{O}(n)$. 

\smallskip

To address the determinization issue, let us introduce a few notations. 
For any word $v = v_1 \ldots v_p$ denote by
$\overleftarrow{v} = v_p \ldots v_1$ the reverse of $v$, 
and for any language ${\mathcal L}$, denote by
$\overleftarrow{\mathcal{L}}$ the language 
$\{\overleftarrow{v} \mid v \in {\mathcal L}\}$. 
In practice, our algorithm does not test if $\M \setminus \cup_{\pi \in B}{\mathcal L}_\pi$ is finite, 
but rather whether its reverse language $\overleftarrow{\M \setminus \cup_{\pi \in B}{\mathcal  L}_\pi}$ is finite. 
Notice that by definition of $\M$, we have 
$ \overleftarrow{\M \setminus \cup_{\pi \in B}{\mathcal L}_\pi} = 
\overleftarrow{\M} \setminus \cup_{\pi \in B}\overleftarrow{{\mathcal L}_\pi} = 
\M \setminus \cup_{\pi \in B}\overleftarrow{{\mathcal L}_\pi}$. 
As in~\cite{BRV06}, to test the finiteness of this language, 
we will build an automaton ${\mathcal A}_{\mathcal{C}}$ accepting $\M \setminus \cup_{\pi \in B}\overleftarrow{{\mathcal L}_\pi}$, 
and then test whether ${\mathcal A}_{\mathcal{C}}$ contains a cycle -- see Subsection~\ref{ssec:includes_looking_for_a_cycle_in_automaton}. 
Taking reverse languages is the trick that allows us to build a \emph{deterministic} automaton ${\mathcal A}_{\mathcal{C}}$, 
thus avoiding the determinization 
which causes the complexity blow-up in the procedure of~\cite{BRV06}. 
The connection between reverse and determinism is certainly unclear at the moment, 
but it is also hard to explain at this stage of the presentation of the algorithm. 
It follows from some details of Appendix~\ref{sec:pinwords}, 
and we will only explain this choice at the beginning of Appendix~\ref{sec:buildingAutomata}. 

We start with building, for each pin-permutation\footnote{Recall that 
when $\pi$ is not a pin-permutation, both $P(\pi)$ and 
$\overleftarrow{{\mathcal L}_\pi}$ are empty. } $\pi \in B$,
a \emph{deterministic} automaton ${\mathcal A}_{\pi}$ 
which accepts $\overleftarrow{{\mathcal L}_\pi}$. 
An overview of the construction of these automata ${\mathcal A}_{\pi}$ 
and of its complexity
is given in the next subsection, while the details -- that are quite technical -- 
are postponed to Appendix~\ref{sec:buildingAutomata}. 

From deterministic automata $\mathcal A_{\pi}$ recognizing the languages $\overleftarrow{{\mathcal L}_{\pi}}$, 
we can obtain a deterministic automaton accepting words of $\cup_{\pi \in B}\overleftarrow{{\mathcal L}_\pi}$ 
\emph{i.e.}, (up to intersection with $\M$) words whose reverses encode proper pin-permutations containing some pattern $\pi \in B$. 
To preserve determinism, the automaton accepting the union 
is not simply built by juxtaposition of the $\mathcal A_{\pi}$. 
Instead, we do the Cartesian product of the automata $\mathcal A_{\pi}$ 
to compute a \emph{deterministic} automaton accepting the union 
$\cup_{\pi \in B}\overleftarrow{{\mathcal L}_\pi}$.  
This deterministic automaton can then be complemented
in linear time, in order to build the automaton ${\mathcal A}_{\mathcal{C}}$ recognizing 
$\M \setminus \cup_{\pi \in B}\overleftarrow{{\mathcal L}_\pi}$. 
Recall that the same operation on non-deterministic automata 
would be exponential in the worst case.
The construction of ${\mathcal A}_{\mathcal{C}}$ will be detailed in Section~\ref{sec:polynomial}, 
and its complexity analyzed. 
It is performed in ${\mathcal O}(s^{2k} )$ where $s$ and $k$ are, as before, 
the maximal size of a pin-permutation in $B$ and the number of pin-permutations in $B$. 
The complexity of testing with this method 
whether $\mathcal{C} = Av(B)$ contains finitely many proper pin-permutations 
is then also ${\mathcal O}(s^{2k} )$.
Writing that ${\mathcal O}(s^{2k} ) = {\mathcal O}(2^{k \cdot 2\log s})$ enables us to measure the complexity improvement
w.r.t.~the complexity $\mathcal{O}(2^{k\cdot s \cdot 2^s})$ of the procedure of~\cite{BRV06}: 
the complexity gain is doubly exponential w.r.t.~$s$.

\subsection{Construction of the automata ${\mathcal A}_{\pi}$}
\label{ssec:idees_construction_automates}

The most difficult part of the algorithm outlined above 
is the construction of the deterministic automata $\mathcal A_{\pi}$
accepting the languages $\overleftarrow{{\mathcal L}_\pi}$, 
for every pin-permutation $\pi \in B$. 
We give below the general idea of the method that is used for this construction, 
together with some results about the complexity of building $\mathcal A_{\pi}$. 
Detailed statements and proofs are provided in Appendices~\ref{sec:pinwords} and~\ref{sec:buildingAutomata}. 

\smallskip

From the definition of ${\mathcal L}_{\pi}$ given p.\pageref{def:lu}
we have:\label{expression L_pi}
\begin{align*}
\overleftarrow{{\mathcal L}_{\pi}}& = \bigcup_{ u \in P(\pi) \atop u = u^{(1)}u^{(2)}\ldots u^{(j)}} A^\star
\overleftarrow{\phi(u^{(j)})} A^\star \ldots A^\star
\overleftarrow{\phi(u^{(2)})} A^\star \overleftarrow{\phi(u^{(1)})}
A^\star
\end{align*}
where $A = \{U,D,L,R\}$, $\phi$ is the map introduced in
Definition~\ref{def:phi} (p.\pageref{def:phi}) and for every pin word $u$, by $u=u^{(1)}u^{(2)}\ldots u^{(j)}$ we mean 
that $u^{(1)}u^{(2)}\ldots u^{(j)}$ is the strong
numeral-led factor decomposition of $u$.
Therefore, we see that a description of $\overleftarrow{{\mathcal L}_{\pi}}$ will follow 
as soon as we are able to describe the set $P(\pi)$ of pin words of $\pi$, 
and more precisely their  strong numeral-led factor decompositions.

In our previous work~\cite{BBR09}, we have given a recursive characterization of the decomposition trees of pin-permutations. 
We recall it below as Equation~\eqref{eq:pin_perm_trees}. 
We will then follow this characterization to recursively describe $P(\pi)$ for any pin-permutation $\pi$, 
and to subsequently give a recursive algorithm to build the automata $\mathcal A_{\pi}$ recognizing $\overleftarrow{{\mathcal L}_{\pi}}$. 
We also present an alternative construction of $\mathcal A_{\pi}$
whose complexity is optimized; but instead of
$\overleftarrow{{\mathcal L}_{\pi}}$, the automaton recognizes a
language $\mathcal{L}'_{\pi}$ such that $\mathcal{L}'_{\pi}\cap {\mathcal M} =
\overleftarrow{\mathcal{L}_{\pi}} \cap {\mathcal M}$, 
which turns out to be sufficient for our purpose (see Subsection~\ref{ssec:includes_looking_for_a_cycle_in_automaton}). 

\smallskip

The recursive characterization of the decomposition trees of pin-permutations provided in~\cite{BBR09} is as follows. 
It involves oscillations and quasi-oscillations, two special kinds of pin-permutations whose definitions are technical and given in Appendix~\ref{sec:oscillations}. 
The set ${\mathcal S}$ of substitution decomposition trees of pin-permutations 
is recursively characterized by:
\begin{align} \label{eq:pin_perm_trees} &{\mathcal S} =  \tikz[sibling distance=7mm,level
  distance=15pt,inner sep=1pt] \draw (0,0) node[leaf]{};  + \begin{tikzpicture}[baseline=(X.base),sibling distance=12pt,level
  distance=15pt,inner sep=1pt] \node[linear] (X){$\oplus$} child {node {$\calE^{+}$}} child
  {node (xx1) {$\calE^{+}$}} child [missing] child {node (xx2) {$\calE^{+}$}};
  \draw [dotted] (xx1) -- (xx2);
  \end{tikzpicture}
  +
  \begin{tikzpicture}[level distance=15pt,sibling
  distance=9pt,baseline=(X.base),inner sep=0] \node[simple,inner sep=0] (X)
  {$\oplus$} child {node (xx1){$\calE^{+}$}} child [missing] child [sibling distance=0pt] {node (xx2){} edge from parent [draw=none] }child  {node[thin, shape=isosceles
    triangle, solid, draw, shape border rotate=90,anchor=apex, minimum
    height=5mm,inner sep=0pt,isosceles triangle apex angle=90]   {$\calN^{+}$}}
  child [sibling distance=0pt] {node (xx3){} edge from parent [draw=none] } child [missing] child {node (xx4) {$\calE^{+}$}};
  \draw [dotted] (xx1) -- (xx2);
  \draw [dotted] (xx3) -- (xx4);
  \end{tikzpicture}
   + \begin{tikzpicture}[baseline=(X.base),sibling distance=18pt,level
  distance=15pt,inner sep=1pt] \node[linear] (X){$\ominus$} child {node {$\calE^{-}$}} child
  {node (xx1) {$\calE^{-}$}} child [missing] child {node (xx2) {$\calE^{-}$}};
  \draw [dotted] (xx1) -- (xx2);
  \end{tikzpicture}
  +
  \begin{tikzpicture}[level distance=15pt,sibling
  distance=9pt,baseline=(X.base),inner sep=0] \node[simple,inner sep=0] (X)
  {$\ominus$} child {node (xx1){$\calE^{-}$}} child [missing] child [sibling distance=0pt] {node (xx2){} edge from parent [draw=none] }child  {node[thin, shape=isosceles
    triangle, solid, draw, shape border rotate=90,anchor=apex, minimum
    height=5mm,inner sep=0pt,isosceles triangle apex angle=90]   {$\calN^{-}$}}
  child [sibling distance=0pt] {node (xx3){} edge from parent [draw=none] } child [missing] child {node (xx4) {$\calE^{-}$}};
  \draw [dotted] (xx1) -- (xx2);
  \draw [dotted] (xx3) -- (xx4);
  \end{tikzpicture}
  \nonumber \\ &
+
\begin{tikzpicture}[baseline=(X.base),sibling distance=15pt,level
  distance=15pt,inner sep=1pt] \node[simple, inner sep=1pt] (X){$\alpha$} child {[fill] circle (2pt)} child
  {[fill] circle (2pt) node (xx1) {}} child [missing] child {[fill] circle (2pt) node (xx2) {}};
  \draw [dotted] (xx1) -- (xx2);
  \end{tikzpicture}
+ \begin{tikzpicture}[level distance=17pt,sibling
  distance=6pt,baseline=(X.base),inner sep=0pt] \node[simple,inner sep=1pt] (X)
  {$\alpha$} child {[fill] circle
    (2pt) node (xx1){}} child [missing] child [sibling distance=0pt] {node (xx2){} edge from parent [draw=none] }child[dashed]  {node[thin, shape=isosceles
    triangle, solid, draw, shape border rotate=90,anchor=apex, minimum
    height=5mm,inner sep=0pt,isosceles triangle apex angle=110]   {${\mathcal S} \setminus \{ \tikz
      \fill[draw] (0,0) circle (2pt); \}$}}
  child [sibling distance=0pt] {node (xx3){} edge from parent [draw=none] } child [missing] child {[fill] circle(2pt) node (xx4) {}};
  \draw [dotted] (xx1) -- (xx2);
  \draw [dotted] (xx3) -- (xx4);
  \end{tikzpicture}+ \begin{tikzpicture}[level distance=17pt,sibling
  distance=6pt,baseline=(X.base),inner sep=0] \node[simple,inner sep=0] (X)
  {$\beta^{+}$} child {[fill] circle (2pt)} child [missing] child {[fill] circle
    (2pt) node (xx1){}} child [missing] child [sibling distance=0pt] {node (xx2){} edge from parent [draw=none] }child[thick, dotted]  {node[thin, shape=isosceles
    triangle, solid, draw, shape border rotate=90,anchor=apex, minimum
    height=5mm,inner sep=0pt,isosceles triangle apex angle=110]   {${\mathcal S} \setminus \{ \tikz
      \fill[draw] (0,0) circle (2pt); \}$}}
  child [sibling distance=0pt] {node (xx3){} edge from parent [draw=none] } child [missing] child[dash pattern=on 3pt off 2pt on 1pt off
  2pt] {node (xx4) {\scriptsize{$12$}}} child [missing] child {[fill] circle (2pt)};
  \draw [dotted] (xx1) -- (xx2);
  \draw [dotted] (xx3) -- (xx4);
  \end{tikzpicture}
+ \begin{tikzpicture}[level distance=17pt,sibling
  distance=6pt,baseline=(X.base),inner sep=0] \node[simple,inner sep=0] (X)
  {$\beta^{-}$} child {[fill] circle (2pt)} child [missing] child {[fill] circle
    (2pt) node (xx1){}} child [missing] child [sibling distance=0pt] {node (xx2){} edge from parent [draw=none] }child[thick, dotted]  {node[thin, shape=isosceles
    triangle, solid, draw, shape border rotate=90,anchor=apex, minimum
    height=5mm,inner sep=0pt,isosceles triangle apex angle=110,inner sep=0]   {${\mathcal S} \setminus \{ \tikz
      \fill[draw] (0,0) circle (2pt); \}$}}
  child [sibling distance=0pt] {node (xx3){} edge from parent [draw=none] } child [missing] child[dash pattern=on 3pt off 2pt on 1pt off
  2pt] {node (xx4) {\scriptsize{$21$}}} child [missing] child {[fill] circle (2pt)};
  \draw [dotted] (xx1) -- (xx2);
  \draw [dotted] (xx3) -- (xx4);
  \end{tikzpicture}
\tag{$\star$}
\end{align}
where $\calE^{+}$ (resp. $\calE^{-}$) is the set of decomposition
trees of increasing (resp. decreasing) oscillations, $\calN^{+}$
(resp. $\calN^{-}$) is the set of decomposition trees of pin-permutations
that are not increasing (resp. decreasing) oscillations and whose root
is not $\oplus$ (resp. $\ominus$), $\alpha$ is any simple pin-permutation
and $\beta^{+}$ (resp. $\beta^{-}$) is any increasing (resp. decreasing)
quasi-oscillation. 
In Appendix~\ref{sec:oscillations}, 
in addition to recalling the definitions of increasing and decreasing (quasi-)oscillations, we present some of their properties. 
For the moment, let us only mention that some special pairs of points in quasi-oscillations can be identified, 
that are called \emph{auxiliary} and \emph{main substitution points}. 
Also, in every simple pin-permutation, we can identify special points, 
that are called \emph{active points} whose definition will also be given in the appendix -- see p.\pageref{def:active}.
In Equation~\eqref{eq:pin_perm_trees} above, edges written \tikz \draw[dashed] (0,0)--(0.5,0);
(resp. \tikz \draw[dash pattern=on 3pt off 2pt on 1pt off 2pt]
(0,0)--(0.5,0);, \tikz \draw[thick, dotted] (0,0)--(0.5,0);) correspond to an active point 
of $\alpha$ (resp. to a pair formed by an auxiliary point and a main substitution point of $\beta^+$ or $\beta^-$).
In this equation the only terms that are recursive are those containing a subtree labeled by 
$\calN^{+}, \calN^{-}$ or ${\mathcal S} \setminus \{ \tikz \fill[draw] (0,0) circle (2pt); \}$. 

Our characterization of $P(\pi)$ and construction of $\mathcal A_{\pi}$ are divided into several cases,
depending on which term of Equation~\eqref{eq:pin_perm_trees} $\pi$ belongs to. 
First, we study the non-recursive cases, 
then the recursive cases with a linear root and
finally the recursive cases with a prime root.
Notice that the cases with root $\ominus$ (resp. $\beta^-$) 
are up to symmetry\footnote{This notion of symmetry is formalized by Remark~\ref{rem:ominus=oplus_transposed} in the appendix.} 
identical to those with root $\oplus$ (resp. $\beta^+$). 
We will therefore only consider the former in our analysis to follow. 

\medskip

\paragraph*{Permutation of size $1$}

Notice first that the permutation $\pi=1 = \tikz \fill[draw] (0,0) circle (2pt);$ (whose decomposition tree
is a leaf) has exactly four pin words -- namely, $P(\pi) = \{1,2,3,4\}$.

\hspace*{-0.05\textwidth}\begin{minipage}{.65\textwidth}
Then
$$\overleftarrow{{\mathcal
    L}_{\pi}}=\{A^\star\overleftarrow{\phi(w)}A^\star \mid w \in
P(\pi)\} = A^\star{\mathcal M}_2 A^\star$$ where $${\mathcal
  M}_2={\mathcal M} \cap A^2= \{UR, UL, DR, DL, RU, RD, LU, LD\}.$$
The language $\overleftarrow{{\mathcal L}_{\pi}}$ is recognized
by the automaton ${\mathcal A}_{\pi}$ of Figure~\ref{fig:automate1}.
\end{minipage}
\quad
\begin{minipage}{.3\textwidth}
\begin{center}
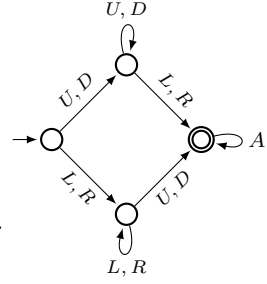

   \begin{tikzpicture}[>=latex, node distance=40pt, shorten >=1pt,
 on grid, every state/.style={draw,thick,minimum size=5pt},/tikz/initial text=]
     {\footnotesize
       \node [state, initial] (q_0) {};
       \node [state] (q_1) [above right=of q_0] {};
       \node [state] (q_2) [below right=of q_0] {};
       \node [state, accepting] (q_3) [above right=of q_2] {};
       \path[->] (q_0) edge node[above,sloped] {\scriptsize $U,D$} (q_1);
       \path[->] (q_0) edge node[below,sloped] {\scriptsize $L,R$} (q_2);
       \path[->] (q_1) edge node[above,sloped] {\scriptsize $L,R$} (q_3);
       \path[->] (q_2) edge node[below,sloped] {\scriptsize $U,D$} (q_3);
       \path[->] (q_3) edge [loop right] node {$A$} (q_3);
       \path[->] (q_2) edge [loop below] node {\scriptsize $L, R$} (q_2);
       \path[->] (q_1) edge [loop above] node {\scriptsize $U, D$} (q_1); }
\end{tikzpicture}
  \captionof{figure}{The automaton ${\mathcal A}_{\pi}$ when $\pi=1$.} \label{fig:automate1}
  \end{center}
\end{minipage}

\paragraph*{Simple permutations}
Let $\pi$ be a simple pin-permutation.
Theorem~\ref{thm:nbpinwords} in Appendix~\ref{sec:pinwords} (p.\pageref{thm:nbpinwords}) shows that the number of pin words of $\pi$ is at most 48. 
This of course does not describe the set $P(\pi)$ of pin words encoding $\pi$ explicitly,
but Algorithm 2 of~\cite{BBPR10} explains how to compute $P(\pi)$ in this case.
We will get back to this computation in Section~\ref{sec:polynomial}.

The construction of an automaton $\mathcal{A}_{\pi}$ accepting $\overleftarrow{{\mathcal L}_\pi}$ is then explained 
in Remark~\ref{rem:ascquadratique} in Appendix~\ref{sec:buildingAutomata}. 
The time and space complexity of the construction of $\mathcal{A}_{\pi}$ is
quadratic w.r.t.~$|\pi|$, as soon as the pin words of $\pi$ are given. 
In Remark~\ref{rem:simple_linear} in Appendix~\ref{sec:buildingAutomata}, 
we explain how to improve the complexity of the construction of $\mathcal{A}_{\pi}$, 
so that it is linear in time and space. 
The automaton $\mathcal{A}_{\pi}$ so obtained however does not accept $\overleftarrow{{\mathcal L}_\pi}$ but 
a language $\mathcal{L}'_{\pi}$ such that $\mathcal{L'}_{\pi} \cap \mathcal{M} = \overleftarrow{{\mathcal L}_{\pi}}\cap \mathcal{M}$. 

\paragraph*{Non recursive case with a linear root}
W.l.o.g., we consider $\pi = \oplus[\xi_1, \ldots, \xi_r]$ 
where $\xi_i$ are increasing oscillations. 
The set of pin words of $\pi$ is expressed using the shuffle product $\shuffle$ (defined p.\pageref{def:shuffle}) 
in Theorem~\ref{thm:pinwords_cas_lineaire_non_recursif} of Appendix~\ref{sec:pinwords}. 
Namely, denoting $P^{(1)}(\xi_k)$ (resp. $P^{(3)}(\xi_k)$) the set of pin words that encode $\xi_k$ 
and whose origin lies in quadrant $1$ (resp. $3$) with respect to the points of $\xi_k$, we have: 
{\small $$P(\pi) = \bigcup_{1\leq i \leq r-1} P(\oplus[\xi_i,\xi_{i+1}]) \cdot
\Big( (P^{(1)}(\xi_{i-1}) , \ldots , P^{(1)}(\xi_1) ) \shuffle
(P^{(3)}(\xi_{i+2}) , \ldots , P^{(3)}(\xi_j) ) \Big)\text{.}
$$}
Together with Lemmas~\ref{lem:f1f3} and \ref{lem:fdouble}, this provides an explicit expression of $P(\pi)$.

The automaton $\mathcal{A}_{\pi}$ accepting $\overleftarrow{{\mathcal L}_\pi}$ 
(resp.~a language $\mathcal{L}'_{\pi}$ such that $\mathcal{L'}_{\pi} \cap \mathcal{M} = \overleftarrow{{\mathcal L}_{\pi}}\cap \mathcal{M}$) 
is built by assembling smaller automata, which correspond to the different languages that appear in the shuffle product expression of $P(\pi)$. 
Subsection~\ref{ssec:non-rec_linear} gives the details of this construction, and Theorem~\ref{thm:5-9} (resp.~Remark~\ref{rem:versionopt1}) proves its correctness. 
Lemma~\ref{lem:complexite_linear} shows that it is achieved in time and space ${\mathcal O}(|\pi|^4)$ (resp. $\O(|\pi|^2)$). 
The automaton $\mathcal{A}_{\pi}$ in this case is shown in Figure~\ref{fig:automate_cas_oplus_non_recursif} (p.\pageref{fig:automate_cas_oplus_non_recursif}). 

\paragraph*{Recursive case with a linear root}
W.l.o.g., 
we consider a permutation $\pi = \oplus[\xi_1, \ldots , \xi_{\ell},$ $ \rho, \xi_{\ell+2}, \ldots ,\xi_r]$, 
where all $\xi_i$ are increasing oscillations, but $\rho$ is not. 
By induction, we may assume that we have an explicit description of $P(\rho)$, 
and an automaton $\mathcal{A}_\rho$ which accepts $\overleftarrow{{\mathcal L}_\rho}$ 
(resp. a language $\mathcal{L}'_{\rho}$ such that $\mathcal{L'}_{\rho} \cap \mathcal{M} = \overleftarrow{{\mathcal L}_{\rho}}\cap \mathcal{M}$). 
In Appendices~\ref{sec:pinwords} and~\ref{sec:buildingAutomata}, the decomposition tree of $\rho$ is denoted $T_{i_0}$. 

The set $P(\pi)$ of pin words of $\pi$ (which of course depends on $P(\rho)$) is given by Theorem~\ref{thm:linearRoot} in Appendix~\ref{sec:pinwords}. 
It always contains 
\[
P_0= P(\rho)\cdot \, \big( P^{(1)}(\xi_{\ell}), \ldots , P^{(1)}(\xi_1) \big) \shuffle \big(P^{(3)}(\xi_{\ell +2}), \ldots , P^{(3)}(\xi_r) \big)
\]
but it may contain more words, if $\pi$ satisfies additional conditions that are shown in the middle two columns of Figure~\ref{fig:H} (p.\pageref{fig:H}). 
In all these possible cases (considered up to symmetry), Theorem~\ref{thm:linearRoot} describes explicitly the complete set of pin words of $\pi$. 

If $P(\pi) =P_0$, like in the previous case the automaton $\mathcal{A}_{\pi}$ accepting $\overleftarrow{{\mathcal L}_\pi}$ is built by assembling
$\mathcal{A}_\rho$ with small automata corresponding to the different languages in the shuffle product defining $P_0$. 
The automaton $\mathcal{A}_{\pi}$ so obtained is shown in Figure~\ref{fig:automate_cas_oplus_recursif} (p.\pageref{fig:automate_cas_oplus_recursif}).

Otherwise, $P_0 \varsubsetneq P(\pi)$, so that the automaton of Figure~\ref{fig:automate_cas_oplus_recursif} accepts some 
but not all words of $\overleftarrow{{\mathcal L}_\pi}$. 
It is however possible to modify it by adding some transitions, 
so that the resulting automaton $\mathcal{A}_{\pi}$ accepts exactly $\overleftarrow{{\mathcal L}_\pi}$.
These modifications of the automaton are shown in the last column of Figure~\ref{fig:H} (p.\pageref{fig:H}). 

These constructions are explained in Subsection~\ref{ssec:rec_linear}. 
The proof that they are correct are however omitted even in the appendix, as they are very similar to some other proofs 
that are detailed there. 
The complexity of these constructions are given in Lemmas~\ref{lem:complexite_linear_rec} and~\ref{lem:complexite_linear_rec_2}: 
it is done in time and space
$\O\left((|\pi|-|\rho|)^2\right)$ plus the additional complexity
due to the construction of $\mathcal{A}_{\rho}$. 
The construction and its complexity are not modified (except for the recursive part)
for building an automaton $\mathcal{A}_{\pi}$ accepting a language 
$\mathcal{L}'_{\pi}$ such that $\mathcal{L'}_{\pi} \cap \mathcal{M} = \overleftarrow{{\mathcal L}_{\pi}}\cap \mathcal{M}$ 
instead of $\overleftarrow{{\mathcal L}_\pi}$. 

\paragraph*{Recursive case with a prime root}
We start with the case where $\pi = \alpha[1,\ldots, 1, \rho, $ $1, \ldots, 1]$ 
with $\alpha$ a simple pin-permutation and $\rho \neq 1$. 
The only other possibility is that $\pi$ is a permutation whose decomposition tree has a root labeled by a 
quasi-oscillation with two children that are not leaves. This special case will be considered in the next paragraph. 
 
By induction, we may assume that we have an explicit description of $P(\rho)$, 
and an automaton $\mathcal{A}_\rho$ which accepts $\overleftarrow{{\mathcal L}_\rho}$ 
(resp. a language $\mathcal{L}'_{\rho}$ such that $\mathcal{L'}_{\rho} \cap \mathcal{M} = \overleftarrow{{\mathcal L}_{\rho}}\cap \mathcal{M}$). 
We denote by $x$ the point of $\alpha$ expanded by $\rho$. 
Let us also record here that the decomposition tree of $\rho$ will be denoted $T$ in Appendices~\ref{sec:pinwords} and~\ref{sec:buildingAutomata}.

As in Definition~\ref{def:Qxalpha} (p.\pageref{def:Qxalpha}), 
let $Q_x(\alpha)$ denote the set of strict pin words 
obtained by deleting the first letter of a quasi-strict pin word 
encoding a pin representation of $\alpha$ starting in $x$. 
The set $P(\pi)$ of pin words of $\pi$ is given by Theorem~\ref{thm:conditionc} (p.\pageref{thm:conditionc}) in Appendix~\ref{sec:pinwords}. 
As in the previous case, it always contains a ground set of words, in this case $P(\rho)\cdot Q_x(\alpha)$, 
but it contains more words in case $\pi$ satisfies some further condition -- denoted $(\mathcal{C})$ in Appendices~\ref{sec:pinwords} and~\ref{sec:buildingAutomata}. 
Although it becomes quite technical, it is possible to make explicit the description of $P(\pi)$ in Theorem~\ref{thm:conditionc}, 
using Lemma~\ref{lem:w_at_least_3} and Remarks~\ref{rem:compute_Q_x(alpha)}, \ref{rem:w_explicit} and~\ref{rem:w_2}.

When $P(\pi) = P(\rho)\cdot Q_x(\alpha)$, the automaton $\mathcal{A}_\pi$ which accepts $\overleftarrow{{\mathcal L}_\pi}$ 
is easily constructed from $\mathcal{A}_\rho$ and $Q_x(\alpha)$. 
As before, when $P(\rho)\cdot Q_x(\alpha) \varsubsetneq P(\pi)$, $\mathcal{A}_\pi$ has the same general structure as this automaton, 
with some new transitions added. This is shown on Figure~\ref{fig:automate_primitif_rec} (p.\pageref{fig:automate_primitif_rec}). 
Theorem~\ref{lem:A_pi_primitif_rec} proves the correctness of this construction. 
This however only holds for $|\rho| \neq 2$. 
In the special case $|\rho| =2$, the construction of $\mathcal{A}_\pi$ is not recursive anymore, 
and is easily solved (see p.\pageref{page:automaton_conditionC_T=2}). 

The complexity of the construction of $\mathcal{A}_\pi$ is discussed in Lemma~\ref{lem:complexite_primitif_rec}. 
Except for the special case $|\rho| =2$, it is $\mathcal{O}(|\pi|-|\rho|)$ in time and space, in addition to the complexity of computing $\mathcal{A}_\rho$. 
This holds both for the automaton $\mathcal{A}_{\pi}$ accepting $\overleftarrow{{\mathcal L}_\pi}$
and for its variant (whose construction is unchanged except for the recursive part) which accepts a language 
$\mathcal{L}'_{\pi}$ such that $\mathcal{L'}_{\pi} \cap \mathcal{M} = \overleftarrow{{\mathcal L}_{\pi}}\cap \mathcal{M}$. 
For the special case $|\rho| =2$, the complexity of building $\mathcal{A}_{\pi}$ accepting $\overleftarrow{{\mathcal L}_\pi}$ is $\mathcal{O}(|\pi|^3)$, 
and it drops to $\mathcal{O}(|\pi|^2)$ for the variant accepting $\mathcal{L}'_{\pi}$. 

\paragraph*{The special case of increasing quasi-oscillations}
W.l.o.g., the only remaining case in Equation~\eqref{eq:pin_perm_trees} is that of a permutation 
$\pi = \beta^{+}[1, \ldots, 1 ,\rho, 1, \ldots, 1, 12,1,$ $ \ldots, 1]$ 
where $\beta^+$ is an increasing quasi-oscillation, 
the permutation $12$ expands an auxiliary point of $\beta^{+}$ 
and $\rho$ (of size at least $2$) expands the corresponding main substitution point of $\beta^{+}$. 
Again, in Appendices~\ref{sec:pinwords} and~\ref{sec:buildingAutomata}, the decomposition tree of $\rho$ is denoted $T$. 
And by induction, we may assume that we have an explicit description of $P(\rho)$, 
and an automaton $\mathcal{A}_\rho$ which accepts $\overleftarrow{{\mathcal L}_\rho}$ 
(resp. a language $\mathcal{L}'_{\rho}$ such that $\mathcal{L'}_{\rho} \cap \mathcal{M} = \overleftarrow{{\mathcal L}_{\rho}}\cap \mathcal{M}$). 

This case is very much constrained, and the set of pin words of $\pi$ is completely determined 
by Theorem~\ref{thm:primeRoot_cas_special}(p.\pageref{thm:primeRoot_cas_special}): $P(\pi) = P(\rho)  \cdot w$, for some word $w$ 
which is uniquely determined, and that Remark~\ref{rem:w_cas_2_non_feuilles} shows explicitly. 
From this, is it not hard to build from $\mathcal{A}_\rho$ an automaton $\mathcal{A}_\pi$ which accepts $\overleftarrow{{\mathcal L}_\pi}$ 
(resp. a language $\mathcal{L}'_{\pi}$ such that $\mathcal{L'}_{\pi} \cap \mathcal{M} = \overleftarrow{{\mathcal L}_{\pi}}\cap \mathcal{M}$). 
This is explained in Paragraph~\ref{sec:tcl} (p.\pageref{sec:tcl}), and is performed in $\O\big(|\pi|-|\rho|\big)$ time and space 
in addition to the time and space complexity of the construction of $\mathcal{A}_{\rho}$.

\medskip

The automaton $\mathcal{A}_\pi$ associated with a pin-permutation $\pi$ is then build recursively, 
by first determining which shape of tree in Equation~\eqref{eq:pin_perm_trees} is matched by the 
decomposition tree of $\pi$, and then applying the corresponding construction. 
From the complexities of these constructions, 
it is not hard to evaluate the overall complexity of building $\mathcal{A}_\pi$. 
Namely: 

\begin{theo}\label{thm:complexity_new_statement}
Given $\pi$ a pin-permutation, the above recursive construction allows to build 
an automaton $\mathcal{A}_\pi$ which accepts $\overleftarrow{{\mathcal L}_\pi}$ 
(resp. a language $\mathcal{L}'_{\pi}$ such that $\mathcal{L'}_{\pi} \cap \mathcal{M} = \overleftarrow{{\mathcal L}_{\pi}}\cap \mathcal{M}$)
in time and space complexity $\mathcal{O}(|\pi|^4)$ (resp. $\mathcal{O}(|\pi|^2)$).
\end{theo}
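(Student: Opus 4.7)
The plan is to proceed by structural induction on the decomposition tree of $\pi$, following the shapes listed in Equation~\eqref{eq:pin_perm_trees}. Correctness is not really in question here: for each of these shapes, the theorems, lemmas and remarks referenced in the paragraphs above (and detailed in Appendices~\ref{sec:pinwords} and~\ref{sec:buildingAutomata}) already establish that the automaton built in that case accepts $\overleftarrow{{\mathcal L}_\pi}$ (resp.\ a language $\mathcal{L}'_\pi$ whose intersection with $\mathcal{M}$ coincides with that of $\overleftarrow{{\mathcal L}_\pi}$). The content of the theorem therefore lies entirely in the complexity bookkeeping.

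The essential structural observation is that each \emph{recursive} shape in Equation~\eqref{eq:pin_perm_trees} triggers the recursion on exactly one child: the unique $\calN^{+}$ or $\calN^{-}$ subtree for the recursive linear cases, the unique ${\mathcal S} \setminus \{\bullet\}$ subtree for the recursive prime case, and likewise for the quasi-oscillation case (the other children are oscillations, isolated points, or a fixed $12$ or $21$, all handled locally without a recursive call). Consequently the recursion unfolds along a single chain of pin-permutations $\pi = \rho_0, \rho_1, \ldots, \rho_d$ with strictly decreasing sizes, where $\rho_d$ falls into one of the non-recursive cases: a leaf, a simple pin-permutation, a non-recursive linear root, or the prime-root exception $|\rho|=2$.

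Set $\delta_i = |\rho_i| - |\rho_{i+1}| \geq 0$, so that $|\rho_d| + \sum_{i=0}^{d-1} \delta_i = |\pi|$. The total cost splits into the base cost incurred at $\rho_d$ plus the sum of the per-step costs along the chain. From the per-case complexities recalled in the paragraphs above, the base cost is at most $\O(|\rho_d|^4)$ in the first version (the worst of the four non-recursive subcases, attained by the non-recursive linear root, the $|\rho|=2$ prime-root exception giving $\O(|\rho_d|^3)$ and the other two being no worse) and at most $\O(|\rho_d|^2)$ in the optimized version. Each step $\rho_i \to \rho_{i+1}$ adds $\O(\delta_i^2)$ in the linear recursive case and $\O(\delta_i)$ in the prime and quasi-oscillation cases, so at most $\O(\delta_i^2)$ in every case. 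Using the elementary inequality $a^2 + \sum_i b_i^2 \leq \bigl(a + \sum_i b_i \bigr)^2$ for non-negative reals, we obtain $|\rho_d|^2 + \sum_i \delta_i^2 \leq |\pi|^2$, whence a grand total of $\O(|\pi|^4) + \O(|\pi|^2) = \O(|\pi|^4)$ in the first version and $\O(|\pi|^2) + \O(|\pi|^2) = \O(|\pi|^2)$ in the second. Space bounds follow identically, as every intermediate construction uses space linear in its time.

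The delicate point is precisely the recursive linear step, whose cost is quadratic in the \emph{local} size increment $\delta_i$ rather than in $|\pi|$; it is exactly the superadditivity inequality $\sum_i \delta_i^2 \leq (\sum_i \delta_i)^2$ that prevents the total from degenerating to something like $\O(|\pi|^5)$. All other ingredients -- checking that the recursion is single-branched and that every base case fits under the announced bound -- are direct consequences of the shape of Equation~\eqref{eq:pin_perm_trees} together with the per-case statements already gathered in this subsection.
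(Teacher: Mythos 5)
Your argument is correct and is essentially the paper's own proof: both exploit the fact that the recursion of Equation~\eqref{eq:pin_perm_trees} runs along a single chain $\pi=\rho_0,\dots,\rho_d$, add the per-case costs from the appendix ($\mathcal{O}(\delta_i^2)$ per recursive step, $\mathcal{O}(|\rho_d|^4)$ resp.\ $\mathcal{O}(|\rho_d|^2)$ at the base), and conclude with the same sum-of-squares bound, the only cosmetic difference being that you use $|\rho_d|^2+\sum_i\delta_i^2\leq\bigl(|\rho_d|+\sum_i\delta_i\bigr)^2$ while the paper bounds each square by $|\pi|$ times the corresponding increment. The paper's proof merely adds two remarks that your appeal to the per-case lemmas leaves implicit: identifying which shape of Equation~\eqref{eq:pin_perm_trees} applies at each level costs only linear time, and the special ``marked'' states needed by the recursive cases are marked on the fly, so no extra time is spent on them.
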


The proof of Theorem~\ref{thm:complexity_new_statement} is however postponed to the appendix 
-- see Theorem~\ref{thm:complexity-A_pi} (p.\pageref{thm:complexity-A_pi}). 
Indeed, to prove this theorem, we need to be careful about some details of the construction 
of automata, that are only explained in the appendix. 
The main difficulty is to ensure that some special states in the automata can be ``marked'' 
in some of the above constructions without increasing the complexity. 
The marking of these special states is needed to build the additional transitions in the cases 
$\pi = \oplus[\xi_1, \ldots , \xi_{\ell}, \rho, \xi_{\ell+2}, \ldots ,\xi_r]$ 
and $\pi = \alpha[1,\ldots, 1, \rho, 1,\ldots, 1]$, 
since these transitions are actually pointing towards these ``marked'' states. 
Subsection~\ref{subsection:marquage} explains how to mark these special states along the construction of~$\mathcal{A}_\pi$.

\section{A polynomial algorithm deciding whether a class contains a finite number of simple permutations}
\label{sec:polynomial}

In this section, we prove the main result of this article: 
%
\begin{theo}\label{thm:main_result}
Given a finite set of permutations $B$, we describe an algorithm that determines whether
the permutation class ${\mathcal C} = Av(B)$ contains a finite number of simple permutations.
Denoting $n = \sum_{\pi \in B} |\pi|$, 
$p = \prod |\pi|$ where the product is taken over all pin-permutations in $B$, 
$k$ the number of pin-permutations in $B$ 
and $s$ the maximal size of a pin-permutation of $B$,
the complexity of the algorithm is ${\mathcal O}(n \log n + s^{2k})$ or more precisely ${\mathcal O}(n \log n + p^{2})$.
\end{theo}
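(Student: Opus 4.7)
The plan is to follow the four-step structure announced in Figure~\ref{fig:algo} and to check that the per-step complexities add up to the claimed bound. By Theorem~\ref{thm:brignall}, $\mathcal{C}=Av(B)$ has finitely many simple permutations if and only if it has finitely many parallel alternations, finitely many wedge simple permutations of each type, and finitely many proper pin-permutations. I would handle the first three conditions using Lemma~\ref{lem:complexity_nlogn} in $\mathcal{O}(n \log n)$ time, which accounts for the $n\log n$ term in the announced bound.

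For the pin-permutation condition, I would invoke Theorem~\ref{thm:nbInfiniEtLangages}, reducing it to the finiteness of $\M \setminus \bigcup_{\pi\in B}\mathcal{L}_\pi$; equivalently, since $\M$ is closed under reversal, the finiteness of $\M \setminus \bigcup_{\pi\in B}\overleftarrow{\mathcal{L}_\pi}$. Only those $\pi \in B$ that are pin-permutations contribute a non-empty $\mathcal{L}_\pi$, so I would first extract the set $PB$ of pin-permutations in $B$ by inspecting each decomposition tree in $\mathcal{O}(|\pi|)$, totalling $\mathcal{O}(n)$. For each $\pi \in PB$ I would then apply Theorem~\ref{thm:complexity_new_statement} to build, in time and space $\mathcal{O}(|\pi|^2)$, a \emph{deterministic} automaton $\mathcal{A}_\pi$ recognizing some language $\mathcal{L}'_\pi$ with $\mathcal{L}'_\pi \cap \M = \overleftarrow{\mathcal{L}_\pi}\cap \M$. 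The variant is enough because every subsequent step is ultimately intersected with $\M$.

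Next I would assemble $\mathcal{A}_\mathcal{C}$ by taking the Cartesian product of the $\mathcal{A}_\pi$, which remains deterministic and has $\mathcal{O}\bigl(\prod_{\pi \in PB}|\pi|^2\bigr) = \mathcal{O}(p^2) = \mathcal{O}(s^{2k})$ states, then complementing it in linear time thanks to determinism, and finally intersecting with a fixed-size automaton for $\M$. Finiteness of the language of $\mathcal{A}_\mathcal{C}$ would then be decided by the classical cycle-search: is there a cycle in $\mathcal{A}_\mathcal{C}$ both reachable from an initial state and coaccessible to an accepting state? This runs in time linear in the size of $\mathcal{A}_\mathcal{C}$. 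Summing all contributions gives $\mathcal{O}\bigl(n\log n + \sum_{\pi \in PB}|\pi|^2 + p^2\bigr) = \mathcal{O}(n \log n + p^2)$, hence the announced bound $\mathcal{O}(n\log n + s^{2k})$.

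The crux, and the main source of the complexity gain over~\cite{BRV06}, is that the per-permutation automata are \emph{deterministic} and of polynomial size: determinism makes the complementation in step~(c) linear (instead of requiring an exponential subset construction), while the $\mathcal{O}(|\pi|^2)$ size bound of Theorem~\ref{thm:complexity_new_statement} is exactly what keeps the Cartesian product at $p^2$ states. Every other ingredient -- the reduction from simple permutations to the three classes, the reversal trick, the product/complement for the languages $\mathcal{L}_\pi$, and the cycle-based finiteness test -- is then standard, and the final complexity estimate is a straightforward sum of the per-step costs.
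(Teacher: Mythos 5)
Your proposal is correct and follows essentially the same route as the paper: the $\mathcal{O}(n\log n)$ test via Lemmas~\ref{lem:alternation}--\ref{lem:complexity_nlogn}, the linear-time extraction of the pin-permutations of $B$, the optimized deterministic automata $\mathcal{A}_\pi$ of size $\mathcal{O}(|\pi|^2)$ from Theorem~\ref{thm:complexity_new_statement}, the deterministic Cartesian product, linear complementation, intersection with $\mathcal{A}(\M)$, and the accessible/co-accessible cycle test, summing to $\mathcal{O}(n\log n + p^2) = \mathcal{O}(n\log n + s^{2k})$. The only detail the paper spells out that you elide is that the pin-permutation detection step must also record the decomposition trees, the pin words of the simple permutations labeling prime nodes, and the sets $Q_x(\alpha)$, since these are the inputs the construction of $\mathcal{A}_\pi$ presupposes.
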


The complexity which is achieved in Theorem~\ref{thm:main_result} makes use of the 
optimized variant of the construction of the automata $\mathcal{A}_{\pi}$. 
Notice that with the non-optimized construction of the automata $\mathcal{A}_{\pi}$, 
although we would have an algorithm whose details are a little bit simpler to describe, its complexity 
would be significantly worst than with the optimized variant, 
namely ${\mathcal O}(n \log n + p^4) = {\mathcal O}(n \log n + s^{4k})$.

The algorithm announced in Theorem~\ref{thm:main_result} can be decomposed into several steps 
and is described in the rest of this section. 

\subsection{Finitely many parallel alternations and wedge simple permutations  in ${\mathcal C}$?}

Following~\cite{BRV06} (see Theorem~\ref{thm:brignall}
p.\pageref{thm:brignall}) we first check whether $\mathcal C$ contains
finitely many parallel alternations and wedge simple permutations.
From Lemmas~\ref{lem:alternation}, \ref{lem:wedge1} and \ref{lem:wedge2}
(p.\pageref{lem:wedge2}) this problem is equivalent to testing if permutations
of $B$ contain some patterns of size at most $4$. Using a result of~\cite{AAAH01}, this can be done in ${\mathcal O}(n \log n)$ time
(see Lemma~\ref{lem:complexity_nlogn} p.\pageref{lem:complexity_nlogn}).

\subsection{Finding pin-permutations in the basis}\label{ssec:finding_pin-perm}

The next step is to determine the subset $PB \subseteq B$ of pin-permutations of $B$.
To do so we use the characterization of the class of
pin-permutations by their decomposition trees established in~\cite{BBR09}, 
and recalled in Equation~\eqref{eq:pin_perm_trees} (p.\pageref{eq:pin_perm_trees}).

More precisely, for each $\pi \in B$, we proceed as follows.

\medskip

$\bullet$ First we compute its decomposition tree ${T}_{\pi}$.

This is achieved in linear time w.r.t.~$|\pi|$, computing first the skeleton
of ${T}_{\pi}$ following~\cite{BCMR05bis} or~\cite{BXHC05}, and next
the labels of linear and prime nodes as explained in~\cite[\S 2.2]{bcmr11}.

\medskip

$\bullet$ Second we add some information on the decomposition tree.

This information will be useful in later steps of our algorithm  
to check whether $\pi$ is a pin-permutation, 
and next (in the affirmative) to determine which construction of the automaton $\mathcal{A}_{\pi}$ 
(see Subsection~\ref{ssec:idees_construction_automates} or details in Appendix~\ref{sec:buildingAutomata}) applies to $\pi$.

\smallskip

- For  each prime node $N$, we record whether the simple
permutation $\alpha$ labeling $N$ is an 
increasing or decreasing
\epi or quasi-\epi.

This may be recorded
by performing a linear time depth-first traversal of
${T}_{\pi}$, and checking each node when it is reached.  As there are
$4$ \epis of each size that are explicitly described as
$2\,4\,1\,6\,3\,8\,5\ldots$ (see Figure~\ref{fig:episgrands} p.\pageref{fig:episgrands})
or one of its symmetries, checking if a simple permutation $\alpha$ is
of this form can be done in linear time w.r.t.~$|\alpha|$.  The same
kind of explicit description also holds for quasi-oscillations, and in
addition we can record which children correspond to the auxiliary
and main substitution points.

\smallskip

- For each node $N$, we record whether the subtree rooted at $N$ encodes an increasing or decreasing \epi.

This may be recorded easily, along the same depth-first traversal of ${T}_{\pi}$ as above. 
Indeed \epis of size greater than $3$ are simple permutations, and
increasing (resp. decreasing) \epis of smaller sizes are $1$, $21$, $231$ and $312$ (resp. $1$, $12$, $132$ and $213$).
So it is sufficient to check whether $N$ is a leaf,
or a prime node labeled by an increasing (resp. decreasing) \epi
all of whose children are leaves,
or a linear node with exactly two children satisfying extra constraints:
they are either both leaves, or one is a leaf and the second one is a linear node with exactly two children that are both leaves.
In this later case the oscillation is increasing (resp. decreasing) if $N$ is labeled $\ominus$ (resp. $\oplus$).

\smallskip

These computations are performed in linear time w.r.t.~$|\alpha|$ for
any prime node labeled by $|\alpha|$, and in constant time for any
linear node.  Hence,
as the sum of the sizes of the labels of all internal nodes is linear w.r.t.~$|\pi|$,
the overall complexity of this step is
linear w.r.t.~$|\pi|$.

\medskip

$\bullet$ Finally we determine whether $\pi$ is a pin-permutation or not.

To do so, we recursively check starting with the root whether its decomposition tree is of the shape described in~\cite{BBR09}
(see Equation~\eqref{eq:pin_perm_trees} p.\pageref{eq:pin_perm_trees}).

\smallskip

- If the root is linear, with the additional information stored we can check  whether
all its children are increasing (resp. decreasing) \epis in linear time w.r.t.~the number of children.
If exactly one child is not an increasing (resp. decreasing) \epi,
we check recursively whether the subtree rooted at this child is the decomposition tree of a pin-permutation.

\smallskip

- If the root is prime, we first check whether its label $\alpha$ is a {\em pin}-permutation.
More precisely, with Algorithm 2 of~\cite{BBPR10} we compute the set of pin words of $\alpha$ and test its emptiness.
By Lemma~4.1 of~\cite{BBPR10}, this is done in linear time w.r.t.~$|\alpha|$. Then we check whether all the children of the root are leaves.

\indent \indent - If exactly one child is not a leaf, we furthermore have to check whether the point $x$ it expands is an active point of $\alpha$. 
With some precisions given in the appendix, this can be done in $\mathcal{O}(|\alpha|)$ time. 
Namely, from Remark~\ref{rem:tailleQx} (p.\pageref{rem:tailleQx})
we just have to test the emptiness of $Q_x(\alpha)$,
which is computed in linear time w.r.t.~$|\alpha|$
(see Remark~\ref{rem:compute_Q_x(alpha)} p.\pageref{rem:compute_Q_x(alpha)}).
Then we check recursively whether the subtree rooted at~$x$ is the decomposition tree of a pin-permutation.

\indent \indent - If exactly two children are not leaves, with the additional information stored we can check in constant time whether
$\alpha$ is an increasing (resp. decreasing) quasi-\epi,
if the two children that are not leaves expand the auxiliary and main substitution points,
and if the one expanding the auxiliary point is the permutation $12$ (resp. $21$).
Then we check recursively whether the subtree rooted at the main substitution point is the decomposition tree of a pin-permutation.

\medskip

As the complexity of each step is linear w.r.t.~the number of children
(which is also the size of the label for a prime node), deciding
whether a permutation $\pi$ is a pin-permutation or not can be done in
linear time w.r.t.~$|\pi|$. The overall determination of $PB$ is therefore
linear in $n = \sum_{\pi \in B} |\pi|$.

\medskip

Moreover, in addition to computing $PB$, the above procedure produces additional results,
that we also record as they are useful in the next step.
Namely, for every permutation $\pi$ of $PB$, we record its decomposition tree $T_{\pi}$, together with the additional information computed on its nodes; and we also record
the set of pin words that encode each simple permutation $\alpha$ labeling a prime node $N$ of $T_{\pi}$ and the set $Q_x(\alpha)$ when $N$ has exactly one non-trivial child.
Notice that the knowledge of these is sufficient to characterize the set of pin words that encode $\pi$ thanks to results of Appendix~\ref{sec:pinwords} 
outlined in Subsection~\ref{ssec:idees_construction_automates}. 

\subsection{Finitely many proper pin-permutations in ${\mathcal C}$?}
\label{ssec:includes_looking_for_a_cycle_in_automaton}

From Theorem~\ref{thm:nbInfiniEtLangages}
(p.\pageref{thm:nbInfiniEtLangages}) it is enough to check whether $\M
\setminus \cup_{\pi \in B}{\mathcal L}_\pi$ is finite. This can be
easily decided with a deterministic automaton $\mathcal{A}_{\mathcal{C}}$ recognizing $\overleftarrow{\M \setminus \cup_{\pi \in B}{\mathcal L}_\pi}$.
From the previous step of the procedure, we know the set $PB$ of pin-permutations of $B$
and some additional results described above. 
First notice that $\cup_{\pi \in B}{\mathcal L}_\pi = \cup_{\pi \in PB}{\mathcal L}_\pi$
as ${\mathcal L}_\pi$ is empty when $\pi$ is not a pin-permutation (see p.\pageref{def:lu}).
We build the automaton $\mathcal{A}_{\mathcal{C}}$ as follows.

\smallskip

$\bullet$ First for each pin-permutation $\pi \in PB$, we construct
$\mathcal{A}_{\pi}$ -- which is deterministic and complete --
recognizing a language $\mathcal{L}'_{\pi}$ such that $\mathcal{L'}_{\pi} \cap \mathcal{M} = \overleftarrow{{\mathcal L}_{\pi}}\cap \mathcal{M}$. 
For this optimized variant, the construction is performed in time and space at most ${\mathcal O}(|\pi|^2)$ as 
presented in Subsection~\ref{ssec:idees_construction_automates} and 
described in details in Appendix~\ref{sec:buildingAutomata} 
(see Theorem~\ref{thm:complexity_new_statement} p.\pageref{thm:complexity_new_statement} 
or Theorem~\ref{thm:complexity-A_pi} p.\pageref{thm:complexity-A_pi}).
Notice that the construction of $\mathcal{A}_{\pi}$ depends on the shape of the decomposition tree $T_{\pi}$ of $\pi$.
But thanks to the additional information stored in $T_{\pi}$, 
we can determine which tree shape matches $T_{\pi}$ 
in linear time w.r.t.~the number of children of the root of $T_{\pi}$, 
and the same holds at each recursive step of the construction.

\smallskip

$\bullet$ Then we build a deterministic automaton ${\mathcal A}_1$
recognizing $\bigcup_{\pi \in PB} {\mathcal L}'_{\pi}$,
where ${\mathcal L}'_{\pi}$ is defined as in the first item. 
The automaton ${\mathcal A}_1$ is obtained
performing the deterministic union
(as a Cartesian product, see~\cite{HU79} for details)
of all the automata $\mathcal{A}_{\pi}$.
This is done in time and space
${\mathcal O}(\prod\limits_{\pi \in PB} |\mathcal{A}_{\pi}|) = {\mathcal O}(\prod\limits_{\pi \in PB} |\pi|^2)$.

\smallskip

$\bullet$ Then we build the automaton
${\mathcal A}_2$ which is the deterministic intersection (again as a Cartesian product) between
${\mathcal A}_1$ and the automaton ${\mathcal A(\M)}$ given in Figure~\ref{fig:AM}
in time and space $ 
\mathcal{O}(|{\mathcal A}_1|.|{\mathcal A} (\M))|) = {\mathcal O}(\prod\limits_{\pi \in PB} |\pi|^2)$.

\begin{figure}[ht]
\begin{center}
\begin{tikzpicture}[node distance=40pt,inner sep=2pt,shorten >= 2pt,every state/.style={draw,thick,minimum size=3pt},/tikz/initial text=]
{\footnotesize
\node[state,initial] at (0,0) (q_00) {};
\node[state] at (1,0.7) (q_01) {}; 
\node[state,accepting] (q_03) [right =of q_01] {};
\node[state] at (1,-0.7) (q_02) {}; 
\node[state,accepting] (q_04) [right =of q_02] {};
\path[->] (q_00) edge node[above,sloped] {\scriptsize $U,D$} (q_01);
\path[->] (q_01) edge node[above,sloped] {\scriptsize $L,R$} (q_03);
\path[->] (q_00) edge node[below,sloped] {\scriptsize $L,R$} (q_02);
\path[->] (q_02) edge node[below,sloped] {\scriptsize $U,D$} (q_04);
\path[->] (q_03) edge [bend left=30] node[right] {\scriptsize $U,D$} (q_04);
\path[->] (q_04) edge [bend left=30] node[left] {\scriptsize $L,R$} (q_03);
}
\end{tikzpicture}
\caption{A deterministic automaton ${\mathcal A}(\M)$ recognizing the set $\M$ of words of length at least $2$ without any factor in {\footnotesize $\{UU,UD,DU,DD,RR,RL,LR,LL\}$}. } \label{fig:automates_stricts_pinwords} \label{fig:AM}
\end{center}
\end{figure}
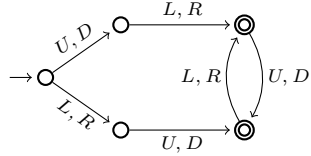
\noindent The automaton ${\mathcal A}_2$ recognizes $\Big( \bigcup_{\pi \in PB} {\mathcal L}'_{\pi} \Big) \cap \M = \Big( \bigcup_{\pi \in PB} \overleftarrow{{\mathcal L}_{\pi}} \Big) \cap \M$.
By Lemma~\ref{lem:patterns} (p.\pageref{lem:patterns}) this is the language of words $\overleftarrow{\phi(w)}$
for all strict pin words $w$ encoding permutations having a pattern in $PB$, \emph{i.e.} that are not in ${\mathcal C}$.
Notice that by Remark~\ref{rem:proper_strict} (p.\pageref{rem:proper_strict}) such permutations are necessarily proper pin-permutations.

\smallskip

$\bullet$ Next we complement ${\mathcal A}_2$ 
to build a deterministic automaton
${\mathcal A}_3$ recognizing $\Astar \setminus \Big( \big(
\bigcup_{\pi \in PB} \overleftarrow{{\mathcal L}_{\pi}} \big) \cap \M
\Big)$.  As ${\mathcal A}_2$ is deterministic, its complement is
obtained in linear time w.r.t.~its size,
by completing it and then turning every final (resp. non-final) state into a non-final (resp. final) state. 
Moreover the size of ${\mathcal A}_3$ is the same as that of the
automaton obtained completing ${\mathcal A}_2$, \emph{i.e.}, ${\mathcal
O}(\prod\limits_{\pi \in PB} |\pi|^2)$

$\bullet$ Finally we compute the deterministic intersection between ${\mathcal A}_3$ and the automaton ${\mathcal A(\M)}$ to obtain the automaton ${\mathcal A}_{\mathcal{C}}$.
This is done in time and space $\mathcal{O}(|{\mathcal A}_3|.|{\mathcal A} (\M))|) = {\mathcal O}(\prod\limits_{\pi \in PB} |\pi|^2)$.
The automaton ${\mathcal A}_{\mathcal{C}}$ built in this way recognizes $\M \setminus \Big( \bigcup_{\pi \in PB} \overleftarrow{{\mathcal L}_{\pi}} \Big) = 
\overleftarrow{\M \setminus \Big( \bigcup_{\pi \in B} {\mathcal L}_{\pi} \Big)}$.
This is the language of all words $\overleftarrow{\phi(w)}$ where $w$ is a strict pin word encoding a permutation of $\mathcal C$ (that is necessarily a proper pin-permutation, as above).

Then, by Theorem~\ref{thm:nbInfiniEtLangages} (p.\pageref{thm:nbInfiniEtLangages}),
checking whether the
permutation class $\mathcal{C}$ contains a finite number of proper
pin-permutations is equivalent to checking whether the language
recognized by ${\mathcal A}_{\mathcal{C}}$ is finite {\it i.e.},
whether ${\mathcal A}_{\mathcal{C}}$ does not contain any cycle that is
accessible and co-accessible ({\it i.e.}, a cycle that can be reached from an
initial state and from which a final state can be reached).
The automaton ${\mathcal A}_{\mathcal{C}}$ is not necessarily accessible and co-accessible.
Its accessible part is made of all states that can be reached in a traversal of the automaton from the initial state;
its co-accessible part is obtained similarly by a traversal from the set of final states taking the edges of the automaton backwards.
Before looking for a cycle, we make ${\mathcal A}_{\mathcal{C}}$ accessible and co-accessible
by keeping only its accessible and co-accessible part, yielding a smaller automaton ${\mathcal A}'_{\mathcal{C}}$.
The complexity of this double reduction of the size of the automaton is linear in time w.r.t.~the size of ${\mathcal A}_{\mathcal{C}}$.
Moreover the size of ${\mathcal A}'_{\mathcal{C}}$ is smaller
than or equal to the one of ${\mathcal A}_{\mathcal{C}}$, \emph{i.e.}, ${\mathcal O}(\prod\limits_{\pi \in PB} |\pi|^2)$.
Finally we test whether
${\mathcal A}'_{\mathcal{C}}$ does not contain any cycle.  This can be done
 in $\mathcal{O}(|{\mathcal A}'_{\mathcal{C}}|)$ time with a
depth-first traversal of ${\mathcal A}'_{\mathcal{C}}$.

Let $s$ be the maximal size of a pin-permutation of $B$ and $k$ the number of pin-permutations in $B$,
then ${\mathcal O}(\prod\limits_{\pi \in PB} |\pi|^2) = {\mathcal O}(s^{2k})$.
Hence putting all these steps together leads to an algorithm whose
complexity is ${\mathcal O}(s^{2k})$ to check whether there are finitely many proper pin-permutations in $\mathcal{C}$,
when the set $PB$ of pin-permutations of $B$, their decomposition trees and the set of pin words of each simple permutation appearing in these trees are given. 

\section{Conclusion}
\label{sec:ccl}

The work reported here follows the line opened by~\cite{AA05} and
continued by~\cite{BRV06}. In~\cite{AA05}, the main theorem provides
(in particular) a sufficient condition for a permutation class
$\mathcal{C}$ to have an algebraic generating function: namely, that
$\mathcal{C}$ contains a finite number of simple permutations. Then,
\cite{BRV06} introduces new objects (most importantly,
pin-permutations) to provide a decision procedure testing this
sufficient condition, for classes with a finite and explicit
basis. Making use of the detailed study of pin-permutations in
\cite{BBR09}, we have described in the above an algorithm
testing this condition. The analysis of its complexity shows that it is efficient.

Because an algebraic generating function is a witness of the combinatorial structure of a permutation class,
we may interpret our result as giving an efficient algorithm testing a sufficient condition for a permutation class to be well-structured.
We believe that more could and should be done on the algorithmization of finding structure in permutation classes.
In particular, we plan to provide efficient algorithms that do not only {\em test} that there is an underlying structure in a permutation class,
but that also {\em compute} this structure.
We set in the sequel the main steps towards the achievement of this project.

\medskip

As discussed in~\cite{AA05}, the proof of the main theorem therein is constructive.
Namely, given the basis $B$ of a class $\mathcal{C}$,
and the set $\mathcal{S}_{\mathcal{C}}$ of simple permutations in $\mathcal{C}$
(assuming that both are finite), the proof of the main theorem of~\cite{AA05}
describes how to compute (a polynomial system satisfied by) the generating function of $\mathcal{C}$, proving thereby that it is algebraic. 
The main step is actually
to compute a (possibly ambiguous) context-free grammar of trees for the
permutations of $\mathcal{C}$, or rather their decomposition trees.

Such a context-free grammar of trees almost captures the combinatorial structure of a permutation class.
The only reason why it does not completely is because the grammar may be ambiguous, and thus may generate several times the same permutation in the class.
On the contrary, unambiguous context-free grammars of trees fall exactly in the context of the {\em combinatorial specifications} of~\cite{Flajolet}, and
describing a permutation class by such a combinatorial specification is undoubtedly demonstrating the structure of the class.
Consequently, we aim at describing an algorithm to compute this combinatorial specification, assuming we are given the finite basis $B$ characterizing the class $\mathcal{C}$.
There would be four main steps in such an algorithm.

First, we should ensure that $\mathcal{C}$ falls into the set of permutation classes we can handle, \emph{i.e.}, ensure that $\mathcal{C}$ contains a finite number of simple permutations.
The present work gives an algorithm for this first step.

Second, when finite, we should compute the set $\mathcal{S}_{\mathcal{C}}$ of simple permutations in $\mathcal{C}$. 
A naive method to do so can be immediately deduced from the results of~\cite{AA05}, but it
is of highly exponential complexity. An algorithm for this second step has subsequently been described in~\cite{PR11}, and its complexity analyzed.
It should be noticed that the complexity of this algorithm also depends on the size of its output, namely on $|\mathcal{S}_{\mathcal{C}}|$ and on $\max \{|\pi| : \pi \in \mathcal{S}_{\mathcal{C}}\}$.

Third, from $B$ and $\mathcal{S}_{\mathcal{C}}$, we should turn the constructive proof of~\cite{AA05} into an actual algorithm,
that would compute the (possibly ambiguous) context-free grammar of trees describing the decomposition trees of the permutations of $\mathcal{C}$.

Finally, we should transform this (possibly ambiguous) context-free grammar into an unambiguous combinatorial specification for $\mathcal{C}$.
We have described in the extended abstract~\cite{fpsac2012} an algorithm for these last two steps, whose complexity is still to analyze.

\smallskip

Combining these four steps will provide an algorithm  to obtain from a basis $B$ of excluded patterns a combinatorial specification for the permutation class $\mathcal{C} = Av(B)$.
We are not only convinced of the importance of this result from a theoretical point of view,
but also (and maybe more importantly) we are confident that it will be of practical use to the permutation patterns community.
Indeed, from a combinatorial specification, it is of course possible with the
methodology of~\cite{Flajolet} to immediately deduce a system of
equations for the generating function of $\mathcal{C}$.
But other algorithmic developments can be considered. In particular,
this opens the way to obtaining systematically uniform random
samplers of permutations in a class, or to the automatic evaluation of
the Stanley-Wilf growth rate of a class.

\bigskip

\paragraph*{Acknowledgments} 
We are very grateful to the anonymous referee for providing both specific comments and global suggestions on the organization of our paper. 
These helped us improve the presentation of our work, on both the large and small scales. 
We would also like to thank Joseph Kung for his availability and efficiency as an editor. 

\bigskip

\begin{appendices}

\section*{Appendices}

\section{Simple pin-permutations, oscillations and quasi-oscillations}
\label{sec:oscillations}

This appendix groups together some technical definitions and results about subsets of pin-permutations: 
the simple ones, the oscillations, and the quasi-oscillations. 
The last two play an important role in the characterization 
of substitution decomposition trees associated with pin-permutations 
(see Equation~\eqref{eq:pin_perm_trees} p.\pageref{eq:pin_perm_trees}).

\subsection{Simple pin-permutations, active knights and active points}

Let $\sigma$ be a simple pin-permutation. 
We have seen from Remark~\ref{rem:simplepin} (p.\pageref{rem:simplepin}) that all pin representations 
of $\sigma$ are proper. 
This implies in particular (see~\cite[Lemma 4.3]{BBR09} for an immediate proof) 
that the first two points in every pin representation of $\sigma$ are in \emph{knight position}, 
\emph{i.e.}, form one of the following configurations in the diagram of $\sigma$: 
\[
\begin{tikzpicture}[scale=.25] \draw [help lines] (0,0) grid (3,2);
\fill (0.5,1.5) circle (6pt); \fill (2.5,0.5) circle (6pt);
\end{tikzpicture} \ ; \quad 
\begin{tikzpicture}[scale=.25] \draw [help lines] (0,0) grid (3,2);
\fill (0.5,0.5) circle (6pt); \fill (2.5,1.5) circle (6pt);
\end{tikzpicture} \ ; \quad 
\begin{tikzpicture}[scale=.25] \draw [help lines] (0,0) grid (2,3);
\fill (0.5,2.5) circle (6pt); \fill (1.5,0.5) circle (6pt);
\end{tikzpicture} \ ; \quad 
\begin{tikzpicture}[scale=.25] \draw [help lines] (0,0) grid (2,3);
\fill (0.5,0.5) circle (6pt); \fill (1.5,2.5) circle (6pt);
\end{tikzpicture} \ . 
\]

We define an {\em active knight} of $\sigma$ to be a pair of points of $\sigma$ in knight position 
which is the possible start of a pin representation of $\sigma$. 
The definition of active knights may be extended to pin-permutations $\sigma$ that are not necessarily simple, 
as pairs of points of $\sigma$ that are the possible start of a pin representation of $\sigma$. 
In addition to the four configurations shown above, such active ``knights'' of non-simple pin-permutations 
may form a configuration \begin{tikzpicture}[scale=.2] \draw [help lines] (0,0) grid (2,2);
\fill (0.5,0.5) circle (6pt); \fill (1.5,1.5) circle (6pt);
\end{tikzpicture} or \begin{tikzpicture}[scale=.2] \draw [help lines] (0,0) grid (2,2);
\fill (0.5,1.5) circle (6pt); \fill (1.5,0.5) circle (6pt);
\end{tikzpicture}
in the diagram of $\sigma$. 
We also define an {\em active point} of $\sigma$ to be a point of $\sigma$ belonging to an active knight of $\sigma$. \label{def:active} 
The active knights (and hence the active points) of any simple pin-permutation may be described (see~\cite[Lemma 4.6]{BBR09}). 
This is however not needed for us in this work, except in the case of oscillations, which we will review in the following. 

\subsection{Oscillations}
\label{ssec:oscillations}

Following~\cite{BRV06}, let us consider the infinite oscillating
sequence defined by $\omega = 3\ 1\ 5\ 2\ 7\ 4\ 9\ 6\ \ldots
(2k+1)\ (2k-2)\ \ldots$. The leftmost part of Figure~\ref{fig:episgrands} shows the
diagram of a prefix of $\omega$.

\begin{defi}
  An {\em increasing \epi} of size $n\geq 4$ is a simple permutation
  of size $n$ that is contained as a pattern in $\omega$. For smaller
  sizes the increasing \epis are $1$, $21$, $231$ and $312$. A {\em
    decreasing \epi} is the reverse\footnote{The reverse of
    $\sigma=\sigma_1\sigma_2\ldots \sigma_n$ is
    $\overleftarrow{\sigma}=\sigma_n\ldots \sigma_2\sigma_1$.} of an increasing
  \epi. 
\label{defn:epi}
\end{defi}

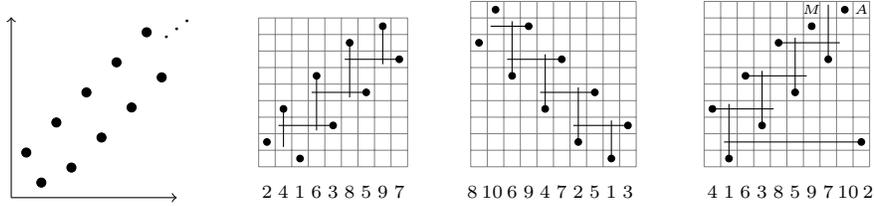
\begin{figure}[ht]
\begin{center}
\begin{tikzpicture}[scale=0.2]
\draw (12,0) node {~};
\draw[->] (0,0) -- (0,12);
\draw[->] (0,0) -- (11,0);
\draw (1,3) [fill] circle (.3);
\draw (2,1) [fill] circle (.3);
\draw (3,5) [fill] circle (.3);
\draw (4,2) [fill] circle (.3);
\draw (5,7) [fill] circle (.3);
\draw (6,4) [fill] circle (.3);
\draw (7,9) [fill] circle (.3);
\draw (8,6) [fill] circle (.3);
\draw (9,11) [fill] circle (.3);
\draw (10,8) [fill] circle (.3);
\draw (11.5,10) node {\begin{rotate}{75} $\ddots$ \end{rotate}};
\end{tikzpicture}
\hspace{0.2 cm}
\begin{tikzpicture}[scale=0.22]
\permutationNomGros{2,4,1,6,3,8,5,9,7}
        \draw (0,12.5) node {};
        \draw (12.5,3) node {};
	\draw (2.5,4.5) -- (2.5,2.2);
	\draw (5.5,3.5) -- (2.2,3.5);
	\draw (4.5,6.5) -- (4.5,3.2);
	\draw (7.5,5.5) -- (4.2,5.5);
	\draw (6.5,8.5) -- (6.5,5.2);
	\draw (9.5,7.5) -- (6.2,7.5);
	\draw (8.5,9.5) -- (8.5,7.2);
\end{tikzpicture}
\hspace{-0.4 cm}
\begin{tikzpicture}[scale=0.22]
\permutation{8,10,6,9,4,7,2,5,1,3}
        \draw (0,12.5) node {};
        \draw (12.5,3) node {};
	\draw (4.5,9.5) -- (2.2,9.5);
	\draw (3.5,6.5) -- (3.5,9.8);
	\draw (6.5,7.5) -- (3.2,7.5);
	\draw (5.5,4.5) -- (5.5,7.8);
	\draw (8.5,5.5) -- (5.2,5.5);
	\draw (7.5,2.5) -- (7.5,5.8);
	\draw (10.5,3.5) -- (7.2,3.5);
	\draw (9.5,1.5) -- (9.5,3.8);
	\draw (1.1,-0.5) node {\scriptsize $8$};
	\draw (2.3,-0.5) node {\scriptsize $10$};
	\draw (3.5,-0.5) node {\scriptsize $6$};
	\draw (4.5,-0.5) node {\scriptsize $9$};
	\draw (5.5,-0.5) node {\scriptsize $4$};
	\draw (6.5,-0.5) node {\scriptsize $7$};
	\draw (7.5,-0.5) node {\scriptsize $2$};
	\draw (8.5,-0.5) node {\scriptsize $5$};
	\draw (9.5,-0.5) node {\scriptsize $1$};
	\draw (10.5,-0.5) node {\scriptsize $3$};
\end{tikzpicture}
\begin{tikzpicture}[scale=0.22]
\permutation{4,1,6,3,8,5,9,7,10,2}
        \draw (0,12.5) node {};
        \draw (12.5,3) node {};
        \draw (7.5,10.5) node {\tiny{$M$}};
        \draw (10.5,10.5) node {\tiny{$A$}};
	\draw (1.5,4.5) -- (5.2,4.5);
	\draw (2.5,1.5) -- (2.5,4.8);
	\draw (3.5,6.5) -- (7.2,6.5);
	\draw (4.5,3.5) -- (4.5,6.8);
	\draw (5.5,8.5) -- (9.2,8.5);
	\draw (6.5,5.5) -- (6.5,8.8);
	\draw (8.5,7.5) -- (8.5,10.8);
	\draw (10.5,2.5) -- (2.2,2.5);
	\draw (1.5,-0.5) node {\scriptsize $4$};
	\draw (2.5,-0.5) node {\scriptsize $1$};
	\draw (3.5,-0.5) node {\scriptsize $6$};
	\draw (4.5,-0.5) node {\scriptsize $3$};
	\draw (5.5,-0.5) node {\scriptsize $8$};
	\draw (6.5,-0.5) node {\scriptsize $5$};
	\draw (7.5,-0.5) node {\scriptsize $9$};
	\draw (8.5,-0.5) node {\scriptsize $7$};
	\draw (9.7,-0.5) node {\scriptsize $10$};
	\draw (10.9,-0.5) node {\scriptsize $2$};
\end{tikzpicture}
\caption{The infinite oscillating sequence $\omega$, an increasing \epi $\xi$ of size $9$, a decreasing \epi of size
  $10$, and an increasing quasi-\epi of size $10$ 
  (obtained from $\xi$ by addition of a maximal element
  or equivalently by taking the inverse of the explicit quasi-\epi of size $10$ given in Subsection~\ref{ssec:quasi-oscillations}), 
  with a pin representation for each.}
\label{fig:episgrands}
\end{center}
\end{figure}

There are two increasing 
\epis of any size greater than or equal to $3$, that can also be given explicitly. 
For even size, they are 
\begin{align*}
& 2 \, 4 \, 1 \, 6 \, 3 \ldots (2k+2) \, (2k-1) \ldots (2n)\, (2n-3)\, (2n-1)  \text{ and } \\
& 3 \, 1\,  5 \, 2 \, 7\,  4 \ldots (2k+1)\,  (2k-2) \ldots (2n)\, (2n-2);
\end{align*}
for odd size,
\begin{align*}
& 2 \, 4 \, 1\,  6\,  3 \ldots (2k+2)\,  (2k-1) \ldots (2n)\, (2n-3)\, (2n+1)\, (2n-1) \text{ and } \\
& 3 \, 1 \, 5 \, 2 \, 7 \, 4 \ldots (2k+1)\,  (2k-2) \ldots (2n+1)\, (2n-2)\, (2n). 
\end{align*}
A similar statement holds for decreasing oscillations. 
As noticed in~\cite[Lemma 2.23]{BBR09}, every increasing (resp.~decreasing) oscillation 
is a pin-permutation. 
Moreover, by definition all oscillations of size at least $4$ are simple. 
Finally, notice also that permutations $1$, $2\, 4\, 1\, 3$ and $3\, 1\, 4\, 2$ 
are both increasing and decreasing \epis, and are the only ones with this property. 
The middle two diagrams of Figure~\ref{fig:episgrands} show some examples of oscillations. 

\medskip

In Appendix~\ref{sec:pinwords}, we will describe explicitly the set of pin words of all pin-per\-mu\-ta\-tions. 
In Subsection~\ref{subsec:pin_word_simple}, this requires some knowledge about the pin words of oscillations 
(w.l.o.g., only increasing oscillations) -- see in particular Lemmas~\ref{lem:f1f3} to~\ref{lem:fdouble}. 
In these lemmas, we have to distinguish cases according to the active knights of the increasing oscillations. 
For this reason, we review in the sequel some results from~\cite{BBR09} about the active knights of increasing oscillations. 

\begin{figure}[hbtp]
\begin{center}
\begin{tikzpicture}
\begin{scope}[scale=0.2]
\permutationNomKnight{1}{}
\end{scope}
\begin{scope}[scale=0.25,xshift=60]
\permutationNomKnight{2,1}{1/2/2/1}
\end{scope}
\begin{scope}[scale=0.25,xshift=150]
\permutationNomKnight{2,3,1}{1/2/3/1,2/3/3/1,1/2/2/3}
\end{scope}
\begin{scope}[scale=0.25,xshift=270]
\permutationNomKnight{3,1,2}{1/3/2/1,1/3/3/2,2/1/3/2}
\end{scope}
\begin{scope}[scale=0.25,xshift=390]
\permutationNomKnight{2,4,1,3}{1/2/2/4,1/2/3/1,2/4/4/3,3/1/4/3}
\end{scope}
\begin{scope}[scale=0.25,xshift=540]
\permutationNomKnight{3,1,4,2}{1/3/2/1,1/3/3/4,2/1/4/2,3/4/4/2}
\end{scope}
\begin{scope}[scale=0.25,xshift=700]
\permutationNomKnight{3,1,5,2,7,4,8,6}{1/3/2/1,7/8/8/6}
\end{scope}
\begin{scope}[scale=0.25,xshift=970]
\permutationNomKnight{3,1,5,2,7,4,9,6,8}{1/3/2/1,7/9/9/8}
\end{scope}
\end{tikzpicture}
\caption{The increasing oscillations of size less than $5$ and
  two increasing oscillations respectively of size $8$ with type $(V,V)$
  and $9$ with type $(V,H)$. Active knights are marked by edges between their two active points.}
\label{fig:lecture_epi_quadrant_1ou3}
\end{center}
\end{figure}
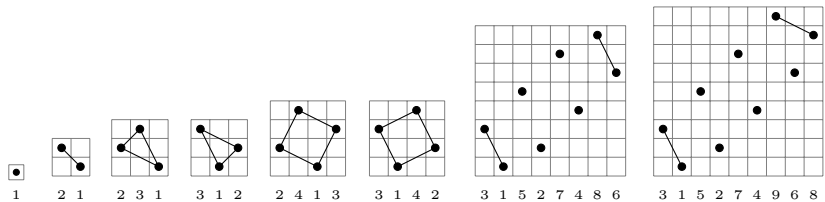

Figure~\ref{fig:lecture_epi_quadrant_1ou3} shows the active knights 
of the increasing oscillations of size up to $4$. 
Lemma 4.6 of~\cite{BBR09} describes the active knights
of simple pin-permutations, and in particular those of the increasing oscillations of size at least $4$. 
It follows from this lemma that an increasing
oscillation of size at least $5$ has exactly two active knights. They are
located at both ends of the main diagonal and they consist of two points
in relative order $21$  (see Figure~\ref{fig:lecture_epi_quadrant_1ou3}).
These active knights are either in horizontal ($H$) position
\begin{tikzpicture}[scale=.15] \draw [help lines] (0,0) grid (3,2);
\fill (0.5,1.5) circle (6pt); \fill (2.5,0.5) circle (6pt);
\end{tikzpicture} or in vertical ($V$) position
\begin{tikzpicture}[scale=.15] \draw [help lines] (0,0) grid (2,3);
\fill (0.5,2.5) circle (6pt); \fill (1.5,0.5) circle (6pt);
\end{tikzpicture}. Therefore there are four types of increasing oscillations of size at least $5$:
$(x,y)$ with $x, y \in \{H,V\}$, where $x$ is the type of the lower left active
knight and $y$ for the upper right. This definition can be extended to
increasing oscillations of size $4$, considering their two active knights
in relative order $21$ (see Figure~\ref{fig:lecture_epi_quadrant_1ou3}).
Note that an even size oscillation has
type $(H,H)$ or $(V,V)$ and an odd size one $(H,V)$ or $(V,H)$.  

\subsection{Quasi-oscillations}
\label{ssec:quasi-oscillations}

We recall the definition of quasi-\epis from~\cite{BBR09}. 

\begin{defi}\label{defn:quasiepi}
  An {\em increasing quasi-\epi} of size $n\geq 6$ is obtained from an
  increasing \epi $\xi$ of size $n-1$ by the addition of either a
  minimal element at the beginning of $\xi$ or a maximal element at
  the end of $\xi$, followed by the move of an element of $\xi$
  according to the rules of Table~\ref{table:def_qepis}\footnote{
    The first row of Table~\ref{table:def_qepis} reads as follows:
    If a maximal element is added to
    $\xi$, with $\xi \in S_{n-1}$ starting (resp. ending) with a pattern $231$
    (resp. $132$), then the corresponding increasing quasi-\epi
    $\beta$ is obtained by moving the left-most point of $\xi$ so that it becomes 
    the right-most (in $\beta$), and the main substitution point is
    the largest point of $\xi$ (see the rightmost diagram of Figure~\ref{fig:episgrands}). 
}.
\begin{table}[ht]
\centering\small
\begin{tabular}{|c|c|c|c|c|c|}
\hline
Element & Pattern & Pattern  & Element & \ldots which & Main subs-\\
inserted &  $\xi_1\xi_2\xi_3$ &    {\small $\xi_{n{-}3}\xi_{n{-}2}\xi_{n{-}1}$}  & to move \ldots & becomes & titution point \\
\hline
max & $231$ & $132$ & left-most & right-most & largest\\
\hline
max & $231$ & $312$ & left-most & right-most & right-most \\
\hline
max & $213$ & $132$ & smallest & largest & largest \\
\hline
max & $213$ & $312$ & smallest & largest & right-most \\
\hline
min & $231$ & $132$ & largest & smallest & left-most \\
\hline
min & $231$ & $312$ & right-most & left-most & left-most \\
\hline
min & $213$ & $132$ & largest & smallest & smallest \\
\hline
min & $213$ & $312$ & right-most & left-most & smallest \\
\hline
\end{tabular}
\caption{Building quasi-\epis from \epis, and defining their main substitution points.}
\label{table:def_qepis}
\end{table}

We define the \emph{auxiliary point} ($A$) 
to be the point added to $\xi$, and the {\em main substitution 
point} ($M$) to be an extremal point of $\xi$ according to Table~\ref{table:def_qepis}.

Furthermore, for $n= 4$ or $5$, there are two increasing quasi-\epis
of size $n$: $2\, 4\, 1\, 3$, $3\, 1\, 4\, 2$, $2\, 5\, 3 \, 1\, 4$
and $4\, 1\, 3 \, 5\, 2$.
Each of them has two possible choices for
its pair of auxiliary and main substitution points.
See Figure~\ref{fig:quasiepi} for more details.
Finally, a {\em decreasing quasi-\epi} is the reverse of an increasing quasi-\epi.
\end{defi}

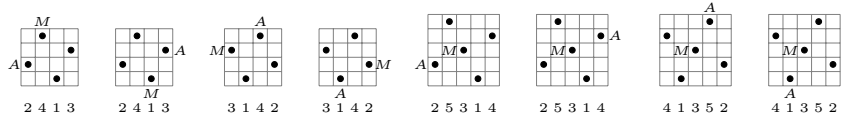
\begin{figure}[ht]
\begin{center}
\begin{tikzpicture}[scale=.19]
	\useasboundingbox (-0.5,0) rectangle (6,6);
        \permutationNom{2,4,1,3}
        \draw (0,5.5) node {};
        \draw (2.5,5.5) node {\tiny{$M$}};
        \draw (0.5,2.5) node {\tiny{$A$}};
        \end{tikzpicture}
\begin{tikzpicture}[scale=.19]
        \useasboundingbox (0,0) rectangle (6.5,6);
        \permutationNom{2,4,1,3}
        \draw (0,5.5) node {};
        \draw (5.5,3.5) node {\tiny{$A$}};
        \draw (3.5,0.5) node {\tiny{$M$}};
        \end{tikzpicture}
\begin{tikzpicture}[scale=.19]
	\useasboundingbox (-0.5,0) rectangle (6,6);
        \permutationNom{3,1,4,2}
        \draw (0,5.5) node {};
        \draw (0.5,3.5) node {\tiny{$M$}};
        \draw (3.5,5.5) node {\tiny{$A$}};
        \end{tikzpicture}
\begin{tikzpicture}[scale=.19]
	\useasboundingbox (0,0) rectangle (6.5,6);
        \permutationNom{3,1,4,2}
        \draw (0,5.5) node {};
        \draw (5.5,2.5) node {\tiny{$M$}};
        \draw (2.5,0.5) node {\tiny{$A$}};
        \end{tikzpicture}
\begin{tikzpicture}[scale=.19]
	\useasboundingbox (-0.5,0) rectangle (7,7);
        \permutationNom{2,5,3,1,4}
        \draw (0,6.5) node {};
        \draw (2.5,3.5) node {\tiny{$M$}};
        \draw (0.5,2.5) node {\tiny{$A$}};
\end{tikzpicture}
\begin{tikzpicture}[scale=.19]
	\useasboundingbox (0,0) rectangle (7.5,7);
        \permutationNom{2,5,3,1,4}
        \draw (0,6.5) node {};
        \draw (2.5,3.5) node {\tiny{$M$}};
        \draw (6.5,4.5) node {\tiny{$A$}};
\end{tikzpicture}
\begin{tikzpicture}[scale=.19]
	\useasboundingbox (-0.5,0) rectangle (7,7);
        \permutationNom{4,1,3,5,2}
        \draw (0,6.5) node {};
        \draw (2.5,3.5) node {\tiny{$M$}};
        \draw (4.5,6.5) node {\tiny{$A$}};
\end{tikzpicture}
\begin{tikzpicture}[scale=.19]
	\useasboundingbox (0,0) rectangle (7,7.5);
        \permutationNom{4,1,3,5,2}
        \draw (0,6.5) node {};
        \draw (2.5,3.5) node {\tiny{$M$}};
        \draw (2.5,0.5) node {\tiny{$A$}};
\end{tikzpicture} 

\caption{The diagrams of the increasing quasi-\epis of size $4$ and $5$, 
where auxiliary ($A$) and main ($M$) substitution points are marked.}
\label{fig:quasiepi}
\end{center}
\end{figure}

In particular, it follows from Definition~\ref{defn:quasiepi} that 
the auxiliary point of increasing quasi-oscillations of size at least $6$ is uniquely determined, 
whereas there are two possible choices of auxiliary point in increasing quasi-oscillations of size $4$ and $5$. 

As noticed in~\cite{BBR09} there are four increasing (resp. decreasing)
quasi-\epis of size $n$ for any $n\geq 6$, two of size $4$ ($2\, 4\, 1\,
3$ and $3\, 1\, 4\, 2$) and two of size $5$ ($2\, 5\, 3\, 1\,
4$ and $4\, 1\, 3\, 5\, 2$). 
Moreover, every quasi-oscillation is a simple pin-permutation. 

\medskip

Like oscillations, the quasi-oscillations of size at least $6$ may also be defined explicitly. 
Namely, one increasing quasi-oscillation is
{\small 
\begin{align*}
& 4\, 1\, 6\, 3 \ldots (2k+2)\, (2k-1) \ldots (2n-2)\, (2n-5)\, \underbrace{(2n-1)}_{M}\, (2n-3)\, \underbrace{(2n)}_{A}\, 2  \text{ {\normalsize for even size}} \\
& 4\, 1\, 6\, 3 \ldots (2k+2)\, (2k-1) \ldots (2n-5)\, (2n)\, (2n-3)\, \underbrace{(2n-1)}_{M}\, \underbrace{(2n+1)}_{A}\, 2 \text{ {\normalsize for odd size,}}
\end{align*}
}
where $M$ (resp. $A$) indicates the main substitution point (resp. the auxiliary point). 
The other three increasing quasi-oscillations are obtained applying some symmetries 
to the diagram of the above permutation $\sigma$, namely reflexion according to its two diagonals. 
In other words, the four increasing quasi-oscillations are $\sigma$, 
its so-called \emph{reverse-complement} $\sigma^{rc}$, 
and their inverses $\sigma^{-1}$ and $(\sigma^{rc})^{-1}$. 
The definition of the auxiliary and main substitution points follows along the application of these symmetries. 

\medskip

It should be noticed that each quasi-oscillation of size $4$ or $5$ is both increasing and decreasing.
However, once its auxiliary point is chosen among the four possibilities,
then its nature (increasing or decreasing) is determined without ambiguity,
and so is its main substitution point.
Moreover, knowing the (unordered) pair of points which are the auxiliary and main substitution points,
we can deduce which one is the auxiliary point without ambiguity.

\medskip

We conclude this paragraph about quasi-oscillations with a remark 
on the number of their active knights which involve (one of) their auxiliary point(s). 
This information will be useful in the proof of Lemma~\ref{lem:w_at_least_3} (p.\pageref{lem:w_at_least_3}). 

\begin{rem}
The auxiliary point of an increasing quasi-oscillation of size $n$ 
(or any of its auxiliary points, in case $n=4$ or $5$) 
belongs to exactly one active knight if $n  \neq 4$, 
and to exactly two active knights if $n=4$. 
\label{rem:knights_quasi-oscillations}
\end{rem}

\begin{proof}
Consider first increasing quasi-oscillations of size greater than $5$. 
From Lemma 4.6 of~\cite{BBR09} (see also the last diagram of Figure~\ref{fig:episgrands}),
the main substitution point belongs to exactly two active knights --
one formed with the auxiliary point and one formed with the point separating it from the auxiliary point --
and there are no other active knights. 

Consider now an increasing quasi-oscillation of size $4$ or $5$ (see Figure~\ref{fig:quasiepi})
where an auxiliary point $x$ is chosen. 
We may also apply to Lemma 4.6 of~\cite{BBR09} to count its active knights that involve $x$. 
Namely, an increasing quasi-oscillation of size $5$ has exactly $4$ active knights,
all of them contain the main substitution point (which is uniquely determined, regardless of the choice of $x$), 
and exactly one of them contains the auxiliary point $x$. 
Finally, an increasing quasi-oscillation of size $4$ has exactly $4$ active knights and each of its points (including the auxiliary point $x$)
belongs to exactly two active knights.
\end{proof}

We refer the reader to~\cite{BBR09}  for further properties of
oscillations and quasi-oscillations.

\section{Pin words of  pin-permutations}\label{sec:pinwords}

Our goal here is to describe the set $P(\pi)$ of pin
words that encode a pin-permutation $\pi$, 
following the recursive characterization of the decomposition trees of pin-per\-mu\-ta\-tions
that is given by Equation~\eqref{eq:pin_perm_trees} (p.\pageref{eq:pin_perm_trees}).

As outlined in Subsection~\ref{ssec:idees_construction_automates}, 
the characterization of $P(\pi)$ we provide is naturally divided into several cases,
depending on which term of Equation~\eqref{eq:pin_perm_trees} $\pi$ belongs to. First, we study the
non-recursive cases, then the recursive cases with a linear root and
finally the recursive cases with a prime root.
We start with a preliminary study of the ways children of
decomposition trees with linear root can be read in a pin
representation. These first results will be useful both in the
non-recursive and the recursive cases.

\begin{rem}\label{rem:ominus=oplus_transposed}
In the study that follows, we never examine the case of decomposition trees with a linear root labeled by $\ominus$. 
Indeed, permutations with decomposition trees of this form are the reverse of permutations whose decomposition trees have a linear root labeled by $\oplus$, 
and every argument and result on the $\oplus$ case can therefore be transposed to the $\ominus$ case. 
A similar symmetry holds for decomposition trees with prime roots labeled by increasing (resp. decreasing) 
quasi-oscillations and two children that are not leaves. 
\end{rem}

\subsection{Reading of children of a linear node}

\begin{defi}
  Let $\pi$ be a pin-permutation and $p=(p_1, \ldots , p_n)$ be a
  pin representation of $\pi$. 
  We say that $p$ \emph{reads} the points of $\pi$ in the order $p_1, \ldots , p_n$. 
  For any set $D$ of points of
  $\pi$, if $k$ is the number of maximal factors $p_i, p_{i+1} ,
  \ldots, p_{i+j}$ of $p$ that contain only points of $D$, we say that
  $D$ is \emph{read in $k$ \fois by $p$}.
  If $C$ is a set of points of $\pi$
  disjoint from $D$, we say that $D$ is \emph{read entirely before} $C$ if 
  every pin belonging to $D$ appears in $p$ before the first pin belonging to $C$.
\label{defn:read}
\end{defi}

Let $\pi$ be a pin-permutation whose decomposition tree $T$ has a linear
root. W.l.o.g., assume that $T=$ \tikz[sibling
  distance=7mm,baseline=-8pt, level distance=5mm] \node
[linear](X){$\oplus$} child {node {$T_1$}} child {node {$T_2$}} child
{node {$\ldots$}} child {node {$T_r$}}; and let $p=(p_1,\ldots,p_n)$ be
one of its pin representations. In the sequel, we denote by $i_0$ the
index of the child which contains $p_1$.

\begin{lem}\label{lem:lem1}
  Let $1 \leq i,j \leq r$ such that either $i < j < i_0$ or $i_0 < j <
  i$. Then $T_j$ is read by $p$ entirely before $T_i$.
\end{lem}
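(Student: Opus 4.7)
The plan is to exploit the fact that in an $\oplus$-decomposition $\pi=\oplus[\pi_1,\ldots,\pi_r]$, the bounding boxes $R_1,\ldots,R_r$ of the blocks corresponding to $T_1,\ldots,T_r$ are arranged strictly along the main diagonal. Denoting $R_k=[a_k,b_k]\times[c_k,d_k]$, I have $b_k<a_{k+1}$ and $d_k<c_{k+1}$ for every $k<r$. The statement is therefore a purely geometric consequence of the axioms of pin sequences: once a pin lies in $R_{i_0}$ and another pin lies in $R_i$, every point of an ``intermediate'' block $R_j$ is trapped inside their common bounding box.

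I would fix $q$ an arbitrary point of $T_j$ and $q'$ an arbitrary point of $T_i$, and treat both cases $i_0<j<i$ and $i<j<i_0$ simultaneously: in either situation $R_j$ lies strictly between $R_{i_0}$ and $R_i$ in both coordinates, so the chain of inequalities
\[
\min\bigl((p_1)_x,q'_x\bigr)<q_x<\max\bigl((p_1)_x,q'_x\bigr) \quad \text{and} \quad \min\bigl((p_1)_y,q'_y\bigr)<q_y<\max\bigl((p_1)_y,q'_y\bigr)
\]
holds, placing $q$ strictly inside the bounding box of $\{p_1,q'\}$.

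Now suppose for contradiction that $q$ is read after $q'$, say $q=p_m$ with $q'\in\{p_1,\ldots,p_{m-1}\}$. By the definition of a pin sequence, $p_m$ must lie outside the bounding box of $\{p_1,\ldots,p_{m-1}\}$, which contains the bounding box of $\{p_1,q'\}$; this contradicts the observation above. Hence every point of $T_j$ is read before every point of $T_i$, which is exactly the statement that $T_j$ is read entirely before $T_i$.

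The argument is genuinely short, so the only potential obstacle is bookkeeping: making sure the strict ``betweenness'' of $R_j$ is correctly read off from $i_0<j<i$ (or $i<j<i_0$), and being explicit that $p_1\in R_{i_0}$ by the very definition of $i_0$. No case distinction between the two configurations is actually needed because the geometric claim is invariant under swapping the roles of $R_i$ and $R_{i_0}$.
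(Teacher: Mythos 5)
Your proof is correct and uses essentially the same argument as the paper: since $p_1\in T_{i_0}$, any point of $T_j$ lies strictly inside the bounding box spanned by $p_1$ and a point of $T_i$, so the pin-sequence axiom (each $p_m$, $m\geq 2$, lies outside the bounding box of $\{p_1,\ldots,p_{m-1}\}$) forbids reading a point of $T_j$ after a point of $T_i$. The paper phrases this forward, taking $p_\ell$ to be the first pin of $T_i$ and noting $T_j\subseteq\mathcal{B}_{p_1,\ldots,p_\ell}$, while you argue by contradiction with arbitrary points; the content is identical.
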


\begin{proof}
  Let $\ell = \min \{ \ell', p_{\ell'}\in T_i\}$. Let ${\mathcal B}_{p_{1},\ldots,p_{\ell}}$ be the
  bounding box of $\{p_1,\ldots,p_{\ell}\}$. As $p_1 \in T_{i_0}$ and $p_\ell \in T_i$, $T_j
  \subseteq {\mathcal B}_{p_1, \ldots,p_{\ell}}$ (see Figure~\ref{fig:bblem1}), hence it is
  entirely read before $p_\ell$ in $p$. Indeed, for all $k \geq
  2$, $p_k$ lies outside the bounding box of $\{p_1,\ldots ,p_{k-1}\}$.
\end{proof}

The previous lemma gives the possible orders in which children are read. Now we
characterize the children $T_i$ which may be read in several \fois.  When
this is the case, we will prove that the decomposition tree is
of a specific shape. This can indeed be deduced from the two following lemmas.
\begin{lem}\label{lem:onlyOneBlockUnfinishedReading}
  For every $k \in \{1, \ldots, n\}$ there is at most one
  child whose reading has started and is not finished after
  $(p_1,p_2,\ldots,p_k)$.
\end{lem}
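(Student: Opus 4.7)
I would proceed by contradiction, choosing $k$ minimal such that two distinct children $T_a$ and $T_b$ with $a<b$ are both partially read after $(p_1,\ldots,p_k)$. By minimality of $k$, exactly one of the two has just become partial at step $k$; up to this choice, $p_k\in T_b$, $|T_b|\geq 2$, and $T_a$ is already partial at time $k-1$. The goal is then to rule out each possible position of $i_0$ relative to $a$ and $b$.

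First, I would dispose of most cases using Lemma~\ref{lem:lem1}. If $i_0<a$, then $i_0<a<b$ and the lemma (applied with $j=a$, $i=b$) forces $T_a$ to be entirely read before $T_b$ starts, contradicting $T_a$ being partial at step $k$. If $i_0>b$, then $a<b<i_0$ and the lemma (applied with $j=b$, $i=a$) forces $T_b$ to be entirely read before $T_a$ starts, contradicting the fact that $T_a$'s first pin is read before step $k$. Finally, if $i_0=b$, then $p_1\in T_b$ so $T_b$ has been started since step $1$; by minimality of $k$ applied to $T_a$ and $T_b$ at time $k-1$, $T_b$ must be fully read at time $k-1$, contradicting that $p_k\in T_b$ is a new pin.

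The remaining cases are $i_0=a$ and $a<i_0<b$, and for these I would argue geometrically. Since the root is $\oplus$, $T_a$ lies strictly to the lower-left of $T_b$ in the diagram of $\pi$. Pick an unread $z\in T_a$ and some read $w\in T_a$ at time $k$. Since $p_k\in T_b$ is strictly upper-right of $z$ and $p_k\in B_k$, every later bounding box $B_m$ ($m\geq k$) has its upper-right corner with coordinates at least those of $p_k$, so $z$ can never exit $B_{m-1}$ through the right or the top side. Dually, any unread $z'\in T_b$ is strictly upper-right of $w$ and $w\in B_k$, so $z'$ can only exit successive bounding boxes through the top or the right. The main obstacle is then to combine these directional constraints with the separation and independence rules of pin sequences -- using in particular Lemma~\ref{lem:[7]2.17}, which identifies the unique unread point on the sides of the current bounding box as the next pin -- to show that once one of $z$ or $z'$ is read, the bounding box grows in a direction that forces the other to become strictly interior to some later $B_m$, contradicting the fact that every point of $\pi$ is eventually read in~$p$.
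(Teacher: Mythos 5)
Your preliminary reductions are fine: the minimal-$k$ setup, the observation that $p_k$ must be the first pin of one of the two children, and the use of Lemma~\ref{lem:lem1} together with minimality to eliminate $i_0<a$, $i_0>b$ and $i_0=b$ are all correct (the ``up to this choice'' normalization needs a word about the reverse-complement symmetry, but that is minor). The genuine gap is that you stop exactly where the content of the lemma lies. In the remaining cases $i_0=a$ and $a<i_0<b$ you only record directional constraints --- an unread $z\in T_a$ can leave later bounding boxes only on the left or below, an unread $z'\in T_b$ only on the right or above --- and then label the derivation of a contradiction from them ``the main obstacle''. Those constraints alone cannot produce a contradiction, because nothing in your argument uses that the children of a $\oplus$-node are $\oplus$-indecomposable, and without that hypothesis the configuration you are trying to exclude actually occurs: cut $1\,2\,3\,4$ into the ($\oplus$-decomposable) blocks $\{1,2\}$ and $\{3,4\}$ and take the pin representation $(2,3,1,4)$; after two pins both blocks are partially read (with $p_1$ in the lower block, i.e.\ your case $i_0=a$), reading $z=1$ never forces $z'=4$ into the interior of a later bounding box, and the sequence finishes with no violation of any pin rule. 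So the mechanism you propose for the last step (``reading one of $z,z'$ forces the other to become strictly interior'') is not derivable from what you have established, and any correct completion must bring in indecomposability, which your sketch never does.

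The paper obtains the contradiction statically, at time $k$ itself, and this is precisely where indecomposability enters. One checks (using Lemma~\ref{lem:lem1} and minimality once more) that in your remaining cases $T_a$ is the started child of smallest index and $T_b$ that of largest index, so the lower-left corner of the bounding box of $\{p_1,\ldots,p_k\}$ lies in $T_a$ and the upper-right corner in $T_b$. If no unread point of $T_a$ lay in the strips directly to the left of or below this box, then the read and unread points of $T_a$ would be separated both horizontally and vertically, writing $T_a$ as a direct sum of two nonempty parts and contradicting its $\oplus$-indecomposability; hence $T_a$ has an unread point on the sides of the bounding box, and symmetrically so does $T_b$. Two distinct points on the sides of one bounding box contradict Lemma~\ref{lem:[7]2.17}, which is the contradiction you were after. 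This side-point argument is the missing ingredient; once it is in place, no analysis of the later evolution of the bounding boxes is needed at all.
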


\begin{proof}
  Suppose that pins $p_1,\ldots,p_k$ have already been read and
  that there are two children $T_i$ and $T_m$ with $i<m$ whose readings
  have started and are not finished. By Lemma~\ref{lem:lem1} there
  exists at most one child $T_j$ with $j < i_0$ and at most one child
  $T_j$ with $j > i_0$ whose readings have started and are not
  finished. Therefore $i \leq i_0$ and $m \geq i_0$. Note that $i = \min \{\ell \mid \exists h \in \{1,
  \ldots, k\}, p_h \in T_{\ell}\}$. The same goes for $m$
  changing the minimum into a maximum. If the reading of $T_i$ is not
  finished, since $T_i$ is $\oplus$-indecomposable, there must exist a
  pin $p_q$ in zone \tikz\draw[pattern=north east lines](0,0)
  rectangle (0.5,0.25); (see Figure~\ref{fig:bblem2}).  Such a pin is
  on the side of the bounding box ${\mathcal B}_{p_1, \ldots,p_k}$ of
  $\{p_1, \ldots,p_k\}$, and the same remark goes for $T_m$.
  But from Lemma~\ref{lem:[7]2.17} (p.\pageref{lem:[7]2.17})
  there is at most one pin lying on the sides of a bounding box,
  and this contradiction concludes the proof. 
\end{proof}

\begin{figure}[ht]
 \begin{minipage}[b]{.32\linewidth}
\begin{center}
 \begin{tikzpicture}[scale=.3]
\draw[help lines] (0,0) -- (-1,0);
\draw[help lines] (0,0) -- (0,-1);
 \draw (0,0) rectangle node {$T_i$} (3,3);
\draw[fill] (2.5,2.5) circle  (2pt);
\draw[help lines] (3,3) rectangle +(0.5,0.5);
\draw[help lines] (3.5,3.5) rectangle +(0.5,0.5);
 \draw (4,4) rectangle node {$T_{j}$} +(3,3);
\draw[help lines] (7,7) rectangle +(0.5,0.5);
\draw[help lines] (7.5,7.5) rectangle +(0.5,0.5);
 \draw (8,8) rectangle node {$T_{i_0}$} +(3,3);
\draw[fill] (8.5,8.5) circle  (2pt);
\draw[help lines] (11,11) -- (12,11);
\draw[help lines] (11,11) -- (11,12);
\draw [very thick] (2.5,2.5) rectangle (9,9);
\draw (7,1.5) node {${\mathcal B}_{p_{1},\ldots,p_{\ell}}$};
\draw (1,4.5) node (pl) {$p_{\ell}$};
\draw (6,11) node (p1) {$p_{1}$};
\draw [->, thick] (pl) -- (2.85,2.75);
\draw [->, thick] (p1) -- (8.5,8.5);

\end{tikzpicture}
\caption{Proof of \newline Lemma~\ref{lem:lem1}.}
\label{fig:bblem1}
\end{center}
\end{minipage}
 \begin{minipage}[b]{.32\linewidth}
\begin{center}
 \begin{tikzpicture}[scale=.3]
\draw[help lines] (0,0) -- (-1,0);
\draw[help lines] (0,0) -- (0,-1);
\draw (0,0) rectangle node {$T_i$} (3,3);
\draw[help lines] (3,3) rectangle +(1,1);
\draw[help lines] (4,4) rectangle +(1,1);
\draw[help lines] (5,5) rectangle +(1,1);
\draw[help lines] (6,6) rectangle +(1,1);
\draw[help lines] (7,7) rectangle +(1,1);
 \draw (8,8) rectangle node[shift={(.2,.2)}] {$T_{m}$} +(3,3);
\draw[help lines] (11,11) -- (12,11);
\draw[help lines] (11,11) -- (11,12);
\draw [very thick] (2.5,2.5) rectangle (9,9);
\draw (7,1.5) node {${\mathcal B}_{p_{1},\ldots,p_{k}}$};
\draw [pattern=north east lines] (0,2.5) rectangle (2.5,3);
\draw [pattern=north east lines] (2.5,0) rectangle (3,2.5);
\draw [pattern=north east lines] (9,8) rectangle +(2,1);
\draw [pattern=north east lines] (8,9) rectangle +(1,2);

\end{tikzpicture}
\caption{Proof of \newline Lemmas~\ref{lem:onlyOneBlockUnfinishedReading} and \ref{lem:oneBlockReadInSeveralTimes}.}
\label{fig:bblem2}
\end{center}
\end{minipage}
 \begin{minipage}[b]{.32\linewidth}
\begin{center}
\begin{tikzpicture}[scale=0.3,inner sep=0]
 \draw[very thick] (-2,-2) rectangle node[below=40pt]{$T_{i_0}$} (7,7) ;
\draw[thick] (2,2) rectangle node {\tiny $p_1, \ldots, p_\ell$}(6,6);
\draw[pattern=north east lines] (-4,-2) -- (-2,-2) -- (-2,-3);
\draw[pattern=north east lines] (9,7) -- (7,7) -- (7,9);
\draw (1.5,8.5) node (b) {\tiny $p_{\ell+1}, \ldots, p_{m-1}$};
\draw[thick,->] (b) -- (7,8);

\draw (1.5,6.5) node[circle,draw] {\tiny $1$};
\draw (0.5,6.5) node[circle,draw] {\tiny $3$};

\draw (6.5,1.5) node[circle,draw] {\tiny $2$};
\draw (6.5,0.5) node[circle,draw] {\tiny $4$};
\draw (-3.5,2.5) node (pm) {$p_m$};
\draw[->] (pm) -- (1.25,6);
\draw[->] (pm) -- (6,1.1);

\draw[help lines] (0,6) -- (7,6);
\draw[help lines] (6,0) -- (6,7);
\draw[help lines] (2,0) -- (2,7);
\draw[help lines] (1,0) -- (1,7);
\draw[help lines] (0,0) -- (0,7);
\draw[help lines] (0,0) -- (7,0);
\draw[help lines] (0,1) -- (7,1);
\draw[help lines] (0,2) -- (7,2);

\end{tikzpicture}
\caption{$T_{i_0}$ is read in \newline two \fois (Lemma~\ref{lem:readInTwoTimes}).}\label{fig:pm}
\end{center}
\end{minipage}
\end{figure}

\begin{lem}\label{lem:oneBlockReadInSeveralTimes}
Every child $T_i$ is read in one piece by $p$, except perhaps $T_{i_0}$.
\end{lem}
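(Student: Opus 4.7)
The plan is to argue by contradiction. Suppose some child $T_i$ with $i \neq i_0$ is read by $p$ in at least two \fois: let $p_a$ be the last pin of its first piece, so $p_a \in T_i$, $p_{a+1} \notin T_i$, and the reading of $T_i$ is not yet finished after $(p_1, \ldots, p_a)$.

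The key step -- which is essentially the geometric reasoning already driving the proof of Lemma~\ref{lem:onlyOneBlockUnfinishedReading} -- is to exhibit an unread pin of $T_i$ lying on a side of the bounding box $\mathcal{B}_{p_1, \ldots, p_a}$. First, no unread pin of $T_i$ can lie strictly inside $\mathcal{B}_{p_1, \ldots, p_a}$, since bounding boxes of pin sequences grow monotonically, so such a pin could never be read later. Hence every unread pin of $T_i$ lies in $T_i$'s rectangle and outside $\mathcal{B}_{p_1, \ldots, p_a}$: either in one of the two hatched zones attached to the read sub-box of $T_i$ (as depicted in Figure~\ref{fig:bblem2}), or strictly lower-left of it. Now $T_i$ is a child of the root labeled $\oplus$, so by Theorem~\ref{thm:decomp_perm} it is $\oplus$-indecomposable; this rules out the possibility that all unread pins sit in the ``strictly lower-left'' region, because otherwise the read and unread parts of $T_i$ would form a nontrivial $\oplus$-decomposition. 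So at least one unread pin of $T_i$ must lie in a hatched zone, i.e.~on a side of $\mathcal{B}_{p_1, \ldots, p_a}$.

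Once such an unread pin is produced, I would conclude by invoking Lemma~\ref{lem:[7]2.17}: the unique point of $\sigma$ on the sides of $\mathcal{B}_{p_1, \ldots, p_a}$ must be $p_{a+1}$. Consequently $p_{a+1}$ belongs to $T_i$, contradicting the choice of $p_a$ as the last pin of the first piece of $T_i$. I expect the main obstacle to be precisely this geometric existence step, whose justification boils down to the $\oplus$-indecomposability argument sketched above; it is in essence the observation already used inside the proof of Lemma~\ref{lem:onlyOneBlockUnfinishedReading}, and in a full write-up I would either cite that proof or extract the observation as a short preliminary lemma shared by both results, so that the rest of the argument reduces to formal bookkeeping.
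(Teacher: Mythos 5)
Your argument is correct and is essentially the paper's own proof: take the last pin $p_a$ of the first piece of $T_i$, use $\oplus$-indecomposability of the children of the $\oplus$ root to force an unread pin of $T_i$ into the hatched zones on the sides of the bounding box of $\{p_1,\ldots,p_a\}$, and conclude via Lemma~\ref{lem:[7]2.17} that this pin is $p_{a+1}\in T_i$, a contradiction. The only difference is that you spell out the case analysis (inside the box / hatched zones / strictly lower-left) that the paper leaves implicit, which is fine.
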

\begin{proof}
  Consider a child $T_i$ with $i\neq i_0$ which is read in more than
  one piece by $p$. Consider the pin $p_{k+1}$ which is the first pin
  outside $T_i$ after $p$ has started reading $T_i$. As $p_1$ is in
  $T_{i_0}$, $p_1$ is outside $T_i$ and the bounding box of $\{p_1, p_2,
  \ldots, p_{k-1}, p_{k}\}$ allows to define a zone \tikz\draw[pattern=north
  east lines](0,0) rectangle (0.5,0.25);
  in $T_i$ as shown in the bottom left part of Figure~\ref{fig:bblem2}.
  Since $T_i$ is $\oplus$-indecomposable, there is
  at least one pin in this zone. This pin is on the side of the
  bounding box of $\{p_1, p_2, \ldots, p_{k}\}$ so it is $p_{k+1}$ by
  Lemma~\ref{lem:[7]2.17} (p.\pageref{lem:[7]2.17}).
  Thus $p_{k+1} \in T_i$ which provides the desired contradiction.
\end{proof}

When a child may be read in several \fois, the
decomposition tree of the whole permutation $\pi$ has a special shape given
in the following lemma.

\begin{lem}\label{lem:readInTwoTimes}
The only permutations $\pi$ whose decomposition trees have a root $\oplus$
in which a child may be read in several \fois are those
whose decomposition trees have one of the shapes given in Figure~\ref{fig:severaltime}
where $\xi^+$ is an increasing oscillation of size at least $4$.

A given permutation $\pi$ may match several shapes of Figure~\ref{fig:severaltime}.
However if a child is read in more than one piece,
then it is necessarily the first child to be read (denoted $T_{i_0}$)
and it is read in two \fois;
in addition, there is exactly one shape of Figure~\ref{fig:severaltime} such that
the first part of $T_{i_0}$ to be read is $S$ and
the second part is the remaining leaves of $T_{i_0}$ with only the point $x$ read in between.
\end{lem}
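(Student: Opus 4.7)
The plan is to start from the assumption that some child of the root is read in several \fois and derive the four possible tree shapes for $\pi$. By Lemma~\ref{lem:oneBlockReadInSeveralTimes} this child is necessarily $T_{i_0}$. I introduce $\ell$ as the largest index with $\{p_1, \ldots, p_\ell\} \subseteq T_{i_0}$ and $m > \ell$ as the smallest index with $p_m \in T_{i_0}$, so that $p_{\ell+1}, \ldots, p_{m-1}$ all lie outside $T_{i_0}$. By Lemma~\ref{lem:lem1} these intermediate pins all lie on a single side of $T_{i_0}$, and up to the four symmetries given by which corner of $T_{i_0}$ one exits through, I may assume they lie above and to the right of $T_{i_0}$, as depicted in Figure~\ref{fig:pm}.

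The first key step will be to locate $p_m$ precisely. The bounding box $B_{m-1}$ of $\{p_1, \ldots, p_{m-1}\}$ has its top and right sides strictly outside $T_{i_0}$, while its bottom and left sides coincide with those of $B_\ell$, the bounding box of $\{p_1, \ldots, p_\ell\}$. Since $p_m$ must be separating or independent with respect to $\{p_1, \ldots, p_{m-1}\}$ and must belong to $T_{i_0}$, an inspection of the admissible sides of $B_{m-1}$ yields exactly the four corner cells of $T_{i_0}$ labelled $1$ to $4$ in Figure~\ref{fig:pm}. The second key step will then be to show that $T_{i_0}$ is read in exactly two \fois: a third or later re-entry into $T_{i_0}$ would again require a return pin lying on a side of some larger bounding box, and applying Lemma~\ref{lem:[7]2.17} together with the $\oplus$-indecomposability of $T_{i_0}$ will rule this out. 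From this one deduces that after $p_m$ only leaves of $T_{i_0}$ remain to be read. Those leaves, together with the intermediate pins $p_{\ell+1}, \ldots, p_{m-1}$ read in $T_j$ for $j > i_0$, are forced by the alternating separation/extension conditions of the pin sequence to form an increasing \epi $\xi^+$ in the sense of Definition~\ref{defn:epi}; the bound $|\xi^+| \geq 4$ comes from the fact that at least two intermediate pins are needed to push $p_m$ into a genuine corner of $T_{i_0}$ rather than merely extending $B_\ell$. Unwinding the four symmetries fixed above yields the four shapes listed in Figure~\ref{fig:severaltime}.

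The main obstacle will be to verify rigorously that the alternating pattern formed by $p_\ell, p_{\ell+1}, \ldots, p_m$ and the tail of the reading must match an increasing \epi and not some competing configuration: this amounts to checking, at each step, that any deviation would either break the $\oplus$-indecomposability of $T_{i_0}$ or create a second point on the sides of some intermediate bounding box, contradicting Lemma~\ref{lem:[7]2.17}. Once this is done, the final assertion of the statement, that exactly one shape of Figure~\ref{fig:severaltime} matches a given decomposition with prescribed $S$ and $x$, will follow easily, since the four configurations are distinguished by the relative placement of $S$, of $x$ and of $\xi^+$ inside the decomposition tree.
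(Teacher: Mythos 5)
Your overall strategy (reduce to $T_{i_0}$ via Lemma~\ref{lem:oneBlockReadInSeveralTimes}, examine the pins read outside $T_{i_0}$, then pin down the position of the return pin $p_m$) is the same as the paper's, but there is a genuine gap at the heart of the argument: you never establish that the intermediate part consists of a \emph{single} pin, i.e.\ that $m=\ell+2$. The paper gets this by first excluding that $p_m$ is an independent pin (it would then sit in the lower-left corner of the bounding box of $\{p_1,\dots,p_\ell\}$, and all later pins of $T_{i_0}$ would stay in that corner, making $T_{i_0}$ $\oplus$-decomposable); hence $p_m$ is a separating pin, and a separating pin inside $T_{i_0}$ cannot separate $p_{m-1}$ from the earlier pins if two or more pins have already been read outside $T_{i_0}$ in the same zone, so exactly one pin $p_{\ell+1}=x$ is read in between. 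This fact is not optional: it is precisely the assertion ``with only the point $x$ read in between'' in the statement, and it is what makes $x$ a leaf child of the root in every shape of Figure~\ref{fig:severaltime}. Your proposal instead allows several intermediate pins and even relies on that: you claim $|\xi^+|\geq 4$ ``because at least two intermediate pins are needed to push $p_m$ into a genuine corner'', which is false --- there is exactly one intermediate pin, and the oscillation $\xi^+$ is not built from intermediate pins or from other children $T_j$ at all. In the shapes $F3+$/$F4+$ (and their $G$ analogues), $\xi^+$ is the simple permutation labelling the root of $T_{i_0}$ itself, formed by the points $p_m,p_{m+1},\dots$ of the \emph{second} reading part of $T_{i_0}$ together with the node carrying $S$; its size bound comes from the forced alternation of left and down separating pins after $p_m$, not from the intermediate pins.

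A second, related problem is the case analysis. You announce ``four possible tree shapes'' distinguished by ``which corner of $T_{i_0}$ one exits through'', but for a root $\oplus$ there are only two exterior zones (upper-right and lower-left), and Figure~\ref{fig:severaltime} contains twelve shapes (six up to this symmetry). The positions $1$--$4$ of Figure~\ref{fig:pm} are not corners of $T_{i_0}$: they are the cells adjacent to, or one step away from, the bounding box of $\{p_1,\dots,p_\ell\}$, and they are available to $p_m$ only \emph{after} the independence case has been excluded and Lemma~\ref{lem:[7]2.17} has been invoked. The subsequent dichotomy --- $p_m$ adjacent to the box (cases $F1$, $F2$, where $\{p_1,\dots,p_\ell,p_m\}$ is already all of $T_{i_0}$) versus $p_m$ separated from it by one further pin, with the alternation either stopping at once ($F3$, $F4$) or continuing ($F3+$, $F4+$) --- is exactly what produces the list of shapes and the uniqueness claim at the end of the lemma, and it is missing from your sketch. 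Without the $m=\ell+2$ step and this finer analysis, the final assertion (exactly one shape with first part $S$, then $x$, then the remaining leaves of $T_{i_0}$) cannot ``follow easily'' as you state.
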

\begin{figure}[ht]
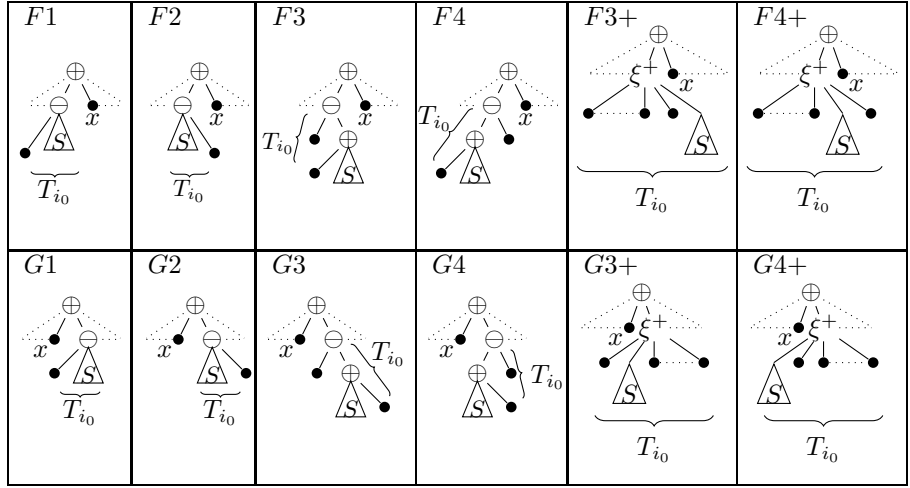

\centering
\begin{tabular}{|p{.1\textwidth}|p{.1\textwidth}|p{.14\textwidth}|p{.13\textwidth}|p{.15\textwidth}|p{.15\textwidth}|}
\hline
$F1$& $F2$ & $F3$ & $F4$ & $F3+$ & $F4+$\\
\begin{tikzArbre}{.3}
\useasboundingbox (-2.3,0) rectangle (4,-7.5);
\node[linear] {$\oplus$}
 	child [dotted] {node[leaf,fill=none] (T1) {}}
 	child {node[linear] (-) {$\ominus$}
		child[level distance=60pt] { node[leaf,anchor=center]  (gauche) {}}
		child [level distance=1pt,xshift=0pt,yshift=-8pt] {node[draw,shape=isosceles triangle,
shape border rotate=90,anchor=north] {\small $S$} edge from parent[draw=none]}
		child [sibling distance=25pt,level distance=60pt] {node[leaf,fill=none,anchor=west] (droite) {} edge from parent [draw=none]}
 	}
	child  {node[leaf] (x) {}}
	child [dotted] {node[leaf,fill=none] (Tk) {} };
\draw[dotted]  (T1) -- (-);
\draw[dotted]  (x) -- (Tk);
\draw (x) node [below=3pt] {$x$};
\draw[decorate,decoration={brace,amplitude=2pt,raise=7pt},anchor=center] (droite) -- node[below
= 10pt]{$T_{i_0}$} (gauche);
\end{tikzArbre}
&
\begin{tikzArbre}{.3}
\useasboundingbox (-2.3,0) rectangle (4,-7.5);
\node[linear] {$\oplus$}
 	child [dotted] {node (T1) {}}
 	child {node[linear] (-) {$\ominus$}
		child [sibling distance=18pt,level distance=60pt] {node  (gauche) {} edge from parent [draw=none]}
		child [level distance=2pt,xshift=0pt,yshift=-7pt] {node[draw,shape=isosceles triangle,
shape border rotate=90,anchor=north] {\small $S$} edge from parent[draw=none]}
		child[sibling distance=40pt,level distance=60pt] { node[leaf] (droite) {} }
 	}
	child  {node[leaf] (x) {}}
	child [dotted] {node (Tk) {} };
\draw[dotted]  (T1) -- (-);
\draw[dotted]  (x) -- (Tk);
\draw (x) node [below=3pt] {$x$};
\draw[decorate,decoration={brace, amplitude=2pt,raise=7pt}] (droite) -- node[below
= 10pt]{\small $T_{i_0}$} (gauche);
\end{tikzArbre}
&
\begin{tikzArbre}{.3}
\useasboundingbox (-3.4,0) rectangle (4,-7.5);
\node[linear] {$\oplus$}
 	child [dotted] {node (T1) {}}
 	child {node[linear] (-) {$\ominus$}
		child {node [leaf] {}}
		child {node[linear] {$\oplus$}
			child { node[leaf]  (gauche) {}}
			child [level distance=1pt,xshift=0pt,yshift=-7pt] {node[draw,shape=isosceles triangle,
	shape border rotate=90,anchor=north] {\small $S$} edge from parent[draw=none]}
			child [level distance=60pt,sibling distance=18pt] {node (droite) {} edge from parent [draw=none]}
 		}
	}
	child  {node[leaf](x) {}}
	child [dotted] {node (Tk) {} };
\draw[dotted]  (T1) -- (-);
\draw[dotted]  (x) -- (Tk);
\draw (x) node [below=3pt] {$x$};
\draw[decorate,decoration={brace, amplitude=2pt,raise =8pt}] (gauche) -- node[left
= 10pt]{\small $T_{i_0}$} (-);
\end{tikzArbre}
&
\begin{tikzArbre}{.3}
\useasboundingbox (-3.4,0) rectangle (4,-7.5);
\node [linear]{$\oplus$}
 	child [dotted] {node (T1) {}}
 	child {node[linear] (-) {$\ominus$}
		child {node[linear] {$\oplus$}
			child {node[leaf]  (gauche) {}}
			child [level distance=1pt,xshift=0pt,yshift=-7pt] {node[draw,shape=isosceles triangle,
	shape border rotate=90,anchor=north] {\small $S$} edge from parent[draw=none]}
			child [sibling distance=18pt] {node (droite) {} edge from parent [draw=none]}
 		}
		child {node[leaf]{}}
	}
	child  {node[leaf] (x) {}}
	child [dotted] {node (Tk) {} };
\draw[dotted]  (T1) -- (-);
\draw[dotted]  (x) -- (Tk);
\draw (x) node [below=3pt] {$x$};
\draw[decorate,decoration={brace, amplitude=2pt,raise =5pt}] (gauche) -- node[above left
= 6pt]{$T_{i_0}$} (-);
\end{tikzArbre}
&
\begin{tikzArbre}{.25}
\useasboundingbox (-4,0) rectangle (4,-11);
\node[linear] {$\oplus$}
	child [level distance=60pt,dotted] {node (T1) {}}
	child[missing]
	child [level distance=60pt] {node (xi) {$\xi^{+}$}
		child [level distance=60pt,sibling distance=35pt]{node (gauche) {} edge from parent [draw=none]}
		child[level distance=60pt] { node[leaf] (f1){}}
		child[missing]
		child[level distance=60pt] { node[leaf] (f2){}}
		child[level distance=60pt] {node[leaf] (fk) {}}
		child[child anchor=north,level distance=60pt] {node[draw,shape=isosceles triangle,
shape border rotate=90,anchor=north] {$S$}}
		child[level distance=60pt] {node (droite) {}edge from parent [draw=none]}
	}
	child[level distance=60pt]  {node[leaf] (x){}}
	child[level distance=60pt,missing]
	child[level distance=60pt,dotted] {node (Tk) {} };
\draw[dotted]  (T1) -- (xi);
\draw[dotted]  (x) -- (Tk);
\draw[dotted]  (f1) -- (f2);
\draw (x) node [below right=3pt] {$x$};
\draw[decorate,decoration={brace, amplitude=4pt,raise =20pt}] (droite) -- node[below
= 28pt]{$T_{i_0}$} (gauche);
\end{tikzArbre}
&
\begin{tikzArbre}{.25}
\useasboundingbox (-4,0) rectangle (4,-11);
\node[linear] {$\oplus$}
	child[level distance=60pt,dotted] {node (T1) {}}
	child[level distance=60pt,missing]
	child[level distance=60pt] {node (xi) {$\xi^{+}$}
		child[level distance=60pt,sibling distance=35pt] {node (gauche) {} edge from parent [draw=none]}
		child[level distance=60pt] { node[leaf] (f1){}}
		child[level distance=60pt,missing]
		child [level distance=60pt]{ node[leaf] (f2){}}
		child[level distance=60pt,child anchor=north] {node[draw,shape=isosceles triangle,
shape border rotate=90,anchor=north] {$S$}}
		child {node[leaf] (fk) {}}
		child[sibling distance=35pt] {node (droite) {}edge from parent [draw=none]}
	}
	child[level distance=60pt]  {node[leaf] (x){}}
	child[level distance=60pt,missing]
	child[level distance=60pt,dotted] {node (Tk) {} };
\draw[dotted]  (T1) -- (xi);
\draw[dotted]  (x) -- (Tk);
\draw[dotted]  (f1) -- (f2);
\draw (x) node [below right=3pt] {$x$};
\draw[decorate,decoration={brace, amplitude=4pt,raise =20pt}] (droite) -- node[below
= 28pt]{$T_{i_0}$} (gauche);
\end{tikzArbre}\\
\hline
$G1$ & $G2$ & $G3$ & $G4$ & $G3+$ & $G4+$\\
\begin{tikzArbre}{.3}
\useasboundingbox (-2.1,0) rectangle (4,-7.5);
\node[baseline,linear] {$\oplus$}
	child [dotted] {node (T1) {}}
	child  {node[leaf] (x) {}}
	child {node[linear] (-) {$\ominus$}
		child {node[leaf] (gauche) {}}
		child [level distance=1pt,xshift=0pt,yshift=-7pt] {node[draw,shape=isosceles triangle,
shape border rotate=90,anchor=north] {\small $S$} edge from parent[draw=none]}
		child [sibling distance=17pt] {node (droite) {} edge from parent [draw=none]}
		}
	child [dotted] {node (Tk) {} };
\draw[dotted]  (T1) -- (x);
\draw[dotted]  (-) -- (Tk);
\draw (x) node [below left=3pt] {$x$};
\draw[decorate,decoration={brace, amplitude=2pt,raise =7pt}] (droite) -- node[below
= 10pt]{$T_{i_0}$} (gauche);
\end{tikzArbre}
&
\begin{tikzArbre}{.3}
\useasboundingbox (-2.1,0) rectangle (4,-7.5);
\node[baseline,linear] {$\oplus$}
	child [dotted] {node (T1) {}}
	child  {node[leaf] (x) {}}
	child {node[linear] (-) {$\ominus$}
		child [sibling distance=17pt] {node (gauche) {} edge from parent [draw=none]}
		child [level distance=1pt,xshift=0pt,yshift=-7pt] {node[draw,shape=isosceles triangle,
shape border rotate=90,anchor=north] {\small $S$} edge from parent[draw=none]}
		child {node[leaf] (droite) {}}
		}
	child [dotted] {node (Tk) {} };
\draw[dotted]  (T1) -- (x);
\draw[dotted]  (-) -- (Tk);
\draw (x) node [below left=3pt] {$x$};
\draw[decorate,decoration={brace, amplitude=2pt,raise =7pt}] (droite) -- node[below
= 10pt]{$T_{i_0}$} (gauche);
\end{tikzArbre}
&
\begin{tikzArbre}{.3}
\useasboundingbox (-2,1) rectangle (4,-7.5);
\node[baseline,linear] {$\oplus$}
	child [dotted] {node (T1) {}}
	child  { node[leaf] (x) {}}
	child {node[linear] (-) {$\ominus$}
		child {node[leaf] {}}
		child {node[linear] {$\oplus$}
			child [sibling distance=17pt] {node (gauche) {} edge from parent [draw=none]}
			child [level distance=1pt,xshift=0pt,yshift=-7pt] {node[draw,shape=isosceles triangle,
	shape border rotate=90,anchor=north] {\small $S$} edge from parent[draw=none]}
			child { node[leaf] (droite) {}}
			}
	}
	child [dotted] {node (Tk) {} };
\draw[dotted]  (T1) -- (x);
\draw[dotted]  (-) -- (Tk);
\draw (x) node [below left=3pt] {$x$};
\draw[decorate,decoration={brace, amplitude=2pt,raise =4pt}] (-) -- node[above right
= 5pt]{$T_{i_0}$} (droite);
\end{tikzArbre}
&
\begin{tikzArbre}{.3}
\useasboundingbox (-2,1) rectangle (4,-7.5);
\node[baseline,linear] {$\oplus$}
	child [dotted] {node (T1) {}}
	child  {node[leaf] (x) {}}
	child {node[linear] (-) {$\ominus$}
		child {node[linear] {$\oplus$}
			child [sibling distance=17pt] {node (gauche) {} edge from parent [draw=none]}
			child [level distance=1pt,xshift=0pt,yshift=-7pt] {node[draw,shape=isosceles triangle,
	shape border rotate=90,anchor=north] {\small $S$} edge from parent[draw=none]}
			child {node[leaf] (droite) {}}
			}
		child { node[leaf] {}}
	}
	child [dotted] {node (Tk) {} };
\draw[dotted]  (T1) -- (x);
\draw[dotted]  (-) -- (Tk);
\draw (x) node [below left=3pt] {$x$};
\draw[decorate,decoration={brace, amplitude=2pt,raise =5pt}] (-) -- node[right
= 10pt]{$T_{i_0}$} (droite);
\end{tikzArbre}
&
\begin{tikzArbre}{.22}
\useasboundingbox (-3.5,0) rectangle (4,-11);
\node[baseline,linear] {$\oplus$}
	child [level distance=60pt,dotted] {node (T1) {}}
	child[missing]
	child [level distance=60pt] {node[leaf] (x){}}
	child [level distance=60pt]{node (xi) {$\xi^{+}$}
		child[level distance=60pt,sibling distance=35pt] {node (gauche) {} edge from parent [draw=none]}
		child[level distance=60pt] {node[leaf] (f1){}}
		child[level distance=60pt,child anchor=north] {node[draw,shape=isosceles triangle,
shape border rotate=90,anchor=north] {$S$}}
		child[level distance=60pt] {node[leaf] (f2){}}
		child[level distance=60pt,missing]
		child[level distance=60pt] {node[leaf] (fk) {}}
		child[level distance=60pt,sibling distance=35pt] {node (droite) {}edge from parent [draw=none]}
	}
	child[missing]
	child [level distance=60pt,dotted] {node (Tk) {} };
\draw[dotted]  (T1) -- (x);
\draw[dotted]  (xi) -- (Tk);
\draw[dotted]  (f2) -- (fk);
\draw (x) node [below left=3pt] {$x$};
\draw[decorate,decoration={brace, amplitude=4pt,raise =20pt}] (droite) -- node[below
= 28pt]{$T_{i_0}$} (gauche);
\end{tikzArbre}
&
\begin{tikzArbre}{.22}
\useasboundingbox (-3.5,0) rectangle (4,-11);
\node[baseline,linear,level distance=60pt] {$\oplus$}
	child [level distance=60pt,dotted] {node (T1) {}}
	child[missing]
	child [level distance=60pt] { node[leaf] (x){}}
	child [level distance=60pt] {node (xi) {$\xi^{+}$}
		child[level distance=60pt,sibling distance=35pt] {node (gauche) {} edge from parent [draw=none]}
		child[level distance=60pt,child anchor=north] {node[draw,shape=isosceles triangle,
shape border rotate=90,anchor=north] {$S$}}
		child[level distance=60pt] {node[leaf] (f1){}}
		child[level distance=60pt] { node[leaf] (f2){}}
		child[level distance=60pt,missing]
		child [level distance=60pt] {node[leaf] (fk) {}}
		child[level distance=60pt,sibling distance=35pt] {node (droite) {}edge from parent [draw=none]}
	}
	child[missing]
	child [level distance=60pt,dotted] {node (Tk) {} };
\draw[dotted]  (T1) -- (x);
\draw[dotted]  (xi) -- (Tk);
\draw[dotted]  (f2) -- (fk);
\draw (x) node [below left=3pt] {$x$};
\draw[decorate,decoration={brace, amplitude=4pt,raise =20pt}] (droite) -- node[below
= 28pt]{$T_{i_0}$} (gauche);
\end{tikzArbre}\\
\hline
\end{tabular}
\caption{Decomposition tree of $\pi$ when $T_{i_0}$ may be read in several \fois.
}
\label{fig:severaltime}
\end{figure}
In Figure~\ref{fig:severaltime} and in the sequel, we draw the attention of the reader to the difference between trees of the shape
$\begin{tikzpicture}[baseline=-10pt,sibling
distance=40pt,level distance=10pt]
\begin{scope}[scale=0.5]
\node[simple] {\textsc{r}} child[child anchor=north]
{node[draw,shape=isosceles triangle, shape border
rotate=90,anchor=north, inner sep=0,isosceles triangle apex angle=90] {\small $T$}} child [level distance =20pt] {[fill] circle (3 pt) node(y1)
{}}; \end{scope}
\end{tikzpicture}$
\  and
$\begin{tikzpicture}[scale=.5,baseline=-10pt, inner sep=0,sibling distance=25pt]
\node {\textsc{r}} child [missing] child
    [level distance=1pt,xshift=0pt,yshift=-7pt]
    {node[draw,shape=isosceles triangle, shape border
      rotate=90,anchor=north,isosceles triangle apex angle=90] {\small $T$} edge from
      parent[draw=none]} child [level distance =20pt]{[fill] circle (3pt) node (x){}};
  \end{tikzpicture}$\ : in the first case the root \textsc{r} has exactly $2$
  children, in the second one it has at least two children, $T$ being a
  forest.

\begin{proof}
  Let $\pi$ be a pin-permutation whose decomposition tree has a root $\oplus$.
  Let $p=(p_1,p_2,\ldots,p_n)$ be a pin representation of $\pi$ that reads one child in several \fois.
  Lemma~\ref{lem:oneBlockReadInSeveralTimes} ensures that there is only one such child, which is necessarily $T_{i_0}$.
  Denote $p_1,\ldots,p_\ell$ the first part of the reading of $T_{i_0}$.
  Then $p_{\ell+1},\ldots,p_{m-1}$ belong to other children until $p_m \in T_{i_0}$.

  As $T_{i_0}$ is a child of the root $\oplus$, each pin $p_i$ with $i
  \in \{ \ell+1,\ell+2,\ldots,m-1 \}$ lies in one of the zones
  \tikz\draw[pattern=north east lines](0,0) rectangle (0.5,0.25); as
  shown in Figure~\ref{fig:pm}. But if both zones contain at least one
  pin $p_i$ with $i \in \{ \ell+1,\ell+2,\ldots,m-1 \}$, the bounding
  box of $\{p_1,\ldots,p_{m-1}\}$ contains $T_{i_0}$ and thus
  $p_m$ cannot be outside the bounding box of $\{p_1,\ldots,p_{m-1}\}$. Hence all pins $p_i$
  with $i \in \{ \ell+1,\ell+2,\ldots,m-1 \}$ are in the same zone.

Assume w.l.o.g. that $\{p_{\ell+1},\ldots,p_{m-1}\}$ are in the
upper right zone of Figure~\ref{fig:pm} (otherwise, in the proof that follows,
cases $F1, \dots, F4+$ of Figure~\ref{fig:cases} are replaced by cases $G1, \dots, G4+$).
If $p_m$ respects
the independence condition, it must lie in the lower left corner of
the bounding box of $\{p_1,\ldots,p_\ell\}$ and every future pin of
$T_{i_0}$ lies in the same corner leading to a $\oplus$-decomposable
child $T_{i_0}$ which contradicts our hypothesis. Thus $p_m$ must be a separating
pin and $m=\ell+2$.

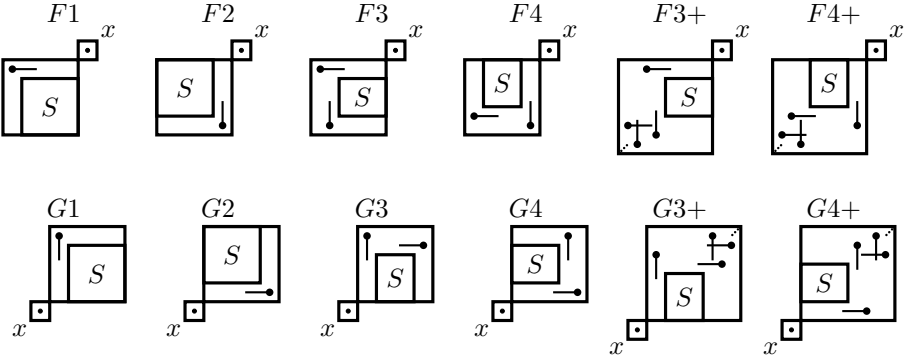
\begin{figure}[ht]
\begin{tabular}{cccccc}

$F1$ & $F2$ & $F3$ & $F4$ & $F3+$ & $F4+$\\
\begin{tikzpicture}[scale=.25]
\useasboundingbox (0,-1) rectangle (6.5,5.5);
\draw[very thick] (0,0) rectangle (4,4);
\draw[very thick] (1,0) rectangle node {$S$} (4,3) ;

\draw [very thick] (4,4) rectangle (5,5);
\draw[fill] (4.5,4.5) circle (3pt) node[above right=2pt] {$x$};
\pinL{0.5}{3.5};
\end{tikzpicture}
&
\begin{tikzpicture}[scale=.25]
\useasboundingbox (0,-1) rectangle (6.5,5.5);
\draw[very thick] (0,0) rectangle (4,4);
\draw[very thick] (0,1) rectangle node {$S$} (3,4) ;

\draw [very thick] (4,4) rectangle (5,5);
\draw[fill] (4.5,4.5) circle (3pt) node[above right=2pt] {$x$};
\pinD{3.5}{0.5};
\end{tikzpicture}
&
\begin{tikzpicture}[scale=.25]
\useasboundingbox (0,-1) rectangle (6.5,5.5);
\draw[very thick] (0,0) rectangle (4,4);
\draw[very thick] (1.5,1) rectangle node {$S$} (4,3) ;
\pinD{1}{0.5};
\pinL{0.5}{3.5};

\draw [very thick] (4,4) rectangle (5,5);
\draw[fill] (4.5,4.5) circle (3pt) node[above right=2pt] {$x$};
\end{tikzpicture}
&
\begin{tikzpicture}[scale=.25]
\useasboundingbox (0,-1) rectangle (6.5,5.5);
\draw[very thick] (0,0) rectangle (4,4);
\draw[very thick] (1,1.5) rectangle node {$S$} (3,4) ;
\pinL{0.5}{1};
\pinD{3.5}{0.5};

\draw [very thick] (4,4) rectangle (5,5);
\draw[fill] (4.5,4.5) circle (3pt) node[above right=2pt] {$x$};
\end{tikzpicture}
&
\begin{tikzpicture}[scale=.25]
\useasboundingbox (0,0) rectangle (6.5,6.5);
\draw[very thick] (0,0) rectangle (5,5);
\draw[very thick] (2.5,2) rectangle node {$S$} (5,4) ;
\pinD{1}{0.5};
\pinL{0.5}{1.5};
\pinD{2}{1};
\pinL{1.5}{4.5};
\draw (0.15,0.15) circle (1pt);
\draw (0.3,0.3) circle (1pt);
\draw (0.45,0.45) circle (1pt);

\draw [very thick] (5,5) rectangle (6,6);
\draw[fill] (5.5,5.5) circle (3pt) node[above right=2pt] {$x$};
\end{tikzpicture}
&
\begin{tikzpicture}[scale=.25]
\useasboundingbox (0,0) rectangle (6.5,6.5);
\draw[very thick] (0,0) rectangle (5,5);
\draw[very thick] (2,2.5) rectangle node {$S$} (4,5) ;
\pinL{0.5}{1};
\pinD{1.5}{0.5};
\pinL{1}{2};
\pinD{4.5}{1.5};
\draw (0.15,0.15) circle (1pt);
\draw (0.3,0.3) circle (1pt);
\draw (0.45,0.45) circle (1pt);

\draw [very thick] (5,5) rectangle (6,6);
\draw[fill] (5.5,5.5) circle (3pt) node[above right=2pt] {$x$};
\end{tikzpicture}\\
&&&&&\\
$G1$ & $G2$ & $G3$ & $G4$ & $G3+$ & $G4+$\\
\begin{tikzpicture}[scale=.25]
\useasboundingbox (0,0) rectangle (-6.5,-6.5);
\draw[very thick] (0,0) rectangle (-4,-4);
\draw[very thick] (0,-1) rectangle node {$S$} (-3,-4) ;

\draw [very thick] (-4,-4) rectangle (-5,-5);
\draw[fill] (-4.5,-4.5) circle (3pt) node[below left=2pt] {$x$};
\pinU{-3.5}{-0.5};
\end{tikzpicture}
&
\begin{tikzpicture}[scale=.25]
\useasboundingbox (0,0) rectangle (-6.5,-6.5);
\draw[very thick] (0,0) rectangle (-4,-4);
\draw[very thick] (-1,0) rectangle node {$S$} (-4,-3) ;

\draw [very thick] (-4,-4) rectangle (-5,-5);
\draw[fill] (-4.5,-4.5) circle (3pt) node[below left=2pt] {$x$};
\pinR{-0.5}{-3.5};
\end{tikzpicture}
&
\begin{tikzpicture}[scale=.25]
\useasboundingbox (0,0) rectangle (-6.5,-6.5);
\draw[very thick] (0,0) rectangle (-4,-4);
\draw[very thick] (-1,-1.5) rectangle node {$S$} (-3,-4) ;
\pinR{-0.5}{-1};
\pinU{-3.5}{-0.5};

\draw [very thick] (-4,-4) rectangle (-5,-5);
\draw[fill] (-4.5,-4.5) circle (3pt) node[below left=2pt] {$x$};
\end{tikzpicture}
&
\begin{tikzpicture}[scale=.25]
\useasboundingbox (0,0) rectangle (-6.5,-6.5);
\draw[very thick] (0,0) rectangle (-4,-4);
\draw[very thick] (-1.5,-1) rectangle node {$S$} (-4,-3) ;
\pinU{-1}{-0.5};
\pinR{-0.5}{-3.5};

\draw [very thick] (-4,-4) rectangle (-5,-5);
\draw[fill] (-4.5,-4.5) circle (3pt) node[below left=2pt] {$x$};
\end{tikzpicture}
&
\begin{tikzpicture}[scale=.25]
\useasboundingbox (0,0) rectangle (-6.5,-6.5);
\draw[very thick] (0,0) rectangle (-5,-5);
\draw[very thick] (-2,-2.5) rectangle node {$S$} (-4,-5) ;
\pinR{-0.5}{-1};
\pinU{-1.5}{-0.5};
\pinR{-1}{-2};
\pinU{-4.5}{-1.5};
\draw (-0.15,-0.15) circle (1pt);
\draw (-0.3,-0.3) circle (1pt);
\draw (-0.45,-0.45) circle (1pt);

\draw [very thick] (-5,-5) rectangle (-6,-6);
\draw[fill] (-5.5,-5.5) circle (3pt) node[below left=2pt] {$x$};
\end{tikzpicture}
&
\begin{tikzpicture}[scale=.25]
\useasboundingbox (0,0) rectangle (-6.5,-6.5);
\draw[very thick] (0,0) rectangle (-5,-5);
\draw[very thick] (-2.5,-2) rectangle node {$S$} (-5,-4) ;
\pinU{-1}{-0.5};
\pinR{-0.5}{-1.5};
\pinU{-2}{-1};
\pinR{-1.5}{-4.5};
\draw (-0.15,-0.15) circle (1pt);
\draw (-0.3,-0.3) circle (1pt);
\draw (-0.45,-0.45) circle (1pt);

\draw [very thick] (-5,-5) rectangle (-6,-6);
\draw[fill] (-5.5,-5.5) circle (3pt) node[below left=2pt] {$x$};
\end{tikzpicture}\\
\end{tabular}
\caption{Diagram of $T_{i_0}$ and $x$ if $T_{i_0}$ is read in two \fois, the first part being~$S$.}\label{fig:cases}
\end{figure}

As at most one point can lie on the sides of a bounding box, there are
only four possible positions for $p_m$ as depicted in Figure~\ref{fig:pm}. If there is no pin separating $p_m$ from $\{p_1,\ldots,
p_{m-1}\}$ then $p_m$ is either in position \tikz\draw (0,0)
node[circle,draw,inner sep=0] {\small $1$}; (case $F1$ on Figure~\ref{fig:cases}) or \tikz\draw (0,0) node[circle,draw,inner sep=0]
{\small $2$}; (case $F2$); moreover pins $\{p_1,p_2,\ldots,p_\ell,p_m\}$ form
a block and thus represent $T_{i_0}$ (because $T_{i_0}$ is
$\oplus$-indecomposable). Otherwise there is exactly one pin $p_{m+1}$
separating $p_m$ from the bounding box of $\{p_1,\ldots,p_\ell\}$,
thus $p_m$ is either in position \tikz\draw (0,0)
node[circle,draw,inner sep=0] {\small $3$}; (cases $F3$ and $F3+$) or
\tikz\draw (0,0) node[circle,draw,inner sep=0] {\small $4$}; (cases
$F4$ and $F4+$). Suppose that it is in position \tikz\draw (0,0)
node[circle,draw,inner sep=0] {\small $4$}; then $p_{m+1}$ is a left
pin separating $p_m$ from the preceding ones. There are again two
different cases: if $p_{m+2}$ respects the independence condition
(case $F4$), then $p_{m+1}$ ends $T_{i_0}$ (since $T_{i_0}$ is
$\oplus$-indecomposable). If $p_{m+2}$ respects the separation
condition then it can only separate $p_{m+1}$ and $\{p_1, \ldots,
p_m\}$ from below (case $F4+$). This process can be repeated
alternating between left and down pins until the following pin
$p_{m+k+1}$ is an independent pin, ending the child $T_{i_0}$.

Thus we have proved that $T_{i_0}$ is read in exactly two \fois,
$p_1,\ldots p_{\ell}$ for the first part and $p_m, \ldots, p_{m+k}$ with
$m=\ell+2$ for the second part. And from
Lemma~\ref{lem:oneBlockReadInSeveralTimes} the pin $p_{\ell+1}$ is
by itself a child $T_i$ of the root.
It is then straightforward to check from Figure~\ref{fig:cases} that $\pi$ has a decomposition tree
of one shape given in Figure~\ref{fig:severaltime} with $S = \{p_1, \dots, p_\ell\}$ and $x = p_{\ell+1}$.
\end{proof}

Note that in the proof of Lemma~\ref{lem:readInTwoTimes}, the order in which the
points corresponding to the leaves of $T_{i_0}\setminus S$ are read is uniquely
determined, leading to the following remark:

\begin{rem}
  If a child $T_{i_0}$ is read in two \fois with the first part fixed,
  then the second part consists of all remaining points of $T_{i_0}$ and
  the order in which they are read is uniquely determined.
 \label{rem:inTwoTimes}
\end{rem}

We now start the description of the set of pin words encoding any pin-permutation, by case study on Equation~\eqref{eq:pin_perm_trees} (p.\pageref{eq:pin_perm_trees}).

\subsection{Non-recursive cases}
\label{subsec:pin_word_simple}

\paragraph*{Permutation of size $1$}
The permutation $\pi=1$ (whose decomposition tree
is a leaf $\tikz \fill[draw] (0,0) circle (2pt);$) has exactly four pin words -- namely, $P(\pi) = \{1,2,3,4\}$.

\paragraph*{Simple permutations}
The only pin-permutations whose decomposition trees have a prime root
and are non-recursive are those whose decomposition trees are of the form
$\begin{tikzpicture}[baseline=-17pt,sibling distance=15pt,level
  distance=15pt,inner sep=1pt] \node[simple, inner sep=1pt] (X){$\pi$} child {[fill] circle (2pt)} child
  {[fill] circle (2pt) node (xx1) {}} child [missing] child {[fill] circle (2pt) node (xx2) {}};
  \draw [dotted] (xx1) -- (xx2);
  \end{tikzpicture}$ , \emph{i.e.}, the simple pin-permutations.
The following theorem describes properties of their pin words.

\begin{theo}\label{thm:nbpinwords}
A simple permutation has at most $48$ pin words, which are all strict or quasi-strict.
\end{theo}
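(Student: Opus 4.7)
The plan is to bound the number of pin words encoding a simple pin-permutation $\sigma$ by first bounding the number of its pin representations, then multiplying by the number of pin words encoding each such representation.

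First, by Remark~\ref{rem:simplepin} every pin representation of $\sigma$ is proper, so by Remark~\ref{rem:proper_strict} every pin word encoding $\sigma$ is either strict or quasi-strict. This already establishes the second assertion of the theorem, and reduces the counting to proper pin representations.

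Next, I would argue that a proper pin representation of $\sigma$ is entirely determined by its first two pins $(p_1,p_2)$. Indeed, in a proper pin representation each $p_{i+1}$ with $i\geq 2$ is a separating pin, and hence must lie on the sides of the bounding box of $\{p_1,\ldots,p_i\}$; by Lemma~\ref{lem:[7]2.17} such a point is unique, so $p_3,p_4,\ldots$ are forced once $(p_1,p_2)$ is chosen. Moreover the pair $(p_1,p_2)$ must be one of the two orderings of an active knight of $\sigma$. Lemma~4.6 of~\cite{BBR09} bounds the number of active knights of a simple pin-permutation by~$4$, giving at most $4\cdot 2 = 8$ proper pin representations of $\sigma$.

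Finally, by Remark~\ref{rem:nb_pin_words}, since $p_3$ is a separating pin in any proper pin representation, each such representation is encoded by exactly $6$ pin words (one per admissible choice of origin $p_0$). Multiplying yields at most $8\cdot 6 = 48$ pin words encoding $\sigma$. The only ingredient not drawn directly from the preceding material of the excerpt is the bound of $4$ on the number of active knights of a simple pin-permutation; this is the sole external input, and is provided by Lemma~4.6 of~\cite{BBR09}. I expect this to be the only point deserving care, since once the bound on active knights is available the argument is a clean combination of Lemma~\ref{lem:[7]2.17}, Remark~\ref{rem:simplepin}, Remark~\ref{rem:proper_strict} and Remark~\ref{rem:nb_pin_words}.
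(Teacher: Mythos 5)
Your proposal is correct and follows essentially the same route as the paper's proof: properness via Remark~\ref{rem:simplepin} (hence strict/quasi-strict words by Remark~\ref{rem:proper_strict}), uniqueness of the continuation after $(p_1,p_2)$ via Lemma~\ref{lem:[7]2.17}, at most $8$ possible beginnings from the description of active knights in \cite{BBR09}, and $6$ pin words per representation by Remark~\ref{rem:nb_pin_words}, giving $8\cdot 6=48$. The only cosmetic difference is that you obtain the bound of $8$ beginnings as ``at most $4$ active knights times $2$ orderings'', whereas the paper cites Lemmas~4.3 and~4.6 of \cite{BBR09} directly for the bound on ordered pairs $(p_1,p_2)$; the external reliance is the same.
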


\begin{proof}
Let $\pi$ be a simple permutation.
Then any pin representation $p$ of $\pi$ is proper
(see Remark~\ref{rem:simplepin} p.\pageref{rem:simplepin}) and $|\pi| \geq 3$,
so $p_3$ is a separating pin and $p$ is associated to $6$ pin words
by Remark~\ref{rem:nb_pin_words} (p.\pageref{rem:nb_pin_words}).

Moreover by Lemmas~4.3 and 4.6 of~\cite{BBR09} there are at most
$8$ possible beginnings $(p_1, p_2)$ of a pin representation of $\pi$.
Furthermore, each of these beginnings gives at most one pin representation $p$ of $\pi$.
Indeed $p_{k+1}$ has to be the only point separating $p_k$ from the previous points, since $p_{i+1}$ separates $p_i$ from previous points for all $i$.
So $\pi$ has at most $8$ pin representations and at most $48$ pin words.
Finally the first statement of Remark~\ref{rem:proper_strict} (p.\pageref{rem:proper_strict})
ensures that they are all strict or quasi-strict. 
\end{proof}


\paragraph*{Permutations whose decomposition trees have a linear root} 
W.l.o.g., since we focus on the non-recursive case,
$\pi = \begin{tikzpicture}[sibling distance=15pt,level
distance=15pt,baseline=-17pt,inner sep=0pt]
\node[linear] {$\oplus$}
	child {node (T1) {$\xi_1$}}
	child {node (T2) {$\xi_2$}}
	child[missing]
	child {node (Tk) {$\xi_r$} };
\draw[dotted]  (T2) -- (Tk);
\end{tikzpicture}$
where $\xi_i$ are increasing oscillations. 
Lemma~\ref{lem:cas_de_base_oplus_lecture_en_deux_fois} is
a direct consequence of Lemma~\ref{lem:readInTwoTimes}.

\begin{lem}
  Let $p = (p_1,p_2, \ldots, p_n)$ be a pin representation of
  $\pi$.  The only child $\xi_i$ which may be read in several \fois
  is the child $\xi_{i_0}$ to which $p_1$ belongs.  Moreover if $p$
  reads $\xi_{i_0}$ in several \fois, it is read in two \fois, the
  second child $\xi_i$ read by $p$ is either $\xi_{i_0-1}$ or
  $\xi_{i_0+1}$ and is a  leaf, denoted $x$. Finally,
  the set $E = \xi_{i_0} \cup \{x\}$ is read in one piece by $p$.
\label{lem:cas_de_base_oplus_lecture_en_deux_fois}
\end{lem}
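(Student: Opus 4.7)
The plan is to derive this lemma as a direct consequence of the two general structural results already established for $\oplus$-rooted pin-permutations, namely Lemma~\ref{lem:oneBlockReadInSeveralTimes} and Lemma~\ref{lem:readInTwoTimes}. First, I will invoke Lemma~\ref{lem:onlyOneBlockUnfinishedReading}/Lemma~\ref{lem:oneBlockReadInSeveralTimes}: since the root of the decomposition tree of $\pi$ is $\oplus$, these apply directly and assert that at most one child of the root can be read in more than one piece, and that child must be the one containing $p_1$, which is $\xi_{i_0}$ by definition. This gives the first assertion.

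Next, I will suppose that $\xi_{i_0}$ is actually read in several pieces and invoke Lemma~\ref{lem:readInTwoTimes} applied to $\pi$: it forces $\xi_{i_0}$ to be read in exactly two pieces, and the decomposition tree of $\pi$ to match one of the shapes $F1,\ldots,F4+,G1,\ldots,G4+$ displayed in Figure~\ref{fig:severaltime}. A uniform inspection of those twelve shapes shows that in each of them, right beside $T_{i_0} = \xi_{i_0}$ in the list of children of the $\oplus$-root, there sits a leaf $x$ (pictured as a single black dot). Hence $x$ is a single point and $x = \xi_{i_0-1}$ or $x = \xi_{i_0+1}$; the hypothesis that every $\xi_i$ be an increasing oscillation causes no conflict since the permutation $1$ is itself an (increasing) oscillation.

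For the final assertion, I will combine Remark~\ref{rem:inTwoTimes} with the explicit reading order extracted in the proof of Lemma~\ref{lem:readInTwoTimes}: once the first piece $p_1,\ldots,p_\ell$ of $\xi_{i_0}$ has been read, the next pin $p_{\ell+1}$ is forced to be $x$, and the remaining points of $\xi_{i_0}$ are then read consecutively in a uniquely determined order starting at $p_{\ell+2}$. Therefore the pins belonging to $E = \xi_{i_0}\cup\{x\}$ occupy a single contiguous block of indices in $(p_1,\ldots,p_n)$, which is exactly the definition of $E$ being read in one piece.

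Since the statement is essentially a specialization of Lemma~\ref{lem:readInTwoTimes} to the setting in which every child of the root is an oscillation, I do not anticipate any real obstacle; the only point requiring attention is the uniform check, across the twelve shapes of Figure~\ref{fig:severaltime}, that the sibling of $T_{i_0}$ read between its two pieces is in every case a leaf directly attached to the $\oplus$-root rather than a deeper node of the tree.
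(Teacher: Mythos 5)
Your proposal is correct and follows essentially the same route as the paper, which simply states that this lemma is a direct consequence of Lemma~\ref{lem:readInTwoTimes} (itself resting on Lemma~\ref{lem:oneBlockReadInSeveralTimes} and Remark~\ref{rem:inTwoTimes}, exactly the ingredients you invoke). Your extra step of checking across the shapes of Figure~\ref{fig:severaltime} that the intervening pin $x$ is a leaf adjacent to $T_{i_0}$, and that the second part of $\xi_{i_0}$ starts at $p_{\ell+2}$, is just a spelled-out version of what the paper leaves implicit.
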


Lemma~\ref{lem:cas_de_base_oplus_lecture_en_deux_fois} together with Lemma~\ref{lem:lem1} leads to the
following.

\begin{csq}
  Every pin representation $p$ of $\pi$ begins by entirely reading
  two consecutive children of the root, say $\xi_i$ and $\xi_{i+1}$,
  then $p$ reads in one piece each of the others $\xi_j$.  Moreover the
  children $\xi_j$ for $j <i$ are read in decreasing order ($\xi_{i-1},
  \xi_{i-2}, \ldots, \xi_1$) and the children $\xi_j$ for $j > i+1$ are read
  in increasing order ($\xi_{i+2}, \xi_{i+3}, \ldots, \xi_r$).
\label{csq:cas_de_base_oplus_lecture_en_deux_fois}
\end{csq}

Consequence~\ref{csq:cas_de_base_oplus_lecture_en_deux_fois} implies
that the restriction of $p$ to each child $\xi_j$ where $j < i$
(resp. $j > i+1$) is a pin representation of $\xi_j$ whose origin lies
in quadrant $1$ (resp. $3$) with respect to the bounding box of the
set of points of $\xi_j$.  Indeed $p_1$ and $p_2$ are in $\xi_i$ or
$\xi_{i+1}$ thus they lie in quadrant $1$
(resp. $3$) with respect to $\xi_j$. Since only $p_2$ may separate
$p_0$ from $p_1$, $p_0$ is also in quadrant $1$ (resp. $3$). 
This is the reason for introducing the functions $\fell$ in Subsection~\ref{ssec:idees_construction_automates}. 
Recall that for any increasing
oscillation $\xi$, we denote by $\fell(\xi)$ the set of pin words that
encode $\xi$ and whose origin lies in quadrant $\quadrantell = 1$ or $3$ with
respect to the points of $\xi$.

To characterize the pin words that encode a permutation $\pi =
\oplus[\xi_1,\xi_2,$ $ \ldots , \xi_r]$ where every $\xi_i$ is an
increasing oscillation, Consequence~\ref{csq:cas_de_base_oplus_lecture_en_deux_fois}
leads us naturally to introduce the shuffle product\footnote{The shuffle product is sometimes called \emph{merge} 
in the permutation patterns literature.} of sequences.
From the above discussion, Theorem~\ref{thm:pinwords_cas_lineaire_non_recursif} then follows,
providing the desired characterization. 

\begin{defi} \label{def:shuffle} Let ${\mathsf A} = ({\mathsf A}_1,{\mathsf A}_2, \ldots, {\mathsf A}_q)$ and
  ${\mathsf B} = ({\mathsf B}_1,{\mathsf B}_2, \ldots, {\mathsf B}_s)$ be two sequences of sets of words.  The {\em shuffle
    product} $ {\mathsf A} \, \shuffle\, {\mathsf B}$ of ${\mathsf A}$ and ${\mathsf B}$ is defined as
 \begin{eqnarray*}
   {\mathsf A}  \shuffle  {\mathsf B} \hspace{-1.2em} &= \big\{ c= c_1  \cdot \ldots \cdot c_{q+s} \mid \,
   \exists \, I=\{i_1,\ldots, i_q\},
   J=\{j_1,\ldots, j_s\} \text{ with } I \cap J=\emptyset , \\
   & i_1< \ldots < i_q,\ j_1< \ldots < j_s,
    \, and \, c_{i_k} \in {\mathsf A}_k \, \forall \, 1 \leq k \leq q, \, 
    c_{j_k} \in {\mathsf B}_k \, \forall \, 1 \leq k \leq s \big\}.
\end{eqnarray*}
\end{defi}

For example, letting ${\mathsf A} = \left( \{x\}, \{aay\}, \{aa\}\right)$ 
and ${\mathsf B}=\left( \{b\},\{b,xy\}\right)$, 
the shuffle of ${\mathsf A}$ and ${\mathsf B}$ is 
\begin{align*}
 {\mathsf A} \shuffle {\mathsf B} = \{ 
 & x \cdot aay \cdot aa \cdot b \cdot b , \quad x \cdot aay \cdot aa \cdot b \cdot xy , \hspace*{1.8cm} {\small \text{for }\{1,2,3\}\uplus\{4,5\}}\\
 & x \cdot aay \cdot b \cdot aa \cdot b , \quad x \cdot aay \cdot b \cdot aa \cdot xy , \hspace*{1.8cm} {\small \text{for }\{1,2,4\}\uplus\{3,5\}}\\
 & x \cdot aay \cdot b \cdot b \cdot aa , \quad x \cdot aay \cdot b \cdot xy \cdot aa , \hspace*{1.8cm} {\small \text{for }\{1,2,5\}\uplus\{3,4\}}\\
 & x \cdot b \cdot aay \cdot aa \cdot b , \quad x \cdot b \cdot aay \cdot aa \cdot xy , \hspace*{1.8cm} {\small \text{for }\{1,3,4\}\uplus\{2,5\}}\\
 & x \cdot b \cdot aay \cdot b \cdot aa , \quad x \cdot b \cdot aay \cdot xy \cdot aa , \hspace*{1.8cm} {\small \text{for }\{1,3,5\}\uplus\{2,4\}}\\
 & x \cdot b \cdot b \cdot aay \cdot aa , \quad x \cdot b \cdot xy \cdot aay \cdot aa , \hspace*{1.8cm} {\small \text{for }\{1,4,5\}\uplus\{2,3\}}\\
 & b \cdot x \cdot aay \cdot aa \cdot b , \quad b \cdot x \cdot aay \cdot aa \cdot xy , \hspace*{1.8cm} {\small \text{for }\{2,3,4\}\uplus\{1,5\}}\\
 & b \cdot x \cdot aay \cdot b \cdot aa , \quad b \cdot x \cdot aay \cdot xy \cdot aa , \hspace*{1.8cm} {\small \text{for }\{2,3,5\}\uplus\{1,4\}}\\
 & b \cdot x \cdot b \cdot aay \cdot aa , \quad b \cdot x \cdot xy \cdot aay \cdot aa , \hspace*{1.8cm} {\small \text{for }\{2,4,5\}\uplus\{1,3\}}\\
 & b \cdot b \cdot x \cdot aay \cdot aa , \quad b \cdot xy \cdot x \cdot aay \cdot aa \} \hspace*{1.8cm} {\small \text{for }\{3,4,5\}\uplus\{1,2\}}
\end{align*}
where every line in the above corresponds to 
the words of ${\mathsf A} \shuffle {\mathsf B}$ associated with the bipartition of $\{1,2,3,4,5\}$ into $I \uplus J$ which is indicated on the right.

\begin{theo}\label{thm:pinwords_cas_lineaire_non_recursif}
  The set $P(\pi)$ of pin words of a permutation
  $\pi = \oplus[\xi_1, \ldots ,$ $ \xi_r]$ where every $\xi_i$ is
  an increasing oscillation is:
\small{$$P(\pi) = \bigcup_{1\leq i \leq r-1} P(\oplus[\xi_i,\xi_{i+1}]) \cdot
\Big( (P^{(1)}(\xi_{i-1}) , \ldots , P^{(1)}(\xi_1) ) \shuffle
(P^{(3)}(\xi_{i+2}) , \ldots , P^{(3)}(\xi_r) ) \Big)\text{.}
$$}
\end{theo}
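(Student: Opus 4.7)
My plan is to prove the two inclusions separately, using Consequence~\ref{csq:cas_de_base_oplus_lecture_en_deux_fois} as the essentially combinatorial backbone and then carefully translating the position of the origin between the local encodings of each $\xi_j$ and the global encoding of $\pi$.

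For the inclusion $P(\pi) \subseteq \text{RHS}$, I would start with a pin word $w$ encoding a pin representation $p = (p_1,\ldots,p_n)$ of $\pi$, relative to some origin $p_0$. Consequence~\ref{csq:cas_de_base_oplus_lecture_en_deux_fois} furnishes an index $i \in \{1,\ldots,r-1\}$ such that $p$ entirely reads $\xi_i$ and $\xi_{i+1}$ (possibly with one interleaving allowed by Lemma~\ref{lem:cas_de_base_oplus_lecture_en_deux_fois}), and then reads each remaining $\xi_j$ in a single piece, in the order $\xi_{i-1},\xi_{i-2},\ldots,\xi_1,$ and $\xi_{i+2},\ldots,\xi_r$, but with the two families possibly interleaved. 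I would split $w = w_0 \cdot c$ where $w_0$ is the prefix encoding all points of $\xi_i \cup \xi_{i+1}$; then $w_0$ (with the same origin $p_0$) is a pin word of $\oplus[\xi_i,\xi_{i+1}]$. For each $j \notin \{i,i+1\}$, the restriction of $p$ to the points of $\xi_j$ is a pin representation of $\xi_j$; moreover the first pin of $\xi_j$ lies outside the bounding box of all previously read pins, in the lower-left (quadrant $3$) if $j<i$ and in the upper-right (quadrant $1$) if $j>i+1$, so its encoding letter in $w$ is the numeral $3$ or $1$ respectively. This means that if we isolate the subword of $c$ corresponding to the letters of $\xi_j$ and choose as local origin any point in quadrant $1$ (resp.~$3$) relative to $\xi_j$, we obtain a pin word in $P^{(1)}(\xi_j)$ (resp.~$P^{(3)}(\xi_j)$). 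The way these subwords interleave inside $c$ is exactly a shuffle of the sequence $(P^{(1)}(\xi_{i-1}),\ldots,P^{(1)}(\xi_1))$ with $(P^{(3)}(\xi_{i+2}),\ldots,P^{(3)}(\xi_r))$, by Lemma~\ref{lem:lem1}.

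For the reverse inclusion I would take a word $w = w_0 \cdot c$ with $w_0 \in P(\oplus[\xi_i,\xi_{i+1}])$ and $c$ in the stated shuffle product, and argue it encodes a pin representation of $\pi$. Place $\xi_i,\xi_{i+1}$ in the appropriate positions inside the diagram of $\pi$, and use the origin of $w_0$ as the origin for $w$; then the pin representation encoded by $w_0$ reads exactly the points of $\xi_i \cup \xi_{i+1}$. Process $c$ letter by letter: whenever a letter is the first of a block $u_j \in P^{(1)}(\xi_j)$ (necessarily a numeral $1$ or $3$), check that it corresponds to an independent pin located in the right corner of the bounding box of everything read so far, namely in the copy of $\xi_j$ inside $\pi$; then, since subsequent letters in $u_j$ are directions and the bounding box of the pins already placed in $\xi_j$ sits strictly inside the block of $\xi_j$ within $\pi$, those directional pins stay inside that block and reproduce the pin representation of $\xi_j$ prescribed by $u_j$. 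The shuffle condition guarantees that the $\xi_j$-blocks appear in the $\oplus$-compatible order required by Lemma~\ref{lem:lem1}, so the construction is globally consistent.

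The main obstacle, in both directions, is the bookkeeping for the origin: one needs to verify that the local encoding of each $\xi_j$ inside a pin word of $\pi$ is precisely a pin word whose origin sits in the correct quadrant of $\xi_j$, and conversely that gluing together such locally-compatible pin words yields a valid pin sequence of $\pi$. The key combinatorial geometric facts that make this work are that (i) each $\xi_j$ for $j\notin\{i,i+1\}$ is started by a numeral-independent pin in the expected corner of the current bounding box, and (ii) the shuffle product captures exactly the freedom in interleaving the two ordered sequences of children, which is both guaranteed by Lemma~\ref{lem:lem1} (as a constraint) and always realizable as a pin sequence (no further constraint appears, since the two families lie in disjoint corners of the bounding box being built).
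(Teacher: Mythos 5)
Your proposal is correct and follows essentially the same route as the paper: the structural facts of Lemma~\ref{lem:lem1}, Lemma~\ref{lem:cas_de_base_oplus_lecture_en_deux_fois} and Consequence~\ref{csq:cas_de_base_oplus_lecture_en_deux_fois} give the reading order, the prefix is absorbed into $P(\oplus[\xi_i,\xi_{i+1}])$, and the remaining children contribute factors in $P^{(1)}$ or $P^{(3)}$ interleaved as a shuffle, with the origin-quadrant bookkeeping handled exactly as in the paper's discussion preceding the theorem. Your sketch is in fact slightly more explicit than the paper's argument, notably in spelling out the converse inclusion, which the paper leaves implicit.
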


Lemmas~\ref{lem:f1f3} and \ref{lem:fdouble} below give explicit expressions
for $P^{(1)}(\xi)$, $P^{(3)}(\xi)$ and $P(\oplus[\xi_i,\xi_j])$
for every increasing oscillations $\xi$, $\xi_i$ and
$\xi_j$, hence with Theorem~\ref{thm:pinwords_cas_lineaire_non_recursif}
an explicit expression for $P(\pi)$.  Note that 
similar results can be obtained for permutations with root $\ominus$
and decreasing oscillations using Remark~\ref{rem:ominus=oplus_transposed} (p.\pageref{rem:ominus=oplus_transposed}). 

In the following lemmas, we distinguish several cases according to the type $(x,y)$, 
with $x,y \in \{H,V\}$, of increasing oscillations -- see Subsection~\ref{ssec:oscillations} 
for the definition of these types. 
It starts with Lemma~\ref{lem:f1f3}, 
whose proof immediately follows from a comprehensive study 
of the different cases illustrated in Figure~\ref{fig:lecture_epi_quadrant_1ou3} (p.\pageref{fig:lecture_epi_quadrant_1ou3}). 

\begin{lem}\label{lem:f1f3}
Let $\xi$ be an increasing oscillation of size  $n \geq 5$.

If $n$ is even, let $n=2p+2$, then $$\small{P^{(1)}(\xi) =
\begin{cases}
3L(DL)^{p}\text{ if $\xi$ has type } (H,H) \\
3D(LD)^{p}\text{ if $\xi$ has type } (V,V) \\
\end{cases}}
\small{P^{(3)}(\xi) = \begin{cases}
1R(UR)^{p}\text{ if $\xi$ has type } (H,H) \\
1U(RU)^{p}\text{ if $\xi$ has type } (V,V)\text{.} \\
\end{cases}}$$

If $n$ is odd, let $n=2p+1$, then $$\small{P^{(1)}(\xi) = \begin{cases}
  3(DL)^{p}\text{ if $\xi$ has  type } (H,V) \\
  3(LD)^{p}\text{ if $\xi$ has type } (V,H) \\
\end{cases}}
\small{P^{(3)}(\xi) = \begin{cases}
1(RU)^{p}\text{ if $\xi$ has type } (H,V) \\
1(UR)^{p}\text{ if $\xi$ has type } (V,H)\text{.} \\
\end{cases}}$$

For the increasing oscillations of size less than $4$,
the values of $P^{(1)}$ and $P^{(3)}$ are:
$$\footnotesize{\begin{array}{llll}
  P^{(1)}(1)= 3 & P^{(3)}(1)= 1 & P^{(1)}(21)= \{3D, 3L\} & P^{(3)}(21)=
  \{1R,1U\} \\
  P^{(1)}(231)=3DL & P^{(3)}(231)=
  1RU &P^{(1)}(312) = 3LD & P^{(3)}(312)= 1UR \\
  P^{(1)}(2413)=3LDL & P^{(3)}(2413)=1RUR & P^{(1)}(3142)=3DLD &
  P^{(3)}(3142)=1URU \\
\end{array}}$$
\label{lem:lecture_epi_quadrant_1ou3}
\end{lem}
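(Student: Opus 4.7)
The proof proceeds by case analysis on the type $(x,y)$ of $\xi$ and the parity of its size, with several symmetries reducing the number of cases we need to treat explicitly: swapping rows with columns exchanges H- and V-types as well as the letters $L\leftrightarrow D$ and $U\leftrightarrow R$, while moving the origin from quadrant $1$ to quadrant $3$ exchanges the upper-right and lower-left ends of $\xi$ and swaps $L\leftrightarrow R$, $D\leftrightarrow U$, and the numerals $1\leftrightarrow 3$. The plan is to detail the argument for $P^{(1)}(\xi)$ with $\xi$ of type $(H,H)$ and even size $n=2p+2$, and to indicate how the other cases follow.

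Since $p_0$ lies to the upper-right of every point of $\xi$, every point of $\xi$ lies in quadrant $3$ of $p_0$, so the first letter of any $w\in P^{(1)}(\xi)$ is $3$. The pair $(p_1,p_2)$ must then form an active knight of $\xi$. By Lemma~4.6 of~\cite{BBR09} (reviewed in Subsection~\ref{ssec:oscillations}), $\xi$ of size $n\geq 5$ and type $(H,H)$ has exactly two active knights, both in H-position and relative order $21$, located respectively at the lower-left and upper-right ends of the diagonal; this yields four candidate ordered pairs for $(p_1,p_2)$. For each candidate I would check whether the sequence can be extended to a complete pin representation of $\xi$ with $p_0$ in quadrant~$1$, ruling out three of them: either the bounding box of $\{p_0,p_1,p_2\}$ already contains all remaining points of $\xi$ (so no valid $p_3$ exists), or the only admissible $p_3$ must be independent, contradicting the properness of pin representations of simple permutations guaranteed by Remark~\ref{rem:simplepin}. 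The sole survivor is $p_1=$ the lower-right point of the upper-right active knight and $p_2=$ its upper-left point, giving $w_2=L$ since $p_2$ lies to the left of the bounding box of $\{p_0,p_1\}$.

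Once $(p_1,p_2)$ is fixed, Lemma~\ref{lem:[7]2.17} forces each subsequent pin to be the unique point of $\xi$ lying on the sides of the current bounding box. Using the explicit formulas for the increasing oscillation $\xi$ recalled in Subsection~\ref{ssec:oscillations}, I would prove by induction that $p_3$ lies below and $p_4$ to the left of their respective bounding boxes, and that this alternation continues until $p_n$, yielding the word $3L(DL)^p$. The other main cases are analogous: for type $(V,V)$ the H-knight is replaced by a V-knight and the first step after $p_1$ is downward rather than leftward, giving $w_2=D$ and the alternation $LDLD\ldots$; for odd size, the upper-right active knight and the opposite end of $\xi$ have opposite H/V orientations, so the zigzag produces exactly $2p$ alternating letters after the initial $3$ with no extra leading $L$ or $D$, explaining the formulas $3(DL)^p$ and $3(LD)^p$. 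The $P^{(3)}$ formulas follow immediately by the origin-swap symmetry. Finally, the small cases $n\leq 4$ are handled by direct enumeration of pin representations of the listed permutations; note that $P^{(1)}(21)=\{3D,3L\}$ has two elements because with only two points no subsequent pin imposes an extension constraint and both orderings of the unique knight produce valid encodings.

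The hard part will be the bookkeeping in the second paragraph, namely verifying, with careful bounding-box arguments, that exactly one of the four candidate starting pairs extends to a valid pin representation, and then checking the inductive step of the zigzag for every type and parity. No deeper tools than Lemma~\ref{lem:[7]2.17}, Remark~\ref{rem:simplepin}, and the explicit description of oscillations from Subsection~\ref{ssec:oscillations} are needed.
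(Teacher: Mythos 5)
Your proposal is correct and follows essentially the same route as the paper, which disposes of this lemma by a comprehensive case study of the diagrams in Figure~\ref{fig:lecture_epi_quadrant_1ou3}, relying on the known active knights of increasing oscillations (Lemma~4.6 of~\cite{BBR09}) and the forced continuation of pin representations. Your write-up merely makes explicit the bookkeeping (uniqueness of the next pin via Lemma~\ref{lem:[7]2.17}, properness via Remark~\ref{rem:simplepin}, symmetry reductions) that the paper leaves to the reader.
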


\begin{rem}\label{rem:P1P3}
  If the increasing oscillation $\xi$ is of size $2$ then
  $\fell(\xi)$ contains two words, otherwise it is a
  singleton. 
  Moreover, for the map $\phi$ studied in Section~\ref{sec:strict pw} (see Definition~\ref{def:phi} p.\pageref{def:phi}), and for any increasing oscillation $\xi$, we have $\phi(P^{(3)}(\xi)) \subseteq \{U,R\}^{\star}$ and $\phi(P^{(1)}(\xi)) \subseteq \{L,D\}^{\star}$.
\end{rem}

We are further interested in describing
the set of pin words of $\oplus[\xi_i,\xi_j]$
for any increasing oscillations $\xi_i$ and $\xi_j$.
This is achieved in Lemma~\ref{lem:fdouble}.
For this purpose, we first describe the set $P(\xi)$ of pin words of any increasing oscillation $\xi$,
and the set $\fmix(\xi_i,\xi_j)$ of pin words of $\oplus[\xi_i,\xi_j]$ such that one of the two oscillations is read in two \fois.

\begin{lem}\label{lem:lecture_epi}
  Let $\Qminus$ (resp. $\SHminus$, resp. $\SVminus$) be the set of pin words of the
  permutation $21$ that are quasi-strict (resp. that are strict and
  end with $R$ or $L$, resp. with $U$ or $D$): $\Qminus =
  \{12,14,22,24,32,34,42,44\}$, $\SHminus = \{1R,2R,3L,4L\}$ and $\SVminus =
  \{1U,2D,3D,4U\}$.  Define similarly $\Qplus$ (resp. $\SHplus$,
  resp. $\SVplus$) for the permutation $12$.

Let $\xi$ be an increasing oscillation of size  $n \geq 5$.

If $n$ is even, let $n=2p+2$, then $$P(\xi) = \begin{cases}
(\Qminus+\SHminus)\cdot (DL)^{p} + (\Qminus+\SHminus)\cdot(UR)^{p}\text{ if $\xi$ has type } (H,H) \\
(\Qminus+\SVminus)\cdot (LD)^{p} + (\Qminus+\SVminus)\cdot(RU)^{p}\text{ if $\xi$ has type } (V,V)\text{.} \\
\end{cases}$$

If $n$ is odd, let $n=2p+1$, then $$P(\xi) = \begin{cases}
(\Qminus+\SVminus)\cdot L(DL)^{p-1} + (\Qminus+\SHminus)\cdot U(RU)^{p-1}\text{ if $\xi$ has type } (H,V) \\
(\Qminus+\SHminus)\cdot D(LD)^{p-1} + (\Qminus+\SVminus)\cdot R(UR)^{p-1}\text{ if $\xi$ has type } (V,H)\text{.} \\
\end{cases}$$

For the increasing oscillations of size less than $5$,
we have:

$\begin{array}{l}
  P(1)= \{1,2,3,4\} \qquad P(21)= \Qminus+\SHminus+\SVminus \\
  P(231)=(\Qminus+\SHminus)\cdot U + (\Qminus+\SVminus)\cdot L + (\Qplus+\SHplus+\SVplus) \cdot 4\\
  P(312)= (\Qminus+\SHminus)\cdot D + (\Qminus+\SVminus) \cdot R+ (\Qplus+\SHplus+\SVplus) \cdot 2  \\
  P(2413)=(\Qminus+\SHminus)\cdot (UR+DL) + (\Qplus+\SVplus)\cdot (RD+LU) \\
  P(3142)=(\Qplus+\SHplus)\cdot (UL+DR) + (\Qminus+\SVminus)\cdot (RU+LD) \\
\end{array}$

\medskip

In particular, $|P(\xi)| \leq 48$ for any increasing oscillation $\xi$ and if $|\xi| \neq 3$, $P(\xi)$ contains only strict and quasi-strict pin words.
\end{lem}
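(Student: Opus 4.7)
}

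The plan is to exploit the fact that oscillations of size $\geq 4$ are simple, so by the argument used in the proof of Theorem~\ref{thm:nbpinwords} each pin representation of $\xi$ is entirely determined by its first two points, which must form an active knight of $\xi$. Combined with Lemma~2.17 of~\cite{BBR09} recalled as Lemma~\ref{lem:[7]2.17} (the next pin, if any, is the unique point on the sides of the current bounding box), this reduces the description of $P(\xi)$ to enumerating the active knights of $\xi$, determining the sequence of successive separating pins starting from each of them, and then listing all encodings of each such pin representation.

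For the main case $|\xi|=n\geq 5$, I would use the description of active knights of oscillations recalled in Subsection~\ref{ssec:oscillations}: $\xi$ has exactly two active knights, each in relative order $21$, located at the two ends of the main diagonal, with horizontal or vertical shape prescribed by the type $(x,y)$ of $\xi$. I would fix one active knight $(p_1,p_2)$ and track the forced separating-pin sequence. Because $\xi$ is a pattern of the infinite oscillating sequence $\omega=3\,1\,5\,2\,7\,4\ldots$, starting from the lower-left knight the sequence of separating directions is forced to alternate so as to follow the staircase shape of $\xi$: for instance for type $(H,H)$ the lower-left start produces directions $U R U R\ldots$, while choosing the \emph{opposite} end as origin and reading backwards (that is, starting at the upper-right knight) gives the mirror $D L D L\ldots$ pattern. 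Splitting strict vs.\ quasi-strict encodings of the first pair $(p_1,p_2)$, which represents a $21$ pattern, yields the two contributions $\Qminus$ and $\SHminus$ (if the knight is horizontal) or $\SHminus$ replaced by $\SVminus$ (if the knight is vertical), multiplied by the forced suffix in $\{U,D,L,R\}^\star$. Summing the two contributions from the two active knights gives exactly the four displayed formulas, after distinguishing the parity of $n=2p+2$ or $n=2p+1$ and the type $(H,H),(V,V),(H,V),(V,H)$.

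For the remaining small oscillations, I would proceed by direct enumeration. For $\xi=1$ this is Remark~\ref{rem:nb_pin_words} (reduced to choosing the quadrant of $p_0$). For $\xi=21$, the pin representation is the permutation itself and one simply encodes it: an origin in one of the four quadrants gives a quasi-strict word in $\Qminus$, an origin on the box horizontally gives $\SHminus$, and vertically gives $\SVminus$. For $\xi\in\{231,312,2413,3142\}$ I would read off the active knights from Figure~\ref{fig:lecture_epi_quadrant_1ou3}: these oscillations have three active knights (for size $3$) or two active knights (for size $4$), one or two of which is in relative order $12$ rather than $21$, which is exactly what produces the $\Qplus,\SHplus,\SVplus$ contributions in the stated formulas; the forced continuation from each knight is then again determined by the separating-pin rule, producing the explicit suffix letters ($U$, $L$, $4$, $UR+DL$, etc.). The bound $|P(\xi)|\leq 48$ and the strict/quasi-strict property for $|\xi|\neq 3$ follow from Theorem~\ref{thm:nbpinwords} when $|\xi|\geq 4$, and from direct inspection of the formulas for $|\xi|\leq 2$; the size-$3$ case admits independent (non-separating) continuations, which is why strict-or-quasi-strict can fail there and why the count is allowed to exceed the bound arising from $6\times(\text{number of active knights})$.

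The main obstacle will not be conceptual but bookkeeping: one must verify for every type and parity that the forced separating sequence really produces the displayed direction word ($(DL)^p$, $(UR)^p$, $L(DL)^{p-1}$, etc.), and that the two active knights contribute the two disjoint terms of the stated sum. This is best organized as a careful case analysis using the explicit forms of the four oscillations of each size $\geq 5$ given in Subsection~\ref{ssec:oscillations}, together with Remark~\ref{rem:inTwoTimes} style arguments to rule out any other pin word.
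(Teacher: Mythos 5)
Your proposal is correct and takes essentially the same route as the paper, whose entire proof is a comprehensive examination of the cases illustrated in Figure~\ref{fig:lecture_epi_quadrant_1ou3}: fix an active knight, use the fact that oscillations of size at least $4$ are simple so the continuation is a forced sequence of separating pins, and enumerate the $6$ encodings of each resulting pin representation (plus direct enumeration, including independent continuations, for sizes at most $3$). The only slip is your claim that size-$4$ oscillations have two active knights — as the figure shows, $2413$ and $3142$ each have four (which is what makes $|P(\xi)|=48$ there) — but since you defer that case to reading the figure, it does not affect the method.
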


\begin{proof}
By comprehensive examination of the cases illustrated in Figure~\ref{fig:lecture_epi_quadrant_1ou3}.
\end{proof}

\begin{defi}\label{def:fdouble}
For any pair of increasing oscillations $(\xi_i,\xi_j)$, we denote
by $\fmix(\xi_i,\xi_j)$ the set of pin words encoding a pin
representation of $\oplus[\xi_i,\xi_j]$ that reads one of the two
oscillations in two \fois.
\end{defi}

\begin{lem}
Let $\xi_i$ and $\xi_j$ be two increasing oscillations.

If none of these two oscillations is of size $1$, or if both of them
are of size $1$, then $\fmix(\xi_i,\xi_j)$ is empty.

Otherwise, assume w.l.o.g.~that $\xi_i =1$, and set $|\xi_j|=2p+q+1$
with $q \in \{0,1\}$. If $|\xi_j|\geq 4$, then
$$\footnotesize{\fmix(\xi_i,\xi_j) = \begin{cases}
(13+23+33+43+1D+4D)\cdot(RU)^{p}R^q\text{ if $\xi_j$ has type } (H,H) \text{ or } (H,V) \\
(13+23+33+43+1L+2L)\cdot(UR)^{p}U^q\text{ if $\xi_j$ has type } (V,V) \text{ or } (V,H)\text{.} \\
\end{cases}}$$
If $|\xi_j|=3$, {\it i.e.}, $\xi_j = 231$ or $312$, we have
$$\begin{array}{l}
\fmix(1,231) = P(12) \cdot 3R +(13+23+33+43+1D+4D)\cdot RU\text{,}\\
\fmix(1,312) = P(12) \cdot 3U +(13+23+33+43+1L+2L)\cdot UR\text{.}
\end{array}$$
If $|\xi_j|=2$, {\it i.e.}, $\xi_j = 21$, we have
$$\fmix(1,21) = (13+23+33+43+1D+4D)\cdot R + (13+23+33+43+1L+2L) \cdot U \text{.}$$
In particular, $|\fmix(\xi_i,\xi_j)| \leq 22$ for any increasing oscillations $\xi_i$ and $ \xi_j$.
\label{lem:lecture_epi_mix}
\end{lem}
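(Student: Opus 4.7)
The plan is to prove the lemma by case analysis on the sizes of $\xi_i$ and $\xi_j$, relying on the structural results established earlier in this subsection, namely Lemma~\ref{lem:readInTwoTimes}, Lemma~\ref{lem:cas_de_base_oplus_lecture_en_deux_fois} and Remark~\ref{rem:inTwoTimes}.

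First, the two emptiness statements follow almost immediately. Lemma~\ref{lem:cas_de_base_oplus_lecture_en_deux_fois} forces that, in any pin representation reading a child of the root $\oplus$ in more than one piece, the other child involved is a leaf. Hence, if neither $\xi_i$ nor $\xi_j$ is of size~$1$, no such pin representation exists and $\fmix(\xi_i,\xi_j) = \emptyset$. If both $\xi_i$ and $\xi_j$ are of size~$1$, then $\pi = 12$ and any pin representation reads its two points as two separate single-pin pieces, so again $\fmix(\xi_i,\xi_j) = \emptyset$.

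For the remaining case I would assume without loss of generality that $\xi_i = 1$ with $|\xi_j| \geq 2$ (the symmetric situation being handled by analogous reasoning), and write $\pi = \oplus[1,\xi_j]$, with $x$ denoting the lower-left point corresponding to $\xi_i$. Any pin representation contributing to $\fmix(\xi_i,\xi_j)$ reads $\xi_j$ in two pieces, so by Lemma~\ref{lem:readInTwoTimes} the decomposition tree of $\pi$ must match one of the twelve shapes of Figure~\ref{fig:severaltime}; since $x$ lies in the lower-left and the root has exactly two children, this narrows the possibilities to shapes $G1$--$G4+$ with the dotted $T_1,\ldots,T_k$ empty. Matching $\xi_j$ with these shapes then yields: if $|\xi_j|=2$ (so $\xi_j = 21$), shape $G1$ or $G2$ applies with $\ell = 1$; if $|\xi_j|=3$ (so $\xi_j \in \{231,312\}$), shape $G1$ or $G2$ applies with $\ell = 2$; and if $|\xi_j| \geq 4$, then $\xi_j$ is simple with prime root, forcing shape $G3+$ or $G4+$ with $\ell = 1$.

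In each case, the first-piece pins $p_1,\ldots,p_\ell$ are essentially determined up to the choice of an active knight of $\xi_j$ involving an extremal point, and by Remark~\ref{rem:inTwoTimes} the second-piece pins $p_{\ell+2},\ldots,p_n$ are uniquely determined afterwards; their positions within the oscillation produce the tail factor $(RU)^p R^q$ or $(UR)^p U^q$, whose exact shape depends on the type $(H,H)$, $(V,V)$, $(H,V)$ or $(V,H)$ of $\xi_j$. To transcribe a pin representation into a pin word I would enumerate the admissible positions of the origin $p_0$ with respect to $(p_1,x)$ using Figure~\ref{fig:origine}; the admissible cells yield the prefix factors $(13+23+33+43+1D+4D)$ or $(13+23+33+43+1L+2L)$ in the cases $|\xi_j|\geq 4$, the remaining cells being incompatible with the subsequent separating pins.

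The main obstacle will be the careful bookkeeping in this enumeration: for every candidate triple $(p_0,p_1,x)$ one must verify that the independence condition at $x = p_{\ell+1}$ and the separation condition at $p_{\ell+2}$ both hold, and one must check that the encoding of the uniquely determined tail matches the claimed oscillating pattern. The small cases $|\xi_j| \in \{2,3\}$ require separate treatment because $\ell > 1$: for $|\xi_j|=3$ the pin $p_2$ lies in $\xi_j$ rather than at $x$, so its encoding contributes additional prefix terms and produces the extra summands $P(12)\cdot 3R$ and $P(12)\cdot 3U$ in the formulas for $\fmix(1,231)$ and $\fmix(1,312)$; for $|\xi_j|=2$ the two possible starting knights of $\xi_j = 21$ account for the two distinct summands in the corresponding formula.
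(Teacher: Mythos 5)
Your overall route is the one the paper takes: emptiness via Lemma~\ref{lem:cas_de_base_oplus_lecture_en_deux_fois}, reduction to the shapes of Lemma~\ref{lem:readInTwoTimes} and Figures~\ref{fig:severaltime}--\ref{fig:cases} (the $G$ shapes, since the size-$1$ child sits lower-left), uniqueness of the tail via Remark~\ref{rem:inTwoTimes}, and the prefix factors obtained by enumerating origins as in Figure~\ref{fig:origine} together with Remark~\ref{rem:nb_pin_words}. Your treatment of $|\xi_j|=2$ and $|\xi_j|\geq 4$ matches the paper's.

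There is, however, a genuine gap in your case $|\xi_j|=3$, where you assert that ``shape $G1$ or $G2$ applies with $\ell=2$''. In fact $231=\ominus[12,1]$ matches configuration $G2$ (with $S=12$) and configuration $G4$ (with $S$ a single leaf), while $312=\ominus[1,12]$ matches $G1$ (with $S=12$) and $G3$ (with $S$ a single leaf); note that $G1$ cannot apply to $231$ at all, since the first child of its $\ominus$ root is the block $12$ rather than a leaf, and symmetrically $G2$ fails for $312$. So for size $3$ there are two pin representations reading $\xi_j$ in two pieces: one whose first piece has two points, which yields the summand $P(12)\cdot 3R$ (resp.\ $P(12)\cdot 3U$) with $16$ words, and one whose first piece is a single point, which yields the summand $(13+23+33+43+1D+4D)\cdot RU$ (resp.\ $(13+23+33+43+1L+2L)\cdot UR$) with $6$ words by Remark~\ref{rem:nb_pin_words}. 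By restricting to $\ell=2$ you exclude precisely the configurations ($G3$/$G4$) that produce these $6$-word summands, so your analysis cannot recover the stated formulas for $\fmix(1,231)$ and $\fmix(1,312)$, nor the bound $|\fmix|\leq 22=16+6$; as written it would yield a strictly smaller set. The fix is simply to include the $|S|=1$ configurations in the size-$3$ case, exactly as the paper does.
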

\begin{proof}
  From Lemma~\ref{lem:cas_de_base_oplus_lecture_en_deux_fois} (p.\pageref{lem:cas_de_base_oplus_lecture_en_deux_fois}) when one
  of the oscillations $\xi_i$ or $\xi_j$ is read in two \fois, then the other one 
  has size $1$. W.l.o.g.~assume that $|\xi_i|=1$. Then
  from Lemma~\ref{lem:readInTwoTimes} (p.\pageref{lem:readInTwoTimes})
  the decomposition tree of $\oplus[\xi_i,\xi_j]$ has one
  of the shapes of Figure~\ref{fig:severaltime} (p.\pageref{fig:severaltime}), with $T_{i_0}$
  corresponding to $\xi_j$ and $x$ corresponding to $\xi_i$.

If $\xi_j$ has size $2$, then $\xi_j=21$ and $\oplus[\xi_i,\xi_j]$ maps only to configurations $G1$ and $G2$ (see Figures~\ref{fig:severaltime} and~\ref{fig:cases}).
Therefore there are two pin representations of $\oplus[\xi_i,\xi_j]$ where $\xi_j$ is read in two \fois.
The twelve corresponding pin words (see Remark~\ref{rem:nb_pin_words} p.\pageref{rem:nb_pin_words})
are those given in Lemma~\ref{lem:lecture_epi_mix}.

If $\xi_j$ has size $3$, then $\xi_j=231$ (resp. $312$) and $\oplus[\xi_i,\xi_j]$ maps only to configurations $G2$ and $G4$ (resp. $G1$ and $G3$), and we conclude similarly.

Otherwise, $|\xi_j|\geq 4$.
Since $\xi_j$ is an increasing oscillation, $\oplus[\xi_i,\xi_j]$ maps only to configuration $G3+$ or to configuration $G4+$, with $|S|=1$.
These two cases are exclusive, and the configuration ($G3+$ or $G4+$) to which $\oplus[\xi_i,\xi_j]$ maps is determined by the type ($V$ or $H$) of the lower left active knight of $\xi_j$.
Lemma~\ref{lem:readInTwoTimes} and Remark~\ref{rem:inTwoTimes} (p.\pageref{rem:inTwoTimes})
ensure that there is exactly one pin representation for $\oplus[\xi_i,\xi_j]$ that reads $\xi_j$ in two \fois.
The six corresponding pin words are those given in Lemma~\ref{lem:lecture_epi_mix}. 
\end{proof}

From the expressions of $\fint, \fmix, P^{(1)}$ and $P^{(3)}$ we can deduce the
explicit expression of $P(\oplus[\xi_i,\xi_j])$ making use of the following result:

\begin{lem}\label{lem:fdouble}
  For any pair of increasing oscillations $(\xi_i,\xi_j)$:
  \begin{itemize}
  \item  If $|\xi_i| > 1$ and $|\xi_j|>1$, $P(\oplus[\xi_i,\xi_j]) = \big( \fint (\xi_j) \cdot P^{(1)} (\xi_i) \big) \bigcup \big(
 \fint (\xi_i) \cdot P^{(3)} (\xi_j) \big) 
$
 \item  If $|\xi_i| = 1$ and $|\xi_j| = 1$, $P(\oplus[\xi_i,\xi_j]) = P(12)$
 \item Otherwise assume w.l.o.g. that $|\xi_i| = 1$ and $|\xi_j| =2p+q$ with $q \in \{0,1\}$:
 $$\hspace*{-0.8cm}P(\oplus[\xi_i,\xi_j]) = \fmix (\xi_i,\xi_j) \bigcup \big( P(\xi_j) \cdot 3 \big) \bigcup \left( \{1,2,3,4\}\cdot P^{(3)}(\xi_j) \right) \bigcup P^{sep}(\xi_j)$$
 $$\text{with } P^{sep}(\xi_j) = \begin{cases}
(2+3)\cdot(UR)^{p}U^q\text{ if $\xi_j$ has type } (H,H) \text{ or } (H,V) \\
(3+4)\cdot(RU)^{p}R^q\text{ if $\xi_j$ has type } (V,V) \text{ or } (V,H)\text{.}
\end{cases}$$
 \end{itemize}
In particular, $|P(\oplus[\xi_i,\xi_j])| \leq 192$ for any oscillations $\xi_i$ and $ \xi_j$.
\end{lem}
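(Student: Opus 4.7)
The plan is to prove the three items of Lemma~\ref{lem:fdouble} by case analysis on the possible orders in which a pin representation $p = (p_1, \ldots, p_n)$ of $\pi = \oplus[\xi_i, \xi_j]$ traverses its two children. The whole analysis rests on Consequence~\ref{csq:cas_de_base_oplus_lecture_en_deux_fois} and Lemma~\ref{lem:cas_de_base_oplus_lecture_en_deux_fois}: the former forces every pin representation to read consecutive children entirely (with only two children, either $\xi_i$ is read first or $\xi_j$ is), while the latter restricts readings in several pieces to the case where the non-initial child is a single point.

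For the first item ($|\xi_i|, |\xi_j| > 1$), Lemma~\ref{lem:cas_de_base_oplus_lecture_en_deux_fois} excludes any splitting, so one child is read entirely and then the other. If $p_1 \in \xi_i$, then the origin $p_0$ and the pins of $\xi_i$ read so far all lie in the lower-left corner relative to $\xi_j$, so the restriction of $p$ to $\xi_j$ is a pin representation of $\xi_j$ whose origin is in quadrant $3$, contributing $P^{(3)}(\xi_j)$; the resulting word is in $\fint(\xi_i) \cdot P^{(3)}(\xi_j)$. Symmetrically, $p_1 \in \xi_j$ gives $\fint(\xi_j) \cdot P^{(1)}(\xi_i)$. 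Both inclusions are reversible since concatenating any valid pin representations respecting the quadrant constraint yields a pin representation of $\pi$. The trivial second item reduces to $\oplus[1,1] = 12$ and the explicit value $P(12)$ from Lemma~\ref{lem:lecture_epi}. For the third item (w.l.o.g. $|\xi_i|=1$, $|\xi_j|\geq 2$), by Lemma~\ref{lem:cas_de_base_oplus_lecture_en_deux_fois} only $\xi_j$ can be read in several pieces, and exactly one of the following occurs: either (a) $\xi_j$ is read in two pieces with $\xi_i$ between them, which is precisely $\fmix(\xi_i, \xi_j)$; or (b) $\xi_j$ is read entirely first and then $\xi_i$ as the last pin, in which case $\xi_i$ lies in the lower-left relative to $\xi_j$, and the origin being inside the bounding box of $\xi_j$ forces the final numeral to be $3$, giving $P(\xi_j) \cdot 3$; or (c) $\xi_i$ is read first (as one of the four numerals in $\{1,2,3,4\}$) and then $\xi_j$ entirely.

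Case~(c) splits according to the nature of $p_2$, the first pin of $\xi_j$. If $p_2$ is independent from $\{p_0,p_1\}$, then $p_0$ lies in the lower-left corner relative to $\xi_j$ and the tail is a strict pin word in $P^{(3)}(\xi_j)$, yielding $\{1,2,3,4\}\cdot P^{(3)}(\xi_j)$. If $p_2$ is separating, then $p_0$ must lie on the opposite side of $p_2$ from $p_1$, and the direction letter of $p_2$ is forced by the horizontal or vertical knight configuration of the first two points of $\xi_j$ (Subsection~\ref{ssec:oscillations}); enumerating by type $(H,H)$, $(H,V)$, $(V,H)$ and $(V,V)$ of $\xi_j$ produces exactly $P^{sep}(\xi_j)$. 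The main obstacle is precisely this separating subcase: one must check, type by type, which initial numerals encoding $\xi_i$ are compatible with a valid placement of $p_0$ making $p_2$ a separating pin in the prescribed direction, and then verify that the remainder of the reading follows the same pattern as $P^{(3)}(\xi_j)$ from Lemma~\ref{lem:f1f3} up to the removal of its leading numeral. The cardinality bound $|P(\oplus[\xi_i,\xi_j])|\leq 192$ finally follows by combining Lemma~\ref{lem:lecture_epi}, Lemma~\ref{lem:lecture_epi_mix} and Remark~\ref{rem:P1P3}: in the first item each union term is bounded by $|\fint|\cdot|P^{(\quadrantell)}|\leq 48\cdot 2$, and in the third item the four contributions are bounded by $22$, $48$, $8$ and $2$ respectively.
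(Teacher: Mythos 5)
Your case analysis follows the paper's proof exactly: ruling out split readings when both children have size greater than $1$, splitting the first item according to which oscillation is read first, and decomposing the third item into the same four contributions — $\fmix(\xi_i,\xi_j)$ for interrupted readings of $\xi_j$, $P(\xi_j)\cdot 3$ when $\xi_j$ is read first, and $\{1,2,3,4\}\cdot P^{(3)}(\xi_j)$ versus $P^{sep}(\xi_j)$ according to whether the first pin of $\xi_j$ is independent or separating — together with the key observation (implicit in your quadrant argument) that the separating case can only occur because $|\xi_i|=1$. The argument is correct and essentially identical to the paper's; only your stated reason for the final numeral $3$ in case (b) is slightly off (the origin need not lie inside the bounding box of $\xi_j$; rather, the last pin must be an independent pin lying in quadrant $3$ of the bounding box of $\xi_j\cup\{p_0\}$), which does not affect the conclusion.
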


\begin{proof}
For the first item, Lemma~\ref{lem:lecture_epi_mix} ensures that the
pin words of $\oplus[\xi_i,\xi_j]$ encode pin representations
reading both $\xi_i$ and $\xi_j$ in one piece. The two terms of
the union are obtained according to which oscillation (among
$\xi_i$ and $\xi_j$) is read first.  The second statement follows
directly from the fact that $\oplus[\xi_i,\xi_j] = 12$ in this
case.  From Lemma~\ref{lem:lecture_epi_mix}, the situation of the
third statement is the one where there are pin words encoding pin
representations reading $\xi_j$ in two \fois. The four terms of the
union account for four different kinds of pin words. Namely, $\fmix
(\xi_i,\xi_j)$ is the set of pin words reading $\xi_j$ in two
\fois, $P(\xi_j) \cdot 3$ is the set of pin words reading first
$\xi_j$ and then $\xi_i$, $\{1,2,3,4\}\cdot P^{(3)}(\xi_j)$ is
the set of pin words reading first $\xi_i$ and then $\xi_j$
starting with an independent pin, and $P^{sep}(\xi_j) $ is the set
of pin words reading first $\xi_i$ and then $\xi_j$ starting
with a separating pin.  Notice that in this last situation, the first
pin of $\xi_j$ may be separating only because $|\xi_i| =1$, so
that this case does not need to appear in the first item.
\end{proof}

This completes the explicit description of all the sets of pin words
appearing in Theorem~\ref{thm:pinwords_cas_lineaire_non_recursif}.

\subsection{Recursive case: decomposition trees with a linear root}

We now focus on pin-permutations whose decomposition trees have
a linear root $\oplus$ and a child $T_{i_0}$ which is not an increasing
oscillation. From~\cite[Lemma 3.7]{BBR09} $T_{i_0}$ is then the first
child read by any pin representation.
Lemma~\ref{lem:readInTwoTimes} (p.\pageref{lem:readInTwoTimes}) gives a characterization of
permutations in which $T_{i_0}$ may be read in several \fois. Moreover
from Remark~\ref{rem:inTwoTimes} if $T_{i_0}$ is read in two \fois the
first part $S$ being fixed, then the order of the points of the
remaining part is uniquely determined. Nevertheless, since some
permutations may satisfy several conditions $F1$ to $G4+$ of Lemma~\ref{lem:readInTwoTimes}, the first part $S$ to be read is not
uniquely determined. For example every permutation satisfying $F3$
also satisfies $F1$, and some permutations satisfy both $F1$ and
$G2$ (see Figure~\ref{fig:cases} p.\pageref{fig:cases}).
In Figure~\ref{fig:H} we classify the permutations according to
the conditions they satisfy.

Let $\setH$ be the set of permutations in which $T_{i_0}$ may be
read in several \fois. Then any permutation of $\setH$ satisfies exactly
one of the conditions $\horse{i}{j}$ of Figure~\ref{fig:H}.
We say that a permutation satisfies condition $\horse{i}{j}$ when 
its diagram has the corresponding shape in Figure~\ref{fig:H} (up to symmetry)
and does not satisfy any condition that appears above $\horse{i}{j}$ in Figure~\ref{fig:H}.
For example a permutation in $\horse{1}{2}$ cannot be in $\horse{2}{2}$.
One can check by a comprehensive verification that there is no other combination (up to
symmetry) of the conditions $F1$ to $G4+$ of Lemma~\ref{lem:readInTwoTimes}.  Moreover as $T_{i_0}$ is not an increasing
oscillation, the sets $S$ and $T$ that appear on Figure~\ref{fig:H} are such that $|S|\geq 2$ and $|T|\geq 1$.
\begin{figure}
\begin{tabular}{m{1.2cm}m{2cm}m{2.3cm}m{5cm}}
~ & \small{Diagram} & \hspace*{-0.5cm}\small{Decomposition tree} & \small{Automaton (see Appendix~\ref{sec:buildingAutomata})} \\
$\underset{(F4+G4)}{2H3}$
&
\begin{tikzDiagramme}{.2}
\draw[boite] (0,0) rectangle  (1,1);
\draw[fill] (0.5,0.5) circle (3pt) node[below=2pt] {\tiny $y$};
\draw[boite] (1,1) rectangle (6,6);
\draw[boite] (1,2) rectangle (5,6);
\draw[boite] (2,3) rectangle node {$T$} (4,5);
\draw[fill] (1.5,2.5) circle (3pt) node[right=2pt] {\tiny $a$};
\draw[fill] (4.5,5.5) circle (3pt) node[above=2pt] {\tiny $b$};
\draw[fill] (5.5,1.5) circle (3pt) node[right=2pt] {\tiny $c$};
\draw[boite] (6,6) rectangle (7,7);
\draw[fill] (6.5,6.5) circle (3pt) node[right=2pt] {\tiny $x$};
\draw[decorate,decoration={brace, amplitude=7pt}] (1,2) -- node[left=3pt]
 {\small $S$} (1,6);
\end{tikzDiagramme}
&
\begin{tikzArbre}{.4}
\begin{scope}[yshift=120]
  \node [linear]{$\oplus$} child[dotted] {node (a) {}} child { node[leaf](b) {}} child {node[linear] {$\ominus$}{ child {node[linear] {$\oplus$}
        child {node[leaf]{} node[below=3pt]{\small $a$}} child
        [level distance=1pt,xshift=0pt,yshift=-7pt]
        {node[draw,shape=isosceles triangle, shape border
          rotate=90,anchor=north] {\small $T$} edge from
          parent[draw=none]} child {node[leaf]{}
          node[below=3pt]{\small $b$}} } child {node[leaf]{}
        node[below=3pt]{\small $c$}} }} child {node[leaf]{}
    node(c) {}} child [dotted] {node (d) {}} ; \draw[dotted, thick]
  (a) -- (b); \draw[dotted, thick] (c) -- (d); \draw (c) node [below
  right=2pt] {$x$}; \draw (b) node [below left=2pt] {$y$};
\end{scope}
\end{tikzArbre}
&
\begin{tikzpicture}[scale=.2, shorten >= 2pt]
\useasboundingbox (-0.5,-0.5) rectangle (23,9);
\begin{scope}[scale=.7]
\patateTriangleUnlabelled{0}{0};
\patateTriangleUnlabelled{2}{0};
\patateTriangleUnlabelled{6}{0};
\patateTriangleUnlabelled{0}{2};
\patateTriangleUnlabelled{2}{2};
\patateTriangleUnlabelled{6}{2};
\patateTriangleUnlabelled{0}{6};
\patateTriangleUnlabelled{2}{6};
\patateTriangleUnlabelled{6}{6};
\begin{scope}
\draw (0.2,7.85) rectangle  +(1.6,0.3);
\inputState{0.35}{8};
\outputState{1.65}{8};
\end{scope}
\begin{scope}[shift={(2,0)}]
\draw (0.2,7.85) rectangle  +(1.6,0.3);
\inputState{0.35}{8};
\outputState{1.65}{8};
\end{scope}
\begin{scope}[shift={(6,0)}]
\draw (0.2,7.85) rectangle +(1.6,0.3);
\draw [pattern=north east lines] (0.2,7.85) rectangle +(0.3,0.3);
\inputState{0.35}{8};
\outputState{1.65}{8};
\end{scope}
\inputState{8.3}{8};
\begin{scope}[shift={(0,0)}]
\draw [rounded corners, fill=black!10,opacity=.5] (0,5) -- (0,6) -- (1,5);
\outputState{0.15}{5.7};
\end{scope}
\begin{scope}[shift={(2,0)}]
\draw [rounded corners, fill=black!10,opacity=.5] (0,5) -- (0,6) -- (1,5);
\outputState{0.15}{5.7};
\end{scope}
\begin{scope}[shift={(8,0)}]
\draw (-0.15,0.2) rectangle +(0.3,1.6);
\inputState{0}{0.35};
\outputState{0}{1.65};
\end{scope}
\begin{scope}[shift={(8,2)}]
\draw (-0.15,0.2) rectangle +(0.3,1.6);
\inputState{0}{0.35};
\outputState{0}{1.65};
\end{scope}
\begin{scope}[shift={(8,6)}]
\draw (-0.15,0.2) rectangle +(0.3,1.6);
\inputState{0}{0.35};
\outputState{0}{1.65};
\draw [pattern=north east lines] (-0.15,0.2) rectangle +(0.3,0.3);
\end{scope}

\draw [fill=black!20,opacity=.2, very thick] (2,0) circle (.5cm);
\draw [fill=black!20,opacity=.2, very thick] (0,2) circle (.5cm);
\draw [fill=black!20,opacity=.2, very thick] (2,2) circle (.5cm);
\draw [fill=black!20,opacity=.2, very thick] (6,2) circle (.5cm);
\draw [fill=black!20,opacity=.2, very thick] (8,2) circle (.5cm);
\draw [fill=black!20,opacity=.2, very thick] (0,6) circle (.5cm);
\draw [fill=black!20,opacity=.2, very thick] (2,6) circle (.5cm);
\draw [fill=black!20,opacity=.2, very thick] (8,6) circle (.5cm);
\draw [fill=black!20,opacity=.2, very thick] (0,8) circle (.5cm);
\draw [fill=black!20,opacity=.2, very thick] (2,8) circle (.5cm);
\draw [fill=black!20,opacity=.2, very thick] (4,8) circle (.5cm);
\draw [fill=black!20,opacity=.2, very thick] (6,8) circle (.5cm);
\draw [fill=black!20,opacity=.2, very thick] (8,8) circle (.5cm);
\draw [fill=black!20,opacity=.2, very thick] (4,0) circle (.5cm);
\draw [fill=black!20,opacity=.2, very thick] (4,2) circle (.5cm);
\draw [fill=black!20,opacity=.2, very thick] (4,6) circle (.5cm);
\draw [fill=black!20,opacity=.2, very thick] (8,0) circle (.5cm);
\draw [fill=black!20,opacity=.2, very thick] (8,4) circle (.5cm);
\draw [fill=black!20,opacity=.2, very thick] (6,0) circle (.5cm);
\draw (8,8) node (exit) {};
\draw (8,7) node (exit87) {};
\draw (7,8) node (exit78) {};
\draw [drop shadow,fill=black!10] (20,8) node[below=0.5,xshift=20pt] {${\mathcal A}_{\rho}$} ellipse  (12 and 3) ;
\draw [fill=black] (20,7) node[right]  {\small $q_{T\cup b}$} circle (10pt);
\draw [->] (8,6) .. controls +(0,-5) and +(0,-10) .. node[below] {\tiny $LDRU$} (20,7);
\draw [fill=black] (20,9) node[right=0pt]  {\small $q_{T \cup a}$} circle (10pt);
\draw [->] (8,6) .. controls +(0,-2) and +(-3,-10) .. node[above=-2pt] {\tiny $DRU$} (18,8);
\draw [->] (6,8) .. controls +(0,3) and +(0,3) .. node[above=-2pt] {\tiny $RDL$} (18,8);
\draw [->] (6,8) .. controls +(0,+4) and +(0,+6) .. node[above=-2pt] {\tiny $URDL$} (20,9);
\draw [fill=black] (18,8) node[left]  {\small $q_{S}$} circle (10pt);
\end{scope}
\end{tikzpicture}
\\
$\underset{(F1+G2)}{2H2}$ &
\begin{tikzDiagramme}{.2}
\draw[boite] (0,0) rectangle  (1,1);
\draw[fill] (0.5,0.5) circle (3pt) node[below=2pt] {\tiny $y$};
\draw[boite] (1,1) rectangle (6,6);
\draw[boite] (2,2) rectangle node {$T$} (5,5);
\draw[fill] (1.5,5.5) circle (3pt) node[left=2pt] {\tiny $a$};
\draw[fill] (5.5,1.5) circle (3pt) node[right=2pt] {\tiny $b$};
\draw[boite] (6,6) rectangle (7,7);
\draw[fill] (6.5,6.5) circle (3pt) node[right=2pt] {\tiny $x$};
\end{tikzDiagramme}
&
\begin{tikzArbre}{.4}
\begin{scope}[yshift=120]
  \node[linear] {$\oplus$} child[dotted] {node (a) {}} child {node[leaf]{} node[below=3pt]{\small $y$} (b) {}} child {node[linear] {$\ominus$}{
      child {node[leaf]{} node[below=3pt]{\small $a$}} child
      [level distance=1pt,xshift=0pt,yshift=-7pt]
      {node[draw,shape=isosceles triangle, shape border
        rotate=90,anchor=north] {\small $T$} edge from
        parent[draw=none]} child {node[leaf]{}
        node[below=3pt]{\small $b$}} } } child {node[leaf]{}
    node[below=3pt]{\small $x$} (c) {}} child [dotted] {node (d) {}} ;
  \draw[dotted, thick] (a) -- (b); \draw[dotted, thick] (c) -- (d);
\end{scope}
\end{tikzArbre}
&
\begin{tikzpicture}[scale=.2, shorten >= 2pt]
\useasboundingbox (-0.5,-0.5) rectangle (23,9);
\begin{scope}[scale=.7]
\patateTriangleUnlabelled{0}{0};
\patateTriangleUnlabelled{2}{0};
\patateTriangleUnlabelled{6}{0};
\patateTriangleUnlabelled{0}{2};
\patateTriangleUnlabelled{2}{2};
\patateTriangleUnlabelled{6}{2};
\patateTriangleUnlabelled{0}{6};
\patateTriangleUnlabelled{2}{6};
\patateTriangleUnlabelled{6}{6};
\begin{scope}
\draw (0.2,7.85) rectangle  +(1.6,0.3);
\inputState{0.35}{8};
\outputState{1.65}{8};
\end{scope}
\begin{scope}[shift={(2,0)}]
\draw (0.2,7.85) rectangle  +(1.6,0.3);
\inputState{0.35}{8};
\outputState{1.65}{8};
\end{scope}
\begin{scope}[shift={(6,0)}]
\draw (0.2,7.85) rectangle +(1.6,0.3);
\draw [pattern=north east lines] (0.2,7.85) rectangle +(0.3,0.3);
\inputState{0.35}{8};
\outputState{1.65}{8};
\end{scope}
\inputState{8.3}{8};
\begin{scope}[shift={(0,0)}]
\draw [rounded corners, fill=black!10,opacity=.5] (0,5) -- (0,6) -- (1,5);
\outputState{0.15}{5.7};
\end{scope}
\begin{scope}[shift={(2,0)}]
\draw [rounded corners, fill=black!10,opacity=.5] (0,5) -- (0,6) -- (1,5);
\outputState{0.15}{5.7};
\end{scope}
\begin{scope}[shift={(8,0)}]
\draw (-0.15,0.2) rectangle +(0.3,1.6);
\inputState{0}{0.35};
\outputState{0}{1.65};
\end{scope}
\begin{scope}[shift={(8,2)}]
\draw (-0.15,0.2) rectangle +(0.3,1.6);
\inputState{0}{0.35};
\outputState{0}{1.65};
\end{scope}
\begin{scope}[shift={(8,6)}]
\draw (-0.15,0.2) rectangle +(0.3,1.6);
\inputState{0}{0.35};
\outputState{0}{1.65};
\draw [pattern=north east lines] (-0.15,0.2) rectangle +(0.3,0.3);
\end{scope}

\draw [fill=black!20,opacity=.2, very thick] (2,0) circle (.5cm);
\draw [fill=black!20,opacity=.2, very thick] (0,2) circle (.5cm);
\draw [fill=black!20,opacity=.2, very thick] (2,2) circle (.5cm);
\draw [fill=black!20,opacity=.2, very thick] (6,2) circle (.5cm);
\draw [fill=black!20,opacity=.2, very thick] (8,2) circle (.5cm);
\draw [fill=black!20,opacity=.2, very thick] (0,6) circle (.5cm);
\draw [fill=black!20,opacity=.2, very thick] (2,6) circle (.5cm);
\draw [fill=black!20,opacity=.2, very thick] (8,6) circle (.5cm);
\draw [fill=black!20,opacity=.2, very thick] (0,8) circle (.5cm);
\draw [fill=black!20,opacity=.2, very thick] (2,8) circle (.5cm);
\draw [fill=black!20,opacity=.2, very thick] (4,8) circle (.5cm);
\draw [fill=black!20,opacity=.2, very thick] (6,8) circle (.5cm);
\draw [fill=black!20,opacity=.2, very thick] (8,8) circle (.5cm);
\draw [fill=black!20,opacity=.2, very thick] (4,0) circle (.5cm);
\draw [fill=black!20,opacity=.2, very thick] (4,2) circle (.5cm);
\draw [fill=black!20,opacity=.2, very thick] (4,6) circle (.5cm);
\draw [fill=black!20,opacity=.2, very thick] (8,0) circle (.5cm);
\draw [fill=black!20,opacity=.2, very thick] (8,4) circle (.5cm);
\draw [fill=black!20,opacity=.2, very thick] (6,0) circle (.5cm);
\draw (8,8) node (exit) {};
\draw (8,7) node (exit87) {};
\draw (7,8) node (exit78) {};
\draw [drop shadow,fill=black!10] (20,8) node[below=0.5,xshift=20pt] {${\mathcal A}_{\rho}$} ellipse  (12 and 3) ;
\draw [fill=black] (20,7) node[right]  {\small $q_{T\cup b}$} circle (10pt);
\draw [->] (8,6) .. controls +(0,-5) and +(0,-10) .. node[below] {\tiny $LUR$} (20,7);
\draw [fill=black] (20,9) node[left=0pt]  {\small $q_{T \cup a}$} circle (10pt);
\draw [->] (8,6) .. controls +(0,-4) and +(-3,-10) .. node[above=-2pt] {\tiny $DRU$} (20,9);
\draw [->] (6,8) .. controls +(0,3) and +(0,3) .. (10,6) .. controls +(0,-3) and +(-2,-1) .. node[above=-2pt] {\tiny $ULD$} (20,7);
\draw [->] (6,8) .. controls +(0,+3) and +(0,+6) .. node[above=-2pt] {\tiny $RDL$} (20,9);
\end{scope}
\end{tikzpicture}
\\
$\underset{(F4+G2)}{2H2\star}$ &
\begin{tikzDiagramme}{.2}
\draw[boite] (0,0) rectangle  (1,1);
\draw[fill] (0.5,0.5) circle (3pt) node[below=2pt] {\tiny $y$};
\draw[boite] (1,1) rectangle (6,6);
\draw[boite] (1,2) rectangle (5,6);
\draw[boite] (2,3) rectangle node {$S$} (5,6);
\draw[fill] (1.5,2.5) circle (3pt) node[right=2pt] {\tiny $a$};
\draw[fill] (5.5,1.5) circle (3pt) node[right=2pt] {\tiny $b$};
\draw[boite] (6,6) rectangle (7,7);
\draw[fill] (6.5,6.5) circle (3pt) node[right=2pt] {\tiny $x$};
\draw[decorate,decoration={brace, amplitude=7pt}] (1,2) -- node[left=3pt]
{\small $S'$} (1,6);
\end{tikzDiagramme}
&
\begin{tikzArbre}{.4}
\begin{scope}[yshift=120]
  \node[linear] {$\oplus$} child[dotted] {node (a) {}} child {node[leaf]{} node[below=3pt]{\small $y$} (b) {}} child {node[linear] {$\ominus$}{
      child {node[linear] {$\oplus$} child {node[leaf]{}
          node[below=3pt]{\small $a$}} child [level
        distance=1pt,xshift=0pt,yshift=-7pt]
        {node[draw,shape=isosceles triangle, shape border
          rotate=90,anchor=north] {\small $S$} edge from
          parent[draw=none]} child [missing]{node[leaf]{}} }
      child {node[leaf]{} node[below=3pt]{\small $b$}} }} child
  {node[leaf]{} node[below=3pt]{\small $x$} (c) {}} child
  [dotted] {node (d) {}} ; \draw[dotted, thick] (a) -- (b);
  \draw[dotted, thick] (c) -- (d);
\end{scope}
\end{tikzArbre}
&
\begin{tikzpicture}[scale=.2, shorten >= 2pt]
\useasboundingbox (-0.5,-0.5) rectangle (23,9);
\begin{scope}[scale=.7]
\patateTriangleUnlabelled{0}{0};
\patateTriangleUnlabelled{2}{0};
\patateTriangleUnlabelled{6}{0};
\patateTriangleUnlabelled{0}{2};
\patateTriangleUnlabelled{2}{2};
\patateTriangleUnlabelled{6}{2};
\patateTriangleUnlabelled{0}{6};
\patateTriangleUnlabelled{2}{6};
\patateTriangleUnlabelled{6}{6};
\begin{scope}
\draw (0.2,7.85) rectangle  +(1.6,0.3);
\inputState{0.35}{8};
\outputState{1.65}{8};
\end{scope}
\begin{scope}[shift={(2,0)}]
\draw (0.2,7.85) rectangle  +(1.6,0.3);
\inputState{0.35}{8};
\outputState{1.65}{8};
\end{scope}
\begin{scope}[shift={(6,0)}]
\draw (0.2,7.85) rectangle +(1.6,0.3);
\draw [pattern=north east lines] (0.2,7.85) rectangle +(0.3,0.3);
\inputState{0.35}{8};
\outputState{1.65}{8};
\end{scope}
\inputState{8.3}{8};
\begin{scope}[shift={(0,0)}]
\draw [rounded corners, fill=black!10,opacity=.5] (0,5) -- (0,6) -- (1,5);
\outputState{0.15}{5.7};
\end{scope}
\begin{scope}[shift={(2,0)}]
\draw [rounded corners, fill=black!10,opacity=.5] (0,5) -- (0,6) -- (1,5);
\outputState{0.15}{5.7};
\end{scope}
\begin{scope}[shift={(8,0)}]
\draw (-0.15,0.2) rectangle +(0.3,1.6);
\inputState{0}{0.35};
\outputState{0}{1.65};
\end{scope}
\begin{scope}[shift={(8,2)}]
\draw (-0.15,0.2) rectangle +(0.3,1.6);
\inputState{0}{0.35};
\outputState{0}{1.65};
\end{scope}
\begin{scope}[shift={(8,6)}]
\draw (-0.15,0.2) rectangle +(0.3,1.6);
\inputState{0}{0.35};
\outputState{0}{1.65};
\draw [pattern=north east lines] (-0.15,0.2) rectangle +(0.3,0.3);
\end{scope}

\draw [fill=black!20,opacity=.2, very thick] (2,0) circle (.5cm);
\draw [fill=black!20,opacity=.2, very thick] (0,2) circle (.5cm);
\draw [fill=black!20,opacity=.2, very thick] (2,2) circle (.5cm);
\draw [fill=black!20,opacity=.2, very thick] (6,2) circle (.5cm);
\draw [fill=black!20,opacity=.2, very thick] (8,2) circle (.5cm);
\draw [fill=black!20,opacity=.2, very thick] (0,6) circle (.5cm);
\draw [fill=black!20,opacity=.2, very thick] (2,6) circle (.5cm);
\draw [fill=black!20,opacity=.2, very thick] (8,6) circle (.5cm);
\draw [fill=black!20,opacity=.2, very thick] (0,8) circle (.5cm);
\draw [fill=black!20,opacity=.2, very thick] (2,8) circle (.5cm);
\draw [fill=black!20,opacity=.2, very thick] (4,8) circle (.5cm);
\draw [fill=black!20,opacity=.2, very thick] (6,8) circle (.5cm);
\draw [fill=black!20,opacity=.2, very thick] (8,8) circle (.5cm);
\draw [fill=black!20,opacity=.2, very thick] (4,0) circle (.5cm);
\draw [fill=black!20,opacity=.2, very thick] (4,2) circle (.5cm);
\draw [fill=black!20,opacity=.2, very thick] (4,6) circle (.5cm);
\draw [fill=black!20,opacity=.2, very thick] (8,0) circle (.5cm);
\draw [fill=black!20,opacity=.2, very thick] (8,4) circle (.5cm);
\draw [fill=black!20,opacity=.2, very thick] (6,0) circle (.5cm);
\draw (8,8) node (exit) {};
\draw (8,7) node (exit87) {};
\draw (7,8) node (exit78) {};
\draw [drop shadow,fill=black!10] (20,8) node[below=0.5,xshift=20pt] {${\mathcal A}_{\rho}$} ellipse  (12 and 3) ;
\draw [fill=black] (22,8) node[above right]  {\small $q_{S}$} circle (10pt);
\draw [fill=black] (20,8) node[left]  {\small $q_{S'}$} circle (10pt);
\draw [->] (8,6) .. controls +(0,-4) and +(0,-10) .. node[below] {\tiny $LDRU$} (22,8);
\draw [->] (8,6) .. controls +(0,-4) and +(0,-8) .. node[above=-2pt] {\tiny $DRU$} (20,8);
\draw [->] (6,8) .. controls +(0,+3) and +(0,+8) .. node[above=-2pt] {\tiny $RDL$} (20,8);
\end{scope}
\end{tikzpicture}
\\
$\underset{(F1+G1)}{2H1}$ &
\begin{tikzDiagramme}{.2}
\draw[boite] (0,0) rectangle  (1,1);
\draw[fill] (0.5,0.5) circle (3pt) node[below=2pt] {\tiny $y$};
\draw[boite] (1,1) rectangle (6,6);
\draw[boite] (2,1) rectangle node {$S$} (6,5);
\draw[fill] (1.5,5.5) circle (3pt) node[left=2pt] {\tiny $a$};
\draw[boite] (6,6) rectangle (7,7);
\draw[fill] (6.5,6.5) circle (3pt) node[right=2pt] {\tiny $x$};
\end{tikzDiagramme}
&
\begin{tikzArbre}{.4}
\begin{scope}[yshift=120]
  \node [linear]{$\oplus$} child[dotted] {node (a) {}} child {node[leaf]{} node[below=3pt]{\small $y$} (b) {}} child {node[linear] {$\ominus$}{
      child {node[leaf]{} node[below=3pt]{\small $a$}} child
      [level distance=1pt,xshift=0pt,yshift=-7pt]
      {node[draw,shape=isosceles triangle, shape border
        rotate=90,anchor=north] {\small $S$} edge from
        parent[draw=none]} child [missing] {node[leaf]{}} } }
  child {node[leaf]{} node[below=3pt]{\small $x$} (c) {}} child
  [dotted] {node (d) {}} ; \draw[dotted, thick] (a) -- (b);
  \draw[dotted, thick] (c) -- (d);
\end{scope}
\end{tikzArbre}
&
\begin{tikzpicture}[scale=.2, shorten >= 2pt]
\useasboundingbox (-0.5,-0.5) rectangle (23,9);
\begin{scope}[scale=.7]
\patateTriangleUnlabelled{0}{0};
\patateTriangleUnlabelled{2}{0};
\patateTriangleUnlabelled{6}{0};
\patateTriangleUnlabelled{0}{2};
\patateTriangleUnlabelled{2}{2};
\patateTriangleUnlabelled{6}{2};
\patateTriangleUnlabelled{0}{6};
\patateTriangleUnlabelled{2}{6};
\patateTriangleUnlabelled{6}{6};
\begin{scope}
\draw (0.2,7.85) rectangle  +(1.6,0.3);
\inputState{0.35}{8};
\outputState{1.65}{8};
\end{scope}
\begin{scope}[shift={(2,0)}]
\draw (0.2,7.85) rectangle  +(1.6,0.3);
\inputState{0.35}{8};
\outputState{1.65}{8};
\end{scope}
\begin{scope}[shift={(6,0)}]
\draw (0.2,7.85) rectangle +(1.6,0.3);
\draw [pattern=north east lines] (0.2,7.85) rectangle +(0.3,0.3);
\inputState{0.35}{8};
\outputState{1.65}{8};
\end{scope}
\inputState{8.3}{8};
\begin{scope}[shift={(0,0)}]
\draw [rounded corners, fill=black!10,opacity=.5] (0,5) -- (0,6) -- (1,5);
\outputState{0.15}{5.7};
\end{scope}
\begin{scope}[shift={(2,0)}]
\draw [rounded corners, fill=black!10,opacity=.5] (0,5) -- (0,6) -- (1,5);
\outputState{0.15}{5.7};
\end{scope}
\begin{scope}[shift={(8,0)}]
\draw (-0.15,0.2) rectangle +(0.3,1.6);
\inputState{0}{0.35};
\outputState{0}{1.65};
\end{scope}
\begin{scope}[shift={(8,2)}]
\draw (-0.15,0.2) rectangle +(0.3,1.6);
\inputState{0}{0.35};
\outputState{0}{1.65};
\end{scope}
\begin{scope}[shift={(8,6)}]
\draw (-0.15,0.2) rectangle +(0.3,1.6);
\inputState{0}{0.35};
\outputState{0}{1.65};
\draw [pattern=north east lines] (-0.15,0.2) rectangle +(0.3,0.3);
\end{scope}

\draw [fill=black!20,opacity=.2, very thick] (2,0) circle (.5cm);
\draw [fill=black!20,opacity=.2, very thick] (0,2) circle (.5cm);
\draw [fill=black!20,opacity=.2, very thick] (2,2) circle (.5cm);
\draw [fill=black!20,opacity=.2, very thick] (6,2) circle (.5cm);
\draw [fill=black!20,opacity=.2, very thick] (8,2) circle (.5cm);
\draw [fill=black!20,opacity=.2, very thick] (0,6) circle (.5cm);
\draw [fill=black!20,opacity=.2, very thick] (2,6) circle (.5cm);
\draw [fill=black!20,opacity=.2, very thick] (8,6) circle (.5cm);
\draw [fill=black!20,opacity=.2, very thick] (0,8) circle (.5cm);
\draw [fill=black!20,opacity=.2, very thick] (2,8) circle (.5cm);
\draw [fill=black!20,opacity=.2, very thick] (4,8) circle (.5cm);
\draw [fill=black!20,opacity=.2, very thick] (6,8) circle (.5cm);
\draw [fill=black!20,opacity=.2, very thick] (8,8) circle (.5cm);
\draw [fill=black!20,opacity=.2, very thick] (4,0) circle (.5cm);
\draw [fill=black!20,opacity=.2, very thick] (4,2) circle (.5cm);
\draw [fill=black!20,opacity=.2, very thick] (4,6) circle (.5cm);
\draw [fill=black!20,opacity=.2, very thick] (8,0) circle (.5cm);
\draw [fill=black!20,opacity=.2, very thick] (8,4) circle (.5cm);
\draw [fill=black!20,opacity=.2, very thick] (6,0) circle (.5cm);
\draw (8,8) node (exit) {};
\draw (8,7) node (exit87) {};
\draw (7,8) node (exit78) {};
\draw [drop shadow,fill=black!10] (20,8) node[below=0.5,xshift=20pt] {${\mathcal A}_{\rho}$} ellipse  (12 and 3) ;
\draw [fill=black] (20,8) node[above right]  {\small $q_{S}$} circle (10pt);
\draw [->] (8,6) .. controls +(0,-4) and +(0,-10) .. node[below] {\tiny $LUR$} (20,8);
\draw [->] (6,8) .. controls +(0,4) and +(0,8) .. node[below=-2pt] {\tiny $ULD$} (20,8);
\end{scope}
\end{tikzpicture}
\\
$\underset{(F1+F2)}{1H2}$ &
\begin{tikzDiagramme}{.2}
\useasboundingbox (0,0) rectangle (7,7);
\draw[boite] (1,1) rectangle (6,6);
\draw[boite] (2,2) rectangle node {$T$} (5,5);
\draw[fill] (1.5,5.5) circle (3pt) node[left=2pt] {\tiny $a$};
\draw[fill] (5.5,1.5) circle (3pt) node[right=2pt] {\tiny $b$};
\draw[boite] (6,6) rectangle (7,7);
\draw[fill] (6.5,6.5) circle (3pt) node[right=2pt] {\tiny $x$};
\end{tikzDiagramme}
&
\begin{tikzArbre}{.4}
 \begin{scope}[yshift=120]
 \node[linear] {$\oplus$} child[dotted] {node (a) {}} child {node [linear](b)
    {$\ominus$}{ child {node[leaf]{} node[below=3pt]{\small
          $a$}} child [level distance=1pt,xshift=0pt,yshift=-7pt]
      {node[draw,shape=isosceles triangle, shape border
        rotate=90,anchor=north] {\small $T$} edge from
        parent[draw=none]} child {node[leaf]{}
        node[below=3pt]{\small $b$}} } } child {node[leaf]{}
    node[below=3pt]{\small $x$} (c) {}} child [dotted] {node (d) {}} ;
  \draw[dotted, thick] (a) -- (b); \draw[dotted, thick] (c) -- (d);
\end{scope}
\end{tikzArbre}
&
\begin{tikzpicture}[scale=.2, shorten >= 2pt]
\begin{scope}[scale=.7]
\patateTriangleUnlabelled{0}{0};
\patateTriangleUnlabelled{2}{0};
\patateTriangleUnlabelled{6}{0};
\patateTriangleUnlabelled{0}{2};
\patateTriangleUnlabelled{2}{2};
\patateTriangleUnlabelled{6}{2};
\patateTriangleUnlabelled{0}{6};
\patateTriangleUnlabelled{2}{6};
\patateTriangleUnlabelled{6}{6};
\begin{scope}
\draw (0.2,7.85) rectangle  +(1.6,0.3);
\inputState{0.35}{8};
\outputState{1.65}{8};
\end{scope}
\begin{scope}[shift={(2,0)}]
\draw (0.2,7.85) rectangle  +(1.6,0.3);
\inputState{0.35}{8};
\outputState{1.65}{8};
\end{scope}
\begin{scope}[shift={(6,0)}]
\draw (0.2,7.85) rectangle +(1.6,0.3);
\draw [pattern=north east lines] (0.2,7.85) rectangle +(0.3,0.3);
\inputState{0.35}{8};
\outputState{1.65}{8};
\end{scope}
\inputState{8.3}{8};
\begin{scope}[shift={(0,0)}]
\draw [rounded corners, fill=black!10,opacity=.5] (0,5) -- (0,6) -- (1,5);
\outputState{0.15}{5.7};
\end{scope}
\begin{scope}[shift={(2,0)}]
\draw [rounded corners, fill=black!10,opacity=.5] (0,5) -- (0,6) -- (1,5);
\outputState{0.15}{5.7};
\end{scope}
\begin{scope}[shift={(8,0)}]
\draw (-0.15,0.2) rectangle +(0.3,1.6);
\inputState{0}{0.35};
\outputState{0}{1.65};
\end{scope}
\begin{scope}[shift={(8,2)}]
\draw (-0.15,0.2) rectangle +(0.3,1.6);
\inputState{0}{0.35};
\outputState{0}{1.65};
\end{scope}
\begin{scope}[shift={(8,6)}]
\draw (-0.15,0.2) rectangle +(0.3,1.6);
\inputState{0}{0.35};
\outputState{0}{1.65};
\draw [pattern=north east lines] (-0.15,0.2) rectangle +(0.3,0.3);
\end{scope}

\draw [fill=black!20,opacity=.2, very thick] (2,0) circle (.5cm);
\draw [fill=black!20,opacity=.2, very thick] (0,2) circle (.5cm);
\draw [fill=black!20,opacity=.2, very thick] (2,2) circle (.5cm);
\draw [fill=black!20,opacity=.2, very thick] (6,2) circle (.5cm);
\draw [fill=black!20,opacity=.2, very thick] (8,2) circle (.5cm);
\draw [fill=black!20,opacity=.2, very thick] (0,6) circle (.5cm);
\draw [fill=black!20,opacity=.2, very thick] (2,6) circle (.5cm);
\draw [fill=black!20,opacity=.2, very thick] (8,6) circle (.5cm);
\draw [fill=black!20,opacity=.2, very thick] (0,8) circle (.5cm);
\draw [fill=black!20,opacity=.2, very thick] (2,8) circle (.5cm);
\draw [fill=black!20,opacity=.2, very thick] (4,8) circle (.5cm);
\draw [fill=black!20,opacity=.2, very thick] (6,8) circle (.5cm);
\draw [fill=black!20,opacity=.2, very thick] (8,8) circle (.5cm);
\draw [fill=black!20,opacity=.2, very thick] (4,0) circle (.5cm);
\draw [fill=black!20,opacity=.2, very thick] (4,2) circle (.5cm);
\draw [fill=black!20,opacity=.2, very thick] (4,6) circle (.5cm);
\draw [fill=black!20,opacity=.2, very thick] (8,0) circle (.5cm);
\draw [fill=black!20,opacity=.2, very thick] (8,4) circle (.5cm);
\draw [fill=black!20,opacity=.2, very thick] (6,0) circle (.5cm);
\draw (8,8) node (exit) {};
\draw (8,7) node (exit87) {};
\draw (7,8) node (exit78) {};
\draw [drop shadow,fill=black!10] (20,8) node[below=0.5,xshift=20pt] {${\mathcal A}_{\rho}$} ellipse  (12 and 3) ;
\draw [fill=black] (20,7) node[right]  {\small $q_{T\cup a}$} circle (10pt);
\draw [->] (8,6) .. controls +(0,-5) and +(0,-10) .. node[below] {\tiny $DRU$} (20,7);
\draw [fill=black] (20,9) node[left=0pt]  {\small $q_{T \cup b}$} circle (10pt);
\draw [->] (8,6) .. controls +(0,-3) and +(-2,-10) .. node[above=-2pt] {\tiny $LUR$} (20,9);
\end{scope}
\end{tikzpicture}
\\
$\underset{(F4)}{1H2\star}$ &
\begin{tikzDiagramme}{.2}
\useasboundingbox (0,0) rectangle (7,7);
\draw[boite] (1,1) rectangle (6,6);
\draw[boite] (1,2) rectangle (5,6);
\draw[boite] (2,3) rectangle node {$S$} (5,6);
\draw[fill] (1.5,2.5) circle (3pt) node[right=2pt] {\tiny $a$};
\draw[fill] (5.5,1.5) circle (3pt) node[right=2pt] {\tiny $b$};
\draw[boite] (6,6) rectangle (7,7);
\draw[fill] (6.5,6.5) circle (3pt) node[right=2pt] {\tiny $x$};
\draw[decorate,decoration={brace, amplitude=7pt}] (1,2) -- node[left=3pt] {\small $S'$} (1,6);
\end{tikzDiagramme}
&
\begin{tikzArbre}{.4}
\begin{scope}[yshift=120]
 \node [linear]{$\oplus$}
	child[dotted] {node (a) {}}
	child {node [linear](b) {$\ominus$}{
		child {node [linear]{$\oplus$}
			child {node[leaf]{} node[below=3pt]{\small $a$}}
child [level distance=1pt,xshift=0pt,yshift=-7pt] {node[draw,shape=isosceles triangle,
shape border rotate=90,anchor=north] {\small $S$} edge from parent[draw=none]}
			child [missing]{node[leaf]{}}
		}
		child {node[leaf]{} node[below=3pt]{\small $b$}}
}}
	child {node[leaf]{} node[below=3pt]{\small $x$} (c) {}}
	child [dotted] {node (d) {}}
;
\draw[dotted, thick] (a) -- (b);
\draw[dotted, thick] (c) -- (d);
\end{scope}
\end{tikzArbre}
&
\begin{tikzpicture}[scale=.2, shorten >= 2pt]
\begin{scope}[scale=.7]
\patateTriangleUnlabelled{0}{0};
\patateTriangleUnlabelled{2}{0};
\patateTriangleUnlabelled{6}{0};
\patateTriangleUnlabelled{0}{2};
\patateTriangleUnlabelled{2}{2};
\patateTriangleUnlabelled{6}{2};
\patateTriangleUnlabelled{0}{6};
\patateTriangleUnlabelled{2}{6};
\patateTriangleUnlabelled{6}{6};
\begin{scope}
\draw (0.2,7.85) rectangle  +(1.6,0.3);
\inputState{0.35}{8};
\outputState{1.65}{8};
\end{scope}
\begin{scope}[shift={(2,0)}]
\draw (0.2,7.85) rectangle  +(1.6,0.3);
\inputState{0.35}{8};
\outputState{1.65}{8};
\end{scope}
\begin{scope}[shift={(6,0)}]
\draw (0.2,7.85) rectangle +(1.6,0.3);
\draw [pattern=north east lines] (0.2,7.85) rectangle +(0.3,0.3);
\inputState{0.35}{8};
\outputState{1.65}{8};
\end{scope}
\inputState{8.3}{8};
\begin{scope}[shift={(0,0)}]
\draw [rounded corners, fill=black!10,opacity=.5] (0,5) -- (0,6) -- (1,5);
\outputState{0.15}{5.7};
\end{scope}
\begin{scope}[shift={(2,0)}]
\draw [rounded corners, fill=black!10,opacity=.5] (0,5) -- (0,6) -- (1,5);
\outputState{0.15}{5.7};
\end{scope}
\begin{scope}[shift={(8,0)}]
\draw (-0.15,0.2) rectangle +(0.3,1.6);
\inputState{0}{0.35};
\outputState{0}{1.65};
\end{scope}
\begin{scope}[shift={(8,2)}]
\draw (-0.15,0.2) rectangle +(0.3,1.6);
\inputState{0}{0.35};
\outputState{0}{1.65};
\end{scope}
\begin{scope}[shift={(8,6)}]
\draw (-0.15,0.2) rectangle +(0.3,1.6);
\inputState{0}{0.35};
\outputState{0}{1.65};
\draw [pattern=north east lines] (-0.15,0.2) rectangle +(0.3,0.3);
\end{scope}

\draw [fill=black!20,opacity=.2, very thick] (2,0) circle (.5cm);
\draw [fill=black!20,opacity=.2, very thick] (0,2) circle (.5cm);
\draw [fill=black!20,opacity=.2, very thick] (2,2) circle (.5cm);
\draw [fill=black!20,opacity=.2, very thick] (6,2) circle (.5cm);
\draw [fill=black!20,opacity=.2, very thick] (8,2) circle (.5cm);
\draw [fill=black!20,opacity=.2, very thick] (0,6) circle (.5cm);
\draw [fill=black!20,opacity=.2, very thick] (2,6) circle (.5cm);
\draw [fill=black!20,opacity=.2, very thick] (8,6) circle (.5cm);
\draw [fill=black!20,opacity=.2, very thick] (0,8) circle (.5cm);
\draw [fill=black!20,opacity=.2, very thick] (2,8) circle (.5cm);
\draw [fill=black!20,opacity=.2, very thick] (4,8) circle (.5cm);
\draw [fill=black!20,opacity=.2, very thick] (6,8) circle (.5cm);
\draw [fill=black!20,opacity=.2, very thick] (8,8) circle (.5cm);
\draw [fill=black!20,opacity=.2, very thick] (4,0) circle (.5cm);
\draw [fill=black!20,opacity=.2, very thick] (4,2) circle (.5cm);
\draw [fill=black!20,opacity=.2, very thick] (4,6) circle (.5cm);
\draw [fill=black!20,opacity=.2, very thick] (8,0) circle (.5cm);
\draw [fill=black!20,opacity=.2, very thick] (8,4) circle (.5cm);
\draw [fill=black!20,opacity=.2, very thick] (6,0) circle (.5cm);
\draw (8,8) node (exit) {};
\draw (8,7) node (exit87) {};
\draw (7,8) node (exit78) {};
\draw [drop shadow,fill=black!10] (20,8) node[below=0.5,xshift=20pt] {${\mathcal A}_{\rho}$} ellipse  (12 and 3) ;
\draw [fill=black] (20,8) node[above right]  {\small $q_{S}$} circle (10pt);
\draw [->] (8,6) .. controls +(0,-5) and +(0,-10) .. node[below] {\tiny $LDRU$} (20,8);
\draw [fill=black] (18,8) node[above left=0pt]  {\small $q_{S'}$} circle (10pt);
\draw [->] (8,6) .. controls +(0,-3) and +(0,-10) .. node[above=0pt] {\tiny $DRU$} (18,8);
\end{scope}
\end{tikzpicture}
\\
$\underset{(F1)}{1H1}$ &
\begin{tikzDiagramme}{.2}
\useasboundingbox (0,0) rectangle (7,7);
\draw[boite] (1,1) rectangle (6,6);
\draw[boite] (2,1) rectangle node {$S$} (6,5);
\draw[fill] (1.5,5.5) circle (3pt) node[left=2pt] {\tiny $a$};
\draw[boite] (6,6) rectangle (7,7);
\draw[fill] (6.5,6.5) circle (3pt) node[right=2pt] {\tiny $x$};
\end{tikzDiagramme}
&
\begin{tikzArbre}{.4}
\begin{scope}[yshift=120]
 \node [linear] {$\oplus$}
	child[dotted] {node (a) {}}
	child {node [linear] (b) {$\ominus$}{
		child {node[leaf]{} node[below=3pt]{\small $a$}}
child [level distance=1pt,xshift=0pt,yshift=-7pt] {node[draw,shape=isosceles triangle,
shape border rotate=90,anchor=north] {\small $S$} edge from parent[draw=none]}
			child [missing] {node[leaf]{}}
		}
}
	child {node[leaf]{}  node[below=3pt]{\small $x$} (c) {}}
	child [dotted] {node (d) {}}
;
\draw[dotted, thick] (a) -- (b);
\draw[dotted, thick] (c) -- (d);
\end{scope}
\end{tikzArbre}
&
\begin{tikzpicture}[scale=.2, shorten >= 2pt]
\begin{scope}[scale=.7]
\patateTriangleUnlabelled{0}{0};
\patateTriangleUnlabelled{2}{0};
\patateTriangleUnlabelled{6}{0};
\patateTriangleUnlabelled{0}{2};
\patateTriangleUnlabelled{2}{2};
\patateTriangleUnlabelled{6}{2};
\patateTriangleUnlabelled{0}{6};
\patateTriangleUnlabelled{2}{6};
\patateTriangleUnlabelled{6}{6};
\begin{scope}
\draw (0.2,7.85) rectangle  +(1.6,0.3);
\inputState{0.35}{8};
\outputState{1.65}{8};
\end{scope}
\begin{scope}[shift={(2,0)}]
\draw (0.2,7.85) rectangle  +(1.6,0.3);
\inputState{0.35}{8};
\outputState{1.65}{8};
\end{scope}
\begin{scope}[shift={(6,0)}]
\draw (0.2,7.85) rectangle +(1.6,0.3);
\draw [pattern=north east lines] (0.2,7.85) rectangle +(0.3,0.3);
\inputState{0.35}{8};
\outputState{1.65}{8};
\end{scope}
\inputState{8.3}{8};
\begin{scope}[shift={(0,0)}]
\draw [rounded corners, fill=black!10,opacity=.5] (0,5) -- (0,6) -- (1,5);
\outputState{0.15}{5.7};
\end{scope}
\begin{scope}[shift={(2,0)}]
\draw [rounded corners, fill=black!10,opacity=.5] (0,5) -- (0,6) -- (1,5);
\outputState{0.15}{5.7};
\end{scope}
\begin{scope}[shift={(8,0)}]
\draw (-0.15,0.2) rectangle +(0.3,1.6);
\inputState{0}{0.35};
\outputState{0}{1.65};
\end{scope}
\begin{scope}[shift={(8,2)}]
\draw (-0.15,0.2) rectangle +(0.3,1.6);
\inputState{0}{0.35};
\outputState{0}{1.65};
\end{scope}
\begin{scope}[shift={(8,6)}]
\draw (-0.15,0.2) rectangle +(0.3,1.6);
\inputState{0}{0.35};
\outputState{0}{1.65};
\draw [pattern=north east lines] (-0.15,0.2) rectangle +(0.3,0.3);
\end{scope}

\draw [fill=black!20,opacity=.2, very thick] (2,0) circle (.5cm);
\draw [fill=black!20,opacity=.2, very thick] (0,2) circle (.5cm);
\draw [fill=black!20,opacity=.2, very thick] (2,2) circle (.5cm);
\draw [fill=black!20,opacity=.2, very thick] (6,2) circle (.5cm);
\draw [fill=black!20,opacity=.2, very thick] (8,2) circle (.5cm);
\draw [fill=black!20,opacity=.2, very thick] (0,6) circle (.5cm);
\draw [fill=black!20,opacity=.2, very thick] (2,6) circle (.5cm);
\draw [fill=black!20,opacity=.2, very thick] (8,6) circle (.5cm);
\draw [fill=black!20,opacity=.2, very thick] (0,8) circle (.5cm);
\draw [fill=black!20,opacity=.2, very thick] (2,8) circle (.5cm);
\draw [fill=black!20,opacity=.2, very thick] (4,8) circle (.5cm);
\draw [fill=black!20,opacity=.2, very thick] (6,8) circle (.5cm);
\draw [fill=black!20,opacity=.2, very thick] (8,8) circle (.5cm);
\draw [fill=black!20,opacity=.2, very thick] (4,0) circle (.5cm);
\draw [fill=black!20,opacity=.2, very thick] (4,2) circle (.5cm);
\draw [fill=black!20,opacity=.2, very thick] (4,6) circle (.5cm);
\draw [fill=black!20,opacity=.2, very thick] (8,0) circle (.5cm);
\draw [fill=black!20,opacity=.2, very thick] (8,4) circle (.5cm);
\draw [fill=black!20,opacity=.2, very thick] (6,0) circle (.5cm);
\draw (8,8) node (exit) {};
\draw (8,7) node (exit87) {};
\draw (7,8) node (exit78) {};
\draw [drop shadow,fill=black!10] (20,8) node[below=0.5,xshift=20pt] {${\mathcal A}_{\rho}$} ellipse  (12 and 3) ;
\draw [fill=black] (20,8) node[above right]  {\small $q_{S}$} circle (10pt);
\draw [->] (8,6) .. controls +(0,-2) and +(0,-10) .. node[below] {\tiny $LUR$} (20,8);
\end{scope}
\end{tikzpicture}
\\
$\underset{(F4+)}{1H1+}$ &
\begin{tikzDiagramme}{.2}
\useasboundingbox (0,0) rectangle (7,7);
\draw[boite] (1,1) rectangle (6,6);
\draw[boite] (3,4) rectangle node {$S$} (5,6);
\draw[fill] (2,3.5) circle (3pt);
\draw (2,3.5) -- +(1,0);
\draw[fill] (5.5,3) circle (3pt);
\draw[fill] (2.5,1.5) circle (3pt);
\draw (2.5,1.5) -- +(0,1);
\draw[fill] (1.5,2) circle (3pt);
\draw (1.5,2) -- +(1,0);
\draw[boite] (6,6) rectangle (7,7);
\draw[fill] (6.5,6.5) circle (3pt) node[right=2pt] {\tiny $x$};
\end{tikzDiagramme}
&
\begin{tikzArbre}{.4}
\begin{scope}[yshift=120]
 \node [linear] {$\oplus$}
	child[dotted] {node (a) {}}
	child {node (b) {$\xi^{+}$}
			child {node[leaf]{} node (aa) {}}
			child {node[leaf]{} node (ab) {}}
			child  {node[draw,shape=isosceles triangle,
				shape border rotate=90,anchor=north] {\small $S$} }
			child  {node[leaf]{}}
			child  [missing] {node[leaf]{}}
		}
	child {node[leaf]{} node[below=3pt]{\small $x$} (c) {}}
	child [dotted] {node (d) {}}
;
\draw[dotted, thick] (a) -- (b);
\draw[dotted, thick] (aa) -- (ab);
\draw[dotted, thick] (c) -- (d);
\end{scope}
\end{tikzArbre}
&
\begin{tikzpicture}[scale=.2, shorten >= 2pt]
\begin{scope}[scale=.7]
\patateTriangleUnlabelled{0}{0};
\patateTriangleUnlabelled{2}{0};
\patateTriangleUnlabelled{6}{0};
\patateTriangleUnlabelled{0}{2};
\patateTriangleUnlabelled{2}{2};
\patateTriangleUnlabelled{6}{2};
\patateTriangleUnlabelled{0}{6};
\patateTriangleUnlabelled{2}{6};
\patateTriangleUnlabelled{6}{6};
\begin{scope}
\draw (0.2,7.85) rectangle  +(1.6,0.3);
\inputState{0.35}{8};
\outputState{1.65}{8};
\end{scope}
\begin{scope}[shift={(2,0)}]
\draw (0.2,7.85) rectangle  +(1.6,0.3);
\inputState{0.35}{8};
\outputState{1.65}{8};
\end{scope}
\begin{scope}[shift={(6,0)}]
\draw (0.2,7.85) rectangle +(1.6,0.3);
\draw [pattern=north east lines] (0.2,7.85) rectangle +(0.3,0.3);
\inputState{0.35}{8};
\outputState{1.65}{8};
\end{scope}
\inputState{8.3}{8};
\begin{scope}[shift={(0,0)}]
\draw [rounded corners, fill=black!10,opacity=.5] (0,5) -- (0,6) -- (1,5);
\outputState{0.15}{5.7};
\end{scope}
\begin{scope}[shift={(2,0)}]
\draw [rounded corners, fill=black!10,opacity=.5] (0,5) -- (0,6) -- (1,5);
\outputState{0.15}{5.7};
\end{scope}
\begin{scope}[shift={(8,0)}]
\draw (-0.15,0.2) rectangle +(0.3,1.6);
\inputState{0}{0.35};
\outputState{0}{1.65};
\end{scope}
\begin{scope}[shift={(8,2)}]
\draw (-0.15,0.2) rectangle +(0.3,1.6);
\inputState{0}{0.35};
\outputState{0}{1.65};
\end{scope}
\begin{scope}[shift={(8,6)}]
\draw (-0.15,0.2) rectangle +(0.3,1.6);
\inputState{0}{0.35};
\outputState{0}{1.65};
\draw [pattern=north east lines] (-0.15,0.2) rectangle +(0.3,0.3);
\end{scope}

\draw [fill=black!20,opacity=.2, very thick] (2,0) circle (.5cm);
\draw [fill=black!20,opacity=.2, very thick] (0,2) circle (.5cm);
\draw [fill=black!20,opacity=.2, very thick] (2,2) circle (.5cm);
\draw [fill=black!20,opacity=.2, very thick] (6,2) circle (.5cm);
\draw [fill=black!20,opacity=.2, very thick] (8,2) circle (.5cm);
\draw [fill=black!20,opacity=.2, very thick] (0,6) circle (.5cm);
\draw [fill=black!20,opacity=.2, very thick] (2,6) circle (.5cm);
\draw [fill=black!20,opacity=.2, very thick] (8,6) circle (.5cm);
\draw [fill=black!20,opacity=.2, very thick] (0,8) circle (.5cm);
\draw [fill=black!20,opacity=.2, very thick] (2,8) circle (.5cm);
\draw [fill=black!20,opacity=.2, very thick] (4,8) circle (.5cm);
\draw [fill=black!20,opacity=.2, very thick] (6,8) circle (.5cm);
\draw [fill=black!20,opacity=.2, very thick] (8,8) circle (.5cm);
\draw [fill=black!20,opacity=.2, very thick] (4,0) circle (.5cm);
\draw [fill=black!20,opacity=.2, very thick] (4,2) circle (.5cm);
\draw [fill=black!20,opacity=.2, very thick] (4,6) circle (.5cm);
\draw [fill=black!20,opacity=.2, very thick] (8,0) circle (.5cm);
\draw [fill=black!20,opacity=.2, very thick] (8,4) circle (.5cm);
\draw [fill=black!20,opacity=.2, very thick] (6,0) circle (.5cm);
\draw (8,8) node (exit) {};
\draw (8,7) node (exit87) {};
\draw (7,8) node (exit78) {};
\draw [drop shadow,fill=black!10] (20,8) node[below=0.5,xshift=20pt] {${\mathcal A}_{\rho}$} ellipse  (12 and 3) ;
\draw [fill=black] (20,8) node[above right]  {\small $q_{S}$} circle (10pt);
\draw [->] (8,6) .. controls +(0,-2) and +(0,-10) .. node[below] {\tiny $LD..LDRU$} (20,8);
\end{scope}
\end{tikzpicture}
\\
\end{tabular}\\
\caption{The set $\setH$ and conditions $\horse{i}{j}$:
$\pi \in \setH$ if and only if $\pi$ satisfies one of the conditions $\horse{i}{j}$ shown above up to symmetry,
that form a partition of $\setH$.}  \label{fig:H}
\end{figure}

Similarly to $P(\pi)$ denoting the set of pin words encoding $\pi$, 
we denote by $P(T)$ the set of pin words that encode the permutation whose decomposition tree is $T$.

\begin{theo}\label{thm:linearRoot}
Let $\pi = \ $\begin{tikzpicture}[sibling distance=15pt,level
distance=15pt,baseline=-17pt,inner sep=0pt]
\node[linear] {$\oplus$}
	child {node (T1) {$\xi_1$}}
	child[missing]
	child {node (Ta) {$\xi_{\ell}$}}
	child[child anchor=north] {node[draw,shape=isosceles triangle,
shape border rotate=90,anchor=north,inner sep=0,isosceles triangle apex angle=90] {$T_{i_0}$}}
	child {node (Ta1) {$\xi_{\ell+2}$}}
	child[missing]
	child {node (Tk) {$\xi_r$} };
\draw[dotted]  (T1) -- (Ta);
\draw[dotted]  (Ta1) -- (Tk);
\end{tikzpicture} be a $\oplus$-decomposable permutation
where $T_{i_0}$ is the only child that is not an increasing oscillation.

For every  $i$ such that $1 \leq i \leq \ell$ and $j$ such that $\ell +2 \leq j \leq r$, set
$$ \F i = \big( P^{(1)}(\xi_{i}), \ldots ,
P^{(1)}(\xi_1) \big) \text{ and }\G j = \big(P^{(3)}(\xi_{j}), \ldots , P^{(3)}(\xi_r) \big)\text{.}$$

We describe below the set $P(\pi)$ of pin words encoding $\pi$.
When $\pi \in \setH$, these sets are given only when the diagram of $\pi$ is one of those shown in Figure~\ref{fig:H}.
When the diagram of $\pi$ is one of their symmetries, $P(\pi)$ is modified accordingly.

$\bullet$ If $\pi\notin \setH$ (\emph{i.e.}, if $\pi$ does not satisfy any condition shown up to symmetry on Figure~\ref{fig:H}) then $P(\pi) =
P_0= P(T_{i_0})\cdot \, \F\ell \shuffle \G{\ell+2} $.

$\bullet$ If $\pi$ satisfies condition $(1H1)$ then
$P(\pi) = P_0 \cup P_1$, with
$$P_1 = \underset{x \bigcup T_{i_0}}{\underbrace{P(S)
\cdot \underset{x}{\underbrace{1}} \cdot
\underset{a}{\underbrace{L}} }} \cdot \, \F \ell \shuffle \G {\ell+3}.$$

$\bullet$ If $\pi$ satisfies condition $(1H1+)$ then
$P(\pi) = P_0 \cup P_1$, with
$$P_1 = \underset{x \bigcup T_{i_0}}{\underbrace{P(S) \cdot
    \underset{x}{\underbrace{1}} \cdot w}} \cdot \, \F \ell \shuffle
    \G {\ell+3},$$ where $w$ is the unique word encoding the unique
    reading of the remaining leaves of $T_{i_0}$. Notice that $w$ is obtained from
    the unique word of $P^{(1)}(\xi)$ (see Remark~\ref{rem:P1P3} p.\pageref{rem:P1P3})
    by deleting its first letter.

$\bullet$ If $\pi$ satisfies condition $(1H2\star)$ then
$P(\pi) = P_0 \cup P_1 \cup P_2$, with
$$\footnotesize{P_1 = \underset{x \bigcup T_{i_0}}{\underbrace{P(S)
    \cdot \underset{x}{\underbrace{1}} \cdot
    \underset{b}{\underbrace{D}} \cdot \underset{a}{\underbrace{L}} }}
\cdot \, \F \ell \shuffle \G {\ell+3} \text{ and }
P_2 = \underset{x \bigcup T_{i_0}}{\underbrace{P(S')
    \cdot \underset{x}{\underbrace{1}} \cdot
    \underset{b}{\underbrace{D}} }} \cdot \, \F \ell \shuffle \G {\ell+3}.}$$

$\bullet$ If $\pi$ satisfies condition $(1H2)$ then
$P(\pi) = P_0 \cup P_1 \cup P_2$, with
$$\footnotesize{P_1 = \underset{x \bigcup T_{i_0}}{\underbrace{P(T\cup a)
    \cdot \underset{x}{\underbrace{1}} \cdot
    \underset{b}{\underbrace{D}} }} \cdot \, \F \ell \shuffle \G {\ell+3}
    \text{ and } P_2 = \underset{x \bigcup T_{i_0}}{\underbrace{P(T\cup b)
    \cdot \underset{x}{\underbrace{1}} \cdot
    \underset{a}{\underbrace{L}} }} \cdot \, \F \ell \shuffle \G {\ell+3}.}$$

$\bullet$ If $\pi$ satisfies condition $(2H1)$ then
$P(\pi) = P_0 \cup P_1 \cup P_2$, with
$$\footnotesize{P_1 = \underset{x \bigcup T_{i_0}}{\underbrace{P(S)
      \cdot \underset{x}{\underbrace{1}} \cdot
      \underset{a}{\underbrace{L}} }} \cdot \, \F \ell \shuffle
  \G {\ell+3} \text{ and }
P_2 = \underset{y \bigcup T_{i_0}}{\underbrace{P(S)
    \cdot \underset{y}{\underbrace{3}} \cdot
    \underset{a}{\underbrace{U}} }} \cdot \, \F {\ell-1} \shuffle \G {\ell+2}.}$$

$\bullet$ If $\pi$ satisfies condition $(2H2\star)$ then $P(\pi)
  = P_0 \cup P_1 \cup P_2 \cup P_3$, with
$$\footnotesize{P_1 = \underset{x \bigcup T_{i_0}}{\underbrace{P(S)
    \cdot \underset{x}{\underbrace{1}} \cdot
    \underset{b}{\underbrace{D}} \cdot \underset{a}{\underbrace{L}} }}
\cdot \, \F \ell \shuffle
\G {\ell+3}, \, \, P_2 = \underset{x \bigcup T_{i_0}}{\underbrace{P(S')
    \cdot \underset{x}{\underbrace{1}} \cdot
    \underset{b}{\underbrace{D}} }} \cdot \, \F {\ell} \shuffle \G
    {\ell+3}}$$
$$\footnotesize{\text{and } P_3 = \underset{y \bigcup T_{i_0}}{\underbrace{P(S')
\cdot \underset{y}{\underbrace{3}} \cdot \underset{b}{\underbrace{R}}
}} \cdot \, \F {\ell-1} \shuffle \G {\ell+2}.}$$

$\bullet$ If $\pi$ satisfies condition $(2H2)$ then
$P(\pi) = P_0 \cup P_1 \cup P_2 \cup P_3 \cup P_4$, with
$$\footnotesize{P_1 = \underset{x \bigcup T_{i_0}}{\underbrace{P(T\cup
    a) \cdot \underset{x}{\underbrace{1}} \cdot
    \underset{b}{\underbrace{D}} }} \cdot \, \F {\ell} \shuffle \G
    {\ell+3}, \, \,
P_2 = \underset{x \bigcup T_{i_0}}{\underbrace{P(T\cup
    b) \cdot \underset{x}{\underbrace{1}} \cdot
    \underset{a}{\underbrace{L}} }} \cdot \, \F {\ell} \shuffle \G {\ell+3},}$$
$$\footnotesize{P_3 = \underset{y \bigcup T_{i_0}}{\underbrace{P(T\cup a) \cdot
      \underset{y}{\underbrace{3}} \cdot \underset{b}{\underbrace{R}}
      }} \cdot \, \F {\ell-1} \shuffle \G {\ell+2}, \, P_4 =
      \underset{y \bigcup T_{i_0}}{\underbrace{P(T\cup b) \cdot
      \underset{y}{\underbrace{3}} \cdot \underset{a}{\underbrace{U}}
      }} \cdot \, \F {\ell-1} \shuffle \G {\ell+2}.}$$

$\bullet$ If $\pi$ satisfies condition $(2H3)$ then
$P(\pi) = P_0 \cup P_1 \cup P_2 \cup P_3 \cup P_4$, with
$$\footnotesize{P_1 = \underset{x \bigcup T_{i_0}}{\underbrace{P(S)
    \cdot \underset{x}{\underbrace{1}} \cdot
    \underset{c}{\underbrace{D}} }} \cdot \, \F {\ell} \shuffle \G
    {\ell+3}, \, \,
P_2 = \underset{x \bigcup T_{i_0}}{\underbrace{P(T\cup
    b) \cdot \underset{x}{\underbrace{1}} \cdot
    \underset{c}{\underbrace{D}} \cdot \underset{a}{\underbrace{L}} }}
\cdot \, \F {\ell} \shuffle \G {\ell+3},}$$
$$\footnotesize{P_3 = \underset{y \bigcup T_{i_0}}{\underbrace{P(T\cup
      a) \cdot \underset{y}{\underbrace{3}} \cdot
      \underset{c}{\underbrace{R}} \cdot \underset{b}{\underbrace{U}}
    }} \cdot \, \F {\ell-1} \shuffle \G {\ell+2}, \,
P_4 = \underset{y \bigcup T_{i_0}}{\underbrace{P(S)
      \cdot \underset{y}{\underbrace{3}} \cdot
      \underset{c}{\underbrace{R}} }} \cdot \, \F {\ell-1} \shuffle
\G {\ell+2}.}$$
\end{theo}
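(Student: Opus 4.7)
The plan is to describe $P(\pi)$ by classifying every pin representation $p$ of $\pi$ according to which child is read first, whether $T_{i_0}$ is read in one or two pieces, and which shape condition from Figure~\ref{fig:H} (if any) governs the split. Since $T_{i_0}$ is the only child that is not an increasing oscillation, \cite[Lemma 3.7]{BBR09} guarantees that $T_{i_0}$ is always the first child read by $p$, so $p_1 \in T_{i_0}$ and in particular $i_0$ in the notation of Subsection~A.1 is the index of $T_{i_0}$.

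First I would establish the base set $P_0$, which accounts for pin representations reading $T_{i_0}$ in a single piece. By Lemma~\ref{lem:lem1} and Consequence~\ref{csq:cas_de_base_oplus_lecture_en_deux_fois}, after finishing $T_{i_0}$ the reading interleaves the children $\xi_\ell, \xi_{\ell-1}, \ldots, \xi_1$ (in that order) with the children $\xi_{\ell+2}, \xi_{\ell+3}, \ldots, \xi_r$ (in that order). Moreover, when a child $\xi_i$ with $i \leq \ell$ is about to be read, all previously read points lie to its upper right, so the origin $p_0$ (placed near $p_1 \in T_{i_0}$) lies in quadrant~$1$ with respect to $\xi_i$; this is exactly the definition of $P^{(1)}(\xi_i)$. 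A symmetric argument gives $P^{(3)}(\xi_j)$ for $j \geq \ell+2$. The shuffle product then encodes all legal interleavings, yielding the factor $\F\ell \shuffle \G{\ell+2}$, and concatenation with $P(T_{i_0})$ gives $P_0$.

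Next I would handle the case where $T_{i_0}$ is read in two pieces. Lemma~\ref{lem:oneBlockReadInSeveralTimes} ensures that only $T_{i_0}$ can be split, Lemma~\ref{lem:readInTwoTimes} says it is split in exactly two pieces and forces the decomposition tree of $\pi$ to match one of the shapes $F1, F2, F3, F4, F3+, F4+$ (and their $G$-symmetrics) of Figure~\ref{fig:severaltime}. The conditions $\horse{i}{j}$ of Figure~\ref{fig:H} simply group these shapes into equivalence classes according to which Lemma~\ref{lem:readInTwoTimes}-shapes $\pi$ realizes; a direct inspection (comprehensive case check) shows these conditions partition $\setH$ up to symmetry. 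For each $\horse{i}{j}$, I enumerate the possible first pieces $S$ of $T_{i_0}$, use Remark~\ref{rem:inTwoTimes} to conclude that once $S$ is fixed, the order on the remaining points of $T_{i_0}$ is determined, and compute the corresponding factor of pin word. For instance, in condition $(1H1)$ the first piece is $S$, the next read point is $x$ (sitting in quadrant~$1$ w.r.t.\ $S$, hence letter $1$), and then $a$ separates from the left, giving the fragment $P(S)\cdot 1 \cdot L$; the remaining increasing oscillations $\xi_1, \ldots, \xi_\ell$ and $\xi_{\ell+3}, \ldots, \xi_r$ (note the index shift, since $a$ together with the remaining leaves of $T_{i_0}$ absorbs $\xi_{\ell+2}$) are then read as before, yielding the shuffle $\F\ell \shuffle \G{\ell+3}$. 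Each of the remaining cases $(1H1+), (1H2), (1H2\star), (2H1), (2H2), (2H2\star), (2H3)$ is treated analogously, always reading off the forced first letters after $S$ from the diagram and using the quadrant analysis of Consequence~\ref{csq:cas_de_base_oplus_lecture_en_deux_fois} for the remaining oscillations.

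The conceptual content is light once Lemmas~\ref{lem:lem1}--\ref{lem:readInTwoTimes} and Remark~\ref{rem:inTwoTimes} are in hand; the main obstacle is the bookkeeping. Specifically, one must verify that (i) the shape conditions $\horse{i}{j}$ are mutually exclusive up to symmetry and exhaust $\setH$, which requires a careful check against Figure~\ref{fig:cases} listing all combinations of $F/G$-shapes; (ii) in the $+$ cases such as $(1H1+)$, the remaining-leaves factor of $T_{i_0}$ is encoded by exactly the word $w$ obtained by deleting the first letter of the unique element of $P^{(1)}(\xi^+)$, which follows from Lemma~\ref{lem:lecture_epi_quadrant_1ou3} and Remark~\ref{rem:P1P3}; and (iii) in every case the index shift in $\F{\cdot}$ and $\G{\cdot}$ correctly accounts for the oscillation $\xi_{\ell\pm 1}$ that has been absorbed into the extended reading of $T_{i_0}$. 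The converse inclusion -- that every word in the listed union is realized by some pin representation of $\pi$ -- is obtained by reversing the construction: given a word in one of the $P_i$, one reconstructs an actual pin representation directly by reading letters, and checks that it is indeed a representation of $\pi$.
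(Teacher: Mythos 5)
Your plan is correct and follows essentially the same route as the paper's proof: force $p_1\in T_{i_0}$ via \cite[Lemma 3.7]{BBR09}, obtain $P_0$ from the one-piece reading together with Lemma~\ref{lem:lem1} and the quadrant analysis behind $P^{(1)}$/$P^{(3)}$ and the shuffle, and handle the two-piece readings through Lemma~\ref{lem:readInTwoTimes}, Remark~\ref{rem:inTwoTimes} and the case classification of Figure~\ref{fig:H}, checking each condition's forced letters (as you do for $(1H1)$) and the resulting index shifts in $\mathfrak{P}^{(1)}$ and $\mathfrak{P}^{(3)}$. The only cosmetic remark is that in your $(1H1)$ discussion the point $x$ simply \emph{is} $\xi_{\ell+2}$ and is read (as the numeral $1$) inside the extended reading of $T_{i_0}\cup\{x\}$, which is exactly why the second shuffle index starts at $\ell+3$; your word ``absorbs'' describes this correctly, if loosely.
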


\begin{proof}
  For each item, it is easy to check that the given pin words are pin words encoding
  $\pi$. Conversely, we prove that a pin word encoding
  $\pi$ is necessarily in the set claimed to be $P(\pi)$.
  First of all, by~\cite[Lemma
  3.7]{BBR09}, every pin representation of $\pi$ starts in the only
  child that is not an increasing oscillation, \emph{i.e.}, with $T_{i_0}$.

  Let us start with the first point of Theorem~\ref{thm:linearRoot}.
  In this case, by definition of $\setH$, we know that $T_{i_0}$ is read in one
  piece. By Lemma~\ref{lem:oneBlockReadInSeveralTimes}
  (p.\pageref{lem:oneBlockReadInSeveralTimes}), the other
  children are also read in one piece, and Lemma~\ref{lem:lem1} ensures
  that the children closest to $T_{i_0}$ are read first. As there is no
  relative order between children $\xi_{\ell+2}$ to $\xi_r$ and children
  $\xi_{\ell}$ to $\xi_1$, this leads to the shuffling operation between
  pin words corresponding to these children, with an external origin
  placed in quadrant $3$ (resp.~$1$) with respect to their bounding box. 

  In the other cases of Theorem~\ref{thm:linearRoot},
  by Lemma~\ref{lem:readInTwoTimes}, every pin representation of $\pi$
  either reads $T_{i_0}$ in one piece or in two \fois. In case
  $T_{i_0}$ is read in one piece, the pin representation is as before
  encoded by pin words of $P_0$. If it is read in two \fois,
  Lemma~\ref{lem:readInTwoTimes} and its proof and Remark~\ref{rem:inTwoTimes}
  ensure that the corresponding pin words are those described.

  Consider for example $\pi$ satisfying condition $\horse{1}{1}$. Then $\pi$ satisfies
  condition $F1$ of Lemma~\ref{lem:readInTwoTimes}, and only this one
  by definition of $\horse{1}{1}$. If $T_{i_0}$ is read in two \fois, then
  Lemma~\ref{lem:readInTwoTimes} ensures that
  $S$ is the first part of $T_{i_0} \cup \{x\}$ to be read, followed by $x$
  and finally $a$. The corresponding pin words are indeed those described in $P_1$.

  Taking the other example of condition $\horse{2}{3}$, $P_1$ corresponds to
  condition $F2$ with $S = T \cup a \cup b$, $P_2$ corresponds to
  condition $F4$ with $S = T \cup b$, $P_3$ corresponds to condition
  $G4$ with $S = T \cup a$ and $P_4$ corresponds to condition $G2$
  with $S = T \cup a \cup b$.
\end{proof}

\begin{rem}
If $\pi$ is a $\ominus$-decomposable permutation, a similar description of $P(\pi)$ can be obtained from Remark~\ref{rem:ominus=oplus_transposed} (p.\pageref{rem:ominus=oplus_transposed}).
\end{rem}

\subsection{Recursive case: decomposition trees with a prime root}

We now turn to the study of the recursive case where
the decomposition tree has a root which is a simple permutation $\alpha$.
We start with the case where $\pi = $\begin{tikzpicture}[sibling
distance=10pt,level distance=10pt,baseline=-15pt] \node[simple] {$\alpha$} child
{[fill] circle (2 pt) node(x1) {}} child[missing] child {[fill] circle
(2 pt) node(xk) {}} child[child anchor=north]
{node[draw,shape=isosceles triangle, shape border
rotate=90,anchor=north,inner sep=0, isosceles triangle apex angle=90] {$T$}} child {[fill] circle (2 pt) node(y1)
{}} child[missing] child {[fill] circle (2 pt) node(yk) {}};
\draw[dotted] (x1) -- (xk);
\draw[dotted] (y1) -- (yk);
\end{tikzpicture} for a tree $T$ that is not a leaf.

We begin with the characterization of the possible ways a pin representation of $\pi$ may read $T$, introducing first a condition that will be useful in the sequel.

\begin{defi}
\label{def:c}
For a permutation $\pi = \alpha[1,\ldots,1,T,1,\ldots,1]$ with $\alpha=\alpha_1\ldots \alpha_k$, we define condition $(\mathcal{C})$ as follows:
$$(\mathcal{C})
\begin{cases}
  \bullet \, \alpha \text{ is an increasing -- resp. decreasing -- quasi-oscillation (see p.\pageref{defn:quasiepi});}\\
  \bullet \, T \text{ expands an auxiliary point of } \alpha\text{;} \\
  \bullet \, \text{the shape of T is \begin{tikzpicture}[baseline=-10pt,inner sep=0pt,scale=.5] \node {$\oplus$} child [missing] child
    [level distance=1pt,xshift=0pt,yshift=-7pt]
    {node[draw,shape=isosceles triangle, shape border
      rotate=90,anchor=north,isosceles triangle apex angle=90] {\small $T'$} edge from
      parent[draw=none]} child[level distance=30pt] {[fill] circle (3pt) node (x){}};
      \end{tikzpicture}
-- resp.
\begin{tikzpicture}[baseline=-10pt,inner sep=0pt,scale=.5] \node {$\ominus$} child [missing]
    child [level distance=1pt,xshift=0pt,yshift=-7pt]
    {node[draw,shape=isosceles triangle, shape border
      rotate=90,anchor=north,isosceles triangle apex angle=90] {\small $T'$} edge from
      parent[draw=none]} child[level distance=30pt]  {[fill] circle (3pt) node (x){}};
\end{tikzpicture}-- if the auxiliary point is $\alpha_k$}\\
\text{or $\alpha_{k-1}$ and
\begin{tikzpicture}[baseline=-10pt,inner sep=0pt,scale=.5] \node {$\oplus$}
        child[level distance=30pt]  {[fill] circle (3pt) node (x){}} child [level
        distance=1pt,xshift=0pt,yshift=-7pt]
        {node[draw,shape=isosceles triangle, shape border
          rotate=90,anchor=north,isosceles triangle apex angle=90] {\small $T'$} edge from
          parent[draw=none]} child [missing];
\end{tikzpicture}
-- resp.
\begin{tikzpicture}[baseline=-10pt,inner sep=0pt,scale=.5]
  \node {$\ominus$} child[level distance=30pt]  {[fill] circle (3pt) node (x){}} child
  [level distance=1pt,xshift=0pt,yshift=-7pt]
  {node[draw,shape=isosceles triangle, shape border
    rotate=90,anchor=north,isosceles triangle apex angle=90] {\small $T'$} edge from parent[draw=none]}
  child [missing];
\end{tikzpicture}
-- if the auxiliary point is $\alpha_1$ or $\alpha_2$.}
\end{cases}
$$
\end{defi}

\begin{lem}\label{lem:lecture_simple}
Let $p=(p_1,\ldots,p_n)$ be a pin representation associated to $\pi = \alpha[1,\ldots, 1, $ $T, 1, \ldots, 1]$.
Then one of the following statements holds:
\begin{itemize}
\item[$(1)$] $p_1 \in T$ and $T$ is read in one piece by $p$;
\item[$(2)$] $T = \{p_1,\ldots,p_i\} \bigcup \{ p_n \}$ with $i \neq n-1$, and $\pi$ satisfies condition $({\mathcal C})$;
\item[$(3)$] $p_1 \notin T$, $T=\{p_2,p_n\}$, and $\pi$ satisfies condition $({\mathcal C})$.
\end{itemize}
Moreover if $(3)$ is satisfied then $p$ is a proper pin representation uniquely determined by $\alpha$ and its auxiliary point;
it is up to symmetry the one depicted in the first diagram of Figure~\ref{fig:pin-rep_quasi-epi_1bloc}.
If $(2)$ is satisfied, defining $T'$ as in condition $(\mathcal{C})$, then $T' = \{p_1,\dots,p_i\}$ and
$(p_{i+1},\ldots,p_n)$ is uniquely determined by $\alpha$ and its auxiliary point,
as shown in the second diagram  of Figure~\ref{fig:pin-rep_quasi-epi_1bloc} up to symmetry.
\end{lem}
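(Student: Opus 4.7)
The plan is to first observe that the set of points corresponding to $T$ forms a block $B$ of $\pi$: this follows from Remark~\ref{fact:blockintopii} (p.\pageref{fact:blockintopii}), since $\alpha$ is simple and every block of $\pi$ other than $\pi$ itself must lie inside a single child of the root. Hence in the diagram of $\pi$, $B$ occupies an axis-parallel rectangle $R$, and every point of $\pi$ outside $B$ is a singleton child of $\alpha$. I would then decompose the pin representation $p=(p_1,\ldots,p_n)$ into maximal factors according to whether their pins lie inside or outside $B$, and study how the transitions between these factors can occur.

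For the first case, assume $p_1\in T$. As long as the pins stay inside $R$, they read consecutive points of $B$ and their restriction is itself a pin representation of the permutation induced by $T$. Suppose $p$ leaves $B$ at index $i+1$ and later comes back. Then at the moment of re-entry, the new pin inside $B$ must lie on the boundary of the bounding box of all previously read pins, so by Lemma~\ref{lem:[7]2.17} (p.\pageref{lem:[7]2.17}) it is unique and, moreover, the whole sub-sequence $p_{i+1},\ldots,p_{n-1}$ outside $B$ is forced, pin after pin, to be the only admissible next point. A geometric case analysis (mirroring that of Lemma~\ref{lem:readInTwoTimes}, p.\pageref{lem:readInTwoTimes}, but with the block $B$ in the role of $T_{i_0}$) shows that this is possible only when the relative position of the leaves of $\alpha$ forms an oscillating pattern around $x$ with exactly one leaf playing the role of an auxiliary point; comparing with Definition~\ref{defn:quasiepi} (p.\pageref{defn:quasiepi}), this forces $\alpha$ to be an increasing or decreasing quasi-oscillation with $x$ being its main substitution point and $T$ having the specific shape prescribed by condition $(\mathcal{C})$. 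This gives case $(2)$, and the argument also shows that after $p_1,\ldots,p_i$ the remainder $(p_{i+1},\ldots,p_n)$ is uniquely determined by $\alpha$ and the choice of auxiliary point, as in the second diagram of Figure~\ref{fig:pin-rep_quasi-epi_1bloc}.

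For the case $p_1\notin T$, let $j$ be the smallest index with $p_j\in B$. If $j\geq 3$, then by Lemma~\ref{lem:[7]2.17} the entering pin $p_j$ is forced and must lie on the bounding box of $\{p_1,\ldots,p_{j-1}\}$; arguing as above one checks that this situation either reduces (after re-indexing) to case $(1)$ applied to the induced pin representation of $B$ starting at $p_j$, or is impossible because the simplicity of $\alpha$ would then produce a smaller non-trivial block. The genuinely new subcase is $j=2$: the pin $p_2$ enters $B$ and the reading of $B$ is not completed at once. Then the analogous geometric analysis shows $|T|=2$, $\alpha$ is again a quasi-oscillation, $x$ is one of its auxiliary points and $T$ is the two-point permutation realising the oscillation pattern at this auxiliary point; this is exactly condition $(\mathcal{C})$ in the $|T|=2$ case and yields case $(3)$, with $p$ uniquely determined by $\alpha$ and the chosen auxiliary point as shown in the first diagram of Figure~\ref{fig:pin-rep_quasi-epi_1bloc}.

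The main obstacle is the converse geometric analysis needed in cases $(2)$ and $(3)$: showing that \emph{any} pin representation which leaves $B$ and returns to it forces $\alpha$ to be a quasi-oscillation with $x$ at an auxiliary point and $T$ of the exact shape in $(\mathcal{C})$. The heart of this step is a careful bounding-box argument, based on Lemma~\ref{lem:[7]2.17}, which propagates along the pins $p_{i+1},\ldots,p_{n-1}$ outside $B$: each such pin must be the unique point on the current bounding box, so the oscillating pattern of leaves of $\alpha$ around $x$ is forced, and comparing with the explicit description of quasi-oscillations in Subsection~\ref{ssec:quasi-oscillations} identifies $\alpha$ and pinpoints its auxiliary point. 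Everything else (uniqueness of $p$ once condition $(\mathcal{C})$ is identified, and the explicit description referred to in Figure~\ref{fig:pin-rep_quasi-epi_1bloc}) follows from the same inductive application of Lemma~\ref{lem:[7]2.17}.
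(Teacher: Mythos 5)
Your high-level skeleton (split on whether $p_1\in T$ and whether $T$ is read in one piece, then a bounding-box forcing argument identifying $\alpha$ as a quasi-oscillation satisfying condition $(\mathcal{C})$, with uniqueness via Lemma~\ref{lem:[7]2.17}) is the same as the paper's, but the decisive structural inputs are missing. The paper does not re-derive how a block can be read by a pin representation: it imports Lemma~3.11(i) and Lemma~3.12(ii) of~\cite{BBR09}, which state precisely that if $p_1\in T$ and $T$ is not read in one piece then the second piece is the \emph{single last} pin $p_n$, and that $p_1\notin T$ forces $T=\{p_2,p_n\}$. Your substitute for these facts does not hold up. The assertion that at re-entry into the block the new pin ``must lie on the boundary of the bounding box of all previously read pins'' is not what Lemma~\ref{lem:[7]2.17} says (it only gives uniqueness of a boundary point \emph{if} one exists), and nothing in your argument shows that the reading can return to $T$ only once, only at the very end, and with a single pin --- which is exactly the content of case $(2)$. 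Likewise, for $p_1\notin T$ you dispose of the remaining configurations (first entry at $j\ge 3$, or $T$ read in one piece starting at $p_2$) by saying they ``either reduce to case $(1)$ or are impossible''; reduction to case $(1)$ is not available since case $(1)$ requires $p_1\in T$, so these configurations must be \emph{excluded}, and the appeal to ``a smaller non-trivial block'' is asserted, not proved. These exclusions are the genuinely hard part of the lemma; the subsequent forcing of $p_{i+1},\ldots,p_{n-1}$ into an alternating left/down sequence ending with an up/right pin is the easy part, which the paper carries out via the explicit position analysis of $p_n$ and $p_{i+1}$ (Figures~\ref{fig:pin-rep_quasi-epi_1bloc} and~\ref{fig:pn}).

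There is also a substantive mismatch in your case $(2)$: you conclude that the point $x$ expanded by $T$ is the \emph{main substitution point} of the quasi-oscillation, whereas condition $(\mathcal{C})$ --- the statement you must verify --- requires $T$ to expand an \emph{auxiliary} point (you state this correctly in case $(3)$, so the two cases of your argument are inconsistent). Finally, the shape constraint on $T$ in $(\mathcal{C})$ (that $T=\oplus[T',\text{point}]$ or its symmetric variants, with the extra point in the appropriate corner) does not come from ``comparing with the explicit description of quasi-oscillations''; it comes from the observation that $p_n$ lies strictly inside a quadrant of the bounding box of $\{p_1,\ldots,p_i\}$ (it cannot lie on its sides), and this step needs to be made explicit.
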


\begin{figure}[htbp]
\begin{minipage}[b]{.65\linewidth}
\begin{center}
\begin{tikzpicture}
 \begin{scope}[scale=.3]
\draw (7,2) rectangle (9,4);
\draw (9.5,2) node {$T$};
\draw (7.5,2.5) [fill] circle (.2);
\draw (7.5,1.6) node {\footnotesize $p_{2}$};
\draw (8.5,3.5) [fill] circle (.2);
\draw (9.8,3.5) node {\footnotesize $p_{n}$};
\draw (8.5,3.5) -- (2,3.5);
\draw (6.6,-0.7) node {\footnotesize $p_{3}$};
\draw (6,1.5) [fill] circle (.2);
\draw (6,2.1) node {\footnotesize $p_{1}$};

\draw (6.5,0) [fill] circle (.2);
\draw (6.5,0) -- (6.5,2);

\draw (5,0.5) [fill] circle (.2);
 \draw (5,0.5) -- (7,0.5);

\draw (5.5,-1) [fill] circle (.2);
\draw (5.5,-1) -- (5.5,1);

\draw (4,-0.5) [fill] circle (.2);
\draw (4,-0.5) -- (6,-0.5);

\draw (4.5,-2) [fill] circle (.2);
\draw (4.5,-2) -- (4.5,0);

\draw (3,-2) node {$\cdot$};
\draw (3.3,-1.7) node {$\cdot$};
\draw (3.6,-1.4) node {$\cdot$};

\draw (2,-2.5) [fill] circle (.2);
\draw (2,-2.5) -- (5,-2.5);
\draw (0.7,-2.5) node {\footnotesize $p_{n-2}$};

\draw (2.5,4) [fill] circle (.2);
\draw (2.5,4) -- (2.5,-3);
\draw (2.5,4.5) node {\footnotesize $p_{n-1}$};
 \end{scope}
\begin{scope}[xshift=3.7cm,scale=.3]
\draw (7,2) rectangle (11,6);
\draw (11.5,3.4) node {$T$};
\draw[thick] (7,2) rectangle (10,5);
\draw (8.5,3.4) node {$T'$};
\draw (8.5,1.4) node {$\mathcal B'$};
\draw (10.5,5.5) [fill] circle (.2);
\draw (11.8,5.5) node {\footnotesize $p_{n}$};
\draw (10.5,5.5) -- (2,5.5);

\draw (6,1.5) [fill] circle (.2);
\draw (6,2.5) node {\footnotesize $p_{i+1}$};
\draw (6.5,0) [fill] circle (.2);
\draw (6.5,0) -- (6.5,2);

\draw (5,0.5) [fill] circle (.2);
\draw (5,0.5) -- (7,0.5);

\draw (5.5,-1) [fill] circle (.2);
\draw (5.5,-1) -- (5.5,1);

\draw (4,-0.5) [fill] circle (.2);
\draw (4,-0.5) -- (6,-0.5);

\draw (4.5,-2) [fill] circle (.2);
\draw (4.5,-2) -- (4.5,0);

\draw (3,-2) node {$\cdot$};
\draw (3.3,-1.7) node {$\cdot$};
\draw (3.6,-1.4) node {$\cdot$};

\draw (2,-2.5) [fill] circle (.2);
\draw (2,-2.5) -- (5,-2.5);
\draw (0.7,-2.5) node {\footnotesize $p_{n-2}$};

\draw (2.5,6) [fill] circle (.2);
\draw (2.5,6) -- (2.5,-3);
\draw (2.5,6.5) node {\footnotesize $p_{n-1}$};
\end{scope}
\end{tikzpicture}
\caption{Diagram of $\pi$ when one child $T$ is not a leaf, is read in two \fois and  $p_1 \notin T$ or $p_1 \in T$.}
\label{fig:pin-rep_quasi-epi_1bloc}
\end{center}
\end{minipage}
\hspace*{0.2cm}
\begin{minipage}[b]{.3\linewidth}
\begin{center}
\tiny
\begin{tikzpicture}[scale=.4]
\useasboundingbox (0,-1) rectangle (5,5);
\draw [help lines] (2,2) grid (5,5);
\draw (0.5,0.5) [fill] circle (0.2);
\draw [fill] (3.5,3.5) circle (0.2);
\node at (-0.5,0.5) {\normalsize $p_1$};
\node at (4.5,3.5) {\normalsize $p_2$};
\node at (2,-0.7) {\normalsize $\mathcal B$};
\draw (2.5,2.5) node [inner sep = 1pt,draw,circle] {\tiny $3$};
\draw (4.5,2.5) node [inner sep = 1pt,draw,circle] {\tiny $4$};
\draw (2.5,4.5) node [inner sep = 1pt,draw,circle] {\tiny $2$};
\draw (4.5,4.5) node [inner sep = 1pt,draw,circle] {\tiny $1$};
\draw [thick] (0,0) rectangle (4,4);
\end{tikzpicture}
\caption{Possible \newline positions for $p_n$.}\label{fig:pn}
\end{center}
\end{minipage}
\end{figure}

\begin{proof}
  If $p_1 \not\in T$, then by Lemma 3.12{\it (ii)} of~\cite{BBR09}, $T
  = \{p_2,p_n\}$.  Up to symmetry assume that $\{p_1,p_2\}$ is
  an increasing subsequence of $\pi$.  As $\{p_2,p_n\}$ forms a block, $p_n$ is
  in one of the $4$ positions shown in Figure~\ref{fig:pn}. But
  position \tikz\node[inner sep = 1pt,circle,draw] {\tiny $3$}; is
  forbidden because it is inside of the bounding box $\mathcal B$ of
  $\{p_1,p_2\}$.  Positions \tikz\node[inner sep = 1pt,circle,draw]
  {\tiny $2$}; and \tikz\node[inner sep = 1pt,circle,draw] {\tiny
    $4$}; lie on the side of the bounding box $\mathcal B$.
  Thus, if $p_n$ lies in one of these positions,
  it must be read immediately after $p_1$ and $p_2$ and thus must
  be $p_3$ from Lemma~\ref{lem:[7]2.17} (p.\pageref{lem:[7]2.17}). But $n > 3$ ($\alpha$ is
  simple so $|\alpha | \geq 4$) so that these positions are also
  forbidden. Hence $p_n$ lies in position \tikz\node[inner sep = 1pt,circle,draw] {\tiny
    $1$}; and $T = 12$.

As $\alpha$ is a simple pin-permutation, $p_3$ respects the
separation condition. 
But if $p_3$ lies above or on the right of the
bounding box $\mathcal B$ then $p_n$ will be on the side of the bounding box of
$\{p_1,p_2,p_3\}$, hence $p_n=p_4$. But in that case, $\alpha$ has
only $3$ children, hence $|\alpha|=3$, contradicting the fact that $\alpha$ is simple.

By symmetry we can assume that $p_3$ lies below $\mathcal B$ (see
the first diagram of Figure~\ref{fig:pin-rep_quasi-epi_1bloc}).
The same argument goes for every pin $p_i$ with
$i = 3, \ldots, n-2$ and these pins form an alternating sequence of
left and down pins. As $p_n$ separates $p_{n-1}$ from all other pins,
$p_{n-1}$ must be an up or right pin (depending on the parity of
$n$). Then $\alpha$ is a quasi-oscillation in which the point expanded
by $T$ is an auxiliary point and $T = 12$ or $T=21$ depending on the
nature of $\alpha$ -- increasing or decreasing.
Consequently, $\pi$ satisfies condition $({\mathcal C})$.
Notice that given $\alpha$ and its auxiliary point,
once we know that $p_1 \notin T$ then $p$ is uniquely determined.

Suppose now that $p_1 \in T$ but $T$ is not read in one piece. By Lemma
3.11{\it (i)} of~\cite{BBR09}, it is read in two \fois, the second
part being $p_n$. Then $T = \{p_1,\ldots,p_i\} \bigcup \{ p_n \}$ with
$n \not= i+1$. Thus $p_n$ does not lie on the sides of the bounding
box $\mathcal B'$ of $\{p_1,\ldots,p_i\}$. Up to symmetry, we can assume that
$p_n$ lies in quadrant $1$ with respect to $\mathcal B'$ (see 
Figure~\ref{fig:pin-rep_quasi-epi_1bloc}).
Therefore, $T= \begin{tikzpicture}[baseline=-10pt,inner sep=0pt,scale=.5] \node {$\oplus$} child [missing] child
    [level distance=1pt,xshift=0pt,yshift=-7pt]
    {node[draw,shape=isosceles triangle, shape border
      rotate=90,anchor=north,isosceles triangle apex angle=90] {\small $T'$} edge from
      parent[draw=none]} child[level distance=30pt] {[fill] circle (3pt) node (x){}};
      \end{tikzpicture} $,
$T'$ being the sub-forest of $T$ whose leaves are the points in $\mathcal B'$, \emph{i.e.}, are $p_1,\ldots,p_i$.
As $T$ is a block, no pin must lie on the sides of the bounding box of $\{p_1,\ldots,p_i,p_n\}$.
Moreover $p_{i+1}$ does not lie in quadrant $1$ with respect to $T$,
otherwise $p_n$ would lie inside the bounding box of $\{p_1,\ldots,p_{i+1}\}$.
If $p_{i+1}$ lies in quadrant $2$ or $4$, $p_n$ would lie on the side of the bounding box of $\{p_1,\ldots,p_{i+1}\}$ and thus must be $p_{i+2}$.
This is in contradiction with $\alpha$ being simple.
Thus $p_{i+1}$ lies in quadrant $3$ with respect to $T$.
The same goes for $p_j$ with $j \in \{ i+1,\ldots,n-2 \}$.
Because $\alpha$ is simple,
we therefore deduce that all these pins form an
alternating sequence of left and down pins until $p_{n-1}$ which must
be an up or right pin depending on the parity of $n$.
Thus $\alpha$ is a quasi-oscillation in which the point expanded by $T$ is an auxiliary point. 
Moreover, $\pi$ satisfies condition (${\mathcal C}$) with $T' = \{p_1,\dots,p_i\}$, as can be seen (up to symmetry) on the right part of Figure~\ref{fig:pin-rep_quasi-epi_1bloc}.
Notice that given $\alpha$ and its auxiliary point,
once we know that $T = \{p_1,\ldots,p_i\} \bigcup \{ p_n \}$ with $i \neq n-1$
then $(p_{i+1},\ldots,p_n)$ is uniquely determined.

Finally if we are not in one of the two cases discussed above,
then $p_1 \in T$ and $T$ is read in one piece by $p$, concluding the proof.
\end{proof}

With the description of the pin representations of $\pi$ in
Lemma~\ref{lem:lecture_simple}, we
are able to give in Theorem~\ref{thm:conditionc} below an explicit
description of the set $P(\pi)$ of pin words that encode $\pi$. 
The statement of Theorem~\ref{thm:conditionc} makes use of the notation $Q_x(\alpha)$, 
which has appeared in Subsection~\ref{ssec:idees_construction_automates}, and that we define below. 

\begin{defi} \label{def:Qxalpha}
  For every simple pin-permutation $\alpha$, with an active point $x$
  (see p.\pageref{def:active}) marked, we define $Q_x(\alpha)$ as the set
  of strict pin words obtained by deleting the first letter of a
  quasi-strict pin word of $\alpha$ whose first point read in $\alpha$
  is $x$.
\end{defi}

Notice that $|u| = |\alpha| -1$ for all $u \in Q_x(\alpha)$.

\begin{rem} \label{rem:tailleQx}
  To each pin representation of $\alpha$ whose first point read is $x$
  corresponds exactly one word of $Q_x(\alpha)$. Indeed the
  quasi-strict pin words associated to a pin representation differ
  only in their first letter (see Figure~\ref{fig:origine} and Remark~\ref{rem:nb_pin_words} p.\pageref{rem:nb_pin_words}).
\end{rem}

\begin{theo}\label{thm:conditionc}
Let $\pi$ be a pin-permutation, whose decomposition tree has a
prime root $\alpha$, with exactly one child $T$ that is not a leaf.
Then, denoting by $x$ the point of $\alpha$ expanded by $T$, the
following holds:
\begin{itemize}
\item If $\pi$ does not satisfy condition $(\mathcal{C})$, then $P(\pi) = P(T)
  \cdot Q_x(\alpha)$.
 \item If $\pi$ satisfies condition $(\mathcal{C})$, we define $T'$ as in $(\mathcal{C})$, and we distinguish two sub-cases
   according to the number of leaves $|T|$ of $T$:
 \begin{itemize}
 \item[$(a)$] if $|T| \geq 3$, let $w$ be the
   unique word encoding the unique reading of the remaining leaves in
   $\pi$ after $T'$ is read when $T$ is read in two \fois. Then
   $P(\pi) = P(T) \cdot Q_x(\alpha)\, \cup \, P(T') \cdot w$.
  \item[$(b)$] if $|T| = 2$, let $P_{\{1,n\}}(\pi)$  be the set of pin words encoding the unique pin representation $p$ of $\pi$
  such that $T = \{p_1,p_n\}$. Define similarly $P_{\{2,n\}}(\pi)$ for the case $T = \{p_2,p_n\}$. Then
  $P(\pi) = P(T) \cdot Q_x(\alpha) \cup P_{\{1,n\}}(\pi) \cup P_{\{2,n\}}(\pi)$.
 \end{itemize}
\end{itemize}
\end{theo}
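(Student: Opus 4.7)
The plan is to deduce this theorem directly from Lemma~\ref{lem:lecture_simple} by translating the three geometric cases it describes into three classes of pin words, and then verifying that these three classes match the right-hand side in each case of the statement.

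First I would prove the ``easy'' inclusion $\supseteq$. Given $u \in P(T)$ and $v \in Q_x(\alpha)$, I would build a pin representation of $\pi$ whose first $|T|$ points are encoded by $u$ and whose remaining $|\alpha|-1$ points are read according to $v$. The key observation is that once $T$ has been entirely read, its bounding box plays exactly the role of the single point $x$ in a pin representation of $\alpha$ starting with $x$; prepending to $v$ the numeral that was deleted from the underlying quasi-strict pin word of $\alpha$ recovers such a pin representation, and Lemma~\ref{lem:quadrant} guarantees that the letters of $v$ still describe, relative to the bounding box of $\{p_0,\ldots,p_{|T|}\}$, the same quadrants as they did relative to the origin of $\alpha$. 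The contributions $P(T')\cdot w$ in case~(a) and $P_{\{1,n\}}(\pi)\cup P_{\{2,n\}}(\pi)$ in case~(b) are then checked separately: in each sub-case one exhibits the corresponding pin representation explicitly using the geometric picture of Figure~\ref{fig:pin-rep_quasi-epi_1bloc} and reads off its pin words.

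For the converse inclusion $\subseteq$, I would take an arbitrary pin representation $p=(p_1,\ldots,p_n)$ of $\pi$ and apply Lemma~\ref{lem:lecture_simple}, which leaves three mutually exclusive possibilities. In case~(1) (where $p_1\in T$ and $T$ is read in one piece), the prefix $(p_1,\ldots,p_{|T|})$ is a pin representation of $T$, so it gives some $u\in P(T)$; the suffix, together with the implicit letter that would have encoded $x$, forms a quasi-strict pin word of a pin representation of $\alpha$ beginning at $x$, and deleting this letter yields an element of $Q_x(\alpha)$. Case~(2) only occurs under condition $(\mathcal{C})$ with $|T|\geq 3$: Lemma~\ref{lem:lecture_simple} tells us that once $T'$ has been read the order of the remaining points is forced, so the pin word splits as an element of $P(T')$ followed by the unique word $w$ encoding this forced continuation. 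Case~(3) similarly only occurs under $(\mathcal{C})$, forces $|T|=2$, and produces the pin representations counted by $P_{\{2,n\}}(\pi)$ (and, symmetrically, when $|T|=2$ sub-case (1) with $T=\{p_1,p_n\}$ contributes the words of $P_{\{1,n\}}(\pi)$ to $P(T)\cdot Q_x(\alpha)$, matching case~(b)).

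The main obstacle I anticipate is the careful bookkeeping in the bijection between pin representations and pin words when passing from $\alpha$ to $\pi$: one must verify that inserting the pin word of $T$ in place of the initial letter(s) of a (quasi-)strict pin word of $\alpha$ produces a legitimate pin word of $\pi$, and that \emph{every} pin word of $\pi$ arises this way. This requires invoking Remark~\ref{rem:tailleQx} to control the number of pin words per pin representation, and using Lemma~\ref{lem:[7]2.17} repeatedly to argue that no additional pin can sneak onto the bounding boxes encountered during the reading. The other technical point, really a routine but error-prone check, is to ensure the uniqueness claims in cases~(2) and~(3) of Lemma~\ref{lem:lecture_simple} translate into the single word $w$ in case~(a) and into the fully explicit sets $P_{\{1,n\}}(\pi)$, $P_{\{2,n\}}(\pi)$ in case~(b), rather than into larger families.
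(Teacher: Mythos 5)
Your overall plan is the same as the paper's: the containment of the claimed sets in $P(\pi)$ is checked directly, and the converse is obtained by running an arbitrary pin representation of $\pi$ through the trichotomy of Lemma~\ref{lem:lecture_simple}. However, your bookkeeping of which case of that lemma produces which term of the theorem contains a genuine error, concentrated in the $|T|=2$ situation. Statement $(2)$ of Lemma~\ref{lem:lecture_simple} does \emph{not} require $|T|\geq 3$: taking $i=1$ it reads $T=\{p_1,p_n\}$ (and $1\neq n-1$ since $n\geq 5$), and this is exactly the configuration that generates $P_{\{1,n\}}(\pi)$. Your proposal instead asserts that case $(2)$ "only occurs with $|T|\geq 3$" and tries to recover $T=\{p_1,p_n\}$ as an instance of case $(1)$ contributing to $P(T)\cdot Q_x(\alpha)$. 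Both halves of that parenthetical fail: when $T=\{p_1,p_n\}$ the block $T$ is read in two pieces (its second point is $p_n$, read last), so the representation cannot satisfy statement $(1)$; and its pin words do not lie in $P(T)\cdot Q_x(\alpha)$, since every word of $P(T)\cdot Q_x(\alpha)$ has a numeral in third position (the first letter of the $Q_x(\alpha)$ factor), whereas by Remark~\ref{rem:w_2} the words of $P_{\{1,n\}}(\pi)\cup P_{\{2,n\}}(\pi)$ are strict or quasi-strict and hence carry only directions from the third letter on. As written, your $\subseteq$ direction therefore misses all pin words coming from the $T=\{p_1,p_n\}$ reading. The repair is what the paper does: inside case $(2)$ split on $|T|$, with $|T|\geq 3$ yielding $P(T')\cdot w$ (uniqueness of $w$ from the forced continuation plus Remark~\ref{rem:nb_pin_words}, since the forced suffix starts at index $\geq 3$) and $|T|=2$ yielding exactly $P_{\{1,n\}}(\pi)$, the uniqueness of that representation again coming from Lemma~\ref{lem:lecture_simple}.

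A smaller point in your case $(1)$: to conclude that the suffix $u_{|T|+1}\ldots u_n$ lies in $Q_x(\alpha)$ you need two facts you leave implicit, namely that $u_{|T|+1}$ is a numeral (because $T$ is a block of $\pi$, so $p_{|T|+1}$ is an independent pin) and that prepending any numeral produces a \emph{quasi-strict} pin word of $\alpha$, which uses Theorem~\ref{thm:nbpinwords} (all pin words of a simple permutation are strict or quasi-strict). Your phrase "the implicit letter that would have encoded $x$" gestures at this but does not justify why no further numerals can occur later in the suffix; without invoking simplicity of $\alpha$ the membership in $Q_x(\alpha)$ does not follow.
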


\begin{proof}
In each case, it is easy to check that the given pin words are pin
words encoding $\pi$.  Conversely, we prove that a pin word encoding
$\pi$ is necessarily in the set claimed to be $P(\pi)$.  Let $u = u_1
\dots u_n \in P(\pi)$ and $p=(p_1,\dots,p_n)$ be the associated pin
representation.
Then $p$ satisfies one statement of Lemma~\ref{lem:lecture_simple}.

If $p$ satisfies statement $(1)$ of Lemma~\ref{lem:lecture_simple} then,
setting $k = |T|$, $(p_k, \dots, p_n)$ is a pin representation of
$\alpha$ beginning with $x$.  Moreover as $T$ is a block of $\pi$,
$p_{k+1}$ is an independent pin, so that $u_{k+1}$ is a numeral. 
Thus for all
$\quadrantell$ in $\{1,2,3,4\}$, $\quadrantell \, u_{k+1} \dots u_n$ is a
pin word encoding $\alpha$ and starting with two numerals.
As $\alpha$ is simple, its pin
words are strict or quasi-strict,
hence $\quadrantell \, u_{k+1} \dots u_n$ is quasi-strict. 
Therefore $u_{k+1} \dots u_n \in Q_x(\alpha)$.
Moreover $(p_1, \dots, p_k)$ is a pin representation of
$T$. Hence $u \in P(T) \cdot Q_x(\alpha)$ which is included in the
set claimed to be $P(\pi)$,
regardless of whether $\pi$ satisfies condition $(\mathcal{C})$ or not.

If $p$ satisfies statement $(2)$ of Lemma~\ref{lem:lecture_simple} then
$\pi$ satisfies condition $(\mathcal{C})$.  If $|T|=2$ then $T=\{p_1,p_n\}$ and $u
\in P_{\{1,n\}}(\pi)$.  Notice that the uniqueness of the pin
representation such that $T=\{p_1,p_n\}$ follows from
Lemma~\ref{lem:lecture_simple}.  Indeed in this case
$(p_{i+1},\dots,p_n)$ is uniquely determined, $i=1$ and $p_1$ is
the only remaining point.  If $|T|\geq 3$ then from
Lemma~\ref{lem:lecture_simple}, $T'=\{p_1,\dots,p_{k-1}\}$ with $k=|T|$
thus the prefix of length $k-1$ of $u$ is in $P(T')$.
From Lemma~\ref{lem:lecture_simple}, $(p_k,\dots,p_n)$ is uniquely determined.
Moreover, as $k \geq 3$,
Remark~\ref{rem:nb_pin_words} (p.\pageref{rem:nb_pin_words}) ensures that
the letters encoding these points are uniquely determined.
This allows to define uniquely the word $w$ encoding $(p_k,\dots,p_n)$, yielding $u \in P(T') \cdot w$. 

If $p$ satisfies statement $(3)$ of Lemma~\ref{lem:lecture_simple} then
$\pi$ satisfies condition $(\mathcal{C})$, $|T|=2$ and $u \in P_{\{2,n\}}(\pi)$.
Notice that the uniqueness of the pin representation such that
$T=\{p_2,p_n\}$ follows from Lemma~\ref{lem:lecture_simple}.
\end{proof}

To make the set $P(\pi)$ of pin words in the statement of Theorem~\ref{thm:conditionc} explicit
(up to the recursive parts $P(T)$ and $P(T')$), 
we conclude the study of the case $\pi = \alpha[1,\ldots,1,T,1,\ldots,1]$ by stating some properties
of the (sets of) words $Q_x(\alpha)$,
$w$, $P_{\{1,n\}}(\pi)$ and $P_{\{2,n\}}(\pi)$ that appear in Theorem~\ref{thm:conditionc}.

\begin{rem}\label{rem:compute_Q_x(alpha)}
The set $Q_x(\alpha) \subseteq P(\alpha)$ can be determined in linear time w.r.t.~$|\alpha|$.
Indeed as $\alpha$ is simple it is sufficient to examine the proper pin
representations of $\alpha$ which start with an active knight containing $x$.
By Lemma~\ref{lem:[7]2.17} (p.\pageref{lem:[7]2.17}), these are entirely determined by their first two points.
Since $\alpha$ is simple these two points are in knight position.
Consequently, there are at most $8$ proper pin representations of $\alpha$ starting with $x$,
and associated pin words are obtained in linear time using
Remark~\ref{rem:nb_pin_words} (p.\pageref{rem:nb_pin_words}).
\end{rem}

\begin{lem}\label{lem:w_at_least_3}
In Theorem~\ref{thm:conditionc}, when $\pi$ satisfies condition $(\mathcal{C})$
and $|T| \geq 3$, the word $w$ is a strict pin word of length
at least $4$ encoding $\alpha$.
Denoting by $w'$ the suffix of length $2$ of $w$, then $P(T') \cdot \phi^{-1}(w') \subseteq P(T)$.
Moreover there exist a word $\bar{w}$ of $Q_x(\alpha)$ and a letter $Z$ such that
$w= \bar{w} \cdot Z$ and no word of $\phi(Q_x(\alpha))$ contains $Z$.
Finally when $|\alpha| \geq 5$ then $Q_x(\alpha)$ contains only $\bar{w}$.
Otherwise $|\alpha| = 4$ and $Q_x(\alpha)$ contains two words.
\end{lem}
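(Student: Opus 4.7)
The plan is to keep the notation of Theorem~\ref{thm:conditionc} and the proof of Lemma~\ref{lem:lecture_simple}: $p=(p_1,\ldots,p_n)$ is the unique pin representation of $\pi$ in which $T$ is read in two \fois, with $T'=\{p_1,\ldots,p_i\}$ and $T\setminus T'=\{p_n\}$. By Lemma~\ref{lem:lecture_simple} and the right-hand diagram of Figure~\ref{fig:pin-rep_quasi-epi_1bloc}, the sequence $(p_{i+1},\ldots,p_n)$ is entirely determined by $\alpha$ and its auxiliary point: $p_{i+1}$ is an independent pin outside $T$, the pins $p_{i+2},\ldots,p_{n-2}$ form an alternating sequence of separating pins moving away from $T$, $p_{n-1}$ is the separating pin that crosses to the opposite side of $T$, and $p_n$ closes the block $T$ by landing inside $T$ at the position playing the role of $x$ in $\alpha$. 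Since $|w|=n-i=|\alpha|\geq 4$ (as $\alpha$ is a quasi-oscillation), since the pattern formed by $(p_{i+1},\ldots,p_n)$ is $\alpha$ (with $p_n$ in the position of $x$), and since $w$ consists of the numeral encoding $p_{i+1}$ followed by the directions encoding the separating pins $p_{i+2},\ldots,p_n$, the first statement follows immediately. For the second statement, the last two pins $p_{n-1}$ and $p_n$ in the figure show that $w'=w_{|\alpha|-1}w_{|\alpha|}$ encodes how $T$ is closed over $T'$: from the table defining $\phi$, the single letter $\phi^{-1}(w')$ is precisely the numeral that names the quadrant of $T'$ in which the extra leaf $T\setminus T'$ sits. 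Appending this numeral to any pin word of $T'$ therefore yields a valid pin word of $T$ in which $T'$ is read first and the extra leaf is added as an independent pin in the correct quadrant, which gives $P(T')\cdot\phi^{-1}(w')\subseteq P(T)$.

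The heart of the lemma is the third statement. Setting $Z=w_{|\alpha|}$ and $\bar{w}=w_1\ldots w_{|\alpha|-1}$, the inclusion $\bar{w}\in Q_x(\alpha)$ will be obtained by exhibiting a pin representation of $\alpha$ starting at~$x$ whose quasi-strict encoding, once the first letter is deleted, equals $\bar{w}$. The key geometric observation is that the cyclic shift $(p_n,p_{i+1},p_{i+2},\ldots,p_{n-1})$ of $(p_{i+1},\ldots,p_n)$ -- moving $p_n$ from the end to the beginning -- is itself a valid pin representation of $\alpha$, with $p_n$ now playing the role of $x$. Indeed, each separating line used in the original reading (the line through $p_{i+k+1}$ separating $p_{i+k}$ from the older alternating pins $\{p_{i+1},\ldots,p_{i+k-1}\}$) also separates $p_{i+k}$ from $\{p_n\}\cup\{p_{i+1},\ldots,p_{i+k-1}\}$, because $p_n$ lies inside $T$ and is therefore on the same side of this line as the older, more ``outward'' alternating pins. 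Once this shifted pin representation is validated, any admissible placement of the origin $p_0$ around $p_1=x$ produces a quasi-strict pin word of the form $v_1\bar{w}$, and removing $v_1$ gives $\bar{w}\in Q_x(\alpha)$. The claim that no word of $\phi(Q_x(\alpha))$ contains $Z$ is then a direction-counting argument: in the canonical orientation where $x$ sits at the extreme corner of $\alpha$, every pin representation of $\alpha$ starting at $x$ uses only letters pointing ``away from $x$'', whereas $Z$ is by construction the letter encoding how $p_n$ closes the block $T$, namely the letter pointing ``towards $x$''; this can be verified case by case from the explicit form of $\alpha$ recalled in Subsection~\ref{ssec:quasi-oscillations} and the table defining~$\phi$.

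Finally, the cardinality of $Q_x(\alpha)$ follows from combining Remark~\ref{rem:knights_quasi-oscillations} with Lemma~\ref{lem:[7]2.17}. The latter forces the whole pin representation of the simple permutation $\alpha$ as soon as its first two pins -- an active knight -- are fixed, so each active knight of $\alpha$ containing $x$ yields exactly one pin representation starting at $x$. For each such pin representation, a short case analysis over the four quadrants where $p_0$ can lie shows that the quadrant of $p_2$ relative to the bounding box $\{p_0,p_1=x\}$ is the same for all four placements of $p_0$, so that deleting $v_1$ produces the same strict pin word regardless of the origin; hence each active knight of $\alpha$ at $x$ contributes exactly one element to $Q_x(\alpha)$. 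Remark~\ref{rem:knights_quasi-oscillations} then identifies the count: $|Q_x(\alpha)|=1$ when $|\alpha|\neq 4$ (hence $|\alpha|\geq 5$, since $\alpha$ is a quasi-oscillation) and $|Q_x(\alpha)|=2$ when $|\alpha|=4$. The main obstacle in the whole argument will be the geometric verification that the cyclic shift $(p_n,p_{i+1},\ldots,p_{n-1})$ preserves the pin representation property; this relies crucially on the very specific shape of the alternating sequence pictured in Figure~\ref{fig:pin-rep_quasi-epi_1bloc} and on the fact that $p_n$ lies inside~$T$, at the extreme corner opposite to the alternating reading.
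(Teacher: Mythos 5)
Your overall architecture coincides with the paper's: you use the uniquely determined two-piece reading from Lemma~\ref{lem:lecture_simple} to show that $w$ is a strict pin word of length $|\alpha|\geq 4$ encoding $\alpha$, you invoke Lemma~\ref{lem:quadrant} for the statement about $w'$, you exhibit a pin representation of $\alpha$ starting at $x$ whose quasi-strict encodings have $\bar{w}$ as suffix, and you count $Q_x(\alpha)$ through active knights (Remark~\ref{rem:knights_quasi-oscillations} together with the fact that a pin representation of a simple permutation is forced once its first two points are fixed). Your one structural variation --- starting the pin representation of $\alpha$ at $p_n$ via a cyclic shift, where the paper starts it at $p_i\in T'$ --- is legitimate and requires the same kind of geometric verification; that part is fine.

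The gap is in your justification that no word of $\phi(Q_x(\alpha))$ contains $Z$. The principle you state --- that every pin representation of $\alpha$ starting at $x$ uses only letters pointing ``away from $x$'', while $Z$ is ``the letter pointing towards $x$'' --- is false. Take the increasing quasi-oscillation of size $10$ of Figure~\ref{fig:episgrands} with $x=A$ in the top right corner: the unique word of $Q_x(\alpha)$ is $3(DL)^3DR$, so $\phi(Q_x(\alpha))$ contains the letter $R$, which points toward the corner occupied by $x$, while $Z=U$; for odd sizes the roles of $R$ and $U$ are exchanged ($\bar{w}$ then contains $U$ and $Z=R$). In general $\bar{w}$ always contains exactly one of the two ``towards-$x$'' letters, and $Z$ is the other one; which is which depends on the type ($H$ or $V$) of the knight formed by the auxiliary and main substitution points, not on the position of $x$ alone. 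So the blanket direction argument cannot be completed as stated. The claim itself is true and is within reach of the tools you already set up: since you prove $|Q_x(\alpha)|=1$ when $|\alpha|\geq 5$, the only word to inspect is $\bar{w}$, whose explicit form (read off Figure~\ref{fig:pin-rep_quasi-epi_1bloc}, or Definition~\ref{def:w_alpha}) visibly omits $Z$; when $|\alpha|=4$ there is exactly one additional word in $Q_x(\alpha)$, and it must be checked directly against the four diagrams of Figure~\ref{fig:quasiepi} --- this is precisely how the paper closes the argument. A minor additional imprecision: it is not true that ``any admissible placement'' of the origin around the first pin yields a quasi-strict word (some placements make the second pin separating); you only need, and only get, that some placement does, which suffices for $\bar{w}\in Q_x(\alpha)$.
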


\begin{proof}
Assume that $\pi$ satisfies condition $(\mathcal{C})$ and $|T| \geq 3$.
Define $T'$ as in condition $(\mathcal{C})$ and let $i = |T'|$. Notice that $i \geq 2$.
By definition of $w$ there exists a pin representation $p=(p_1,\dots,p_n)$ of $\pi$ such that $T' = \{p_1, \dots, p_i\}$,
$T$ is read in two \fois and any corresponding pin word $u = u_1 \dots u_n$ satisfies $u_{i+1} \dots u_n = w$.
Then $p$ satisfies statement $(2)$ of Lemma~\ref{lem:lecture_simple}, thus $T = \{p_1, \dots, p_i\} \cup \{p_n\}$
as in the second diagram of Figure~\ref{fig:pin-rep_quasi-epi_1bloc} and $\{p_{i+2}, \dots, p_n\}$ are separating pin.
As $i \geq 2$ and $T'$ is a block of $\pi$, $p_{i+1}$ is an independent pin encoded with a numeral.
So $w = u_{i+1} \dots u_n$ is a strict pin word.

Moreover as $T = \{p_1, \dots, p_i\} \cup \{p_n\}$,
$(p_{i+1}, \dots, p_{n})$ is a pin representation of $\alpha$ ending with $x$
thus $w$ is a pin word encoding $\alpha$ and $|w| = |\alpha| \geq 4$.
Likewise $(p_i, \dots, p_{n-1})$ is a pin representation of $\alpha$ beginning with $x$.

Denoting by $w'$ the suffix of length $2$ of $w$, then
from Lemma~\ref{lem:quadrant} (p.\pageref{lem:quadrant}) $\phi^{-1}(w')$
is a numeral indicating the quadrant in which $p_n$ lies with respect
to $T'$. And as $T = T' \cup \{p_n\}$, for all $u'$ in $P(T')$, $u'\cdot \phi^{-1}(w')$
belongs to $P(T)$.

Moreover letting $\bar{w}$ be the prefix of $w$ of length $|w|-1$,
for all $\quadrantell$ in $\{1,2,3,4\}$, $\quadrantell \, \bar{w}$ is a quasi-strict pin word of $\alpha$.
Therefore $\bar{w} \in Q_x(\alpha)$.  Denoting $Z$ the last letter of $w$, $Z$ encodes $p_n$.
Moreover $\bar{w}$ encodes $p_{i+1}, \dots, p_{n-1}$ and the position of $p_{i+1}, \dots, p_n$
is the same (up to symmetry) as the one shown on Figure~\ref{fig:pin-rep_quasi-epi_1bloc}.
On this figure, it is immediate to check that $\phi(\bar{w})$ does not contain $Z$.
To prove that this holds not only for $\bar{w}$ but also for all words of $Q_x(\alpha)$,
we first study the cardinality of $Q_x(\alpha)$.

\smallskip

From Remark~\ref{rem:tailleQx}, to each pin representation of $\alpha$ whose first point read is $x$ corresponds exactly one word of $Q_x(\alpha)$.
Recall from Remark~\ref{rem:compute_Q_x(alpha)} that a pin representation of $\alpha$ is determined by its first two points, which form an active knight. 
So we just have to compute the number of active knights of $\alpha$ to which $x$ belongs,
remembering that $\alpha$ is an increasing quasi-oscillation and $x$ is an auxiliary point of $\alpha$. 
This question has been addressed in Remark~\ref{rem:knights_quasi-oscillations} (p.\pageref{rem:knights_quasi-oscillations}). 
It follows that if $\alpha$ is an increasing quasi-oscillation of size greater than $4$
and $x$ is an auxiliary point of $\alpha$, then $|Q_x(\alpha)|=1$ as $x$ belongs to only one active knight;
and when $|\alpha|=4$, $|Q_x(\alpha)|=2$ as $x$ belongs to two active knights.

\smallskip

To conclude the proof, recall that the word $\bar{w}$ defined earlier belongs to $Q_x(\alpha)$
and is such that $\phi(\bar{w})$ does not contain $Z$.
When $|\alpha| \neq 4$, we have $|Q_x(\alpha)|=1$ so that $Q_x(\alpha) = \{\bar{w}\}$ and we conclude that no word of $\phi(Q_x(\alpha))$ contains $Z$.
When $|\alpha| = 4$, $|Q_x(\alpha)|=2$ and there is only one word $\bar{w}'$ different from $\bar{w}$ in $Q_x(\alpha)$,
which may be computed from Figure~\ref{fig:quasiepi} (p.\pageref{fig:quasiepi}).
We then check by comprehensive verification (of the four cases of size $4$ on Figure~\ref{fig:quasiepi}) that $\phi(\bar{w}')$ does not contain $Z$.
Details are left to the reader.
\end{proof}

We are furthermore able to describe $w$ explicitly in Remark~\ref{rem:w_explicit} below, and we record its expression here for future use in our work.

\begin{defi}\label{def:w_alpha}
To each quasi-oscillation $\alpha$ of which an auxiliary point $A$ is marked,
we associate a word $w^A_\alpha$ defined below.
Denoting by $M$ the main substitution point of $\alpha$ corresponding to $A$  and by  $K_{A,M}$ the active knight formed by $A$ and $M$ then:

When  $\alpha$ is increasing and $K_{A,M}$ is of type $H$ (resp. $V$), \\
\indent \indent  if $A$ is in the top right corner of $\alpha$, we set\\
\indent \indent \indent \indent  $w^A_\alpha = (DL)^{p-2}DRU$ (resp. $w^A_\alpha =(LD)^{p-2}LUR$) if  $|\alpha| = 2p$ \\
\indent \indent \indent \indent  $w^A_\alpha = (DL)^{p-2}UR$ (resp. $w^A_\alpha =(LD)^{p-2}RU$) if $|\alpha| = 2p-1$;

\indent \indent if $A$ is in the bottom left corner of $\alpha$, we set \\
\indent \indent \indent \indent  $w^A_\alpha = (UR)^{p-2}ULD$ (resp. $w^A_\alpha =(RU)^{p-2}RDL$) if  $|\alpha| = 2p$\\
\indent \indent \indent \indent   $w^A_\alpha = (UR)^{p-2}DL$ (resp. $w^A_\alpha =(RU)^{p-2}LD$) if  $|\alpha| = 2p-1$;

When $\alpha$ is decreasing, $w^A_\alpha$ is obtained by symmetry exchanging left and right.
\end{defi}

Notice that for quasi-oscillations that are both increasing and decreasing the choice of $A$ determines their nature, so that  $w^A_\alpha$ is properly defined.

\begin{rem}\label{rem:w_explicit}
If $A$ is in the top right corner of $\alpha$
(see Figure~\ref{fig:pin-rep_quasi-epi_1bloc} p.\pageref{fig:pin-rep_quasi-epi_1bloc} or Figure~\ref{fig:episgrands} p.\pageref{fig:episgrands}),
then $w = 3\cdot w^A_\alpha$.
If $A$ is in the bottom left (resp. top left, bottom right) corner of $\alpha$ then $w = 1\cdot w^A_\alpha$
(resp.  $w= 4\cdot w^A_\alpha$, $w = 2\cdot w^A_\alpha$).
\end{rem}

\begin{rem}\label{rem:w_2}
In Theorem~\ref{thm:conditionc}, if $\pi$ satisfies condition $(\mathcal{C})$
and $|T| =2$, then $ P_{\{1,n\}}(\pi) \cup
P_{\{2,n\}}(\pi) = \{u \in P(\pi) \mid u \text{
  strict or quasi-strict}\}$, denoted $P_{\textsc{sqs}}(\pi)$.
This set corresponds to two proper pin
representations, so it contains $12$ pin words
(see Remark~\ref{rem:nb_pin_words} p.\pageref{rem:nb_pin_words}). 
Moreover, with the notations of Lemma~\ref{lem:lecture_epi} (p.\pageref{lem:lecture_epi}),
and $K_{A,M}$ as in Definition~\ref{def:w_alpha},
we have an explicit description of $P_{\textsc{sqs}}(\pi)$:

When $K_{A,M}$ is of type $H$ (resp. $V$), \\
\indent \indent if $\alpha$ is increasing (see Figure~\ref{fig:pin-rep_quasi-epi_1bloc} p.\pageref{fig:pin-rep_quasi-epi_1bloc}), then \\
\indent \indent \indent $P_{\textsc{sqs}}(\pi) = (\Qplus+\SHplus) \cdot w^A_\alpha$
(resp. $P_{\textsc{sqs}}(\pi) = (\Qplus+\SVplus) \cdot w^A_\alpha$) \\
\indent \indent and if $\alpha$ is decreasing, then \\
\indent \indent \indent $P_{\textsc{sqs}}(\pi) = (\Qminus+\SHminus) \cdot w^A_\alpha$
(resp. $P_{\textsc{sqs}}(\pi) = (\Qminus+\SVminus) \cdot w^A_\alpha$).
\end{rem}

\medskip

This concludes the study of the case $\pi =
\alpha[1,\ldots,1,T,1,\ldots,1]$.  It now remains to deal with the
case where more than one child of $\alpha$ is not a leaf. From
Theorem~3.1 of~\cite{BBR09} (see also Equation~\eqref{eq:pin_perm_trees} p.\pageref{eq:pin_perm_trees}), in this case $\alpha$ is an increasing
(resp. decreasing) quasi-oscillation having exactly two children
that are not leaves, and these are completely determined.

\begin{theo}\label{thm:primeRoot_cas_special}
  Let $\pi = $ \begin{tikzpicture}[level distance=17pt,sibling
  distance=6pt,baseline=-10pt,inner sep=0] \node[simple,inner sep=0] (X)
  {$\beta^{+}$} child {[fill] circle (2pt)} child [missing] child {[fill] circle
    (2pt) node (xx1){}} child [missing] child [sibling distance=0pt] {node (xx2){} edge from parent [draw=none] }child[thick, dotted]  {node[thin, shape=isosceles triangle, solid, draw, shape border rotate=90,anchor=apex, minimum
    height=5mm,inner sep=1pt,isosceles triangle apex angle=110]   {$T$}}
  child [sibling distance=0pt] {node (xx3){} edge from parent [draw=none] } child [missing] child[dash pattern=on 3pt off 2pt on 1pt off
  2pt] {node (xx4) {$12$}} child [missing] child {[fill] circle (2pt)};
  \draw [dotted] (xx1) -- (xx2);
  \draw [dotted] (xx3) -- (xx4);
  \end{tikzpicture}
  where $\beta^+$ is an increasing quasi-oscillation, the
  permutation $12$ expands an auxiliary point of $\beta^{+}$ and $T$ (of size at least $2$) expands the
  corresponding main substitution point of $\beta^{+}$. Then $P(\pi) = P(T) \cdot w$ where $w$
  is the unique word encoding the unique reading of the remaining leaves in $\pi$ after $T$ is read.
\end{theo}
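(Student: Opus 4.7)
The plan is to prove that every pin representation of $\pi$ reads the entire subtree $T$ first in one piece, and that the reading of the remaining leaves thereafter is completely forced. From this the equality $P(\pi) = P(T)\cdot w$ follows: a pin word of $\pi$ decomposes as a prefix $u$ of length $|T|$ encoding $T$, concatenated with a fixed suffix $w$ depending only on $\beta^+$ and the marked auxiliary/main substitution points.

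First I would establish that for any pin representation $p=(p_1,\ldots,p_n)$ of $\pi$, the first pin $p_1$ must lie in $T$. The candidates for $p_1$ fall into three classes: a leaf-child of $\beta^+$, one of the two points of the $12$-block expanding the auxiliary point $A$, or a point of $T$ expanding the main substitution point $M$. The pair $\{p_1,p_2\}$ must form an active knight of $\pi$, and by Lemma~4.6 of~\cite{BBR09} the active knights of simple pin-permutations (together with their behavior under inflation) are tightly constrained. The two points of the $12$-block are horizontally and vertically adjacent in $\pi$, so they cannot both be $p_1,p_2$; if $p_1$ is one of them, then $\{p_1,a_2\}$ (where $a_2$ is the other) must be read as a block in two pieces, which by an argument in the spirit of statement~(3) of Lemma~\ref{lem:lecture_simple} forces $\pi$ to admit a shape with a single non-leaf child inflating an auxiliary point, contradicting the hypothesis that $T$ (at the main substitution point) is also a non-leaf child. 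A parallel argument, using that starting inside a leaf-child would leave the remaining pin sequence unable to match the interval structure of $T$ and of the $12$-block simultaneously, rules out $p_1$ being a leaf-child of $\beta^+$. Hence $p_1\in T$.

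Next I would show that $T$ is read in one piece as the initial segment $p_1,\ldots,p_{|T|}$. Since $T$ is an interval of $\pi$ with $|T|\geq 2$ and $p_1\in T$, if $T$ were not read in one piece then by Lemma~3.11 of~\cite{BBR09} it would be read in exactly two pieces with $p_n$ being the second piece. By the analogue of statement~(2) of Lemma~\ref{lem:lecture_simple}, this would force $T$ to expand an auxiliary point of the root, contradicting the fact that $T$ expands the main substitution point $M$. So $T$ is read as a prefix, and $p_1,\ldots,p_{|T|}$ is a pin representation of $T$.

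Once $T$ has been read, the remaining pins $p_{|T|+1},\ldots,p_n$ are forced geometrically. The simplicity of $\beta^+$ together with Lemma~\ref{lem:[7]2.17} implies that at each subsequent step the new pin is the unique point of $\pi$ still to be read lying on the sides of the current bounding box; this determines the order of the leaf-children of $\beta^+$, and in particular forces an alternating separating sequence matching the quasi-oscillation structure of $\beta^+$, with the two points of the $12$-block appearing in the unique order consistent with being read as consecutive separating pins around the auxiliary position. Because $|T|\geq 2$, any pin word of $\pi$ has its origin fixed after the prefix of length~$|T|$, so by Lemma~\ref{lem:quadrant} the letters encoding $p_{|T|+1},\ldots,p_n$ are also uniquely determined, yielding a single word $w$ independent of the particular pin word of $T$ chosen. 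This proves $P(\pi)\subseteq P(T)\cdot w$; the reverse inclusion is immediate since any element of $P(T)$ encodes a pin representation of $T$ that can be extended by the unique subsequent reading described above to a pin representation of $\pi$. The main obstacle is the case analysis in the first step, ruling out $p_1\notin T$: it requires a careful examination of active knights in the presence of the two simultaneous inflations (of $A$ into $12$ and of $M$ into $T$) to verify that no alternative starting position leads to a valid pin representation of~$\pi$.
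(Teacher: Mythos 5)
Your overall plan (show every pin representation reads $T$ first and in one piece, then show the continuation is forced, then get uniqueness of the suffix word from Remark~\ref{rem:nb_pin_words}) is the right shape, and it differs from the paper only in that the paper outsources the structural fact to an external reference (Section~3.4, Figure~10 of~\cite{BBR09}, which is exactly the configuration of Figure~\ref{fig:pin-rep_quasi-epi_2blocs}) instead of re-deriving it. However, your derivation of the crucial step -- the forced continuation after $T$ -- does not work as stated. You claim that ``the simplicity of $\beta^+$ together with Lemma~\ref{lem:[7]2.17} implies that at each subsequent step the new pin is the unique point still to be read lying on the sides of the current bounding box.'' This fails at the very first step after $T$: in the actual (unique) configuration, no point of $\pi$ lies on the sides of the bounding box of $T$, and the pin $p_{|T|+1}$ is an \emph{independent} pin sitting in a corner quadrant (it is the lower point of the $12$-block expanding the auxiliary point). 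Lemma~\ref{lem:[7]2.17} says nothing about independent pins, so it does not single out $p_{|T|+1}$; its uniqueness is precisely the non-trivial structural fact, which one can get either by citing~\cite{BBR09} as the paper does, or by contracting the block $T$ to a point and applying statement~$(3)$ of Lemma~\ref{lem:lecture_simple} (uniqueness of the pin representation) to $\beta^+[1,\ldots,1,12,1,\ldots,1]$ read from the main substitution point. Your description of the forced reading is also wrong: the two points of the $12$-block are \emph{not} read as consecutive separating pins; one is read immediately after $T$ as an independent pin and the other is the very last pin $p_n$ (a separating pin), see Figure~\ref{fig:pin-rep_quasi-epi_2blocs}. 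This misdescription indicates the forced configuration itself has not been established.

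The preliminary case analysis is also only gestured at, and one of its justifications is incorrect: two points that are adjacent in position and value can perfectly well be $p_1,p_2$ of a pin sequence (adjacent is not aligned), so adjacency alone does not exclude starting inside the $12$-block. What does exclude it (and likewise excludes $p_1$ being a leaf of $\beta^+$, and excludes $T$ being read in two pieces) is the block argument you only allude to: if $p_1\notin T$ then, by Lemma~3.12$(ii)$ of~\cite{BBR09} as used in the proof of Lemma~\ref{lem:lecture_simple}, the block $T$ would have to be $\{p_2,p_n\}$, and if $T\ni p_1$ were read in two pieces then its second piece is $\{p_n\}$; in either situation the $12$-block, which is a non-trivial block disjoint from $T$ and not containing $p_1$, would also have to be $\{p_2,p_n\}$, a contradiction. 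Note also that your appeal to ``the analogue of statement~$(2)$'' of Lemma~\ref{lem:lecture_simple} is not a direct quotation: that lemma and its proof assume all children of the root other than $T$ are leaves, which is false here, so the argument must be redone (or replaced by the simpler disjointness contradiction above). As written, the proposal therefore has a genuine gap exactly at the point where the theorem's content lies.
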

\vspace{-0.3cm}
~\\
\begin{minipage}{.6\textwidth}
\begin{proof}
Let $p=(p_1,\ldots,p_n)$ be a pin representation associated to $\pi$.
According to Section 3.4 (and more precisely Figure 10) of~\cite{BBR09}, the configuration depicted on Figure~\ref{fig:pin-rep_quasi-epi_2blocs}
is the only possible configuration up to symmetry for a pin-permutation whose root is a simple permutation with two non trivial children.
Thus the sequence $(p_{k+1},\ldots,p_n)$ is uniquely determined in $\pi$.
Moreover $k+1 \geq 3$, so that the suffix encoding $(p_{k+1},\ldots,p_n)$ in a pin word of $p$ is a word
$w$ uniquely determined from Remark~\ref{rem:nb_pin_words} (p.\pageref{rem:nb_pin_words}).
\end{proof}
\end{minipage}
\quad
\begin{minipage}{.35\textwidth}
\begin{tikzpicture}[scale=.3]
\draw (6,1) rectangle (8,3);
\draw (7,2) node {$T$};

\draw (9,4) rectangle (11,6);
\draw (11.7,4) node {$T'$};

\draw (10.5,5.5) [fill] circle (.2);
\draw (11.8,5.5) node {\footnotesize $p_{n}$};
\draw (10.5,5.5) -- (2,5.5);

\draw (9.5,4.5) [fill] circle (.2);
\draw (9.5,3.6) node {\footnotesize $p_{k+1}$};

\draw (8.5,0) [fill] circle (.2);
\draw (8.8,-0.8) node {\footnotesize $p_{k+2}$};
\draw (8.5,0) -- (8.5,2);

\draw (5,0.5) [fill] circle (.2);
\draw (5,0.5) -- (9,0.5);

\draw (5.5,-1) [fill] circle (.2);
\draw (5.5,-1) -- (5.5,1);

\draw (4,-0.5) [fill] circle (.2);
\draw (4,-0.5) -- (6,-0.5);

\draw (4.5,-2) [fill] circle (.2);
\draw (4.5,-2) -- (4.5,0);

\draw (3,-2) node {$\cdot$};
\draw (3.3,-1.7) node {$\cdot$};
\draw (3.6,-1.4) node {$\cdot$};

\draw (2,-2.5) [fill] circle (.2);
\draw (2,-2.5) -- (5,-2.5);
\draw (0.7,-2.5) node {\footnotesize $p_{n-2}$};

\draw (2.5,6) [fill] circle (.2);
\draw (2.5,6) -- (2.5,-3);
\draw (2.5,6.5) node {\footnotesize $p_{n-1}$};
\end{tikzpicture}
  
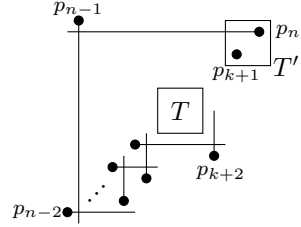
\captionof{figure}{Diagram of $\pi$ if two children are not leaves}
\label{fig:pin-rep_quasi-epi_2blocs}
\end{minipage}

\begin{rem} \label{rem:w_cas_2_non_feuilles}
The word $w$ in the statement of Theorem~\ref{thm:primeRoot_cas_special} is a strict pin word
uniquely determined by $\beta^{+}$ and the two points expanded in $\beta^{+}$.

More precisely, taking the notations of 
Definition~\ref{def:w_alpha}
(with $\beta^+$ instead of $\alpha$), $w = 1 \cdot w^A_{\beta^+}$ (resp. $w = 3 \cdot w^A_{\beta^+}$)
when $A$ is in the top right (resp. bottom left) corner of $\beta^+$
(see Figure~\ref{fig:pin-rep_quasi-epi_2blocs}).
\end{rem}

For decomposition trees whose root is a decreasing quasi-oscillation, we obtain from Remark~\ref{rem:ominus=oplus_transposed} (p.\pageref{rem:ominus=oplus_transposed})
a description of $P(\pi)$ similar to the one of Theorem~\ref{thm:primeRoot_cas_special}.

\section[Building deterministic automata $\mathcal A_{\pi}$]{Building deterministic automata $\mathcal A_{\pi}$ accepting the languages $\overleftarrow{{\mathcal L}_{\pi}}$} 
\label{sec:buildingAutomata}

Appendix~\ref{sec:pinwords} gives a recursive description 
of the set $P(\pi)$ of pin words encoding $\pi$, 
for any pin-permutation $\pi$. 
As explained in Subsection~\ref{ssec:idees_construction_automates}, 
we next use this precise knowledge about $P(\pi)$ 
to build deterministic automata $\mathcal A_{\pi}$ 
recognizing the languages $\overleftarrow{{\mathcal L}_{\pi}}$, 
for any pin-permutation $\pi$. 
Recall from Subsection~\ref{ssec:idees_construction_automates}
(p.\pageref{ssec:idees_construction_automates}) that 
\begin{align*}
\overleftarrow{{\mathcal L}_{\pi}}& = \bigcup_{ u \in P(\pi) \atop u = u^{(1)}u^{(2)}\ldots u^{(j)}} A^\star
\overleftarrow{\phi(u^{(j)})} A^\star \ldots A^\star
\overleftarrow{\phi(u^{(2)})} A^\star \overleftarrow{\phi(u^{(1)})}
A^\star
\end{align*}
where $A = \{U,D,L,R\}$, $\phi$ is the map introduced in
Definition~\ref{def:phi} (p.\pageref{def:phi}) and for every pin word $u$, by $u=u^{(1)}u^{(2)}\ldots u^{(j)}$ we mean 
(here and everywhere after) that $u^{(1)}u^{(2)}\ldots u^{(j)}$ is the strong
numeral-led factor decomposition of $u$.

To build these automata $\mathcal A_{\pi}$ recognizing $\overleftarrow{{\mathcal L}_{\pi}}$, 
we proceed again recursively, distinguishing several cases following Equation~\eqref{eq:pin_perm_trees} p.\pageref{eq:pin_perm_trees}, 
\emph{i.e.}, according to the shape of the decomposition tree of $\pi$. 
We also present an alternative construction of $\mathcal A_{\pi}$
whose complexity is optimized; but instead of
$\overleftarrow{{\mathcal L}_{\pi}}$, the automaton recognizes a
language $\mathcal{L}'_{\pi}$ such that $\mathcal{L}'_{\pi}\cap {\mathcal M} =
\overleftarrow{\mathcal{L}_{\pi}} \cap {\mathcal M}$. 
In both constructions, the automata $\mathcal A_{\pi}$ are deterministic, 
and we have explained in Subsection~\ref{ssec:structure_algo} that 
this is the key to control the complexity of our algorithm. 
Moreover, in addition to being deterministic, 
both automata are complete and have a unique final state 
without outgoing transitions except for a loop labeled by all letters of $A$. 
These properties of $\mathcal A_{\pi}$ are inherited 
from the smaller automata used in its construction.

Our construction of automata accepting words $v \in \overleftarrow{\mathcal{L}_{\pi}}$ relies on a
greedy principle: at each step we find the first occurrence of
$\overleftarrow{\phi(u^{(\ell)})}$ that appears in the
suffix of the word $v$ that has not yet been read by the automaton.
This is facilitated by the fact that
in $\overleftarrow{\mathcal{L}_{\pi}}$ the
factors $\overleftarrow{\phi(u^{(\ell)})}$ are separated by
$A^{\star}$.

The reason why we consider \emph{reversed} words is in order to preserve determinism. Indeed
intuitively the possible beginnings of pin words encoding a
permutation may be numerous, whereas all these words end with very
similar shuffle products as it appears in
Theorems~\ref{thm:pinwords_cas_lineaire_non_recursif}
and~\ref{thm:linearRoot}
(p.\pageref{thm:pinwords_cas_lineaire_non_recursif} and
\pageref{thm:linearRoot}).

The description of our construction of the automata 
$\mathcal A_{\pi}$ is organized as follows. 
In Subsection~\ref{sec:ac}, we present generic
constructions of automata that will be used several times.  In
Subsections~\ref{pw:non-recursive} to~\ref{sec:decomp_simple}, we
construct recursively the automata $\mathcal A_{\pi}$ that recognize
the languages $\overleftarrow{{\mathcal L}_{\pi}}$ for any
pin-permutation $\pi$, distinguishing cases according to the
decomposition tree of $\pi$ -- see Equation~\eqref{eq:pin_perm_trees}
p.\pageref{eq:pin_perm_trees}.  In these constructions, some states of
the automata must be marked, and this is detailed in
Subsection~\ref{subsection:marquage}.  We conclude with
Subsection~\ref{ssec:complexity} that analyzes the complexity of
building $\mathcal A_{\pi}$.

\subsection{Generic constructions of deterministic automata}\label{sec:ac}
We present some generic constructions that are used in the next subsections.
We refer the reader to~\cite{HU79} for more details about automata.

\paragraph*{Aho-Corasick algorithm} Let $X$ be a finite set of
words over a finite alphabet $A$. The Aho-Corasick algorithm~\cite{AC75}
builds a deterministic automaton that recognizes $A^{\star}X$ in
linear time and space w.r.t.~the sum $\| X \|$ of the lengths of the
words of $X$. The first step of the algorithm consists in constructing
a tree-automaton whose states are labeled by the prefixes of the words
of $X$. The initial state is the empty word $\varepsilon$. For any
word $u$ and any letter $a$ there is a transition labeled by $a$ from
state $u$ to state $ua$ if $ua$ is a prefix of a word of $X$. At this
step the final states are the leaves of the tree. The second step
consists in adding transitions in the automaton according to a
breadth-first traversal of the tree-automaton to obtain a complete
automaton. For any state $u$ and any letter $a$, the transition from
$u$ labeled by $a$ goes to the state corresponding to the longest
suffix of $ua$ that is also a prefix of a word of $X$. The set of
final states is the set of states corresponding to words having a
suffix in $X$. These states correspond to a leaf or an internal node -- when
there is a factor relation between two words of $X$ -- of the original
tree-automaton. The ones corresponding to internal nodes
are marked on the fly during the construction of
the missing transitions.

\begin{rem}\label{rem:Aho_Corasick_usuel}
Notice that all transitions labeled with a letter of $A$ that does not
appear in any word of $X$ go to the initial state.
Moreover the reading of any word $u$ by the automaton leads to the state labeled with
the longest suffix of $u$ that is also a prefix of a word of $X$.
\end{rem}

\paragraph*{A variant for first occurrences} An adaptation of
the Aho-Corasick algorithm allows us to build in linear time and space
w.r.t.~$\| X \|$ a deterministic automaton, denoted $\AC{X}$,
recognizing the set of words ending with a {\em first} occurrence of a
word of $X$ (which is strictly included in $A^{\star}X$).
First we perform the first step of the Aho-Corasick
algorithm on $X$, obtaining a tree automaton.
We modify the second step as follows:
in the breadth-first traversal, we stop the exploration of a branch and delete its
descendants as soon as a final state is reached. Moreover we do not
build the outgoing transitions
from the final states, nor the loops on the final states.
This ensures that the language recognized is the set of words
ending with a first occurrence of a word of $X$.  Finally we merge the
final states into a unique final state $f$ to obtain $\AC{X}$.
Moreover if we add a loop labeled by all letters of $A$ on $f$ we
obtain an automaton ${\mathcal {AC}}^{\circlearrowleft}(X)$ that
recognizes the set $A^{\star}XA^{\star}$ of words having a factor in $X$.

\begin{rem}
The main difference between our variant and the construction of
Aho-Corasick is that we \emph{stop} as soon as a \emph{first}
occurrence of a word of $X$ is read. This ensures that $\AC{X}$ has a
unique final state without any outgoing transition.
\end{rem}

This variant for first occurrences satisfies properties
analogous to Remark~\ref{rem:Aho_Corasick_usuel}:

\begin{lem}\label{lem:trans_initial}
In $\AC{X}$, all transitions labeled with a letter that does not
appear in any word of $X$ go to the initial state.
Moreover let $u$ be a word without any factor in $X$ except maybe as a suffix.
Then the reading of $u$ by $\AC{X}$ leads to the state labeled with
the longest suffix of $u$ that is also a prefix of a word of $X$.
\end{lem}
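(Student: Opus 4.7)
The plan is to prove both statements directly from the construction of $\AC{X}$, with the second one by induction on $|u|$.

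For the first statement, recall that every non-final state of $\AC{X}$ is labeled by a proper prefix of some word of $X$, and that for any such state $p$ and any letter $a$, the transition $p \xrightarrow{a} q$ added during the breadth-first completion goes to the state $q$ labeled by the longest suffix of $pa$ that is still a prefix of a word of $X$ (or to the unique final state $f$ if $pa$ itself ends with a word of $X$; but this cannot occur when $a$ does not appear in any word of $X$). Now assume $a$ does not appear in any word of $X$. Any non-empty suffix $s$ of $pa$ ends with the letter $a$, so if $s$ were a prefix of some word $w \in X$, then $w$ would contain the letter $a$, contradicting the hypothesis. Hence the longest suffix of $pa$ that is a prefix of a word of $X$ is the empty word $\varepsilon$, which labels the initial state. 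This proves the first statement.

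For the second statement, I argue by induction on $|u|$. The base case $u = \varepsilon$ is immediate: reading the empty word leads to the initial state, which is labeled by $\varepsilon$, indeed the longest suffix of $\varepsilon$ that is a prefix of a word of $X$. For the inductive step, write $u = va$ and assume $u$ has no factor in $X$ except possibly as a suffix. Every factor of $v$ is a factor of $u$ ending strictly before the last position of $u$, hence is not a suffix of $u$; by hypothesis on $u$, no such factor belongs to $X$, so $v$ itself has no factor in $X$ at all. By induction hypothesis, reading $v$ in $\AC{X}$ leads to the state $p$ labeled by the longest suffix of $v$ that is a prefix of a word of $X$. This state $p$ is not the final state $f$, because reaching $f$ would mean $v$ ends with (hence contains) a word of $X$.

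It remains to analyze the transition from $p$ on letter $a$. By construction of $\AC{X}$, either this transition goes to $f$, which happens exactly when $pa$ ends with a word of $X$ (and then the label of $f$ can be taken to be that word, the longest suffix of $ua$ which is a prefix -- actually an equal -- of a word of $X$), or it goes to the state labeled by the longest suffix of $pa$ that is a prefix of a word of $X$. A standard Aho--Corasick argument shows that this label coincides with the longest suffix of $ua$ that is a prefix of a word of $X$: indeed any suffix $s$ of $ua$ which is a prefix of a word of $X$ must be a suffix of $ua$ of length at most $|p|+1$ (because $ps$-like longer suffixes of $ua$ of the form $ta$ with $|t|>|p|$ would yield $t$ as a suffix of $v$ which is a prefix of a word of $X$ longer than $p$, contradicting maximality of $p$), so $s$ is actually a suffix of $pa$, and conversely every suffix of $pa$ is a suffix of $ua$. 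Thus the state reached after reading $u$ is labeled by the longest suffix of $u$ that is a prefix of a word of $X$, completing the induction.

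The only subtle point, which is really the heart of the argument, is to justify why the induction does not break by hitting the unique final state $f$ prematurely: this is exactly guaranteed by the hypothesis that $u$ has no factor in $X$ except possibly as a suffix, which passes from $u$ to $v$ in a strengthened form (no factor at all in $v$).
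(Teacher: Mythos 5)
Your proof is correct and rests on the same two ingredients as the paper's own argument: the characterization of the completed transitions (the target of the transition from a state $p$ on a letter $a$ is the state labeled by the longest suffix of $pa$ that is a prefix of a word of $X$) together with the observation that the hypothesis on $u$ prevents the reading path from reaching the merged final state before the last letter. The difference is only in packaging — you re-derive this invariant by an explicit induction on $|u|$, whereas the paper observes that $\AC{X}$ is a subautomaton of the classical Aho--Corasick automaton and invokes Remark~\ref{rem:Aho_Corasick_usuel} — and the only blemish is the harmless notational slip where you write $ua$ for $u=va$.
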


\begin{proof}
Let $\mathcal A$ be the usual Aho-Corasick automaton on $X$.
Then $\AC{X}$ (before the merge of all final states in $f$) is a subautomaton of $\mathcal A$, and
therefore the first assertion is a direct consequence of Remark~\ref{rem:Aho_Corasick_usuel}.
Let $u$ be a word without any factor in $X$ except maybe as a suffix.
Then the path of the reading of $u$ by $\mathcal A$ does not visit any final state,
except maybe the last state reached.
Thus all this path is included in $\AC{X}$ and we conclude using Remark~\ref{rem:Aho_Corasick_usuel}.
\end{proof}

\paragraph*{A variant for a partition $X_1, X_2$} When the set $X$ is
partitioned into two subsets $X_1$ and $X_2$ such that no word of
$X_1$ (resp. $X_2$) is a factor of a word of $X_2$ (resp. $X_1$)
\footnote{This is a simple condition that allows us to define without ambiguity
words ending with a first occurrence either in $X_1$ or in $X_2$.},
we adapt the previous construction and build a deterministic automaton
$\AC{X_1, X_2}$ which recognizes the same language as $\AC{X}$.  But
instead of merging all final states into a unique final state, we
build two final states $f_1$ and $f_2$ corresponding to the first
occurrence of a word of $X_1$ (resp. $X_2$).  This construction is
linear in time and space w.r.t.~$\| X_1 \| + \| X_2 \|$.  In what
follows we will use this construction only when $X_1$ and $X_2$ are
languages on disjoint alphabets, so that the factor independence
condition is trivially satisfied.

\paragraph*{Concatenation} Building an automaton
${\mathcal A}_1 \cdot {\mathcal A}_2$ recognizing the concatenation
${\mathcal L}_1 \cdot {\mathcal L}_2$ of two languages respectively
recognized by the deterministic automata ${\mathcal A}_1$ and
${\mathcal A}_2$ is easy when ${\mathcal A}_1$ has a unique final state without outgoing transitions.
Indeed it is enough to merge the
final state of ${\mathcal A}_1$ and the initial state of ${\mathcal
  A}_2$ into a unique state that is not initial (resp. not final),
except when the initial state of ${\mathcal A}_1$ (resp. ${\mathcal
  A}_2$) is final. Note that the resulting
automaton is deterministic and of size at most $|{\mathcal
  A}_1|+|{\mathcal A}_2|$, where the {\em size} $|{\mathcal A}|$ is
the number of states of any automaton ${\mathcal A}$.
This construction is done in constant time.

When the final state of ${\mathcal A}_1$ has no outgoing transitions except for a loop labeled by all letters of $A$ and when ${\mathcal L}_2$ is of type $\Astar \cdot {\mathcal L}$ for an arbitrary language  ${\mathcal L}$, we can do the same construction to obtain an automaton recognizing the concatenation ${\mathcal L}_1 \cdot {\mathcal L}_2 = {\mathcal L}_1 \cdot \Astar \cdot {\mathcal L}$. We just have to delete the loop on the final state of ${\mathcal A}_1$ before merging states.

In particular, according to this construction,
the automaton obtained concatenating ${\mathcal {AC}}^{\circlearrowleft}(X)$ and ${\mathcal A}$
is $\AC{X} \cdot {\mathcal A}$. Therefore,
even though $\AC{X}$ recognizes a language strictly included in $\Astar X$,
$\AC{X} \cdot {\mathcal A}$ recognizes $\Astar X \Astar {\mathcal L}$
when ${\mathcal A}$ recognizes a language $\Astar {\mathcal L}$.

\paragraph*{Union} We say that an automaton is {\em almost complete} if
for any letter $a$, all  non final states have an outgoing transition labeled by $a$ (notice that the only difference with complete automaton is that final states are allowed to miss some transitions).
Let ${\mathcal A}_1$ and ${\mathcal A}_2$ be two
deterministic automata that are 
almost complete\footnote{
Notice that the automata $\AC{X}$, ${\mathcal {AC}}^{\circlearrowleft}(X)$ and $\AC{X_1, X_2}$ satisfy these conditions.
}.
We define the automaton ${\mathcal U}({\mathcal
  A}_1,{\mathcal A}_2)$ as follows. We perform the Cartesian product
of ${\mathcal A}_1$ and ${\mathcal A}_2$ beginning from the pair of
initial states (see~\cite{HU79}). However we stop exploring a path when it enters a final
state of ${\mathcal A}_1$ or ${\mathcal A}_2$.  Therefore in
${\mathcal U}({\mathcal A}_1,{\mathcal A}_2)$ there is no outgoing
transitions from any state $(q_1,q_2)$ such that $q_1$ or $q_2$ is
final. Moreover these states are merged into a unique final state of
${\mathcal U}({\mathcal A}_1,{\mathcal A}_2)$.  Let ${\mathcal L}_1$
(resp. ${\mathcal L}_2$, ${\mathcal L}$) be the language recognized by
${\mathcal A}_1$ (resp. ${\mathcal A}_2$, ${\mathcal U}({\mathcal
  A}_1,{\mathcal A}_2)$). Then ${\mathcal L}$ is the set of words of
${\mathcal L}_1 \cup {\mathcal L}_2$ truncated after their first
factor in ${\mathcal L}_1 \cup {\mathcal L}_2$. The language
$({\mathcal L}_1 \cup {\mathcal L}_2)A^\star = {\mathcal L} A^\star$
    is recognized
by the automaton ${\mathcal U}^{\circlearrowleft}({\mathcal A}_1,{\mathcal A}_2)$ with an
additional loop labeled by all letters of $A$ on the final state. Notice that ${\mathcal U}({\mathcal A}_1,{\mathcal A}_2)$ (resp. ${\mathcal U}^{\circlearrowleft}({\mathcal A}_1,{\mathcal A}_2)$) is deterministic,
almost complete (resp. complete) and has a unique final state without outgoing transitions (resp. whose only outgoing transition is a loop labeled by all letters of $A$). The complexity in time and space of these constructions is in ${\mathcal O}(|{\mathcal A}_1| \cdot |{\mathcal A}_2|)$.

\subsection{Pin-permutation of size $1$ and simple pin-permutations} \label{pw:non-recursive}

\paragraph*{Pin-permutation of size $1$} 
When $\pi = 1$, we have seen in Subsection~\ref{ssec:idees_construction_automates} 
that $\overleftarrow{{\mathcal L}_{\pi}} = A^\star{\mathcal M}_2 A^\star$ 
is recognized by the automaton ${\mathcal A}_{\pi}$ of Figure~\ref{fig:automate1} (p.\pageref{fig:automate1}). 

\paragraph*{Simple pin-permutations}
\label{par:simple}
In this paragraph, for a simple permutation $\pi$ whose set of pin words $P(\pi)$ is given, we build the automaton ${\mathcal A}_{\pi}$. 
The computation of $P(\pi)$ from $\pi$ is discussed in
Subsection~\ref{ssec:finding_pin-perm} as a sub-procedure of the algorithm described in
Section~\ref{sec:polynomial}. The study that follows is based on the upcoming lemma.

\begin{lem}\label{lem:sqs_L(u)}
For every permutation $\pi$ (not necessarily simple), we have
$$\bigcup\limits_{u \in
  P(\pi) \atop u \text{ strict or quasi-strict}}\text{\hspace{-0.9cm}}
\mathcal{L}(u) = \Astar \cdot E_{\pi}^{\textsc{s}} \cdot \Astar \ \cup \ \Astar \cdot \mathcal{M}_2 \cdot \Astar \cdot E_{\pi}^{\textsc{qs}} \cdot \Astar
$$
where $E_{\pi}^{\textsc{s}} =\{\phi(u) \mid u \in P(\pi), u \text{ is
  strict}\}$ and $E_{\pi}^{\textsc{qs}} = \{\phi(u^{(2)}) \mid
u=u^{(1)}u^{(2)} \in P(\pi), u \textrm{ is}$
$\textrm{qua\-si-strict}\}$.
\end{lem}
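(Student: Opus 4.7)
My plan is to establish the equality by mutual inclusion, the non-trivial $(\supseteq)$ direction reducing to the following sub-claim: if $u = q_1 \cdot u^{(2)} \in P(\pi)$ is quasi-strict, with leading numeral $q_1$, then for every numeral $q \in \{1,2,3,4\}$ the word $q \cdot u^{(2)}$ also lies in $P(\pi)$.

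The $(\subseteq)$ inclusion is a short case analysis. For strict $u \in P(\pi)$, $\mathcal{L}(u) = \Astar \phi(u) \Astar \subseteq \Astar E_{\pi}^{\textsc{s}} \Astar$ directly from the definition of $E_{\pi}^{\textsc{s}}$. For quasi-strict $u = u^{(1)} u^{(2)}$, we have $|u^{(1)}|=1$, and by Definition~\ref{def:phi} $\phi(u^{(1)})$ is a two-element subset of $\mathcal{M}_2$, so $\mathcal{L}(u) = \Astar \phi(u^{(1)}) \Astar \phi(u^{(2)}) \Astar \subseteq \Astar \mathcal{M}_2 \Astar E_{\pi}^{\textsc{qs}} \Astar$. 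For the $(\supseteq)$ inclusion, the containment $\Astar E_{\pi}^{\textsc{s}} \Astar \subseteq \bigcup_{u} \mathcal{L}(u)$ is immediate. To handle $\Astar \mathcal{M}_2 \Astar E_{\pi}^{\textsc{qs}} \Astar$, I will exploit the fact that the four sets $\phi(1), \phi(2), \phi(3), \phi(4)$ partition $\mathcal{M}_2$: given $v = \alpha m \beta w \gamma$ with $m \in \mathcal{M}_2$ and $w \in \phi(u^{(2)})$ for some quasi-strict $u = u^{(1)} u^{(2)} \in P(\pi)$, I pick the unique numeral $q$ with $m \in \phi(q)$ and take $u' = q \cdot u^{(2)}$. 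Then $v$ lies in $\Astar \phi(q) \Astar \phi(u^{(2)}) \Astar = \mathcal{L}(u')$, and the sub-claim yields $u' \in P(\pi)$.

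The main obstacle is the sub-claim. The plan is first to check that $u' := q \cdot u^{(2)}$ is a syntactically valid pin word: it avoids the forbidden factors of Remark~\ref{rem:UU,UD...} because $u^{(2)}$ does and prepending a numeral creates no forbidden pair. Hence $u'$ encodes some permutation $\pi'$, and I must show $\pi' = \pi$. For this I will invoke Lemma~\ref{lem:quadrant}, which expresses, for each $i \geq 2$, the quadrant in which $p_i$ lies with respect to $\{p_0, \ldots, p_{i-2}\}$ as a function of the pair of consecutive letters $w_{i-1} w_i$: for $i = 2$ only $w_2 = u_2$ matters (since $u_2$ is a numeral), and for $i \geq 3$ both relevant letters lie inside $u^{(2)}$. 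Thus all such quadrants, together with the separating directions carried by $u^{(2)}$, remain unchanged when $q_1$ is replaced by $q$; as these data determine the relative pattern of $(p_1, \ldots, p_n)$, we conclude $\pi' = \pi$ and hence $u' \in P(\pi)$.
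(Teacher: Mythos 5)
Your reduction of the lemma to the sub-claim that, for a quasi-strict $u=u^{(1)}u^{(2)}\in P(\pi)$, every word $q\cdot u^{(2)}$ with $q\in\{1,2,3,4\}$ again belongs to $P(\pi)$ is exactly the skeleton of the paper's proof, and your treatment of the two inclusions (in particular using that $\phi(1),\phi(2),\phi(3),\phi(4)$ partition $\mathcal{M}_2$ to pick the numeral $q$) is sound. The problem lies in your proof of the sub-claim itself. After noting that $u'=q\cdot u^{(2)}$ is a legitimate pin word encoding some $\pi'$, you finish with ``as these data determine the relative pattern of $(p_1,\ldots,p_n)$, we conclude $\pi'=\pi$''. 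That closing assertion is the entire content of the sub-claim and is not justified: Lemma~\ref{lem:quadrant} locates each $p_i$ only with respect to the bounding box of $\{p_0,\ldots,p_{i-2}\}$, a box that still contains the origin $p_0$ --- and the origin is precisely what differs between the two encodings. To pass from ``same origin-dependent quadrants and same direction letters'' to ``same pattern of $(p_1,\ldots,p_n)$'' you need an induction on $i$ showing that, at each step, this information pins down the position of $p_i$ relative to every one of $p_1,\ldots,p_{i-1}$ irrespective of where $p_0$ sits (for instance, letter $U$ with quadrant $1$ forces $p_i$ to be above all of $p_1,\ldots,p_{i-1}$, to the right of $p_1,\ldots,p_{i-2}$ and to the left of $p_{i-1}$). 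This is true and provable, but it is the crux, and it genuinely uses the quasi-strict hypothesis: when the second letter is a direction, changing the first letter does change the permutation ($1R$ and $3R$ encode $21$ and $12$ respectively), so the implication cannot be waved through as a generality about ``the data''.

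The paper sidesteps all of this with a shorter observation: keep the pin representation $(p_1,\ldots,p_n)$ of $\pi$ fixed and only relocate the origin. Since $u$ is quasi-strict, $p_1$ and $p_2$ are independent pins, and Figure~\ref{fig:origine}, combined with Remark~\ref{rem:nb_pin_words} (the letters of $p_i$ for $i\ge 3$ do not depend on the choice of origin), shows that placing $p_0$ in each of the four admissible cells yields the four words $1u^{(2)},2u^{(2)},3u^{(2)},4u^{(2)}$, all encoding the very same point set and hence the same permutation $\pi$. So either complete your route with the explicit induction sketched above, or replace it by this origin-relocation argument; as written, the decisive step of your proposal is asserted rather than proved.
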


\begin{proof}
By definition of ${\mathcal L}(u)$ (see Definition~\ref{def:lu} p.\pageref{def:lu}),
$$\bigcup\limits_{u \in P(\pi) \atop u \text{ strict or quasi-strict}}\text{\hspace{-1cm}} {\mathcal L}(u)= \Big(\bigcup\limits_{u \in P(\pi) \atop u \text{ strict}}
A^\star\phi(u)A^\star \Big) \, \bigcup \,  \Big(\text{\hspace{-0.5cm}}
\bigcup\limits_{u=u^{(1)}u^{(2)} \in P(\pi) \atop u \text{
    quasi-strict}} \hspace{-0.5cm}
A^\star\phi(u^{(1)})A^\star\phi(u^{(2)})A^\star \Big)\text{.}$$

Moreover, as can be seen on Figure~\ref{fig:origine} (p.\pageref{fig:origine}),
for every quasi-strict pin word $u = u^{(1)}u^{(2)} \in
P(\pi)$, the words $hu^{(2)}$ also belong to $ P(\pi)$ for all
$h\in\{1,2,3,4\}$.  This allows to write
$$\bigcup\limits_{u \in P(\pi) \atop u \text{ strict or quasi-strict}}\text{\hspace{-1cm}} {\mathcal L}(u)=  \Big( \bigcup\limits_{u \in P(\pi) \atop u \text{ strict}}
A^\star \phi(u) A^\star \Big) \, \bigcup \,
\Big( \text{\hspace{-0.2cm}}\bigcup\limits_{h \in \{1,2,3,4\}} \hspace{-0.5cm} A^\star\phi(h)\,
\, \cdot \text{\hspace{-0.6cm}} \bigcup\limits_{u=u^{(1)}u^{(2)} \in P(\pi) \atop u \text{
    quasi-strict}} \hspace{-0.5cm} A^\star\phi(u^{(2)})A^\star
\, \Big)  \text{.}$$ 
Hence
$$\bigcup\limits_{u \in P(\pi) \atop u \text{ strict or
    quasi-strict}}\text{\hspace{-1cm}} \mathcal{L}(u) = \Astar \cdot
E_{\pi}^{\textsc{s}} \cdot \Astar \ \cup \ \Astar \cdot \mathcal{M}_2
\cdot \Astar \cdot E_{\pi}^{\textsc{qs}} \cdot \Astar \text{,}
$$
concluding the proof.
\end{proof}

\begin{lem}\label{lem:ascsquadratique} 
For every permutation $\pi$ whose sets of strict and quasi-strict pin
words (or equivalently $E_{\pi}^{\textsc{s}}$ and $E_{\pi}^{\textsc{qs}}$) are given,
one can build in time and space ${\mathcal O}
\left(|E_{\pi}^{\textsc{s}}| \cdot |E_{\pi}^{\textsc{qs}}| \cdot
|\pi|^2 \right)$ 
a deterministic complete automaton $\mathcal{A}_{\pi}^{\textsc{sqs}}$ having a unique
final state without outgoing transitions except for a loop labeled by
all letters of $A$ that recognizes the language $ \bigcup\limits_{u
  \in P(\pi) \atop u \text{ strict or quasi-strict}}
\text{\hspace{-0.8cm}} \overleftarrow{\mathcal{L}(u)}$. 
\end{lem}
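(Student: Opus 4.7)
The plan is to combine the structural description from Lemma~\ref{lem:sqs_L(u)} with the generic automata constructions recalled in Subsection~\ref{sec:ac}. Reversing the equality of Lemma~\ref{lem:sqs_L(u)} and using that $\mathcal{M}_2$ is closed under reversal, the language to recognize decomposes as
\[
\bigcup_{\substack{u \in P(\pi) \\ u \text{ strict or quasi-strict}}} \overleftarrow{\mathcal{L}(u)}
\;=\; \Astar \cdot \overleftarrow{E_{\pi}^{\textsc{s}}} \cdot \Astar \;\cup\; \Astar \cdot \overleftarrow{E_{\pi}^{\textsc{qs}}} \cdot \Astar \cdot \mathcal{M}_2 \cdot \Astar.
\]
So the strategy is to build one deterministic automaton for each term of this union and then combine them with the $\mathcal{U}^{\circlearrowleft}$ construction.

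First, I would build $\mathcal{B}_{\textsc{s}} = \mathcal{AC}^{\circlearrowleft}(\overleftarrow{E_{\pi}^{\textsc{s}}})$, which recognizes $\Astar \cdot \overleftarrow{E_{\pi}^{\textsc{s}}} \cdot \Astar$. Since each strict pin word of $\pi$ has length $|\pi|$ and is mapped by $\phi$ to a word of length $|\pi|+1$, the variant of the Aho--Corasick construction yields an automaton of size $\mathcal{O}(|E_{\pi}^{\textsc{s}}| \cdot |\pi|)$, built in the same time. Second, I would build the automaton $\mathcal{AC}^{\circlearrowleft}(\overleftarrow{E_{\pi}^{\textsc{qs}}})$ (of size $\mathcal{O}(|E_{\pi}^{\textsc{qs}}| \cdot |\pi|)$ by the same argument, since for quasi-strict $u = u^{(1)}u^{(2)}$ the word $\phi(u^{(2)})$ has length $|\pi|$), and concatenate it with the constant-size automaton $\mathcal{A}_1$ recognizing $\Astar \cdot \mathcal{M}_2 \cdot \Astar$ (the automaton of Figure~\ref{fig:automate1}). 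Since $\mathcal{A}_1$ recognizes a language of the form $\Astar \cdot \mathcal{L}$, the concatenation property recalled in Subsection~\ref{sec:ac} guarantees that the resulting automaton $\mathcal{B}_{\textsc{qs}}$ recognizes exactly $\Astar \cdot \overleftarrow{E_{\pi}^{\textsc{qs}}} \cdot \Astar \cdot \mathcal{M}_2 \cdot \Astar$; its size remains $\mathcal{O}(|E_{\pi}^{\textsc{qs}}| \cdot |\pi|)$.

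Third, I would apply the $\mathcal{U}^{\circlearrowleft}$ construction to $\mathcal{B}_{\textsc{s}}$ and $\mathcal{B}_{\textsc{qs}}$, yielding $\mathcal{A}_{\pi}^{\textsc{sqs}} = \mathcal{U}^{\circlearrowleft}(\mathcal{B}_{\textsc{s}}, \mathcal{B}_{\textsc{qs}})$. By the properties of $\mathcal{U}^{\circlearrowleft}$ recalled in Subsection~\ref{sec:ac}, the result is a deterministic and complete automaton with a unique final state whose only outgoing transition is a loop labeled by all letters of $A$, and it recognizes precisely the union of the languages of $\mathcal{B}_{\textsc{s}}$ and $\mathcal{B}_{\textsc{qs}}$ concatenated with $\Astar$, which is exactly the target language. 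Its size is $\mathcal{O}(|\mathcal{B}_{\textsc{s}}| \cdot |\mathcal{B}_{\textsc{qs}}|) = \mathcal{O}(|E_{\pi}^{\textsc{s}}| \cdot |E_{\pi}^{\textsc{qs}}| \cdot |\pi|^2)$, and it is built within the same time bound.

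The argument is mostly bookkeeping; the main point to check carefully is that each elementary operation preserves the structural invariants we need (determinism, completeness, and the shape of the unique final state), so that the concatenation rule applies between $\mathcal{AC}^{\circlearrowleft}(\overleftarrow{E_{\pi}^{\textsc{qs}}})$ and $\mathcal{A}_1$ and, subsequently, the Cartesian product via $\mathcal{U}^{\circlearrowleft}$ is well-defined and produces the desired form. These invariants are all ensured by the definitions of $\mathcal{AC}^{\circlearrowleft}$, the concatenation construction for an almost-complete automaton with self-looping final state, and $\mathcal{U}^{\circlearrowleft}$, so no additional work should be required beyond verifying that the composed language is indeed the one given by Lemma~\ref{lem:sqs_L(u)} after reversal.
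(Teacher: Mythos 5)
Your proposal is correct and follows essentially the same route as the paper: reverse the decomposition of Lemma~\ref{lem:sqs_L(u)}, build Aho--Corasick-style automata for $\overleftarrow{E_{\pi}^{\textsc{s}}}$, $\overleftarrow{E_{\pi}^{\textsc{qs}}}$ and $\mathcal{M}_2$, concatenate the latter two, and combine with $\mathcal{U}^{\circlearrowleft}$, with the stated sizes giving the $\mathcal{O}(|E_{\pi}^{\textsc{s}}|\cdot|E_{\pi}^{\textsc{qs}}|\cdot|\pi|^2)$ bound. The only differences (using the looped variants $\mathcal{AC}^{\circlearrowleft}$ and the automaton of Figure~\ref{fig:automate1} in place of $\AC{\M_2}$) are cosmetic and do not affect correctness or complexity.
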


\begin{proof}
From Lemma~\ref{lem:sqs_L(u)}, we have
$$\bigcup\limits_{u \in P(\pi) \atop u
 \text{ strict or quasi-strict}}
\text{\hspace{-1cm}} \overleftarrow{\mathcal{L}(u)} = \overleftarrow{A^\star
  E_{\pi}^{\textsc{s}}A^\star} \bigcup \overleftarrow{A^\star
  \mathcal{M}_2 A^\star E_{\pi}^{\textsc{qs}}A^\star} = \left( A^\star
\overleftarrow{E_{\pi}^{\textsc{s}}} \bigcup (A^\star
\overleftarrow{E_{\pi}^{\textsc{qs}}} \cdot A^\star \mathcal{M}_2)
\right) A^\star\text{.}
$$

Recall that $\AC{\overleftarrow{E_{\pi}^{\textsc{s}}}}$,
$\AC{\overleftarrow{E_{\pi}^{\textsc{qs}}}}$ and $\AC{\M_2}$ are automata recognizing respectively the
set of words ending with a first occurrence of a word of $\overleftarrow{E_{\pi}^{\textsc{s}}}$,
$\overleftarrow{E_{\pi}^{\textsc{qs}}}$ and $\M_2$ and obtained using the construction given in Subsection~\ref{sec:ac}.
The sizes of the first two automata are respectively ${\mathcal O} \left(|E_{\pi}^{\textsc{s}}| \cdot |\pi| \right)$
and ${\mathcal O} \left(|E_{\pi}^{\textsc{qs}}| \cdot |\pi| \right)$, and the size of the third one is constant.
Indeed for all $w$ in $E_{\pi}^{\textsc{s}}$, $|w| = |\pi|+1$ and for all $w$ in $E_{\pi}^{\textsc{qs}}$,
$|w| = |\pi|$ so that $\|E_{\pi}^{\textsc{s}}\| = |E_{\pi}^{\textsc{s}}| \cdot (|\pi|+1)$ and
$\|E_{\pi}^{\textsc{qs}}\| = |E_{\pi}^{\textsc{s}}| \cdot |\pi|$.

Then the deterministic automaton ${\mathcal A}_{\pi}^{\textsc{sqs}}$ is obtained as the union
$\linebreak {\mathcal U}^{\circlearrowleft} (\AC{\overleftarrow{E_{\pi}^{\textsc{s}}}} \ ,\
{\AC{\overleftarrow{E_{\pi}^{\textsc{qs}}}} \cdot \AC{\M_2}} )$
in time and space
${\mathcal O} \left(|E_{\pi}^{\textsc{s}}| \cdot | E_{\pi}^{\textsc{qs}}| \cdot |\pi|^2 \right)$.
\end{proof}

Lemma~\ref{lem:ascsquadratique} is used mostly in two special cases where \emph{all} the pin words of $\pi$
are strict or quasi-strict, and that we identify explicitly in the following remark.

\begin{rem}\label{rem:ascquadratique}
By Theorem~\ref{thm:nbpinwords} (p.\pageref{thm:nbpinwords})
the pin words encoding a simple permutation are either strict or quasi-strict and there are at most 48 of them.
Therefore when $\pi$ is a simple permutation, we take
$\mathcal{A}_{\pi}=\mathcal{A}_{\pi}^{\textsc{sqs}}$ (from Lemma~\ref{lem:ascsquadratique}) and the
time and space complexity of the construction of $\mathcal{A}_{\pi}$ is
quadratic w.r.t.~$|\pi|$, as soon as the pin words of $\pi$ are given. 

Notice also that when $\pi=12$,
$\mathcal{A}_{\pi}=\mathcal{A}_{\pi}^{\textsc{sqs}}$ and the time and
space complexity of the construction is ${\mathcal O}(1)$.
\end{rem}

The above construction follows the general scheme announced at the
beginning of the section, but it is not optimized. We actually can
provide a more specific construction of $\mathcal{A}_{\pi}$ whose complexity is
\emph{linear} when the permutation $\pi$ is simple. This construction relies on the
same idea as the one we give in~\cite{BBPR10}.

\begin{lem} \label{lem:sqs_linear_complexity}
For every permutation $\pi$ whose sets of strict and quasi-strict pin
words (or equivalently $E_{\pi}^{\textsc{s}}$ and $E_{\pi}^{\textsc{qs}}$) are given,
one can build in time and space ${\mathcal
  O} \left((|E_{\pi}^{\textsc{s}}|+|E_{\pi}^{\textsc{qs}}|) \cdot
|\pi| \right)$ 
a deterministic complete automaton $\mathcal{A}_{\pi}^{\textsc{sqs}}$ having a unique
final state without outgoing transitions except for a loop labeled by
all letters of $A$ that recognizes a language ${\mathcal L}'$ such that $${\mathcal
  L}'\cap {\mathcal M} = \text{\hspace{-1cm}}\bigcup\limits_{u \in P(\pi) \atop u \text{
    strict or quasi-strict}} \text{\hspace{-0.8cm}} \overleftarrow{\mathcal{L}(u)} \cap
{\mathcal M}.$$
\end{lem}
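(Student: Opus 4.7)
The starting point is the language identity of Lemma~\ref{lem:sqs_L(u)} (p.\pageref{lem:sqs_L(u)}), which after reversing gives
\[
\bigcup\limits_{u \in P(\pi) \atop u \text{ strict or quasi-strict}} \overleftarrow{\mathcal{L}(u)} \;=\; A^\star \overleftarrow{E_\pi^{\textsc{s}}} A^\star \;\cup\; A^\star \overleftarrow{E_\pi^{\textsc{qs}}} A^\star \mathcal{M}_2 A^\star.
\]
The quadratic bound of Lemma~\ref{lem:ascsquadratique} comes from recognizing this language \emph{exactly}, which forces a Cartesian-product union of two Aho--Corasick-like automata because of the separator $A^\star\mathcal{M}_2$. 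The plan is to replace that separator by the simpler block $A^2$, yielding a language which agrees with the original on $\mathcal{M}$ and is recognized by a single Aho--Corasick automaton, thereby avoiding the product altogether.

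Concretely, set $X_s = \overleftarrow{E_\pi^{\textsc{s}}}$, $X_q' = \overleftarrow{E_\pi^{\textsc{qs}}}\cdot A^2$, and
\[
\mathcal{L}' \;=\; A^\star X_s A^\star \;\cup\; A^\star X_q' A^\star \;=\; A^\star (X_s \cup X_q') A^\star .
\]
The first step will be to establish the equality $\mathcal{L}' \cap \mathcal{M} = \bigcup_u \overleftarrow{\mathcal{L}(u)} \cap \mathcal{M}$. The strict summand matches identically, so only the quasi-strict one requires argument. The key observation is that, by the very definition of $\mathcal{M}$, any two consecutive letters of a word of $\mathcal{M}$ already lie in $\mathcal{M}_2$. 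Hence for $v \in \mathcal{M}$, the existence of a factor in $\mathcal{M}_2$ lying strictly after an occurrence of $\overleftarrow{E_\pi^{\textsc{qs}}}$ is equivalent to the existence of at least two letters strictly after that occurrence, i.e., to $v \in A^\star \overleftarrow{E_\pi^{\textsc{qs}}} A^2 A^\star = A^\star X_q' A^\star$.

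The automaton $\mathcal{A}_\pi^{\textsc{sqs}}$ is then built directly as $\mathcal{AC}^{\circlearrowleft}(X_s \cup X_q')$, using the variant of the Aho--Corasick construction recalled in Subsection~\ref{sec:ac}. By construction this automaton is deterministic, complete, has a unique final state whose only outgoing transition is the loop on $A$, and recognizes exactly $A^\star (X_s \cup X_q') A^\star = \mathcal{L}'$. For the complexity, the total length of the pattern set is
\[
\|X_s\| + \|X_q'\| \;=\; |E_\pi^{\textsc{s}}|(|\pi|+1) + |A|^2\,|E_\pi^{\textsc{qs}}|(|\pi|+2) \;=\; \mathcal{O}\bigl((|E_\pi^{\textsc{s}}|+|E_\pi^{\textsc{qs}}|)\cdot |\pi|\bigr),
\]
the factor $|A|^2 = 16$ from appending the two arbitrary letters being absorbed into the constant; since the Aho--Corasick variant is linear in the total pattern length, the announced time and space complexity follows.

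I expect the main (relatively mild) difficulty to lie in the language identity of the first step, and more precisely in the direction from the original union to $\mathcal{L}'$: one has to check carefully that an $\mathcal{M}_2$-factor lying strictly after the qs-occurrence in the concatenation $A^\star \overleftarrow{E_\pi^{\textsc{qs}}} A^\star \mathcal{M}_2 A^\star$ does force two \emph{actual} letters of $v$ beyond that occurrence (so that, conversely, $v \in A^\star X_q' A^\star$). Once this point is settled, the construction of $\mathcal{A}_\pi^{\textsc{sqs}}$ and the complexity analysis become routine applications of the Aho--Corasick toolbox of Subsection~\ref{sec:ac}.
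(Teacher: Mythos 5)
Your proof is correct and follows essentially the same route as the paper: starting from Lemma~\ref{lem:sqs_L(u)}, you glue a two-letter block onto the (reversed) quasi-strict patterns so that a single automaton ${\mathcal{AC}}^{\circlearrowleft}(\cdot)$ suffices, the identity on $\mathcal{M}$ being justified exactly as in the paper by the fact that any two consecutive letters of a word of $\mathcal{M}$ already form an element of $\mathcal{M}_2$. The only, immaterial, difference is that you append $A^2$ after $\overleftarrow{E_{\pi}^{\textsc{qs}}}$, whereas the paper sets $E_{\pi}= E_{\pi}^{\textsc{s}} \cup \mathcal{M}_2 \cdot E_{\pi}^{\textsc{qs}}$ and builds ${\mathcal{AC}}^{\circlearrowleft}(\overleftarrow{E_{\pi}})$; both choices yield the same intersection with $\mathcal{M}$ and the same linear complexity.
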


\begin{proof}
Define $E_{\pi}= E_{\pi}^{\textsc{s}} \ \cup \ \mathcal{M}_2
\cdot E_{\pi}^{\textsc{qs}}$ and $\mathcal{L}' = \Astar
\cdot \overleftarrow{E_{\pi}} \cdot \Astar$.  Let us prove that
$${\mathcal L}'\cap {\mathcal M} = \text{\hspace{-1cm}}\bigcup\limits_{u \in P(\pi) \atop u
  \text{ strict or quasi-strict}}\text{\hspace{-0.8cm}} \overleftarrow{\mathcal{L}(u)} \cap
              {\mathcal M}\text{.}$$ This will be enough to conclude
              the proof of Lemma~\ref{lem:sqs_linear_complexity},
              setting $\mathcal{A}_{\pi}^{\textsc{sqs}}= {\mathcal
                {AC}}^{\circlearrowleft}(\overleftarrow{E_{\pi}})$.

From Lemma~\ref{lem:sqs_L(u)}, we have
$$\bigcup\limits_{u \in
  P(\pi) \atop u \text{ strict or quasi-strict}} \text{\hspace{-0.8cm}}
\mathcal{L}(u) = \Astar \cdot E_{\pi}^{\textsc{s}} \cdot \Astar \ \cup \ \Astar \cdot \mathcal{M}_2 \cdot \Astar \cdot E_{\pi}^{\textsc{qs}} \cdot \Astar \text{.}
$$
Since $(\Astar \cdot \M_2 \cdot \Astar \cdot E_{\pi}^{\textsc{qs}} \cdot \Astar) \cap \mathcal{M} = (\Astar \cdot \mathcal{M}_2 \cdot  E_{\pi}^{\textsc{qs}} \cdot \Astar) \cap  \mathcal{M}$ and $E_{\pi}= E_{\pi}^{\textsc{s}} \ \cup \ \mathcal{M}_2 \cdot E_{\pi}^{\textsc{qs}}$, we obtain
$$\bigcup\limits_{u \in
  P(\pi) \atop u \text{ strict or quasi-strict}}\text{\hspace{-0.8cm}}
\overleftarrow{\mathcal{L}(u)} \cap {\mathcal M} = \Astar \cdot \overleftarrow{E_{\pi}} \cdot \Astar \cap \mathcal{M}\text{,}$$
concluding the proof.
\end{proof}

\begin{rem}\label{rem:simple_linear}
When $\pi$ is a simple permutation, the automaton
$\mathcal{A}_{\pi}^{\textsc{sqs}}$ of Lemma~\ref{lem:sqs_linear_complexity} recognizes a language
$\mathcal{L}'_{\pi}$ such that $\mathcal{L}'_{\pi}\cap {\mathcal M} =
\overleftarrow{\mathcal{L}_{\pi}} \cap {\mathcal M}$.
In the optimized alternative construction of $\mathcal{A}_{\pi}$
mentioned in Subsection~\ref{ssec:idees_construction_automates} and at the beginning of Appendix~\ref{sec:buildingAutomata},
for a simple permutation $\pi$
we take $\mathcal{A}_{\pi} = \mathcal{A}_{\pi}^{\textsc{sqs}}$
from Lemma~\ref{lem:sqs_linear_complexity} and
the time and space complexity of building the automaton $\mathcal{A}_{\pi}$ is \emph{linear}
w.r.t.~$|\pi|$ as soon as $P(\pi)$ is given.
\end{rem}

\subsection{Pin-permutations with a linear root: non-recursive case} \label{ssec:non-rec_linear}

W.l.o.g. (see Remark~\ref{rem:ominus=oplus_transposed} p.\pageref{rem:ominus=oplus_transposed}),
the only non-recursive case with a linear root is the one
where $\pi = \oplus[\xi_1, \ldots, \xi_r]$, every $\xi_i$ being an
increasing oscillation.
Theorem~\ref{thm:pinwords_cas_lineaire_non_recursif}
(p.\pageref{thm:pinwords_cas_lineaire_non_recursif}) gives an
explicit description of the elements of $P(\pi)$.
These words are the
concatenation of a pin word belonging to some
$P(\oplus[\xi_i,\xi_{i+1}])$ with a pin word belonging to
the shuffle product $$ (P^{(1)}(\xi_{i-1}) , \ldots , P^{(1)}(\xi_1) )
\, \shuffle \, (P^{(3)}(\xi_{i+2}) , \ldots , P^{(3)}(\xi_r) ).$$

To shorten the notations in the following, let us define
$$ \PhiP{\quadrantell}{j} = \overleftarrow{\phi(P^{(\quadrantell)}(\xi_{j}))} \text{ for }\quadrantell =1 \text{ or }3 \text{ and } 1\leq j \leq r\text{.}$$
\label{pagedef:PhiP}

From Lemma~\ref{lem:f1f3} (p.\pageref{lem:f1f3}), for all $j$,
the pin words of $P^{(1)}(\xi_j)$ and $P^{(3)}(\xi_j)$
respectively are strict. Hence, the decomposition of $u
\in P(\pi)$ into \snl factors that is needed to describe
$\overleftarrow{{\mathcal L}_{\pi}}$ (see p.\pageref{expression L_pi}) 
is easily obtained and gives: \label{eq_ssec:non-rec_linear}
$$
\overleftarrow{{\mathcal L}_{\pi}} = \bigcup_{1\leq i \leq r-1}
\left(\left(A^{\star}\PhiP{1}{1},\ldots,A^{\star}\PhiP{1}{i-1}\right)
\shuffle
\left(A^{\star}\PhiP{3}{r},\ldots,A^{\star}\PhiP{3}{i+2}\right)\right)
\cdot \overleftarrow{{\mathcal L}_{\oplus[\xi_i,\xi_{i+1}]}}
$$
In the above equation, we have $\overleftarrow{{\mathcal L}_{\oplus[\xi_i,\xi_{i+1}]}}$
where we might have expected to find $A^{\star} \overleftarrow{\phi(P(\oplus[\xi_i,\xi_{i+1}]))} A^{\star}$.
The reason is that
the term $A^{\star} \overleftarrow{\phi(P(\oplus[\xi_i,\xi_{i+1}]))} A^{\star}$ is not
properly defined, since $P(\oplus[\xi_i,\xi_{i+1}])$ contains pin words that are not strict. 

The automaton ${\mathcal A}_{\pi}$ is then built by assembling smaller
automata, whose construction is explained below.

\paragraph*{Construction of ${\mathcal A}(\xi_i,\xi_j)$}
Since for all $i,j$, the languages
$\PhiP{1}{i}$ and
$\PhiP{3}{j}$ -- that contain at most two words -- are defined on disjoint
alphabets (see Remark~\ref{rem:P1P3} p.\pageref{rem:P1P3}), we can use
the construction given in Subsection~\ref{sec:ac} to build the
deterministic automata ${\mathcal
  A}(\xi_i,\xi_j)=\AC{\PhiP{1}{i},
  \PhiP{3}{j}}$.
Figure~\ref{fig:atomicAutomaton} shows a diagram of ${\mathcal
  A}(\xi_i,\xi_j)$ and defines states $s_{ij}$, $f^{(1)}_{ij}$ and
$f^{(3)}_{ij}$.

\begin{lem}\label{lem:complexite_aij}
For all $i,j$, the complexity in time and space of the construction of ${\mathcal A}(\xi_i,\xi_j)$ is $\O(|\xi_i|+|\xi_j|).$
\end{lem}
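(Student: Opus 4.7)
The plan is to reduce the claim to the complexity bound for the $\AC{X_1,X_2}$ construction, which was established in Subsection~\ref{sec:ac} as linear in time and space w.r.t.~$\|X_1\|+\|X_2\|$. For the specific instance $X_1 = \PhiP{1}{i}$ and $X_2 = \PhiP{3}{j}$, the work thus reduces to showing that both sets consist of few short words and that the factor-independence condition required by the construction is satisfied.

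First, I would observe that the factor-independence condition is trivial here: by Remark~\ref{rem:P1P3}, the words of $P^{(1)}(\xi_i)$ (and therefore of $\PhiP{1}{i}$) use only the letters $L,D$, while those of $P^{(3)}(\xi_j)$ use only $U,R$. These alphabets being disjoint, no word of $\PhiP{1}{i}$ can be a factor of a word of $\PhiP{3}{j}$, and conversely. Hence $\AC{\PhiP{1}{i},\PhiP{3}{j}}$ is well-defined.

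Next, I would bound $\|\PhiP{1}{i}\|$ and $\|\PhiP{3}{j}\|$ by inspecting the explicit expressions given in Lemma~\ref{lem:f1f3}. In every case, each of $P^{(1)}(\xi_i)$ and $P^{(3)}(\xi_j)$ consists of one word (or two when the oscillation has size~$2$), and each such word has length exactly $|\xi|$ (or at most $|\xi|$ in the small cases). Applying $\phi$ either replaces the leading numeral by two directions (increasing the length by~$1$) or sends a length-$1$ strict pin word to a length-$2$ word of~$\M$; reversing preserves length. Thus each word of $\PhiP{1}{i}$ has length at most $|\xi_i|+1$ and similarly for $\PhiP{3}{j}$, so $\|\PhiP{1}{i}\|+\|\PhiP{3}{j}\| = \O(|\xi_i|+|\xi_j|)$.

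Combining these two observations with the complexity bound for $\AC{X_1,X_2}$ yields the announced $\O(|\xi_i|+|\xi_j|)$ time and space cost. There is no real obstacle: once the explicit descriptions of $P^{(1)}$ and $P^{(3)}$ from Lemma~\ref{lem:f1f3} are in hand, the argument is a direct book-keeping step on top of the generic Aho--Corasick variant.
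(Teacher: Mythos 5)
Your proposal is correct and follows the same route the paper implicitly takes: the lemma is stated as an immediate consequence of the linear-time bound for the generic $\AC{X_1,X_2}$ construction from Subsection~\ref{sec:ac}, with the applicability (disjoint alphabets, via Remark~\ref{rem:P1P3}) and the length bookkeeping (at most two words of length at most $|\xi|+1$ each, via Lemma~\ref{lem:f1f3} and the definition of $\phi$) justified exactly as you do.
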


\begin{figure}[htbp]
\hspace{.02\linewidth}
  \begin{minipage}[t]{.60\linewidth}
    \begin{center}
\begin{tikzpicture}
\draw [drop shadow,rounded corners,thick, fill=black!10] (0,0) -- (2,0) -- (0,2) -- cycle;
\outputState{0.15}{1.65};
\outputState{1.65}{0.15};
\inputState{0.15}{0.15};
\draw  (-.4,-.5) node[ right ,text width=2.33cm] {Initial state $s_{ij}$
};
\draw [->] (-.2,-.2) -- (0,0.1);
\draw [->,shorten >= 7pt] (2.4,0.5) node[ right ,text width=3cm] {Final state $f^{(1)}_{ij}$} -- (1.65,0.15);
\node at (4,0.05) {accepting $\PhiP{1}{i}$};
\draw [->, shorten >= 7pt] (1.2,1.8) node[ right ,text width=3cm] {Final state $f^{(3)}_{ij}$} -- (0.15,1.65);
\node at (2.8,1.35) {accepting $\PhiP{3}{j}$};
\draw [dashed,->] (0.15,0.15) -- node[above=-4pt] {\footnotesize $\{L,D\}$} (1.55,0.15);
\draw [dashed,->] (0.15,0.15) -- node[right=-4pt] {\footnotesize $\{U,R\}$} (0.15,1.55);
\end{tikzpicture}
      \caption{Atomic automaton {${\mathcal A}(\xi_{i},\xi_{j})$} used \newline in the construction of $\mathcal{A}_{\pi}$.}
      \label{fig:atomicAutomaton}
    \end{center}
  \end{minipage}
  \begin{minipage}[t]{.3\linewidth}
    \begin{center}
\begin{tikzpicture}
\draw [drop shadow,rounded corners,thick, fill=black!10] (0,0) -- (2,0) -- (0,2) -- cycle;
\outputState{0.15}{1.65};
\outputState{1.65}{0.15};
\inputState{0.15}{0.15};
\draw (0.7,0.5) node {\footnotesize ${\mathcal A}(\xi_{i},\xi_{i+1})$};
\draw [fill=black!20,opacity=.2, very thick] (1.9,0.1) circle (.45cm);
\draw [fill=black!20,opacity=.2, very thick] (1.9,1.9) circle (.45cm);
\draw [fill=black!20,opacity=.2, very thick] (0.1,1.9) circle (.45cm);
\draw (1.85,0.2) rectangle  +(0.3,1.6);
\draw (2,1) node[right]  {\footnotesize ${\mathcal A}_{\xi_{i+1}}$};
\draw (1,2) node[above]  {\footnotesize ${\mathcal A}_{\xi_{i}}$};
\draw (0.2,1.85) rectangle +(1.6,0.3);
\draw (2,2) node  {\footnotesize $f_i$};
\inputState{0.35}{2}
\inputState{2}{0.35}
\outputState{2}{1.65}
\outputState{1.65}{2}
\draw [->] (2.35,2) .. controls +(0.6,0) and +(0.6,0.6) .. node[right] {$A$} (2.2,2.25);
\draw [->] (-.3,-.3) -- (0,0);
\end{tikzpicture}
      \caption{Automaton $\mathcal{A}^{\oplus}(\xi_i,\xi_{i+1})$.}
      \label{fig:ADoubleNonMixte}
    \end{center}
  \end{minipage}
\hspace{.02\linewidth}
\end{figure}

\paragraph*{Construction of ${\mathcal A}_{\xi_{i}}$}
By Lemma~\ref{lem:lecture_epi} (p.\pageref{lem:lecture_epi}), for any
$i$, $|P(\xi_i)| \leq 48$, the pin words in $P (\xi_{i})$ are explicit
and all of them are either strict or quasi-strict except when  $|\xi_{i}|=3$.

If $|\xi_{i}| \neq 3$, from Lemma~\ref{lem:ascsquadratique} it is possible to
build the deterministic automaton ${\mathcal A}_{\xi_{i}}$ in quadratic
time and space w.r.t.~$|\xi_{i}|$, and from Lemma~\ref{lem:sqs_linear_complexity}
the construction can be optimized to be linear in time and space
w.r.t.~$|\xi_{i}|$. 

If $|\xi_{i}|=3$, $P(\xi_{i})$ can be partitioned into two parts $P_{\textsc{sqs}}
\uplus P(12)\cdot \quadrantell$ where $\quadrantell = 4$ (resp. $2$) if $\xi_{i} = 231$ (resp. $312$) and $P_{\textsc{sqs}}$ is the set
of strict and quasi-strict pin words of $P(\xi_{i})$.  With
Lemma~\ref{lem:ascsquadratique} or~\ref{lem:sqs_linear_complexity} we build the automaton
$\mathcal{A}_{\xi_{i}}^{\textsc{sqs}}$ corresponding to $P_{\textsc{sqs}}$,
and the automaton corresponding to $P(12)\cdot \quadrantell$ is the concatenation of two basic
automata, $\AC{\overleftarrow{\phi(\quadrantell)}}$
(where $\phi(\quadrantell)$ for $\quadrantell = 2$ or $4$ is given p.\pageref{def:phi}) and
${\mathcal A}_{12}$ (see Remark~\ref{rem:ascquadratique} p.\pageref{rem:ascquadratique}).
Finally the automaton ${\mathcal A}_{\xi_{i}}$ is the union
$\U^{\circlearrowleft} (\mathcal{A}_{\xi_{i}}^{\textsc{sqs}}, \AC{\overleftarrow{\phi(\quadrantell)}}\cdot \mathcal{A}_{12})$.
As $|\xi_{i}| = 3$, ${\mathcal A}_{\xi_{i}}$ is built in constant time.

\begin{lem}\label{lem:complexite_a_xi_j}
For all $i$, building ${\mathcal A}_{\xi_{i}}$ is done in time and space $\O(|\xi_i|^2)$
with the classical construction and $\O(|\xi_i|)$ in the optimized version.
\end{lem}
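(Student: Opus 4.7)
The plan is to treat separately the three sub-cases arising in the construction of $\mathcal{A}_{\xi_i}$ described just before the lemma, and in each case to plug in the complexity bounds already established in Subsections~\ref{pw:non-recursive} and~\ref{sec:ac}. Both complexity bounds (classical and optimized) will follow essentially as immediate corollaries, once one observes that $P(\xi_i)$ is always of constant cardinality.

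First I would dispose of the case $|\xi_i| = 3$. Here $\xi_i \in \{231,312\}$ is of constant size, the decomposition $P(\xi_i) = P_{\textsc{sqs}} \uplus P(12)\cdot \quadrantell$ of Lemma~\ref{lem:lecture_epi} involves only a bounded number of words, and ${\mathcal A}_{\xi_i}$ is obtained as $\U^{\circlearrowleft}\!\bigl(\mathcal{A}_{\xi_i}^{\textsc{sqs}},\AC{\overleftarrow{\phi(\quadrantell)}}\cdot \mathcal{A}_{12}\bigr)$, where each component is of constant size. All the operations involved (Aho--Corasick on a constant input, concatenation, union $\U^{\circlearrowleft}$) then run in $\O(1)$ time and space, which is compatible with both announced bounds.

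For the generic case $|\xi_i| \neq 3$, the key observation is that Lemma~\ref{lem:lecture_epi} ensures $|P(\xi_i)| \leq 48$ and that all its elements are either strict or quasi-strict. Hence $|E_{\xi_i}^{\textsc{s}}|, |E_{\xi_i}^{\textsc{qs}}| = \O(1)$. Moreover, the explicit formulas of Lemma~\ref{lem:lecture_epi} allow the sets $E_{\xi_i}^{\textsc{s}}$ and $E_{\xi_i}^{\textsc{qs}}$ to be enumerated in $\O(|\xi_i|)$ time and space, since each of the constantly many words has length $\O(|\xi_i|)$. Applying Lemma~\ref{lem:ascsquadratique} then yields the classical construction of $\mathcal{A}_{\xi_i} = \mathcal{A}_{\xi_i}^{\textsc{sqs}}$ in time and space
\[
\O\!\left(|E_{\xi_i}^{\textsc{s}}|\cdot|E_{\xi_i}^{\textsc{qs}}|\cdot |\xi_i|^2\right) = \O(|\xi_i|^2),
\]
and applying Lemma~\ref{lem:sqs_linear_complexity} yields the optimized construction in time and space
\[
\O\!\left((|E_{\xi_i}^{\textsc{s}}|+|E_{\xi_i}^{\textsc{qs}}|)\cdot |\xi_i|\right) = \O(|\xi_i|).
\]

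The proof is therefore essentially bookkeeping: there is no real obstacle, and no induction to set up, because $\xi_i$ is an increasing oscillation and is handled by the non-recursive machinery of Subsection~\ref{pw:non-recursive}. The only point that deserves care is to make sure that the costs of computing the pin words themselves (via the explicit descriptions of Lemma~\ref{lem:lecture_epi}) and of feeding them into the generic automata constructions do not secretly contribute more than the dominant $\O(|\xi_i|^2)$ or $\O(|\xi_i|)$ term; this is immediate from the fact that the number of pin words is a constant independent of $|\xi_i|$ and each has length $\O(|\xi_i|)$.
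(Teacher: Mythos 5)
Your proof is correct and follows essentially the same route as the paper: the lemma is there a direct consequence of the construction paragraph preceding it, namely the constant bound $|P(\xi_i)|\leq 48$ from Lemma~\ref{lem:lecture_epi}, the application of Lemma~\ref{lem:ascsquadratique} (resp.~Lemma~\ref{lem:sqs_linear_complexity}) for the classical (resp.~optimized) bound, and the constant-time treatment of the special case $|\xi_i|=3$ via the partition $P_{\textsc{sqs}} \uplus P(12)\cdot \quadrantell$. Your extra remark that enumerating the explicit pin words costs only $\O(|\xi_i|)$ is a harmless bookkeeping point that the paper leaves implicit.
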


\paragraph*{Construction of $\mathcal{A}^{\oplus}(\xi_i,\xi_{i+1})$}
We now explain how to build a deterministic automaton $\mathcal{A}^{\oplus}(\xi_i,\xi_{i+1})$ recognizing $\overleftarrow{{\mathcal
    L}_{\oplus[\xi_i,\xi_{i+1}]}}$.
Lemma~\ref{lem:fdouble} (p.\pageref{lem:fdouble}) describes the pin words of $P(\oplus[\xi_{i},\xi_{i+1}])$, proving the correctness of the following constructions.

If $|\xi_{i}| > 1$ and $|\xi_{i+1}| > 1$, we obtain
$\mathcal{A}^{\oplus}(\xi_i,\xi_{i+1})$ by gluing together automata
$\mathcal{A}(\xi_i,\xi_{i+1})$, $\mathcal{A}_{\xi_{i}}$ and
$\mathcal{A}_{\xi_{i+1}}$ as shown in
Figure~\ref{fig:ADoubleNonMixte}.  More precisely $f_{i(i+1)}^{(1)}$
(resp. $f_{i(i+1)}^{(3)}$) and the initial state of
$\mathcal{A}_{\xi_{i+1}}$ (resp. $\mathcal{A}_{\xi_{i}}$) are merged
into a unique state that is neither initial nor final. The final states of
$\mathcal{A}_{\xi_{i}}$ and $\mathcal{A}_{\xi_{i+1}}$ are also merged
into a unique final state $f_{i}$ having a loop labeled by all letters of~$A$.

If $|\xi_{i}| = 1$ and $|\xi_{i+1}| = 1$ then $\mathcal{A}^{\oplus}(\xi_i,\xi_{i+1}) = \mathcal{A}_{12}$ is built using Remark~\ref{rem:ascquadratique}.

Otherwise assume w.l.o.g. that $|\xi_{i}| = 1$ and $|\xi_{i+1}| >
1$. The set of pin words $P(\oplus[\xi_{i},\xi_{i+1}])$ can be partitioned
into two parts $P_{\textsc{sqs}}$ and $P'$, $P_{\textsc{sqs}}$ being the set of strict
and quasi-strict pin words.  If $|\xi_{i+1}| \not= 3$, then $P' =
P(\xi_{i+1}) \cdot 3$ and $\mathcal{A}^{\oplus}(\xi_i,\xi_{i+1})=\U^{\circlearrowleft}\left(\mathcal{A}^{\textsc{sqs}}_{\oplus[\xi_i,\xi_{i+1}]},\AC{\overleftarrow{\phi(3)}}\cdot {\mathcal
  A_{\xi_{i+1}}}\right) $. If $|\xi_{i+1}| = 3$, then $P' =
P(\xi_{i+1}) \cdot 3 \bigcup P(12)\cdot 3X$ where $X$ is a
direction, and we use again concatenation and union but performed on automata of constant size.

Note that in all cases ${\mathcal A}^{\oplus} (\xi_{i},\xi_{i+1})$ has a
unique final state -- that we denote $f_i$ -- without outgoing transitions except for the
loop labeled by all letters of $A$.

\begin{lem}\label{lem:complexite_double}
For all $i$, building the automaton $\mathcal{A}^{\oplus}(\xi_i,\xi_{i+1})$ is done in time and space
${\mathcal O}(|\xi_i|^4 + |\xi_{i+1}|^4)$ with the
classical construction and
$\O(|\xi_i|^2+|\xi_{i+1}|^2)$ in the optimized version.
\end{lem}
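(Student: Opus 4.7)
The plan is to bound the construction cost by a case analysis following the three cases distinguished in the construction of $\mathcal{A}^{\oplus}(\xi_i,\xi_{i+1})$ above, and to plug in the sub-automata sizes and costs provided by Lemmas~\ref{lem:complexite_aij}, \ref{lem:complexite_a_xi_j}, \ref{lem:ascsquadratique} and~\ref{lem:sqs_linear_complexity}.

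When $|\xi_i|, |\xi_{i+1}| \geq 2$, $\mathcal{A}^{\oplus}(\xi_i,\xi_{i+1})$ is obtained by gluing $\mathcal{A}(\xi_i,\xi_{i+1})$, $\mathcal{A}_{\xi_i}$ and $\mathcal{A}_{\xi_{i+1}}$ through the merging of a constant number of states, which is a constant-time operation. Summing the sub-automata sizes yields a total cost of $\O(|\xi_i|^2 + |\xi_{i+1}|^2)$ classically and $\O(|\xi_i|+|\xi_{i+1}|)$ in the optimized variant, both of which lie well within the announced bounds. The case $|\xi_i| = |\xi_{i+1}| = 1$ reduces to $\mathcal{A}_{12}$ and is constant-time.

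The crux of the proof is case $3$, where (up to symmetry) $|\xi_i| = 1$ and $|\xi_{i+1}| \geq 2$, since here $\mathcal{A}^{\oplus}(\xi_i,\xi_{i+1})$ is built as $\U^{\circlearrowleft}\bigl(\mathcal{A}^{\textsc{sqs}}_{\oplus[\xi_i,\xi_{i+1}]},\, \AC{\overleftarrow{\phi(3)}}\cdot \mathcal{A}_{\xi_{i+1}}\bigr)$, and the union $\U^{\circlearrowleft}$ is a Cartesian product whose cost is the product of the sizes of its two inputs (see Subsection~\ref{sec:ac}). By Lemma~\ref{lem:fdouble}, the sets $E^{\textsc{s}}_{\oplus[\xi_i,\xi_{i+1}]}$ and $E^{\textsc{qs}}_{\oplus[\xi_i,\xi_{i+1}]}$ have bounded cardinality, so Lemma~\ref{lem:ascsquadratique} yields $|\mathcal{A}^{\textsc{sqs}}_{\oplus[\xi_i,\xi_{i+1}]}| = \O(|\xi_{i+1}|^2)$ classically and Lemma~\ref{lem:sqs_linear_complexity} improves this to $\O(|\xi_{i+1}|)$ in the optimized variant. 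The second factor $\AC{\overleftarrow{\phi(3)}}\cdot\mathcal{A}_{\xi_{i+1}}$ has size $\O(|\xi_{i+1}|^2)$ classically and $\O(|\xi_{i+1}|)$ optimized. The product then gives $\O(|\xi_{i+1}|^4) = \O(|\xi_i|^4 + |\xi_{i+1}|^4)$ in the classical construction and $\O(|\xi_{i+1}|^2) = \O(|\xi_i|^2 + |\xi_{i+1}|^2)$ in the optimized one. The minor sub-case where $|\xi_{i+1}| = 3$ only contributes constant-size automata to the union and leaves the estimates unchanged. The main obstacle is simply to isolate this Cartesian product as the only step that behaves non-additively in the sizes of the sub-automata; once identified, the fourth-power bound in the classical setting and its improvement to a quadratic bound in the optimized one follow by direct computation, and case $3$ is seen to dominate the three case estimates, yielding the announced complexities.
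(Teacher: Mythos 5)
Your proposal is correct and follows essentially the same route as the paper's proof: handle the generic and size-one cases additively, and observe that the only non-additive step is the union $\U^{\circlearrowleft}$ (a Cartesian product) in the mixed case $|\xi_i|=1$, $|\xi_{i+1}|\geq 2$, where the bounded cardinality of $P(\oplus[\xi_i,\xi_{i+1}])$ from Lemma~\ref{lem:fdouble} together with Lemmas~\ref{lem:ascsquadratique} and~\ref{lem:sqs_linear_complexity} gives the quartic (resp.\ quadratic) bound. Your write-up is in fact slightly more explicit than the paper's about the sizes of the two factors entering the product, but the argument is the same.
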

\begin{proof}
The complexity of the construction of $\mathcal{A}(\xi_i,\xi_{i+1})$
is linear w.r.t.~$|\xi_{i}|+|\xi_{i+1}|$ from
Lemma~\ref{lem:complexite_aij} and the one of $\mathcal{A}_{\xi_{i}}$
is quadratic -- or linear in the optimized version --
w.r.t.~$|\xi_{i}|$ (see Lemma~\ref{lem:complexite_a_xi_j}). 
This concludes the proof except when $|\xi_{i}| =1$ and $|\xi_{i+1}| \neq 1$ (or conversely). 
When $|\xi_{i}| =1$, from Lemma~\ref{lem:fdouble} (p.\pageref{lem:fdouble}),
we have an explicit description of $P(\oplus[\xi_{i},\xi_{i+1}])$ and $|P(\oplus[\xi_{i},\xi_{i+1}])| \leq 192$.
Hence, the complexity of the construction of $\mathcal{A}^{\textsc{sqs}}_{\oplus[\xi_i,\xi_{i+1}]}$ when $|\xi_i|=1$
is quadratic -- or linear in the optimized version -- w.r.t.~$|\xi_{i+1}|$,
using Lemmas~\ref{lem:ascsquadratique} and \ref{lem:sqs_linear_complexity}.
Then the result follows from the complexity of the union of two automata, which is proportional to the product
of the sizes of the automata.
\end{proof}

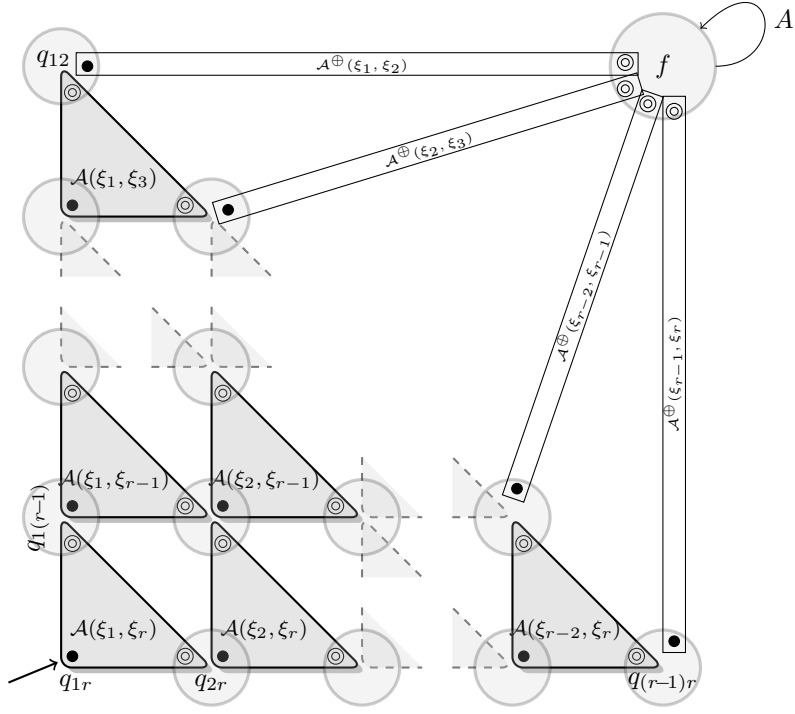
\begin{figure}[ht]
\begin{center}
\begin{tikzpicture}
\patateTriangle{0}{0}{1}{r};
\patateTriangle{2}{0}{2}{r};
\patateTriangle{6}{0}{r-2}{r};
\patateTriangle{0}{2}{1}{r-1};
\patateTriangle{2}{2}{2}{r-1};
\patateTriangle{0}{6}{1}{3};

\draw [opacity=.5,rounded corners,thick, fill=black!10,dashed] (0,5.2) -- (0,6) -- (0.8,5.2);
\draw [opacity=.5,rounded corners,thick, fill=black!10,dashed] (0,4.8) -- (0,4) -- (0.8,4);
\draw [opacity=.5,rounded corners,thick, fill=black!10,dashed] (2,4.8) -- (2,4) -- (2.8,4);
\draw [opacity=.5,rounded corners,thick, fill=black!10,dashed] (4,0.8) -- (4,0) -- (4.8,0);
\draw [opacity=.5,rounded corners,thick, fill=black!10,dashed] (4,2.8) -- (4,2) -- (4.8,2);
\draw [opacity=.5,rounded corners,thick, fill=black!10,dashed] (2,5.2) -- (2,6) -- (2.8,5.2);
\draw [opacity=.5,rounded corners,thick, fill=black!10,dashed] (5.2,2) -- (6,2) -- (5.2,2.8);
\draw [opacity=.5,rounded corners,thick, fill=black!10,dashed] (5.2,0) -- (6,0) -- (5.2,0.8);
\draw [opacity=.5,rounded corners,thick, fill=black!10,dashed] (1.2,4) -- (2,4) -- (1.2,4.8);
\draw [opacity=.5,rounded corners,thick, fill=black!10,dashed] (4,1.2) -- (4,2) -- (4.8,1.2);
\draw [fill=black!20,opacity=.2, very thick] (8,8) circle (.7cm);
\draw (8,8) node {$f$};
\draw [fill=black!20,opacity=.2, very thick] (2,2) circle (.5cm);
\draw [fill=black!20,opacity=.2, very thick] (0,2) circle (.5cm);
\draw [fill=black!20,opacity=.2, very thick] (0,4) circle (.5cm);
\draw [fill=black!20,opacity=.2, very thick] (0,6) circle (.5cm);
\draw [fill=black!20,opacity=.2, very thick] (0,8) circle (.5cm);
\draw [fill=black!20,opacity=.2, very thick] (2,0) circle (.5cm);
\draw [fill=black!20,opacity=.2, very thick] (2,4) circle (.5cm);
\draw [fill=black!20,opacity=.2, very thick] (2,6) circle (.5cm);
\draw [fill=black!20,opacity=.2, very thick] (4,0) circle (.5cm);
\draw [fill=black!20,opacity=.2, very thick] (4,2) circle (.5cm);
\draw [fill=black!20,opacity=.2, very thick] (6,0) circle (.5cm);
\draw [fill=black!20,opacity=.2, very thick] (6,2) circle (.5cm);
\draw [fill=black!20,opacity=.2, very thick] (8,0) circle (.5cm);
\draw (0.2,7.88) rectangle (7.67,8.18);
\draw [xshift=2.1cm,yshift=5.9cm,rotate=17] (0,0) rectangle node[rotate=17]{\tiny ${\mathcal A^{\oplus}}(\xi_{2},\xi_{3})$} +(5.9,0.3);
\inputState{2.22}{6.09};
\draw [xshift=6.15cm,yshift=2.2cm,rotate=71] (0,0) rectangle node[rotate=71]{\tiny ${\mathcal A^{\oplus}}(\xi_{r-2},\xi_{r-1})$} +(5.7,0.3);
\inputState{6.06}{2.38};
\draw [xshift=8.3cm,yshift=0.2cm,rotate=90] (0,0) rectangle node[rotate=90]{\tiny ${\mathcal A^{\oplus}}(\xi_{r-1},\xi_{r})$} +(7.4,0.3);
\inputState{8.15}{0.35};
\outputState{8.15}{7.4};
\outputState{7.8}{7.5};
\outputState{7.5}{7.7};
\outputState{7.5}{8.05};
\draw (4,8.03) node {\tiny ${\mathcal A^{\oplus}}(\xi_{1},\xi_{2})$};
\draw [->] (8.7,8) .. controls +(1,0) and +(1,1) .. node[right] {$A$} (8.5,8.5);
\draw (-0.3,2) node [rotate=90]  {$q_{1(r\!-\!1)}$};
\draw (2,-0.2) node {$q_{2r}$};
\draw (0.2,-0.2) node {$q_{1r}$};
\draw (-0.1,8.1) node {$q_{ 12}$};
\inputState{0.35}{8};
\draw (8,-0.2) node {$q_{(r\!-\!1)r}$};
\draw [thick,->,shorten >= 6pt] (-0.7,-0.2) -- (.15,.15);
\end{tikzpicture}

\caption{Automaton $\mathcal{A}_{\pi}$ for $\pi=\oplus[\xi_1, \ldots, \xi_r]$  where
  every $\xi_i$ is an increasing oscillation.}
\label{fig:automate_cas_oplus_non_recursif}
\end{center}
\end{figure}

\paragraph*{Assembling $\mathcal{A}_{\pi}$}
According to  the description of $\overleftarrow{{\mathcal L}_\pi}$ given
p.\pageref{eq_ssec:non-rec_linear}, the automata ${\mathcal
  A}(\xi_{i},\xi_{j})$ and ${\mathcal A}^{\oplus}
(\xi_{i},\xi_{i+1})$ can be glued together to finish the construction
of $\mathcal{A}_{\pi}$, as shown in
Figure~\ref{fig:automate_cas_oplus_non_recursif}. More precisely for any
$i,j$ such that $1\leq i<j\leq r$
\begin{itemize}
\item if $i+1 \neq j$, then $s_{ij}$, $f^{(1)}_{(i-1)j}$ and
  $f^{(3)}_{i(j+1)}$ are merged into a unique state $q_{ij}$ that is neither
  initial (except when $i=1$ and $j=r$) nor final,
\item if $i+1=j$, $f^{(1)}_{(i-1)j}$, $f^{(3)}_{i(j+1)}$ and the
  initial state of $\mathcal{A}^{\oplus}(\xi_i,\xi_{j}) = \mathcal{A}^{\oplus}(\xi_i,\xi_{i+1})$ are
  merged into a unique state $q_{ij}$ that is neither initial nor final,
\end{itemize}
and the final states $f_i$ of the automata $\mathcal{A}^{\oplus}(\xi_i,\xi_{i+1})$ are merged into a unique final state $f$
having a loop labeled by all letters of $A$. 
The states $q_{ij}$ defined above correspond to the
shuffle product construction.
To be more precise, taking the final state to be the merged state $q_{ij}$
and adding a loop labeled by all letters of $A$ on it,
the accepted language would be 
$\left(A^{\star}\PhiP{1}{1},\ldots,A^{\star}\PhiP{1}{i-1}\right)
\shuffle
\left(A^{\star}\PhiP{3}{r},\ldots,A^{\star}\PhiP{3}{j+1}\right) \linebreak \cdot \Astar$
as it will be proved in the following.
The automata $\mathcal{A}^{\oplus}(\xi_i,\xi_{i+1})$ in the second item above correspond to the concatenation
with $\overleftarrow{{\mathcal L}_{\oplus[\xi_i,\xi_{i+1}]}}$.

Note that if $r=2$, $\mathcal{A}_{\pi}$ is $\mathcal{A}^{\oplus}(\xi_1,\xi_{2})$.

\begin{lem}\label{lem:complexite_linear}
For any permutation $\pi$ such that  $\pi = \oplus[\xi_1, \ldots, \xi_r]$, every $\xi_i$ being an
increasing oscillation, the construction of the automaton $\mathcal{A}_{\pi}$
 is done in time and space
${\mathcal O}(|\pi|^4)$ with the
classical construction and
$\O(|\pi|^2)$ in the optimized version.
\end{lem}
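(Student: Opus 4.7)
The plan is to sum the construction costs of all the constituent automata that appear in $\mathcal{A}_{\pi}$, using Lemmas~\ref{lem:complexite_aij} and~\ref{lem:complexite_double}, and then verify that the gluing together of these pieces is cheap enough not to dominate. From the construction in Figure~\ref{fig:automate_cas_oplus_non_recursif}, the components are: the atomic automata $\mathcal{A}(\xi_i,\xi_j)$, one for each pair $1 \leq i < j \leq r$ with $j > i+1$; the automata $\mathcal{A}^{\oplus}(\xi_i,\xi_{i+1})$ for $1 \leq i \leq r-1$; and finally a constant number of state identifications (merging $s_{ij}$, $f^{(1)}_{(i-1)j}$ and $f^{(3)}_{i(j+1)}$ into a single state $q_{ij}$, and merging the various final states $f_i$ into the single final state $f$).

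First I would handle the cheaper part. By Lemma~\ref{lem:complexite_aij}, each atomic automaton $\mathcal{A}(\xi_i,\xi_j)$ is built in time and space $\mathcal{O}(|\xi_i|+|\xi_j|)$. Summing over the $\mathcal{O}(r^2)$ pairs gives
\[
\sum_{1 \leq i < j \leq r} \mathcal{O}\bigl(|\xi_i|+|\xi_j|\bigr) \; \leq \; (r-1) \sum_{i=1}^r |\xi_i| \; = \; (r-1)|\pi| \; = \; \mathcal{O}(|\pi|^2),
\]
since $r \leq |\pi|$. The gluing operations are all local state identifications (constant time each) and there are $\mathcal{O}(r^2) = \mathcal{O}(|\pi|^2)$ of them, so they also fit within $\mathcal{O}(|\pi|^2)$.

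The dominant contribution comes from the automata $\mathcal{A}^{\oplus}(\xi_i,\xi_{i+1})$. By Lemma~\ref{lem:complexite_double}, each one is built in time and space $\mathcal{O}(|\xi_i|^4+|\xi_{i+1}|^4)$ in the classical construction and $\mathcal{O}(|\xi_i|^2+|\xi_{i+1}|^2)$ in the optimized one. Summing over $i \in \{1,\ldots,r-1\}$ gives at most $2\sum_{i=1}^r |\xi_i|^k$ for $k=4$ or $k=2$ respectively. The key elementary inequality I would use is that for $k \geq 1$ and nonnegative integers $|\xi_i|$,
\[
\sum_{i=1}^r |\xi_i|^k \; \leq \; \Bigl(\sum_{i=1}^r |\xi_i|\Bigr)^k \; = \; |\pi|^k,
\]
since each term on the left is bounded by $(\sum_j |\xi_j|)^{k-1}\cdot |\xi_i|$. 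Thus the total cost of building all the $\mathcal{A}^{\oplus}(\xi_i,\xi_{i+1})$ is $\mathcal{O}(|\pi|^4)$ classically and $\mathcal{O}(|\pi|^2)$ in the optimized version, which dominates the contributions of the atomic automata and of the gluing and yields the claimed overall bounds.

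The calculation here is routine; there is no real combinatorial obstacle, only the need to be careful that (a) the atomic automata are counted once each (noting that Figure~\ref{fig:automate_cas_oplus_non_recursif} uses $\mathcal{A}^{\oplus}$ rather than $\mathcal{A}$ on the diagonal $j=i+1$, so the two sums do not double-count), and (b) the complexities stated in Lemmas~\ref{lem:complexite_aij} and~\ref{lem:complexite_double} cover both time and space, so that the same bounds transfer directly to $\mathcal{A}_{\pi}$.
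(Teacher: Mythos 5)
Your proof is correct and follows essentially the same route as the paper: bound the size/cost of $\mathcal{A}_{\pi}$ by the sum of the costs of the atomic automata $\mathcal{A}(\xi_i,\xi_j)$ and of the automata $\mathcal{A}^{\oplus}(\xi_i,\xi_{i+1})$ via Lemmas~\ref{lem:complexite_aij} and~\ref{lem:complexite_double}, note that the merges are cheap, and sum, using $\sum_i |\xi_i|^k \leq |\pi|^k$. Your write-up is in fact slightly more explicit than the paper's about how the double sum over pairs is bounded by $(r-1)|\pi| = \mathcal{O}(|\pi|^2)$; the only cosmetic slip is calling the $\mathcal{A}^{\oplus}$ part ``dominant'' in the optimized version, where both contributions are of the same order $\mathcal{O}(|\pi|^2)$.
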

\begin{proof}
Denote by $n$ the size of $\pi$, then $n=\sum_{i=1}^{r}|\xi_i|$.
By construction, taking into account the merge of states:
\begin{eqnarray*}
|\mathcal{A}_{\pi}|&\leq&\sum_{i=1}^{r-2 }\sum_{j=i+2}^r |{\mathcal
  A}(\xi_{i},\xi_{j})|+\sum_{i=1}^{r-1}|{\mathcal A}^{\oplus}(\xi_{i},\xi_{i+1})|
\end{eqnarray*}
and from Lemmas~\ref{lem:complexite_aij} and \ref{lem:complexite_double}, in the classical construction
\begin{eqnarray*}
|\mathcal{A}_{\pi}|&=&\O\left(\sum_{i=1}^{r-2 }\sum_{j=i+2}^r (|\xi_{i}| + |\xi_{j}|)+\sum_{i=1}^{r-1}(|\xi_{i}|^4+|\xi_{i+1}|^4)\right)=\O(n^4)
\end{eqnarray*}
and in the optimized version
\begin{eqnarray*}
|\mathcal{A}_{\pi}|&=&\O\left(\sum_{i=1}^{r-2 }\sum_{j=i+2}^r (|\xi_{i}| + |\xi_{j}|)+\sum_{i=1}^{r-1}(|\xi_{i}|^2+|\xi_{i+1}|^2)\right)=\O(n^2).
\end{eqnarray*}
We conclude the proof noticing that all these automata are built in linear time w.r.t.~their size.
\end{proof}

\paragraph*{Correctness of the construction}
We now prove that the automaton $\mathcal{A}_{\pi}$ given in
Figure~\ref{fig:automate_cas_oplus_non_recursif}  recognizes the
language $\overleftarrow{{\mathcal L}_{\pi}}$, for $\pi=\oplus[\xi_1, \ldots, \xi_r]$,
every $\xi_i$ being an increasing oscillation.

\begin{defi}\label{def:trace}
Let ${\mathcal A}$ be a deterministic complete automaton over the
alphabet $A$ whose set of states is $Q$ and let $u$ be a word in
$A^{\star}$.  We define $trace_{\mathcal A}(u)$ as the word of $Q^{|u|+1}$ that
consists in the states of the automaton that are visited when reading $u$ from the
initial state of ${\mathcal A}$, and for all $q \in Q$ we define
$q\cdot u$ to be the state of ${\mathcal A}$ reached when reading $u$ from $q$.
\end{defi}

Let $u$ be a word in $A^{\star}$. We define two parameters on $u$:
$$i(u) = \max\{ i \in \{ 0, r\} \mid u \in \Astar \cdot \PhiP{1}{1} \cdot \Astar \cdot \PhiP{1}{2} \ldots \Astar \cdot \PhiP{1}{i} \cdot \Astar  \}\text{ , and}$$
$$j(u) = \min\{ j \in \{ 1, r+1\} \mid u \in \Astar \cdot  \PhiP{3}{r}\cdot  \Astar \cdot \PhiP{3}{r-1} \ldots \Astar \cdot \PhiP{3}{j} \cdot \Astar  \}.$$

\begin{rem}\label{rem:which_shuffle}
Every word $u$ such that $i(u) \geq i$ and $j(u) \leq j$ belongs to 
$$\Big((A^{\star}\PhiP{1}{1},A^{\star}\PhiP{1}{2},\ldots,A^{\star}\PhiP{1}{i})
\shuffle
(A^{\star}\PhiP{3}{r},A^{\star}\PhiP{3}{r-1},\ldots,A^{\star}\PhiP{3}{j})\Big)\cdot A^{\star}.$$
\end{rem}

\begin{proof}
By definition, $u$ belongs to
$\Astar \cdot \PhiP{1}{1} \cdot \Astar \cdot \PhiP{1}{2} \ldots \Astar \cdot \PhiP{1}{i} \cdot \Astar$
and to $\Astar \cdot  \PhiP{3}{r}\cdot  \Astar \cdot \PhiP{3}{r-1} \ldots \Astar \cdot \PhiP{3}{j} \cdot \Astar$.
Moreover, by Remark~\ref{rem:P1P3} (p.\pageref{rem:P1P3}), all $\PhiP{1}{k}$ are words on the alphabet $\{L,D\}$ while all $\PhiP{3}{k}$ are words on $\{U,R\}$.
These alphabets being disjoint, we conclude that $u$ belongs to
$\Big((A^{\star}\PhiP{1}{1},A^{\star}\PhiP{1}{2},\ldots,A^{\star}\PhiP{1}{i})
\shuffle
(A^{\star}\PhiP{3}{r},A^{\star}\PhiP{3}{r-1},\ldots,A^{\star}\PhiP{3}{j})\Big)\cdot A^{\star}$.
\end{proof}

The following lemma characterizes the first visit of state $ q_{ij}$ in $\mathcal{A}_{\pi}$:
\begin{lem}\label{lem:etatDeLecture}
Let $Q$ be the set of states of $\mathcal{A}_{\pi}$ (see Figure~\ref{fig:automate_cas_oplus_non_recursif}),
$(i,j) \neq (1,r)$ and $u \in A^{\star}$. Then
$trace_{\mathcal{A}_{\pi}}(u) \in (Q \setminus q_{ij})^{\star} \cdot q_{ij}$ if and only if 
$u = vw$ with
either $w \in \PhiP{1}{i-1}$, $i(v) = i-2$ and $j(v) = j+1$;
or $w \in \PhiP{3}{j+1}$, $i(v) = i-1$ and $j(v) = j+2$.
\end{lem}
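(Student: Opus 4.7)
The approach is to exploit the grid-like block structure of $\mathcal{A}_\pi$: the state $q_{ij}$ is obtained by merging the initial state $s_{ij}$ of the atomic Aho-Corasick automaton $\mathcal{A}(\xi_i,\xi_j)$ with $f^{(1)}_{(i-1)j}$ and $f^{(3)}_{i(j+1)}$, the final states of two neighboring atomic automata. Because in the variant of Aho-Corasick's construction used here the final states of each atomic automaton have no outgoing transitions within that automaton, the trace of any word can enter $q_{ij}$ only through the completion of a first occurrence of a pattern word in $\PhiP{1}{i-1}$ inside $\mathcal{A}(\xi_{i-1},\xi_j)$ (when $i>1$), or in $\PhiP{3}{j+1}$ inside $\mathcal{A}(\xi_i,\xi_{j+1})$ (when $j<r$). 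The crucial disjoint-alphabet property (Remark~\ref{rem:P1P3}: $\PhiP{1}{k}\subseteq\{L,D\}^\star$ and $\PhiP{3}{k}\subseteq\{U,R\}^\star$) then ensures that these two kinds of completions interact cleanly with the two counters $i(\cdot)$ and $j(\cdot)$.

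The key intermediate step I would establish, by induction on $(a-1)+(r-b)$, is a positioning claim: for every word $v$, the trace of $v$ has visited $q_{ab}$ if and only if $i(v)\geq a-1$ and $j(v)\leq b+1$. The forward implication follows from Remark~\ref{rem:which_shuffle} together with the fact that crossing from $q_{(a-1)b}$ to $q_{ab}$ (resp.\ from $q_{a(b+1)}$ to $q_{ab}$) strictly increases the $i$-counter by matching a new $\PhiP{1}{a-1}$ factor (resp.\ strictly decreases the $j$-counter by matching a new $\PhiP{3}{b+1}$ factor). The converse uses the induction hypothesis and the shuffle structure of $\overleftarrow{\mathcal L_\pi}$ reflected in the grid of $q_{ab}$.

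With this positioning claim in hand, the ($\Leftarrow$) direction of the lemma follows: assuming $u=vw$ with $w\in\PhiP{1}{i-1}$, $i(v)=i-2$, $j(v)=j+1$ (the other case being symmetric), the positioning claim says that $v$'s trace has visited $q_{(i-1)j}$ but not $q_{ij}$, and reading $w\in\{L,D\}^\star$ from the state of $v$'s trace takes us, by Lemma~\ref{lem:trans_initial} and the first-occurrence property, into $f^{(1)}_{(i-1)j}=q_{ij}$ exactly on the last letter. Conversely, if the trace of $u$ first reaches $q_{ij}$ at its last position via $\mathcal{A}(\xi_{i-1},\xi_j)$, letting $t'$ be the most recent time at which the trace was at $q_{(i-1)j}$ and applying the Aho-Corasick first-occurrence property for $\AC{\PhiP{1}{i-1},\PhiP{3}{j}}$ gives that $u_{t'+1}\ldots u_{|u|}$ ends with a first occurrence of some $p\in\PhiP{1}{i-1}$; setting $w:=p$ and $v:=u_{1..|u|-|w|}$, the positioning claim applied to $v$ together with the hypothesis that $q_{ij}$ is not visited before the end forces $i(v)=i-2$ and $j(v)=j+1$. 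The main obstacle is the positioning claim itself: tracking the combinatorial counters through a deterministic Aho-Corasick-style automaton requires careful bookkeeping of interleaved partial matches between the two pattern families, and this bookkeeping is tractable precisely because of the disjointness of the alphabets $\{L,D\}$ and $\{U,R\}$, which keeps the two counters independent and forces every exit from an atomic sub-automaton to correspond to a genuine pattern completion.
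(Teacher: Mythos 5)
Your overall architecture (induction on the grid distance $r-j+i-1$, the first-occurrence property of the $\AC{\cdot,\cdot}$ blocks, and the disjoint alphabets $\{L,D\}$ versus $\{U,R\}$) is the same as the paper's, but the intermediate ``positioning claim'' you build everything on is false, and this is a genuine gap. You claim: the trace of $v$ has visited $q_{ab}$ if and only if $i(v)\geq a-1$ and $j(v)\leq b+1$. Counterexample: take $v=yz$ with $y\in\PhiP{3}{r}$ and $z\in\PhiP{1}{1}$ (possible since these are over disjoint alphabets). Then $i(v)\geq 1$ and $j(v)\leq r+1$, so your claim asserts that $q_{2r}$ is visited; but the run of $\mathcal{A}_{\pi}$ completes the $\PhiP{3}{r}$ factor first, hence leaves $\A{\xi_1,\xi_r}$ through $f^{(3)}_{1r}=q_{1(r-1)}$ and then reaches $q_{2(r-1)}$, never touching $q_{2r}$ (grid moves are irreversible, so a state $q_{\cdot r}$ can no longer be reached). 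The set of visited states is a staircase path determined by the \emph{temporal order} in which factors of the two families complete, not by the counter values alone, so the inequalities on $i(v),j(v)$ do not locate the trace.

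Both directions of your proof of the lemma use precisely the false direction of this claim: in the ($\Leftarrow$) part you deduce ``$v$'s trace has visited $q_{(i-1)j}$ but not $q_{ij}$'' from $i(v)=i-2$, $j(v)=j+1$, and in the ($\Rightarrow$) part you deduce the exact equalities $i(v)=i-2$, $j(v)=j+1$ from ``$q_{ij}$ not visited''; neither inference is valid once the biconditional fails (the sound direction only yields $i(v)\geq i-2$ and $j(v)\leq j+1$). What is needed -- and what the paper's induction on $r-j+i-1$ establishes -- is an invariant about the \emph{first arrival} at a given grid state, characterized by exact equalities on $i(v)$ and $j(v)$ together with the identity of the last completed factor ($\PhiP{1}{i-1}$ or $\PhiP{3}{j+1}$); this is essentially the statement of the lemma itself, propagated cell by cell using the first-occurrence property of $\A{\xi_{i-1},\xi_j}$ and $\A{\xi_i,\xi_{j+1}}$ and the disjointness of the alphabets. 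Reformulate your inductive hypothesis in that form (exact position of the trace, not the monotone ``has visited'' relation) and the rest of your argument can be repaired.
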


\begin{proof}
By induction on $r-j+i-1$, using ${\mathcal A}(\xi_i,\xi_j)=\AC{\PhiP{1}{i}, \PhiP{3}{j}}$.
Notice that $r-j+i-1$ is the number of automata ${\mathcal A}(\xi_k,\xi_\ell)$ that we need to go through before reaching $q_{ij}$.
\end{proof}

\begin{theo}\label{thm:5-9}
If $\pi = \oplus[\xi_1, \ldots, \xi_r]$ where every $\xi_i$ is an
increasing oscillation then automaton $\mathcal{A}_{\pi}$ given in
Figure~\ref{fig:automate_cas_oplus_non_recursif} recognizes the
language $\overleftarrow{{\mathcal L}_{\pi}}$.
\end{theo}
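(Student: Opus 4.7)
The plan is to establish both inclusions using the union decomposition of $\overleftarrow{{\mathcal L}_{\pi}}$ given on page~\pageref{eq_ssec:non-rec_linear}, together with Lemma~\ref{lem:etatDeLecture}, Remark~\ref{rem:which_shuffle}, and the (separately justified) fact that $\mathcal{A}^{\oplus}(\xi_i,\xi_{i+1})$ accepts $\overleftarrow{{\mathcal L}_{\oplus[\xi_i,\xi_{i+1}]}}$. The key observation is that in $\mathcal{A}_{\pi}$ the only way a computation can reach the final state $f$ is to first enter some $q_{i(i+1)}$ (for a unique $i\in\{1,\dots,r-1\}$) and then traverse $\mathcal{A}^{\oplus}(\xi_i,\xi_{i+1})$, because the atomic automata $\mathcal{A}(\xi_k,\xi_\ell)$ for $k+1<\ell$ have all their final states merged with neighbouring $q_{ij}$'s and lead strictly toward the diagonal $j=i+1$.

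For the direction $\mathcal{L}(\mathcal{A}_{\pi})\subseteq\overleftarrow{{\mathcal L}_{\pi}}$, I would take a word $u$ accepted by $\mathcal{A}_{\pi}$ and let $q_{i(i+1)}$ be the first state on the diagonal visited by $trace_{\mathcal{A}_{\pi}}(u)$. Decomposing $u=vw$ where $v$ ends exactly at this first visit, Lemma~\ref{lem:etatDeLecture} (applied with $j=i+1$) tells us that $v=v'w'$ with either $w'\in\PhiP{1}{i-1}$ and $(i(v'),j(v'))=(i-2,i+2)$, or $w'\in\PhiP{3}{i+2}$ and $(i(v'),j(v'))=(i-1,i+3)$; in either case Remark~\ref{rem:which_shuffle} places $v$ in $\bigl((A^{\star}\PhiP{1}{1},\dots,A^{\star}\PhiP{1}{i-1})\shuffle(A^{\star}\PhiP{3}{r},\dots,A^{\star}\PhiP{3}{i+2})\bigr)\cdot A^{\star}$. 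Because the suffix $w$ is then accepted from the initial state of $\mathcal{A}^{\oplus}(\xi_i,\xi_{i+1})$, it lies in $\overleftarrow{{\mathcal L}_{\oplus[\xi_i,\xi_{i+1}]}}$; concatenating and absorbing the trailing $A^{\star}$ into $w$ gives $u$ in the $i$-th term of the union defining $\overleftarrow{{\mathcal L}_{\pi}}$.

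For the converse, I would take $u$ in the $i$-th term of the union, write $u=vw$ with $v$ in the shuffle product and $w\in\overleftarrow{{\mathcal L}_{\oplus[\xi_i,\xi_{i+1}]}}$, and show that reading $v$ drives $\mathcal{A}_{\pi}$ from its initial state $q_{1r}$ to $q_{i(i+1)}$. This is where the greedy behaviour of the $\AC{\cdot,\cdot}$ automata must be exploited: because $\PhiP{1}{k}\subseteq\{L,D\}^{\star}$ and $\PhiP{3}{k}\subseteq\{U,R\}^{\star}$ are over disjoint alphabets (Remark~\ref{rem:P1P3}), the factors of $v$ coming from the two shuffled sequences never interact, and each atomic automaton $\mathcal{A}(\xi_k,\xi_\ell)=\AC{\PhiP{1}{k},\PhiP{3}{\ell}}$ simply consumes letters in $\{L,D,U,R\}$ until a first occurrence of $\PhiP{1}{k}$ or $\PhiP{3}{\ell}$ is seen, triggering the correct move in the grid. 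Formalising this by induction on $r-(i+1)+(i-1)$ (mirroring the induction in Lemma~\ref{lem:etatDeLecture}) yields $q_{1r}\cdot v = q_{i(i+1)}$, after which $w$ drives $\mathcal{A}^{\oplus}(\xi_i,\xi_{i+1})$ to $f_i=f$, so $u$ is accepted.

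The main obstacle will be handling the shuffle cleanly: one must argue that every interleaving of a $\{L,D\}$-pin-word sequence with a $\{U,R\}$-pin-word sequence is recognised by the greedy first-occurrence automata $\mathcal{A}(\xi_k,\xi_\ell)$ along the correct path in the triangular grid, and conversely that any path through the grid corresponds to such an interleaving. The disjointness of alphabets makes this essentially a two-track argument, and is already encapsulated in Lemma~\ref{lem:etatDeLecture}; the remaining work is the bookkeeping connecting the indices $(i(v),j(v))$ to positions in the shuffle and to the grid coordinate $q_{ij}$.
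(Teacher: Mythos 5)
Your first inclusion (every word accepted by $\mathcal{A}_{\pi}$ lies in $\overleftarrow{{\mathcal L}_{\pi}}$) is sound and essentially the paper's argument. The gap is in the converse. You claim that for $u=vw$ with $v$ in the $i$-th shuffle term and $w\in\overleftarrow{{\mathcal L}_{\oplus[\xi_i,\xi_{i+1}]}}$, reading $v$ drives the automaton from $q_{1r}$ exactly to $q_{i(i+1)}$. This is false in general: $\mathcal{A}_{\pi}$ is deterministic and its unique run on $u$ knows nothing of your chosen factorization. The $A^{\star}$ gaps inside the shuffle (and the beginning of $w$) may contain spurious occurrences of factors $\PhiP{1}{m}$ or $\PhiP{3}{m}$, so the greedy first-occurrence transitions can fire earlier, or on other indices, than the intended occurrences; consequently the run enters the diagonal at some state $q_{m(m+1)}$ with possibly $m\neq i$, and since each $\mathcal{A}^{\oplus}(\xi_m,\xi_{m+1})$ leads only to $f$, the state $q_{i(i+1)}$ may never be visited at all. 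The disjointness of the alphabets $\{L,D\}$ and $\{U,R\}$ (Remark~\ref{rem:P1P3}) only prevents interference between the two shuffled tracks; it does nothing against this misalignment, which is the actual difficulty, and your proposed induction ``mirroring Lemma~\ref{lem:etatDeLecture}'' would at best show that the run reaches \emph{some} diagonal state, not the one you need.

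The paper's proof handles exactly this: it takes $v'$ the shortest prefix of the whole word $u$ with $j(v')-i(v')\leq 3$, so that by Lemma~\ref{lem:etatDeLecture} the run sits at the diagonal state $q_{m(m+1)}$ with $m=i(v')+1$, and then compares $m$ with the index $i$ of the union term. When $m=i$, or when $|m-i|\geq 2$, one concludes by exhibiting the required factors of $\PhiP{1}{\cdot}$ or $\PhiP{3}{\cdot}$ in the remaining suffix. But in the boundary cases $m=i-1$ and $m=i+1$ the suffix is only seen to belong to $\overleftarrow{{\mathcal L}_{\oplus[\xi_{m},\xi_{m+1},\xi_{m+2}]}}$ (or its symmetric counterpart), and the proof must invoke Theorem~\ref{thm:lienLangagesMotif} (pattern containment implies ${\mathcal L}_{\sigma}\subseteq{\mathcal L}_{\pi}$) to deduce membership in $\overleftarrow{{\mathcal L}_{\oplus[\xi_m,\xi_{m+1}]}}$, the language accepted from $q_{m(m+1)}$. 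Your plan never invokes this theorem and offers no substitute for it, so the converse inclusion does not close as sketched; you need both the re-anchoring at the first diagonal visit of $u$ itself and the case analysis with Theorem~\ref{thm:lienLangagesMotif} for the off-by-one cases.
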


\begin{proof}
Assume that $r \geq 3$ (otherwise $r=2$, $\mathcal{A}_{\pi}$ is
$\mathcal{A}^{\oplus}(\xi_1,\xi_{2})$ and the result trivially holds).
We first prove that every word recognized by $\mathcal{A}_{\pi}$ is in
$\overleftarrow{{\mathcal L}_{\pi}}$. Let $u$ be a word recognized by
$\mathcal{A}_{\pi}$. Then $trace_{{\mathcal A}_{\pi}}(u)$ ends with
the final state $f$ of $\mathcal{A}_{\pi}$. As $f$ is accessible only
from some ${\mathcal A}^{\oplus}(\xi_{k},\xi_{k+1})$,
$trace_{{\mathcal A}_{\pi}}(u)$ contains some $q_{k(k+1)}$. Moreover
for all $(i,j) \neq (1,r)$, every path from the initial state $q_{1r}$
to $q_{ij}$ contains $q_{(i-1)j}$ or $q_{i(j+1)}$. Therefore
$trace_{{\mathcal A}_{\pi}}(u) \in q_{i_1j_1}Q^\star q_{i_2j_2}Q^\star
\dots q_{i_{r-1}j_{r-1}}Q^\star f$ where $(i_1,j_1) = (1,r)$,
$(i_{r-1},j_{r-1}) = (k,k+1)$ and for all $\ell$,
$(i_{\ell+1},j_{\ell+1}) \in \{(i_\ell+1,j_\ell),
(i_\ell,j_\ell-1)\}$. Hence by definition of ${\mathcal A}(\xi_i,\xi_j)$
and ${\mathcal A}^{\oplus}(\xi_{k},\xi_{k+1})$ and by the
expression of $\overleftarrow{{\mathcal L}_{\pi}}$ given
p.\pageref{eq_ssec:non-rec_linear}, $u \in \overleftarrow{{\mathcal
    L}_{\pi}}$.

Conversely, let $u \in \overleftarrow{{\mathcal L}_{\pi}}$.
We want to prove that $q_{1r}\cdot u = f$, $q_{1r}$ being the initial state of
$\mathcal{A}_{\pi}$ and $f$ its final state.
The expression of $\overleftarrow{{\mathcal L}_{\pi}}$ given p.\pageref{eq_ssec:non-rec_linear}
ensures that there exists $k$ such that $u = vw$ with $i(v) \geq k-1$,
$j(v) \leq k+2$ and $w \in \overleftarrow{{\mathcal L}_{\oplus[\xi_{k},\xi_{k+1}]}}$.
Let $u = v'w'$ with $v'= v_1 \dots v_s$ the shortest prefix of $u$ such that $j(v')-i(v') \leq 3$, and set $i = i(v')$.
Since $v'$ is of minimal length, $j(v') = i+3$ and there exists $v'' \in \Astar$ such that $v = v'v''$.
So $w' = v''w$ belongs to $\Astar \cdot \overleftarrow{{\mathcal L}_{\oplus[\xi_{k},\xi_{k+1}]}}=\overleftarrow{{\mathcal L}_{\oplus[\xi_{k},\xi_{k+1}]}}$.
Thus, using also Remark~\ref{rem:which_shuffle}, we have:

\hspace{-0.75cm}
\begin{tikzpicture}
\node at (-3,-0.8) {$u$};
\node at (-2.5,-0.8) {$=$};
\node at (-2.5,-2) {$=$};
\draw[|-|] (-2,-0.8) -- node[above] {$v$}  node[below] {\footnotesize $\in
(A^{\star}\PhiP{1}{1},\ldots,A^{\star}\PhiP{1}{k-1}) \shuffle (A^{\star}\PhiP{3}{r},\ldots,A^{\star}\PhiP{3}{k+2})
$} (7,-0.8);
\draw[|-|] (7,-0.8) -- node[above] {$w$}  node[below] {\footnotesize $\in \overleftarrow{{\mathcal L}_{\oplus[\xi_k,\xi_{k+1}]}}$} (9,-0.8);
\draw[|-|] (-2,-2) -- node[above] {$v'$}  node[below] {\footnotesize $ \in
(A^{\star}\PhiP{1}{1},\ldots,A^{\star}\PhiP{1}{i}) \shuffle (A^{\star}\PhiP{3}{r},\ldots,A^{\star}\PhiP{3}{i+3})
$} (5.3,-2);
\draw[|-|] (5.3,-2) -- node[above] {$w'$} (9,-2);
\end{tikzpicture}

Since $v'$ is of minimal length, $i(v_1 \dots v_{s-1}) < i(v')$ or $j(v_1 \dots v_{s-1}) > j(v')$.
Thus $v' = \bar{v}\bar{w}$ with
either $\bar{w} \in \PhiP{1}{i(v')}$, $i(\bar{v}) = i(v')-1$ and $j(\bar{v}) = j(v')$;
or $\bar{w} \in \PhiP{3}{j(v')}$, $i(\bar{v}) = i(v')$ and $j(\bar{v}) = j(v')+1$.
By Lemma~\ref{lem:etatDeLecture}, $trace_{{\mathcal A}_\pi}(v')$ ends with $q_{i(v')+1,j(v')-1}$.

Therefore $u = v' w'$ with $q_{1r}\cdot v' = q_{i+1,i+2}$ and $w' \in \overleftarrow{{\mathcal L}_{\oplus[\xi_{k},\xi_{k+1}]}}$.

If $i=k-1$ then $q_{1r}\cdot u = (q_{1r}\cdot v')\cdot w' =
  q_{k,k+1}\cdot w' = f$ as $w'$ belongs to the language
  $\overleftarrow{{\mathcal L}_{\oplus[\xi_{k},\xi_{k+1}]}}$
  recognized by the automaton ${\mathcal
    A}^{\oplus}(\xi_{k},\xi_{k+1})$.

If $i \leq k-3$. By definition $i(u) \geq k-1$ and $i(v')=i$, and as
  $u=v'w'$, $w' \in \Astar \cdot \PhiP{1}{i+1} \cdot \Astar \cdot \PhiP{1}{i+2} \cdot
  \Astar \ldots \Astar \cdot \PhiP{1}{k-1} \cdot \Astar$. Therefore as $i \leq k-3$,
  $w' \in \Astar \cdot \PhiP{1}{i+1} \cdot \Astar \cdot \PhiP{1}{i+2} \cdot \Astar$ and $w'$
  belongs to the language $\overleftarrow{{\mathcal
      L}_{\oplus[\xi_{i+1},\xi_{i+2}]}}$ recognized by the automaton
  ${\mathcal A}^{\oplus}(\xi_{i+1},\xi_{i+2})$. Finally as $u = v'w'$ and
  $trace_{{\mathcal A}_\pi}(v')$ ends with $q_{i+1,i+2}$,
  $trace_{{\mathcal A}_\pi}(u)$ ends with $f$.

If $i \geq k+1$ then $j(v') \geq k+4$ and
by symmetry of $i(u)$ and $j(u)$ the proof is similar to the previous case.

If $i = k-2$, as $v=v'v''$ and $i(v) \geq k-1$ and $i(v')=i$ then
$v'' \in \Astar \cdot \PhiP{1}{i+1} \cdot \Astar$. Moreover $w \in
\overleftarrow{{\mathcal L}_{\oplus[\xi_{i+2},\xi_{i+3}]}} $ so $w' =
v''w \in \Astar \cdot \PhiP{1}{i+1} \cdot \overleftarrow{{\mathcal
    L}_{\oplus[\xi_{i+2},\xi_{i+3}]}}$. Therefore $w' \in
\overleftarrow{{\mathcal L}_{\oplus[\xi_{i+1},\xi_{i+2},\xi_{i+3}]}}$
and by Theorem~\ref{thm:lienLangagesMotif}
(p.\pageref{thm:lienLangagesMotif}), $w' \in \overleftarrow{{\mathcal
    L}_{\oplus[\xi_{i+1},\xi_{i+2}}}]$. Hence $w'$ is recognized by
${\mathcal A}^{\oplus}(\xi_{i+1},\xi_{i+2})$ so $q_{1r}\cdot u = f$
(since $q_{1r}\cdot v' = q_{i+1,i+2}$).

If $i = k$, by symmetry of $i(u)$ and $j(u)$ the proof is similar to the previous case, 
concluding  the proof of Theorem~\ref{thm:5-9}.
\end{proof}

\begin{rem}\label{rem:versionopt1}
With the optimized construction of ${\mathcal A}_{\pi}$, we prove
similarly that ${\mathcal A}_{\pi}$ recognizes a language $\mathcal{L}'_{\pi}$ 
such that   $\mathcal{L}'_{\pi}\cap {\mathcal M} =
\overleftarrow{\mathcal{L}_{\pi}} \cap {\mathcal M}$.
\end{rem}

We end this subsection with a remark which will be  useful in Subsection~\ref{subsection:marquage}.
\begin{rem}\label{rem:pattern}
Let $\pi^{(1)} = \oplus[\xi_2, \ldots, \xi_r]$ be the pattern of $\pi$
obtained by deletion of the elements of $\xi_1$.
If $r \geq 3$ then ${\mathcal A}_{\pi^{(1)}}$ is obtained by taking  $q_{2r}$ (see
Figure~\ref{fig:automate_cas_oplus_non_recursif}) as initial state and by considering only
the states of ${\mathcal A}_{\pi}$ that are accessible from $q_{2r}$.
Thus in ${\mathcal A}_{\pi}$ the language recognized from $q_{2r}$ is $\overleftarrow{{\mathcal L}_{\pi^{(1)}}}$.
If $r=2$ then $\pi^{(1)} = \xi_2$, ${\mathcal A}_{\pi^{(1)}}$ is also a
part of $\mathcal{A}^{\oplus}(\xi_1,\xi_{2}) = {\mathcal A}_{\pi}$ and
$\overleftarrow{{\mathcal L}_{\pi^{(1)}}}$ is the language recognized from
the bottom right  state of Figure~\ref{fig:ADoubleNonMixte} (p.\pageref{fig:ADoubleNonMixte}).
The same property holds with the pattern $\pi^{(r)} = \oplus[\xi_1, \ldots, \xi_{r-1}]$,
the state $q_{1(r-1)}$ and the top left state of Figure~\ref{fig:ADoubleNonMixte}.
\end{rem}

\subsection{Pin-permutations with a linear root: recursive case} \label{ssec:rec_linear}

Suppose w.l.o.g. that the decomposition tree of $\pi$ is
 \begin{tikzpicture}[sibling distance=15pt,level
distance=16pt,baseline=-12pt]
\node[linear] {$\oplus$}
	child {node (T1) {$\xi_1$}}
	child[missing]
	child {node (Ta) {$\xi_{\ell}$}}
	child[child anchor=north] {node[draw,shape=isosceles triangle,
shape border rotate=90,anchor=north,inner sep=0,isosceles triangle apex angle=90] {$T_{i_0}$}}
	child {node (Ta1) {$\xi_{\ell+2}$}}
	child[missing]
	child {node (Tk) {$\xi_r$}};
\draw[dotted]  (T1) -- (Ta);
\draw[dotted]  (Ta1) -- (Tk);
\end{tikzpicture}, {\it i.e.}, the root has label $\oplus$ and
all of its children but one -- denoted $T_{i_0}$ -- are increasing oscillations.
Then the  automaton $\mathcal{A}(T_{i_0})=\mathcal{A}_\rho$ associated to the
permutation $\rho$ whose decomposition tree is $T_{i_0}$ is
recursively obtained.
We explain how to build $\mathcal{A}_\pi$ from $\mathcal{A}_\rho$.

\paragraph*{If $\pi \notin \setH$, \emph{i.e.} if $\pi$ does not satisfy any condition of
  Figure~\ref{fig:H} (p.\pageref{fig:H})} Then Theorem~\ref{thm:linearRoot}
(p.\pageref{thm:linearRoot}) ensures that $P(\pi) = P(\rho)\cdot
\F\ell \shuffle \G{\ell+2}$ with
$$ \F \ell = \big( P^{(1)}(\xi_{\ell}), \ldots ,
P^{(1)}(\xi_1) \big) \text{ and }\G{\ell+2} = \big(P^{(3)}(\xi_{\ell+2}),
 \ldots , P^{(3)}(\xi_r) \big).$$
This characterization translates into the following expression for $\overleftarrow{{\mathcal L}_{\pi}} $:
\begin{equation*}
\overleftarrow{{\mathcal L}_{\pi}} =
\left(\left(A^{\star}\PhiP{1}{1},\ldots,A^{\star}\PhiP{1}{\ell}\right)
\shuffle
\left(A^{\star}\PhiP{3}{r},\ldots,A^{\star}\PhiP{3}{\ell+2}\right)\right)
\cdot \overleftarrow{{\mathcal L}_{\rho}}
\end{equation*}
with the notations $\PhiP{h}{j}$ from p.\pageref{pagedef:PhiP}.

To deal with the shuffle product, we use again the automata ${\mathcal
  A}(\xi_i,\xi_j)$ whose initial and final states are
$s_{ij}$, $f^{(1)}_{ij}$ and $f^{(3)}_{ij}$ (see
Figure~\ref{fig:atomicAutomaton} p.\pageref{fig:atomicAutomaton}). We
furthermore introduce the deterministic automata ${\mathcal
  A}^{(1)}(\xi_i)=\AC{\PhiP{1}{i}}$ for $1\leq i \leq \ell$ and
${\mathcal A}^{(3)}(\xi_j)=\AC{\PhiP{3}{j}}$ for $\ell+2\leq j \leq r$
whose initial and final states are denoted respectively $s_i^{(1)}$,
$f_i^{(1)}$, $s_j^{(3)}$ and $f_j^{(3)}$.  The automaton ${\mathcal
  A}^{(1)}(\xi_i)$ (resp. ${\mathcal A}^{(3)}(\xi_j)$) corresponds to
the reading of parts of $\F{\ell}$ \Big(resp. $\G{\ell+2}$\Big)
in the shuffle product $\F{\ell} \shuffle \G{\ell+2}$
after all the parts of $\G{\ell+2}$ \Big(resp. $\F{\ell}$\Big) are read.

With these notations, the language $\overleftarrow{{\mathcal L}_{\pi}}$ associated to
$\pi$ is the one recognized by the automaton $\mathcal{A}_{\pi}$
given in Figure~\ref{fig:automate_cas_oplus_recursif} where the
following merges are performed:
\begin{itemize}
\item for any $i,j$ such that $1\leq i \leq \ell$ and $\ell +2 \leq
  j\leq r$, $s_{ij}$, $f^{(1)}_{(i-1)j}$ and $f^{(3)}_{i(j+1)}$
  are merged into a unique state $q_{ij}$ that is neither initial (except when
  $i=1$ and $j=r$) nor final,
\item for $1\leq i \leq \ell$, $s_i^{(1)}$, $f^{(1)}_{i-1}$ and
  $f^{(3)}_{i(\ell+2)}$ are merged into a unique state $q_{i}$ that is neither initial
  nor final,
\item for $\ell+2 \leq j \leq r$, $s_j^{(3)}$, $f^{(3)}_{j+1}$ and
  $f^{(1)}_{\ell j}$ are merged into a unique state $q_{j}$ that is neither initial
  nor final,
\item $f^{(3)}_{\ell+2}$, $f^{(1)}_{\ell}$ and the initial state of
  ${\mathcal A}_{\rho}$ are merged into a unique state $q_{\rho}$ that is neither initial
  nor final.
\end{itemize}

\begin{figure}[ht]
\begin{center}
\begin{tikzpicture}
\patateTriangle{0}{0}{1}{r};
\patateTriangle{2}{0}{2}{r};
\patateTriangle{6}{0}{\ell}{r};
\patateTriangle{0}{2}{1}{r-1};
\patateTriangle{2}{2}{2}{r-1};
\patateTriangle{6}{2}{\ell}{r-1};
\patateTriangle{0}{6}{1}{\ell+2};
\patateTriangle{2}{6}{2}{\ell+2};
\patateTriangle{6}{6}{\ell}{\ell+2};
\begin{scope}
\draw (0.2,7.85) rectangle node[above=3pt] {\footnotesize ${\mathcal A}^{(1)}(\xi_{1})$} +(1.6,0.3);
\inputState{0.35}{8};
\outputState{1.65}{8};
\end{scope}
\begin{scope}[shift={(2,0)}]
\draw (0.2,7.85) rectangle node[above=3pt] {\footnotesize ${\mathcal A}^{(1)}(\xi_{2})$} +(1.6,0.3);
\inputState{0.35}{8};
\outputState{1.65}{8};
\end{scope}
\draw [opacity=.5,dashed] (4.6,7.85) -- (4.2,7.85) -- (4.2,8.15) -- (4.6,8.15);
\draw [opacity=.5,dashed] (5.4,7.85) -- (5.8,7.85) -- (5.8,8.15) -- (5.4,8.15);
\begin{scope}[shift={(6,0)}]
\draw (0.2,7.85) rectangle node[above=3pt] {\footnotesize ${\mathcal A}^{(1)}(\xi_{\ell})$} +(1.6,0.3);
\draw [pattern=north east lines] (0.2,7.85) rectangle +(0.3,0.3);
\inputState{0.35}{8};
\outputState{1.65}{8};
\end{scope}
\draw (9,8) ellipse  (1cm and .25cm); 
\draw (9,8) node{ ${\mathcal A}_{\rho}$};
\inputState{8.3}{8};
\outputState{9.7}{8};
\draw [->] (9.8,8) .. controls +(1,-0.65) and +(1,0.65) .. node[right] {$A$} (9.78,8.08);

\begin{scope}[shift={(0,0)}]
\draw [rounded corners, fill=black!10,opacity=.5,dashed] (0,5.2) -- (0,6) -- (0.8,5.2);
\end{scope}
\begin{scope}[shift={(2,0)}]
\draw [rounded corners, fill=black!10,opacity=.5,dashed] (0,5.2) -- (0,6) -- (0.8,5.2);
\end{scope}
\begin{scope}[shift={(4,0)}]
\draw [rounded corners, fill=black!10,opacity=.5,dashed] (0,5.2) -- (0,6) -- (0.8,5.2);
\end{scope}
\begin{scope}[shift={(6,0)}]
\draw [rounded corners, fill=black!10,opacity=.5,dashed] (0,5.2) -- (0,6) -- (0.8,5.2);
\end{scope}
\begin{scope}[shift={(4,-4)}]
\draw [rounded corners, fill=black!10,opacity=.5,dashed] (0,5.2) -- (0,6) -- (0.8,5.2);
\end{scope}
\begin{scope}[shift={(4,0)}]
\draw [rounded corners, fill=black!10,opacity=.5,dashed] (0,.8) -- (0,0) -- (.8,0);
\end{scope}
\begin{scope}[shift={(4,2)}]
\draw [rounded corners, fill=black!10,opacity=.5,dashed] (0,.8) -- (0,0) -- (.8,0);
\end{scope}
\begin{scope}[shift={(4,6)}]
\draw [rounded corners, fill=black!10,opacity=.5,dashed] (0,.8) -- (0,0) -- (.8,0);
\end{scope}
\begin{scope}[shift={(4,8)}]
\draw [rounded corners, fill=black!10,opacity=.5,dashed] (0,-0.8) -- (0,0) -- (0.8,-0.8);
\end{scope}
\begin{scope}[shift={(0,4)}]
\draw [rounded corners, fill=black!10,opacity=.5,dashed] (0,.8) -- (0,0) -- (.8,0);
\end{scope}
\begin{scope}[shift={(2,4)}]
\draw [rounded corners, fill=black!10,opacity=.5,dashed] (0,.8) -- (0,0) -- (.8,0);
\end{scope}
\begin{scope}[shift={(6,4)}]
\draw [rounded corners, fill=black!10,opacity=.5,dashed] (0,.8) -- (0,0) -- (.8,0);
\end{scope}
\begin{scope}[shift={(6,0)}]
\draw [rounded corners, fill=black!10,opacity=.5,dashed] (-.8,0) -- (0,0) -- (-.8,.8);
\end{scope}
\begin{scope}[shift={(6,2)}]
\draw [rounded corners, fill=black!10,opacity=.5,dashed] (-.8,0) -- (0,0) -- (-.8,.8);
\end{scope}
\begin{scope}[shift={(6,4)}]
\draw [rounded corners, fill=black!10,opacity=.5,dashed] (-.8,0) -- (0,0) -- (-.8,.8);
\end{scope}
\begin{scope}[shift={(6,6)}]
\draw [rounded corners, fill=black!10,opacity=.5,dashed] (-.8,0) -- (0,0) -- (-.8,.8);
\end{scope}
\begin{scope}[shift={(2,4)}]
\draw [rounded corners, fill=black!10,opacity=.5,dashed] (-.8,0) -- (0,0) -- (-.8,.8);
\end{scope}
\begin{scope}[shift={(8,4)}]
\draw [rounded corners, fill=black!10,opacity=.5,dashed] (-.8,0) -- (0,0) -- (-.8,.8);
\end{scope}

\begin{scope}[shift={(8,0)}]
\draw (-0.15,0.2) rectangle +(0.3,1.6);
\inputState{0}{0.35};
\outputState{0}{1.65};
\draw (0,1) node[right=4pt] {\footnotesize ${\mathcal A}^{(3)}({\xi_{r}})$};
\end{scope}
\begin{scope}[shift={(8,2)}]
\draw (-0.15,0.2) rectangle +(0.3,1.6);
\inputState{0}{0.35};
\outputState{0}{1.65};
\draw (0,1) node[right=4pt] {\footnotesize ${\mathcal A}^{(3)}({\xi_{r-1}})$};
\end{scope}
\draw [opacity=.5,dashed] (7.85,4.6) -- (7.85,4.2) -- (8.15,4.2) -- (8.15,4.6);
\draw [opacity=.5,dashed] (7.85,5.4) -- (7.85,5.8) -- (8.15,5.8) -- (8.15,5.4);
\begin{scope}[shift={(8,6)}]
\draw (-0.15,0.2) rectangle +(0.3,1.6);
\inputState{0}{0.35};
\outputState{0}{1.65};
\draw (0,1) node[right=4pt] {\footnotesize ${\mathcal A}^{(3)}({\xi_{\ell+2}})$};
\draw [pattern=north east lines] (-0.15,0.2) rectangle +(0.3,0.3);
\end{scope}

\draw [fill=black!20,opacity=.2, very thick] (0,4) circle (.5cm);
\draw [fill=black!20,opacity=.2, very thick] (2,4) circle (.5cm);
\draw [fill=black!20,opacity=.2, very thick] (6,4) circle (.5cm);
\draw [fill=black!20,opacity=.2, very thick] (6,6) circle (.5cm);
\draw [fill=black!20,opacity=.2, very thick] (2,0) circle (.5cm);
\draw [fill=black!20,opacity=.2, very thick] (0,2) circle (.5cm);
\draw [fill=black!20,opacity=.2, very thick] (2,2) circle (.5cm);
\draw [fill=black!20,opacity=.2, very thick] (6,2) circle (.5cm);
\draw [fill=black!20,opacity=.2, very thick] (8,2) circle (.5cm);
\draw [fill=black!20,opacity=.2, very thick] (0,6) circle (.5cm);
\draw [fill=black!20,opacity=.2, very thick] (2,6) circle (.5cm);
\draw [fill=black!20,opacity=.2, very thick] (8,6) circle (.5cm);
\draw [fill=black!20,opacity=.2, very thick] (0,8) circle (.5cm);
\draw [fill=black!20,opacity=.2, very thick] (2,8) circle (.5cm);
\draw [fill=black!20,opacity=.2, very thick] (4,8) circle (.5cm);
\draw [fill=black!20,opacity=.2, very thick] (6,8) circle (.5cm);
\draw [fill=black!20,opacity=.2, very thick] (8,8) circle (.5cm);
\draw [fill=black!20,opacity=.2, very thick] (4,0) circle (.5cm);
\draw [fill=black!20,opacity=.2, very thick] (4,2) circle (.5cm);
\draw [fill=black!20,opacity=.2, very thick] (4,6) circle (.5cm);
\draw [fill=black!20,opacity=.2, very thick] (8,0) circle (.5cm);
\draw [fill=black!20,opacity=.2, very thick] (8,4) circle (.5cm);
\draw [fill=black!20,opacity=.2, very thick] (6,0) circle (.5cm);
\draw (-0.2,2) node [rotate=90]  {$q_{1(r\!-\!1)}$};
\draw (0.2,-0.2) node {$q_{1r}$};
\draw (2,-0.2) node {$q_{2r}$};
\draw (8.3,2) node {$q_{r\!-\!1}$};
\draw (8.1,-0.2) node {$q_{r}$};
\draw (-0.1,8.1) node {$q_{1}$};
\draw (2,8.2) node {$q_{2}$};
\draw (6,8.2) node {$q_{\ell}$};
\draw (8,8.23) node {$q_{\rho}$};
\draw (8.3,6) node {$q_{\ell\!+\!2}$};
\draw [thick,->,shorten >= 6pt] (-0.7,-0.2) -- (.15,.15);
\end{tikzpicture}
\caption{The automaton $\mathcal{A}_{\pi}$ for $\pi=\oplus[\xi_1,
  \ldots, \xi_{\ell}, \rho, \xi_{\ell+2}, \ldots, \xi_r]$, where every
  $\xi_i$ but $\rho$ is an increasing oscillation (in the case $\pi \notin \setH$).}
\label{fig:automate_cas_oplus_recursif}
\end{center}
\end{figure}
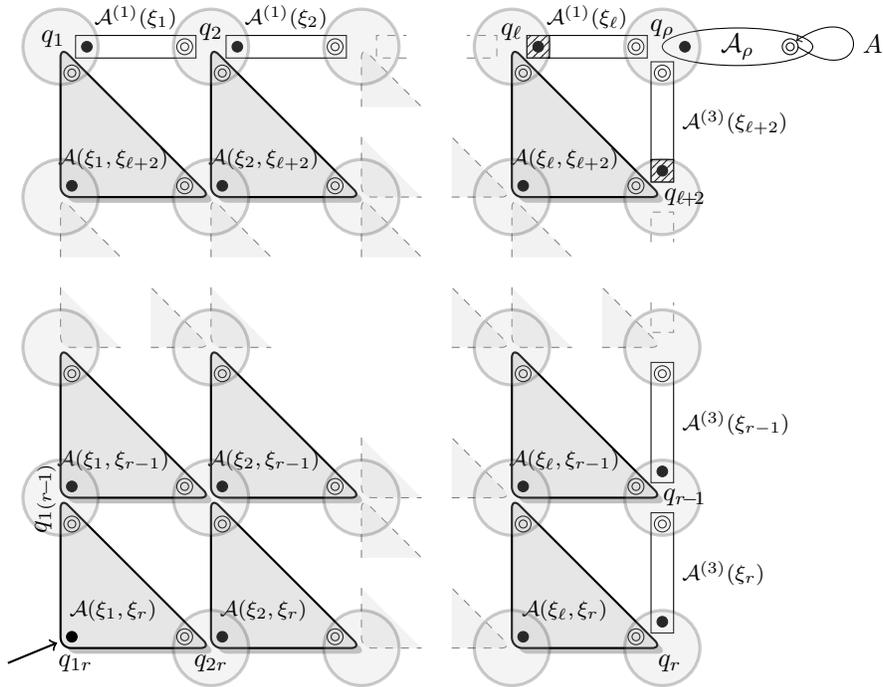

Note that if $\ell = 0$ (resp. $r = \ell +1$) \emph{i.e.}, if $T_{i_0}$ is the first (resp. last) child,
then only the automaton ${\mathcal A}_{\rho}$ and the automata ${\mathcal A}^{(3)}(\xi_j)$
(resp. ${\mathcal A}^{(1)}(\xi_i)$) appear in ${\mathcal A}_{\pi}$ whose initial state is then $q_r$ (resp. $q_1$).

The proof that the automaton $\mathcal{A}_{\pi}$ obtained by this construction recognizes $\overleftarrow{{\mathcal L}_{\pi}}$ is omitted.
However this construction is very similar to the non-recursive case of the previous section where the proofs are detailed.

\begin{lem} \label{lem:complexite_linear_rec}
For any pin-permutation $\pi = \oplus[\xi_1, \ldots,
  \xi_{\ell}, \rho, \xi_{\ell+2},\ldots, \xi_r]$ such that every $\xi_i$ but $\rho$ is
an increasing oscillation and $\pi \notin \setH$, the construction of the automaton
$\mathcal{A}_{\pi}$ (see Figure~\ref{fig:automate_cas_oplus_recursif})
is done in time and space
$\O\left((|\pi|-|\rho|)^2\right)$ plus the additional complexity
due to the construction of $\mathcal{A}_{\rho}$,
both in the classical and the optimized construction.
\end{lem}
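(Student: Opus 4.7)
The plan is to bound separately each family of ``elementary'' automata that are glued together to form $\mathcal{A}_{\pi}$ (as shown in Figure~\ref{fig:automate_cas_oplus_recursif}), and then to observe that the gluing itself is performed in constant time per merge. Writing $m = |\pi| - |\rho| = \sum_{i \neq i_0} |\xi_i|$, the building blocks are: (i) for each pair $(i,j)$ with $1 \leq i \leq \ell$ and $\ell+2 \leq j \leq r$, the automaton ${\mathcal A}(\xi_i,\xi_j)$, (ii) for each $i$ with $1\leq i\leq \ell$, the automaton ${\mathcal A}^{(1)}(\xi_i) = \AC{\PhiP{1}{i}}$, (iii) for each $j$ with $\ell+2\leq j \leq r$, the automaton ${\mathcal A}^{(3)}(\xi_j) = \AC{\PhiP{3}{j}}$, and (iv) the automaton $\mathcal{A}_{\rho}$ obtained recursively.

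The first step is to recall the size bounds already established. From Lemma~\ref{lem:complexite_aij}, $|{\mathcal A}(\xi_i,\xi_j)| = \O(|\xi_i|+|\xi_j|)$ and this automaton is built in time and space of the same order. For ${\mathcal A}^{(1)}(\xi_i)$ and ${\mathcal A}^{(3)}(\xi_j)$, the set $\PhiP{1}{i}$ (resp.~$\PhiP{3}{j}$) contains at most two words (Remark~\ref{rem:P1P3}) each of length $\O(|\xi_i|)$ (resp.~$\O(|\xi_j|)$), hence the Aho--Corasick variant of Subsection~\ref{sec:ac} builds $\AC{\PhiP{1}{i}}$ and $\AC{\PhiP{3}{j}}$ in time and space $\O(|\xi_i|)$ and $\O(|\xi_j|)$ respectively.

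The next step is to sum these bounds. The dominant contribution comes from the pair automata:
\[
\sum_{i=1}^{\ell}\sum_{j=\ell+2}^{r} \O(|\xi_i|+|\xi_j|)
= \O\!\left((r{-}\ell{-}1)\sum_{i=1}^{\ell}|\xi_i| \;+\; \ell\sum_{j=\ell+2}^{r}|\xi_j|\right)
= \O(m^2),
\]
since both $\sum_{i=1}^{\ell}|\xi_i|$ and $\sum_{j=\ell+2}^{r}|\xi_j|$ are bounded by $m$, and $\ell, r{-}\ell{-}1 \leq m$. The contributions of the ${\mathcal A}^{(1)}(\xi_i)$ and ${\mathcal A}^{(3)}(\xi_j)$ sum to $\O(m)$, which is absorbed into $\O(m^2)$.

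Finally, I would argue that the assembly performed in the text, namely the merging of the three state triples (at each $q_{ij}$, $q_i$, $q_j$, and $q_\rho$) and the addition of the loop on the final state, involves a constant amount of work per merge and $\O(m^2)$ merges in total, and thus does not exceed the size bound above. Adding the recursive contribution for $\mathcal{A}_\rho$ yields the claimed complexity $\O((|\pi|-|\rho|)^2)$ plus the cost of building $\mathcal{A}_\rho$. The argument is insensitive to whether we use the classical or the optimized construction, because the only component that differs between the two variants is $\mathcal{A}_\rho$ (the elementary automata (i)--(iii) are the same), and its cost is counted separately. The only delicate point is to check that the merges really are $\O(1)$ each, which follows from the fact that every state being merged has a bounded in/out-degree in its local elementary automaton; this is the sole ``bookkeeping'' step, and the rest of the proof is a straightforward summation.
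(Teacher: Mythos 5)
Your proof is correct and follows essentially the same route as the paper's: bound $|\mathcal{A}_{\pi}|-|\mathcal{A}_{\rho}|$ by summing the sizes of the constituent automata $\mathcal{A}(\xi_i,\xi_j)$, $\mathcal{A}^{(1)}(\xi_i)$, $\mathcal{A}^{(3)}(\xi_j)$ via Lemma~\ref{lem:complexite_aij} and Remark~\ref{rem:P1P3}, obtain $\O((|\pi|-|\rho|)^2)$, and note that construction time is linear in automaton size, with the recursive cost of $\mathcal{A}_{\rho}$ accounted for separately. Your extra remarks on the constant-time merges and on the classical/optimized variants only differing through $\mathcal{A}_{\rho}$ are points the paper leaves implicit, and they are accurate.
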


\begin{proof}
First notice that $|\pi|-|\rho|=\sum_{i=1, i \neq \ell+1}^r|\xi_i|$. Moreover, taking into account the merge of states:
$$|\mathcal{A}_{\pi}|\leq
\sum_{i=1}^{\ell}\sum_{j=\ell+2}^r|\A{\xi_i,\xi_j}|+\sum_{i=1}^{\ell}
|{\mathcal A}^{(1)}(\xi_{i})| + \sum_{j=\ell+2}^r |{\mathcal
  A}^{(3)}(\xi_{j})|+|\mathcal{A}_{\rho}|.$$ From
Lemma~\ref{lem:complexite_aij} (p.\pageref{lem:complexite_aij}) and
the fact that $|P^{(1)}(\xi_{i})| \leq 2$ and $|P^{(3)}(\xi_{j})| \leq
2$ (see Remark~\ref{rem:P1P3} p.\pageref{rem:P1P3}), it follows that
$$|\mathcal{A}_{\pi}|-|\mathcal{A}_{\rho}|=\O\left( \sum_{i=1}^{\ell}
\sum_{j=\ell+2}^r(|\xi_i|+|\xi_j|)+\sum_{i=1 \atop i \neq
  \ell+1}^r|\xi_i| \right)=\O((|\pi|-|\rho|)^2),$$ concluding the
proof, since the time of the construction is linear w.r.t.~the size of the automaton.
\end{proof}

We end this paragraph with a remark which will be useful in Subsection~\ref{subsection:marquage}.

\begin{rem}\label{rem:pattern_rec}
If $\ell \neq 0$, let $\pi^{(1)}$ be the pattern of $\pi$ obtained by
deletion of the elements of $\xi_1$.  Then ${\mathcal A}_{\pi^{(1)}}$
is obtained by taking $q_{2r}$ (see
Figure~\ref{fig:automate_cas_oplus_recursif}) as initial state and by
considering only the states of ${\mathcal A}_{\pi}$ that are
accessible from $q_{2r}$.  Thus in ${\mathcal A}_{\pi}$ the language
recognized from $q_{2r}$ is $\overleftarrow{{\mathcal
    L}_{\pi^{(1)}}}$.  If $r \neq \ell +1$ the same property holds
with the pattern $\pi^{(r)}$ (obtained by deletion of the elements of
$\xi_r$) and the state $q_{1(r-1)}$.  We take the convention that
$q_{1(\ell+1)}=q_1$, $q_{(\ell+1)r}=q_r$ and $q_{(\ell+1)(\ell+1)}=
q_{\rho}$.
\end{rem}

\paragraph*{If $\pi \in \setH$, \emph{i.e.} if one of the conditions given in Figure~\ref{fig:H}
  (p.\pageref{fig:H}) holds for $\pi$}
Then
Theorem~\ref{thm:linearRoot} (p.\pageref{thm:linearRoot}) ensures that
$P(\pi)$ is the union of the set $P_0 = P(\rho)\cdot \F\ell \shuffle
\G{\ell+2}$ that we consider in the previous paragraph and some other
sets that are very similar, all ending with the same kind of shuffle
product. As the automaton ${\mathcal A}_{\pi}$ recognizes reversed words,
these similar ends lead to similar beginnings in the automaton. So
the automaton ${\mathcal A}_{\pi}$ has the same general structure as
the automaton ${\mathcal A}$ of
Figure~\ref{fig:automate_cas_oplus_recursif}  but
some transitions are added to account for the words in $P(\pi)$ not belonging to $P_0$.

More precisely ${\mathcal A}_{\pi}$ is obtained performing the following modifications
on the automaton ${\mathcal A}$ of Figure~\ref{fig:automate_cas_oplus_recursif}.
First we add new paths as depicted in the last column of
Figure~\ref{fig:H} (p.\pageref{fig:H}). 
These paths start in the shaded states $q_{\ell}$ or
$q_{\ell+2}$ of Figure~\ref{fig:automate_cas_oplus_recursif} and
arrive in marked states of $\mathcal{A}_{\rho}$.
If a path is labeled in Figure~\ref{fig:H} by a word $w$ with $s$ letters
we build $s-1$ new states and $s$ transitions such that the reading of $w$
from the shaded state leads to the corresponding marked state of $\mathcal{A}_{\rho}$.
These marked states may be seen as initial states of subautomata:
in Figure~\ref{fig:H}, for all $Y$, $q_Y$ is a state of $\mathcal{A}_{\rho}$ such that
the language recognized from $q_Y$ is $\overleftarrow{{\mathcal L}_{\sigma}}$,
where $\sigma$ is the permutation whose diagram is $Y$.
The way in which such states of ${\mathcal A}_{\rho}$ are marked is explained in Subsection~\ref{subsection:marquage}.

Moreover to keep the resulting automaton deterministic and complete
when adding these new paths we have to make some other changes.
Notice that state $q_{\ell}$ (resp. $q_{\ell+2}$) is the initial state
of ${\mathcal A}^{(1)}(\xi_{\ell})=\AC{\PhiP{1}{\ell}}$ (resp. ${\mathcal
  A}^{(3)}(\xi_{\ell +2})=\AC{\PhiP{3}{\ell +2}}$).
Remark~\ref{rem:P1P3}
(p.\pageref{rem:P1P3}) ensures that $\PhiP{1}{\ell} = \overleftarrow{\phi(P^{(1)}(\xi_{\ell}))}$ 
(resp. $\PhiP{3}{\ell+2} = \overleftarrow{\phi(P^{(3)}(\xi_{\ell +2}))}$) 
is defined on the alphabet $\{L,D\}$ (resp. $\{U,R\}$).
Therefore, from Lemma~\ref{lem:trans_initial} (p.\pageref{lem:trans_initial}),
transitions labeled by $U$ or $R$
(resp. $L$ or $D$) leaving $q_\ell$ (resp. $q_{\ell+2}$) are loops on
the initial state $q_{\ell}$ (resp. $q_{\ell+2}$) of
${\mathcal A}^{(1)}(\xi_\ell)$ (resp. ${\mathcal A}^{(3)}(\xi_{\ell+2})$).
Hence, as can be seen on see Figure~\ref{fig:H}, the new transitions leaving
shaded states are labeled by directions that correspond to loops in
${\mathcal A}$.  So we just have to delete
the loops and replace them by the new transitions in order to preserve
the determinism of the automaton.

Now we make some other changes to preserve completeness and ensure that even though we have deleted loops
on shaded states, all words that were recognized by the automaton
${\mathcal A}$ are still recognized by the modified automaton
${\mathcal A}_{\pi}$. As $q_\ell$ (resp. $q_{\ell+2}$) is not
reachable from $q_{\ell+2}$ (resp. $q_\ell$) we can handle separately
new states reachable from $q_\ell$ and new states reachable from
$q_{\ell+2}$. Let $q_0$ be $q_\ell$ (resp. $q_{\ell+2}$). Like in
the Aho-Corasick algorithm we label any new state $q$ reachable from $q_0$
by the shortest word labeling a path from $q_0$ to $q$. So these
labels begin with $U$ or $R$ (resp. $L$ or $D$) (see
Figure~\ref{fig:H}). Notice that the states in the part ${\mathcal
  A}^{(1)}(\xi_\ell)$ (resp. ${\mathcal A}^{(3)}(\xi_{\ell+2})$) of
${\mathcal A}$ are also labeled in such a way, but their labels differ
from the ones of the new states since they contain only letters $L$ or
$D$ (resp. $U$ or $R$).
By Lemma~\ref{lem:trans_initial} (p.\pageref{lem:trans_initial}),
we know that in ${\mathcal A}^{(1)}(\xi_\ell)$
(resp. ${\mathcal A}^{(3)}(\xi_{\ell+2})$) all transitions labeled by
$U$ or $R$ (resp. $L$ or $D$) go to $q_0$, therefore we replace them
by transitions going to the new state labeled by $U$ or $R$ (resp. $L$ or $D$)
if such a new state exists (otherwise we keep the transition going to $q_0$).
We complete the construction by adding missing
transitions from the states newly created: for any such state $q$, the
transition from $q$ labeled by $Z$ goes to the longest suffix of $q
\cdot Z$ that is a state of the automaton -- either a new state or
a state of ${\mathcal A}^{(1)}(\xi_\ell)$
(resp. ${\mathcal A}^{(3)}(\xi_{\ell+2})$).

The proof that the automaton $\mathcal{A}_{\pi}$ obtained by this
construction recognizes $\overleftarrow{{\mathcal L}_{\pi}}$ is
omitted to avoid the examination of the eight cases of
Figure~\ref{fig:H}.  However, it is similar to the proof of
Theorem~\ref{lem:A_pi_primitif_rec} (p.\pageref{lem:A_pi_primitif_rec}),
with some of the difficulties released (since labels on the new paths
are explicit in Figure~\ref{fig:H}, while they are not in the proof of Theorem~\ref{lem:A_pi_primitif_rec}).

\begin{lem}\label{lem:complexite_linear_rec_2}
The complexity of building $\mathcal{A}_{\pi}$ given in Lemma~\ref{lem:complexite_linear_rec}
(p.\pageref{lem:complexite_linear_rec}) still holds if $\pi \in \setH$.
\end{lem}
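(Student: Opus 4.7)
The plan is to reduce the complexity analysis to that of Lemma~\ref{lem:complexite_linear_rec} by showing that the extra work incurred when $\pi \in \setH$ is dominated by $\O((|\pi|-|\rho|)^2)$. Recall that the automaton $\mathcal{A}_\pi$ in this case is obtained from the automaton $\mathcal{A}$ of Figure~\ref{fig:automate_cas_oplus_recursif} -- whose construction is already handled by Lemma~\ref{lem:complexite_linear_rec} -- by three local modifications: (a) the addition of the new paths described in the last column of Figure~\ref{fig:H}, (b) the deletion of certain loops on the shaded states $q_\ell$ and $q_{\ell+2}$, and (c) the redirection of transitions inside ${\mathcal A}^{(1)}(\xi_\ell)$ and ${\mathcal A}^{(3)}(\xi_{\ell+2})$ together with the completion of missing transitions from the newly created states. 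The key observation is that each of (a), (b), (c) has cost $\O\!\bigl((|\pi|-|\rho|)^2\bigr)$, hence the overall cost is unchanged.

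First I would bound the number of states and transitions added in step (a). Inspecting Figure~\ref{fig:H} case by case, every label of a new path has length at most a constant in the cases $2H3$, $2H2$, $2H2{\star}$, $2H1$, $1H2$, $1H2{\star}$ and $1H1$, so those configurations contribute $\O(1)$ new states and transitions. The only case with a non-constant label is $1H1+$, where the path is labelled by a factor of the form $LD\cdots LDRU$ coming from an increasing oscillation $\xi^+$ sitting at the root; its length is bounded by $|\xi^+|=\O(|\pi|-|\rho|)$. Thus step (a) creates at most $\O(|\pi|-|\rho|)$ states and transitions, and can be performed in $\O(|\pi|-|\rho|)$ time and space by storing these new states in a trie so that the endpoint of each new path (which must be merged with the marked state of $\mathcal{A}_\rho$ identified in Subsection~\ref{subsection:marquage}) is located in constant time.

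Step (b) is trivial: at most two loops need to be removed, in constant time. For step (c), recall that $q_\ell$ and $q_{\ell+2}$ are the initial states of ${\mathcal A}^{(1)}(\xi_\ell)=\AC{\PhiP{1}{\ell}}$ and ${\mathcal A}^{(3)}(\xi_{\ell+2})=\AC{\PhiP{3}{\ell+2}}$, whose sizes are $\O(|\xi_\ell|)$ and $\O(|\xi_{\ell+2}|)$ respectively (Lemma~\ref{lem:complexite_aij} together with Remark~\ref{rem:P1P3}). By Lemma~\ref{lem:trans_initial} the transitions we must possibly redirect are exactly those labelled by a letter of $\{U,R\}$ (resp. $\{L,D\}$), and there is at most one such transition per state of ${\mathcal A}^{(1)}(\xi_\ell)$ (resp. ${\mathcal A}^{(3)}(\xi_{\ell+2})$), pointing back to $q_\ell$ (resp. $q_{\ell+2}$). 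Scanning through these automata once and redirecting the appropriate transitions to the newly created states (using the trie from step (a) to retrieve them) therefore costs $\O(|\xi_\ell|+|\xi_{\ell+2}|)=\O(|\pi|-|\rho|)$. Finally, completing the outgoing transitions from the $\O(|\pi|-|\rho|)$ newly created states requires, for each such state $q$ and each letter $Z\in A$, finding the longest suffix of $q\cdot Z$ that is the label of a state of the automaton; maintaining a suffix-link structure on the trie of new states (analogous to the failure links used by Aho–Corasick) makes each lookup amortised constant, so this step also runs in $\O(|\pi|-|\rho|)$ time and space.

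Summing the contributions of (a), (b) and (c) with the bound of Lemma~\ref{lem:complexite_linear_rec} for the underlying automaton $\mathcal{A}$ gives a total complexity of $\O\!\bigl((|\pi|-|\rho|)^2\bigr)$ plus the cost of constructing $\mathcal{A}_\rho$, in both the classical and the optimised version. The main (mild) obstacle is the case $1H1+$, where I have to check that the length of the $LD\cdots LDRU$ label is indeed bounded by $|\pi|-|\rho|$ and that I can identify on the fly the correct marked state $q_S$ in $\mathcal{A}_\rho$; this follows from Subsection~\ref{subsection:marquage}, which guarantees that such marks are produced along the recursive construction of $\mathcal{A}_\rho$ without extra cost.
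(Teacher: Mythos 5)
There is a genuine gap in your accounting of the case $(1H1+)$. In that configuration the non-constant label $LD\cdots LDRU$ on the new path encodes the reading of the remaining leaves of $T_{i_0}$, and the oscillation $\xi^{+}$ it comes from is the label of the \emph{root of $T_{i_0}$}, i.e.\ it lives inside $\rho$, not among the sibling children $\xi_1,\ldots,\xi_\ell,\xi_{\ell+2},\ldots,\xi_r$ of the root of $\pi$. Hence the length of this path is $\O(|\rho|)$, and your claimed bound $|\xi^{+}|=\O(|\pi|-|\rho|)$ is false in general: take $\pi$ with only a couple of size-one children besides $\rho$, so that $|\pi|-|\rho|=\O(1)$ while $|\xi^{+}|$ can be of order $|\rho|$. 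Consequently your conclusion that the extra work is dominated by $\O\bigl((|\pi|-|\rho|)^2\bigr)$ does not follow; the extra cost is in fact $\O(|\rho|+|\xi_\ell|+|\xi_{\ell+2}|)$.

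The missing step is how to absorb the $\O(|\rho|)$ term. The paper's proof does this by observing that the complexity of building $\mathcal{A}_{\rho}$ is itself at least linear in $|\rho|$, so the $\O(|\rho|)$ contribution of the new paths (states and their outgoing transitions) is swallowed by the ``additional complexity due to the construction of $\mathcal{A}_{\rho}$'' already present in the statement of Lemma~\ref{lem:complexite_linear_rec}; the remaining $\O(|\xi_\ell|+|\xi_{\ell+2}|)=\O(|\pi|-|\rho|)$ for redirecting transitions in ${\mathcal A}^{(1)}(\xi_\ell)$ and ${\mathcal A}^{(3)}(\xi_{\ell+2})$ is handled as you do. Your treatment of steps (b) and (c), and the observation that the other conditions of Figure~\ref{fig:H} contribute only constant-length paths, are fine (the auxiliary trie/suffix-link machinery is unnecessary but harmless); what must be repaired is the bound on the $(1H1+)$ path together with the absorption argument just described.
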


\begin{proof}
When $\pi \in \setH$, the construction of $\mathcal{A}_{\pi}$
is the same as in the case $\pi \notin \setH$, with some new paths added.
There are at most four new paths, $\O(|\rho|)$ new states in each path,
$\O(|\rho|)$ transitions from these new states, and the modification
of transitions in ${\mathcal A}^{(1)}(\xi_\ell)$ (resp. ${\mathcal
  A}^{(3)}(\xi_{\ell+2})$) is done in $\O(|\xi_\ell|)$
(resp. $\O(|\xi_{\ell+2}|)$).
So in the construction of $\mathcal{A}_{\pi}$ described above,
we have to add a time and space complexity
$\O(|\rho|+|\xi_\ell|+|\xi_{\ell+2}|)$ w.r.t. the case $\pi \notin \setH$.
As $|\xi_\ell|+|\xi_{\ell+2}| = \O\left((|\pi|-|\rho|)\right)$
and as the complexity of the 
construction of ${\mathcal A}_{\rho}$ is bigger than $\O(|\rho|)$,
this does not change the overall estimation of the complexity of the
construction of ${\mathcal A}_{\pi}$ given in
Lemma~\ref{lem:complexite_linear_rec}.
\end{proof}

\subsection{Pin-permutations with a prime root: recursive case}
\label{sec:decomp_simple}

\subsubsection{Exactly one child of the root is not a leaf.}

Let  $\pi = $\begin{tikzpicture}[sibling
distance=10pt,level distance=10pt,baseline=-15pt] \node[simple] {$\alpha$} child
{[fill] circle (2 pt) node(x1) {}} child[missing] child {[fill] circle
(2 pt) node(xk) {}} child[child anchor=north]
{node[draw,shape=isosceles triangle, shape border
rotate=90,anchor=north,inner sep=0, isosceles triangle apex angle=90] {$T$}} child {[fill] circle (2 pt) node(y1)
{}} child[missing] child {[fill] circle (2 pt) node(yk) {}};
\draw[dotted] (x1) -- (xk);
\draw[dotted] (y1) -- (yk);
\end{tikzpicture} where $\alpha$ is a simple permutation all of whose children but $T$ are
leaves. Denote by $\rho$ the permutation whose decomposition tree
is $T$, and by $x$ the point of $\alpha$ expanded by $T$.

Recall that $Q_x(\alpha)$ (see Definition~\ref{def:Qxalpha} p.\pageref{def:Qxalpha}) denotes the set of strict pin words obtained
by deleting the first letter of quasi-strict pin words of $\alpha$
whose first point read in $\alpha$ is $x$.

\paragraph*{If $\pi$ does not satisfy condition ($\mathcal C$) (see Definition~\ref{def:c}
p.\pageref{def:c})} \hspace{-0.35cm} Then from
Theorem~\ref{thm:conditionc} (p.\pageref{thm:conditionc}),
$P(\pi)=P(\rho) \cdot Q_x(\alpha)$.  So $\overleftarrow{{\mathcal
    L}_{\pi}} = \Astar \cdot \overleftarrow{\phi(Q_x(\alpha))} \cdot
\overleftarrow{{\mathcal L}_{\rho}}$ and since $\overleftarrow{{\mathcal L}_{\rho}}=\Astar \cdot \overleftarrow{{\mathcal L}_{\rho}}$  the automaton ${\mathcal
  A}_{\pi}$ recognizing $\overleftarrow{{\mathcal L}_{\pi}}$ is
obtained by the concatenation of
$\AC{\overleftarrow{\phi(Q_x(\alpha))}}$ with ${\mathcal A}_{\rho}$,
which is recursively obtained.

\paragraph*{If $\pi$ satisfies condition ($\mathcal C$) and $|T| \geq 3$} \hspace{-0.3cm} Then
by Theorem~\ref{thm:conditionc} (p.\pageref{thm:conditionc}) --
and using the notations of this theorem,
$P(\pi)$ contains $P(\rho) \cdot Q_x(\alpha)$
and some other words.  Defining $T'$ as in condition ($\mathcal C$),
$\rho'$ the permutation whose decomposition tree is $T'$, and $w$ the
unique word encoding the unique reading of the remaining leaves in
$\pi$ after $T'$ is read when $T$ is read in two \fois, these other
words are $P(\rho') \cdot w$.  Note that from
Lemma~\ref{lem:w_at_least_3} (p.\pageref{lem:w_at_least_3}) $w$ is a
strict pin word.  So $\overleftarrow{{\mathcal L}_{\pi}} = \Astar
\cdot \overleftarrow{\phi(Q_x(\alpha))} \cdot \overleftarrow{{\mathcal
    L}_{\rho}} \cup \Astar \cdot \overleftarrow{\phi(w)} \cdot
\overleftarrow{{\mathcal L}_{\rho'}}$.
The skeleton of ${\mathcal A}_{\pi}$ is the concatenation of the automaton
$\AC{\overleftarrow{\phi(Q_x(\alpha))}}$ with ${\mathcal A}_{\rho}$
and then as in the recursive case with a linear root,
we add some new transitions to account for the words in $P(\rho') \cdot w$.

Denoting $Z$ the last letter of $w$ (\emph{i.e.}, the first letter of
$\overleftarrow{\phi(w)}$), Lemma~\ref{lem:w_at_least_3} ensures that
no word of $\overleftarrow{\phi(Q_x(\alpha))}$ contains $Z$ and
therefore by Lemma~\ref{lem:trans_initial}
(p.\pageref{lem:trans_initial}) all the transitions labeled by $Z$ in 
the automaton $\AC{\overleftarrow{\phi(Q_x(\alpha))}}$ go to its initial state, denoted $q_0$.  We built an
automaton $\mathcal{A}$
by performing the following modifications on
$\AC{\overleftarrow{\phi(Q_x(\alpha))}}$: remove the loop labeled by
$Z$ on $q_0$ and add a path reading $\overleftarrow{\phi(w)}$ from
$q_0$ to a new final state $f'$.  Label all states $q$ of
$\mathcal{A}$ by the shortest word labeling a path from the initial
state $q_0$ to $q$.  Replace any transition labeled by $Z$ from a
state $q$ of $\AC{\overleftarrow{\phi(Q_x(\alpha))}}$ to $q_0$ by a
transition from $q$ to the new state labeled by $Z$.  Finally complete
the automaton with transitions from the states of the added path: for all such states $q$ but $f'$, the
transition from $q$ labeled by $a$ goes to the longest suffix of $q
\cdot a$ that is a state of the automaton -- either a new state or a pre-existing state.
Notice that the automaton ${\mathcal A}$ we obtain is almost complete and has exactly two final states,
without outgoing transitions:
$f$ -- the unique final state of $\AC{\overleftarrow{\phi(Q_x(\alpha))}}$ -- and $f'$.

The automaton ${\mathcal A}_{\pi}$ is then obtained from $\mathcal{A}$ and ${\mathcal A}_{\rho}$
by merging $f$ with the initial state $q_{T}$ of ${\mathcal A}_{\rho}$ and $f'$ with a marked state $q_{T'}$
(see Subsection~\ref{subsection:marquage}) of ${\mathcal A}_{\rho}$ which is
a state from which the recognized language is $\overleftarrow{{\mathcal L}_{\rho'}}$.
This construction is shown in Figure~\ref{fig:automate_primitif_rec}.

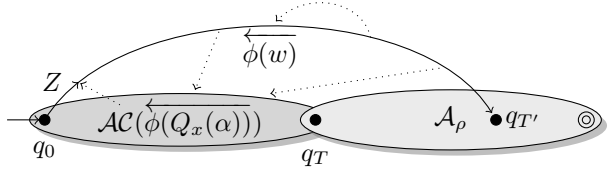
\begin{figure}[htbp]
\begin{center}
\begin{tikzpicture}
\useasboundingbox (-.5, -.1) rectangle (7.5,1.4);
\draw [drop shadow,fill=black!16] (1.8,0) node {$\AC{\overleftarrow{\phi(Q_x(\alpha))}}$} ellipse  (2cm and .35cm) ;
\draw [->] (-.5,0) -- (-.07,0);
\inputState{0}{0};
\draw (0,-.4) node {$q_{0}$};
\draw [drop shadow,fill=black!8] (5.4,0) node {$\mathcal{A}_\rho$} ellipse  (2cm and .4cm) ;
\inputState{6}{0};
\draw (6,0) node[right] {$q_{T'}$};
\draw [->,shorten >=3pt, inner sep=0pt,
        postaction={decorate, decoration={markings, mark=at position 0.1 with {\arrow{>}}}},
        postaction={decorate, decoration={markings, mark=at position 0.1 with \node (nZ){};}},
        postaction={decorate, decoration={markings, mark=at position 0.15 with \node (n1){};}},
        postaction={decorate, decoration={markings, mark=at position 0.5 with \node (n3){};}},
        postaction={decorate, decoration={markings, mark=at position 0.7 with \node (n4){};}},
        postaction={decorate, decoration={markings, mark=at position 0.85 with \node (n5){};}},
        postaction={decorate, decoration={markings, mark=at position 0.4 with \node (n2){};}}
        ] (0,0) .. controls +(1,1.65) and +(-1,1.65) .. node[below=2pt] {$\overleftarrow{\phi(w)}$} (6,0);
\draw[dotted,->] (1,0.2) -- (nZ);
\draw[dotted,->] (n2) -- (2,0.4);
\draw[dotted,->] (n5) -- (3,0.35);
\draw [dotted,->,shorten >= 2pt] (n4) .. controls +(-0.1,0.5) and +(0.5,0.5) .. (n3);
\draw (0.1,0.5) node {$Z$};
\inputState{3.6}{0};
\draw (3.6,-.5) node {$q_{T}$};
\outputState{7.2}{0};
\end{tikzpicture}
\end{center}
\caption{Automaton $\mathcal{A}_\pi$ for $\pi=\alpha[1,\ldots,1,\rho,1,\ldots,1]$.}
\label{fig:automate_primitif_rec}
\end{figure}

Notice that the automaton $\mathcal{A}$ obtained from $\AC{\overleftarrow{\phi(Q_x(\alpha))}}$
is somehow very similar to $\AC{\overleftarrow{\phi(Q_x(\alpha))}, \{\overleftarrow{\phi(w)}\}}$ but
because $\overleftarrow{\phi(w)}$ has a suffix in $\overleftarrow{\phi(Q_x(\alpha))}$ (from Lemma~\ref{lem:w_at_least_3}),
the sets of words $X_1 = \overleftarrow{\phi(Q_x(\alpha))}$ and $X_2 = \{\overleftarrow{\phi(w)}\}$
do not satisfy the independence condition required in our construction of $\AC{X_1,X_2}$.

\begin{lem}\label{lem:AC(Qx(alpha),Phi(w))}
The automaton $\mathcal{A}$ of the above construction recognizes the set of words ending with a first occurrence of a
word of $\overleftarrow{\phi(Q_x(\alpha))}$.  Moreover for any word  $u$ recognized by $\mathcal{A}$,
$\, q_0 \cdot u = f'$ if $\overleftarrow{\phi(w)}$ is a suffix of $u$, and $q_0 \cdot u = f$ otherwise.
\end{lem}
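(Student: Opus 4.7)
My plan is to exploit the similarity between $\mathcal{A}$ and the Aho--Corasick-style automaton $\AC{X_1, X_2}$ with $X_1 = \overleftarrow{\phi(Q_x(\alpha))}$ and $X_2 = \{\overleftarrow{\phi(w)}\}$, even though these two sets do not satisfy the factor-independence hypothesis needed to invoke our generic construction. The key structural fact I would first record is that, by Lemma~\ref{lem:w_at_least_3}, $w = \bar{w}\cdot Z$ with $\bar{w} \in Q_x(\alpha)$, so $\overleftarrow{\phi(w)} = Z \cdot \overleftarrow{\phi(\bar{w})}$ where $\overleftarrow{\phi(\bar{w})} \in X_1$; moreover $Z$ does not appear in any word of $X_1$. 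Hence every $Z$-transition of $\AC{X_1}$ had to go to $q_0$ (by Lemma~\ref{lem:trans_initial}), which is precisely what justifies the redirection performed in the construction of $\mathcal{A}$.

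First I would verify from the construction that $\mathcal{A}$ is deterministic and almost complete, with final states $\{f, f'\}$ and no outgoing transitions from either. For this I would check that: (i)~the only transitions removed from $\AC{X_1}$ are $Z$-transitions all previously pointing to $q_0$; (ii)~the added $\overleftarrow{\phi(w)}$-path, the modifications of $Z$-transitions and the fall-back transitions from the new states together supply exactly one outgoing transition per letter at each non-final state. Since $f$ already had no outgoing transitions in $\AC{X_1}$ and the added path leaves $q_0$ (not $f$), $f$ keeps this property; the state $f'$ is declared final with no outgoing transitions by construction.

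The heart of the proof will be the following invariant, which is the analogue of Lemma~\ref{lem:trans_initial} adapted to $X_1 \cup X_2$: for every word $u$ having no factor in $X_1 \cup X_2$ except possibly as a suffix, the state $q_0 \cdot u$ is the one labeled by the longest suffix of $u$ that is also a prefix of a word of $X_1 \cup X_2$. I would prove this by induction on $|u|$, splitting the inductive step into two cases according to whether the newly read letter $a$ keeps us inside the ``original'' part of the automaton (the surviving part of $\AC{X_1}$) or moves us onto the added $\overleftarrow{\phi(w)}$-path. In either case the longest-suffix fall-back transitions — together with the fact that the labels on the added path all start with $Z$, while labels of states of $\AC{X_1}$ contain only letters that appear in $X_1$ (so no $Z$) — imply that the transition goes exactly to the longest-suffix-prefix state, mirroring the standard Aho--Corasick invariant.

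From the invariant the two desired conclusions follow easily. If $u$ ends with its first occurrence of a word of $X_1 \cup X_2$ then, by the invariant applied to the prefix of length $|u|-1$, one more transition brings us precisely to $f$ (if this occurrence is a word of $X_1$ that is not the suffix $\overleftarrow{\phi(\bar{w})}$ preceded by $Z$) or to $f'$ (if this occurrence is $\overleftarrow{\phi(w)} = Z \cdot \overleftarrow{\phi(\bar{w})}$); this uses the explicit redirection of $Z$-transitions to the new path, which ensures that the suffix $\overleftarrow{\phi(\bar{w})}$ is recognised at $f'$ precisely when it is immediately preceded by $Z$ in $u$. The main obstacle I anticipate is exactly this disambiguation between $f$ and $f'$: one must check that along the added path every fall-back transition points to the correct state so that a longer $X_2$-match always overrides the shorter $X_1$-match $\overleftarrow{\phi(\bar{w})}$ sitting inside it, and conversely that an $X_1$-occurrence not preceded by $Z$ never reaches $f'$. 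This is a careful but routine case analysis on the fall-back rule combined with the fact that $Z$ appears nowhere in $X_1$.
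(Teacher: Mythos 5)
Your proposal is correct and follows essentially the same route as the paper: both hinge on Lemma~\ref{lem:w_at_least_3} (so that $\overleftarrow{\phi(w)}=Z\bar{w}$ with $\bar{w}\in\overleftarrow{\phi(Q_x(\alpha))}$ and $Z$ occurring in no word of $\overleftarrow{\phi(Q_x(\alpha))}$) combined with the longest-suffix-prefix invariant, which yields both the recognized language and the $f$ versus $f'$ dichotomy. The only difference is presentational: the paper gets the invariant by noting that merging $f$ and $f'$ turns $\mathcal{A}$ into $\AC{\overleftarrow{\phi(Q_x(\alpha))}\cup\{\overleftarrow{\phi(w)}\}}$ and then citing Lemma~\ref{lem:trans_initial}, whereas you re-prove that invariant directly by induction on $|u|$.
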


\begin{proof}
From Lemma~\ref{lem:w_at_least_3} (p.\pageref{lem:w_at_least_3}),
there exists a word $\bar{w} \in \overleftarrow{\phi(Q_x(\alpha))}$
and a letter $Z \in A$ such that $\overleftarrow{\phi(w)} =
Z\bar{w}$. Moreover no word of $\overleftarrow{\phi(Q_x(\alpha))}$
contains $Z$.

Therefore by construction, merging states $f$ and $f'$ of
$\mathcal{A}$ into a unique final state, we would obtain the automaton
$\AC{\overleftarrow{\phi(Q_x(\alpha))} \cup
  \{\overleftarrow{\phi(w)}\}}$.  Consequently, since
$\overleftarrow{\phi(w)}$ has a suffix in
$\overleftarrow{\phi(Q_x(\alpha))}$, the automaton $\mathcal{A}$
recognizes the set of words ending with a first occurrence of a word
of $\overleftarrow{\phi(Q_x(\alpha))}$.

Let $u$ be a word ending with its first occurrence of a word of $\overleftarrow{\phi(Q_x(\alpha))}$,
then $u$ does not have any factor in $\overleftarrow{\phi(Q_x(\alpha))} \cup \{\overleftarrow{\phi(w)}\}$
except as a suffix.
Lemma~\ref{lem:trans_initial} (p.\pageref{lem:trans_initial}) ensures that
$q_0 \cdot u$ is the state labeled with longest suffix of $u$ that is also a prefix of a word of
$\overleftarrow{\phi(Q_x(\alpha))} \cup \{\overleftarrow{\phi(w)}\}$, concluding the proof.
\end{proof}

Lemma~\ref{lem:AC(Qx(alpha),Phi(w))} allows us to prove the
correctness of the above construction of $\mathcal{A}_{\pi}$.  The
idea is the following: if $u \in \Astar \cdot
\overleftarrow{\phi(Q_x(\alpha))} \cdot \overleftarrow{{\mathcal
    L}_{\rho}}$ (resp. $\Astar \cdot \overleftarrow{\phi(w)}
\cdot\overleftarrow{{\mathcal L}_{\rho'}}$) and if
$trace_{\mathcal{A}_{\pi}}(u)$ contains $q_{T'}$ (resp. $q_T$) and not
$q_{T}$ (resp.  $q_{T'}$) before, then $u$ is still accepted by
$\mathcal{A}_{\pi}$ since $\overleftarrow{{\mathcal L}_{\rho}}
\subseteq \overleftarrow{{\mathcal L}_{\rho'}}$
(resp. $\overleftarrow{\phi(w)} \cdot \overleftarrow{{\mathcal
    L}_{\rho'}} \subseteq \overleftarrow{{\mathcal L}_{\rho}}$).  This
is formalized in the following theorem.

\begin{theo}\label{lem:A_pi_primitif_rec}
The automaton $\mathcal{A}_{\pi}$ obtained by the above construction recognizes $\overleftarrow{{\mathcal L}_{\pi}}$.
\end{theo}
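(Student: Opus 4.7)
The plan is to prove both inclusions of languages, using as main tools Lemma~\ref{lem:AC(Qx(alpha),Phi(w))} together with two structural inclusions that compare $\overleftarrow{{\mathcal L}_{\rho}}$ and $\overleftarrow{{\mathcal L}_{\rho'}}$.

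First I would establish two preliminary containments that will bridge between the two branches of the description $\overleftarrow{{\mathcal L}_{\pi}} = A^\star \overleftarrow{\phi(Q_x(\alpha))} \overleftarrow{{\mathcal L}_{\rho}} \cup A^\star \overleftarrow{\phi(w)} \overleftarrow{{\mathcal L}_{\rho'}}$. (i) $\overleftarrow{{\mathcal L}_{\rho}} \subseteq \overleftarrow{{\mathcal L}_{\rho'}}$: since $T'$ is a subtree of $T$ forced by condition $(\mathcal{C})$, the permutation $\rho'$ is a pattern of $\rho$, and Theorem~\ref{thm:lienLangagesMotif} applies. (ii) $\overleftarrow{\phi(w)} \cdot \overleftarrow{{\mathcal L}_{\rho'}} \subseteq \overleftarrow{{\mathcal L}_{\rho}}$: this is the technical heart. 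For each $\mu\in P(\rho')$, Lemma~\ref{lem:w_at_least_3} gives $\mu \cdot \phi^{-1}(w') \in P(\rho)$, with $w'$ the suffix of length $2$ of $\phi(w)=w$. Because $\phi^{-1}(w')$ is a numeral, its strong numeral-led factor decomposition extends that of $\mu$, hence ${\mathcal L}(\mu \cdot \phi^{-1}(w'))$ contains ${\mathcal L}(\mu)\cdot w' \cdot A^\star$. Reversing yields $A^\star \overleftarrow{w'}\cdot \overleftarrow{{\mathcal L}(\mu)} \subseteq \overleftarrow{{\mathcal L}_{\rho}}$, and since $\overleftarrow{\phi(w)}$ starts with $\overleftarrow{w'}$ and $\overleftarrow{{\mathcal L}(\mu)}$ is closed under prepending any $A^\star$-prefix, we conclude.

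For the reverse inclusion (automaton language $\subseteq \overleftarrow{{\mathcal L}_{\pi}}$), pick $u$ accepted by $\mathcal{A}_{\pi}$ and let $v_1$ be the shortest prefix of $u$ whose reading from $q_0$ enters $\{f,f'\}=\{q_T,q_{T'}\}$; write $u=v_1 v_2$. By Lemma~\ref{lem:AC(Qx(alpha),Phi(w))}, either $q_0\cdot v_1=f$ and $v_1$ ends with a first occurrence of a word of $\overleftarrow{\phi(Q_x(\alpha))}$ not equal to a suffix that is $\overleftarrow{\phi(w)}$, giving $v_2\in\overleftarrow{{\mathcal L}_{\rho}}$, or $q_0\cdot v_1=f'$ and $v_1$ ends with $\overleftarrow{\phi(w)}$, giving $v_2\in\overleftarrow{{\mathcal L}_{\rho'}}$. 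In either case $u$ lies in the corresponding branch of $\overleftarrow{{\mathcal L}_{\pi}}$.

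For the forward inclusion, let $u\in \overleftarrow{{\mathcal L}_{\pi}}$. Whichever branch of the union it lies in, $u$ contains a factor in $\overleftarrow{\phi(Q_x(\alpha))}$ (directly, or via the suffix of $\overleftarrow{\phi(w)}$ guaranteed by Lemma~\ref{lem:w_at_least_3}); let $v_1$ be the shortest prefix of $u$ ending with such a factor and write $u=v_1 v_2$. Using (i) and (ii), rewrite the witness of $u\in\overleftarrow{{\mathcal L}_{\pi}}$ so that: in the case $q_0\cdot v_1=f$, we have $u= u_1'\psi' u_2'$ with $\psi'\in\overleftarrow{\phi(Q_x(\alpha))}$ and $u_2'\in\overleftarrow{{\mathcal L}_{\rho}}$; in the case $q_0\cdot v_1=f'$, we have $u=u_1'\overleftarrow{\phi(w)}u_2'$ with $u_2'\in\overleftarrow{{\mathcal L}_{\rho'}}$. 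By minimality of $v_1$, it is a prefix of the highlighted factor in each decomposition, so $v_2$ equals a suffix $s u_2'$ of the corresponding tail; since both $\overleftarrow{{\mathcal L}_{\rho}}$ and $\overleftarrow{{\mathcal L}_{\rho'}}$ are closed under prepending arbitrary words of $A^\star$, $v_2$ belongs to $\overleftarrow{{\mathcal L}_{\rho}}$ (resp.\ $\overleftarrow{{\mathcal L}_{\rho'}}$) and is accepted by $\mathcal{A}_{\rho}$ from $q_T$ (resp.\ $q_{T'}$), so $u$ is accepted by $\mathcal{A}_{\pi}$. The main obstacle is proving inclusion (ii): it requires carefully tracking that adding the numeral $\phi^{-1}(w')$ at the end of $\mu$ adds exactly one strong numeral-led factor whose $\phi$-image contains $w'$, after which the merger with $\overleftarrow{\phi(w)}$ is clean.
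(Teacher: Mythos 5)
Your overall strategy is the same as the paper's (the two preliminary inclusions, the decomposition of $u$ at the first occurrence of a factor of $\overleftarrow{\phi(Q_x(\alpha))}$, Lemma~\ref{lem:AC(Qx(alpha),Phi(w))} to identify whether the run sits at $q_T$ or $q_{T'}$, and then minimality plus $\Astar$-closure), and your inclusion (ii) is proved correctly and is exactly how the paper exploits Lemma~\ref{lem:w_at_least_3}. However, there is a genuine gap at the crux of the forward inclusion, namely the case where $q_0\cdot v_1 = f$ but the only witness of $u\in\overleftarrow{{\mathcal L}_{\pi}}$ comes from the second branch, $u = a\,\overleftarrow{\phi(w)}\,m'$ with $m'\in\overleftarrow{{\mathcal L}_{\rho'}}$. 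Your claim that ``using (i) and (ii)'' one can rewrite $u$ as $u_1'\psi'u_2'$ with $\psi'\in\overleftarrow{\phi(Q_x(\alpha))}$ and $u_2'\in\overleftarrow{{\mathcal L}_{\rho}}$ is not justified: the natural candidate $\psi'=\bar{w}$ (the suffix of $\overleftarrow{\phi(w)}$) leaves $u_2'=m'$, which lies only in $\overleftarrow{{\mathcal L}_{\rho'}}$, and inclusion (i) goes in the wrong direction to fix this. Moreover, even granting some rewriting, your minimality step only guarantees that $v_1$ ends no later than the end of the highlighted occurrence; if $v_1$ ended \emph{strictly inside} the occurrence of $\overleftarrow{\phi(w)}$ (past its initial letter $Z$ but before its end), then $v_2$ would be a proper suffix of $\bar{w}m'$ and neither (ii) nor its refinement $\Astar\overleftarrow{w'}\,\overleftarrow{{\mathcal L}_{\rho'}}\subseteq\overleftarrow{{\mathcal L}_{\rho}}$ applies to it.

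What is missing is precisely the positional argument the paper makes: since $q_0\cdot v_1=f$, Lemma~\ref{lem:AC(Qx(alpha),Phi(w))} says $\overleftarrow{\phi(w)}$ is not a suffix of $v_1$; minimality of $v_1$ then rules out $\overleftarrow{\phi(w)}$ as a factor of $v_1$ (its suffix $\bar{w}\in\overleftarrow{\phi(Q_x(\alpha))}$ would give an earlier occurrence); and an occurrence of $\overleftarrow{\phi(w)}$ straddling the $v_1/v_2$ boundary is impossible because $\overleftarrow{\phi(w)}$ begins with $Z$, while the last $|\bar{w}|$ letters of $v_1$ form a word of $\overleftarrow{\phi(Q_x(\alpha))}$, all of whose words have length exactly $|\overleftarrow{\phi(w)}|-1$ and contain no $Z$ by Lemma~\ref{lem:w_at_least_3}. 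Only this forces $v_1$ to be a prefix of $a$, so that $v_2\in\Astar\,\overleftarrow{\phi(w)}\,\overleftarrow{{\mathcal L}_{\rho'}}\subseteq\overleftarrow{{\mathcal L}_{\rho}}$ by your (ii), and the run from $q_T$ accepts $v_2$. So the ``main obstacle'' is not (ii) (which you handled), but this no-overlap argument reconciling the state reached by the automaton with the branch the witness comes from; without it the proof does not go through.
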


\begin{proof}
Recall that $\overleftarrow{{\mathcal L}_{\pi}} = \Astar \cdot \overleftarrow{\phi(Q_x(\alpha))} \cdot \overleftarrow{{\mathcal L}_{\rho}} \cup \Astar \cdot \overleftarrow{\phi(w)} \cdot \overleftarrow{{\mathcal L}_{\rho'}}$.
The above construction ensures that every word accepted by $\mathcal{A}_{\pi}$ belongs to the language $\overleftarrow{{\mathcal L}_{\pi}}$.
Conversely let us prove that every word of $\overleftarrow{{\mathcal L}_{\pi}}$ is accepted by $\mathcal{A}_{\pi}$.

Let $u$ be a word of $\overleftarrow{{\mathcal L}_{\pi}}$.
From Lemma~\ref{lem:w_at_least_3} (p.\pageref{lem:w_at_least_3}), there is a word $\bar{w} \in \overleftarrow{\phi(Q_x(\alpha))}$ and a letter $Z \in A$ such that
$\overleftarrow{\phi(w)} = Z\bar{w}$.
Therefore $u$ has a factor in $\overleftarrow{\phi(Q_x(\alpha))}$.
Hence we can decompose $u$ uniquely as $u = u_1 u_2$ where $u_1$ is the prefix of $u$ ending with the first occurrence of a factor in $\overleftarrow{\phi(Q_x(\alpha))}$.
Consequently from Lemma~\ref{lem:AC(Qx(alpha),Phi(w))}, $q_0 \cdot u_1$ is either $q_T$ or $q_{T'}$, namely
$q_0 \cdot u_1 = q_{T'}$ if $\overleftarrow{\phi(w)}$ is a suffix of $u_1$ and $q_0 \cdot u_1 = q_T$ otherwise.

Moreover, since $u$ belongs to $\overleftarrow{{\mathcal L}_{\pi}}$,
and because by definition $\overleftarrow{{\mathcal L}_{\rho}}= \Astar \cdot \overleftarrow{{\mathcal L}_{\rho}}$ (and similarly for $\rho'$),
we deduce that $u_2$ belongs to $\overleftarrow{{\mathcal L}_{\rho}}$ or $\overleftarrow{{\mathcal L}_{\rho'}}$.
Let us finally notice that, since $\rho' \leq \rho$,
Theorem~\ref{thm:lienLangagesMotif} (p.\pageref{thm:lienLangagesMotif}) ensures that
$\overleftarrow{{\mathcal L}_{\rho}} \subseteq \overleftarrow{{\mathcal L}_{\rho'}}$
thus $u_2 \in \overleftarrow{{\mathcal L}_{\rho'}}$.

If $q_0 \cdot u_1 = q_{T'}$ then as $u_2 \in \overleftarrow{{\mathcal L}_{\rho'}}$,
$u$ is recognized by ${\mathcal A}_\pi$.
Assume on the contrary that $q_0 \cdot u_1 = q_T$.
Then $q_0 \cdot u = q_T \cdot u_2$ and by definition of $q_T$ it is enough to prove that $u_2 \in \overleftarrow{{\mathcal L}_\rho}$.

Assume first that  $u \notin \Astar \cdot \overleftarrow{\phi(w)} \cdot \overleftarrow{{\mathcal L}_{\rho'}}$.
Then 
since $u \in \overleftarrow{{\mathcal L}_{\pi}}$, we have
$u \in \Astar \cdot \overleftarrow{\phi(Q_x(\alpha))} \cdot \overleftarrow{{\mathcal L}_{\rho}}$.
Because $u_1$ ends with the first occurrence of a
factor of $\overleftarrow{\phi(Q_x(\alpha))}$,
we deduce that $u_2 \in \Astar \cdot
\overleftarrow{{\mathcal L}_{\rho}} = \overleftarrow{{\mathcal L}_{\rho}}$.

Otherwise $u \in \Astar \cdot \overleftarrow{\phi(w)} \cdot
\overleftarrow{{\mathcal L}_{\rho'}}$.
Recall that $u_1$ is the prefix of $u$ ending with the first occurrence of a
factor of $\overleftarrow{\phi(Q_x(\alpha))}$.
First (using also Lemma~\ref{lem:AC(Qx(alpha),Phi(w))} and $q_0 \cdot u_1 = q_T$), this implies
that $\overleftarrow{\phi(w)}$ is not a suffix of $u_1$.
And second, this also implies that $\overleftarrow{\phi(w)}$ is not a factor of $u_1$.
But by assumption $\overleftarrow{\phi(w)}$ is a factor of $u$.
We claim that the first occurrence of $\overleftarrow{\phi(w)}$ in $u$
starts after the end of $u_1$.
We have just proved that $\overleftarrow{\phi(w)}$ is not a factor of $u_1$.
Moreover, $\overleftarrow{\phi(w)} = Z\bar{w}$ starts with the letter $Z$,
and from Lemma~\ref{lem:w_at_least_3}
(p.\pageref{lem:w_at_least_3}) the $|\bar{w}|$ last letters of $u_1$
are different from $Z$
(recall that all words of $\overleftarrow{\phi(Q_x(\alpha))}$ have the same length $|\alpha| = |\bar{w}|$).
This proves our claim, and
consequently, $u_2 \in \Astar \cdot
\overleftarrow{\phi(w)} \cdot \overleftarrow{{\mathcal L}_{\rho'}}$.
Let $v \in \Astar$ and $v' \in \overleftarrow{{\mathcal L}_{\rho'}}$
such that $u_2 = v \cdot \overleftarrow{\phi(w)} \cdot v'$. 
From Lemma~\ref{lem:w_at_least_3} p.\pageref{lem:w_at_least_3},
denoting by $w'$ the suffix of length $2$ of $w$, for all $u'$ in
$P(\rho')$, $u'\cdot \phi^{-1}(w')$ belongs to $P(\rho)$.
Therefore $\overleftarrow{w'} \overleftarrow{{\mathcal L}_{\rho'}}
\subseteq \overleftarrow{{\mathcal L}_{\rho}}$.  But $v' \in
\overleftarrow{{\mathcal L}_{\rho'}}$ and $\overleftarrow{w'}$ is a
prefix of $\overleftarrow{\phi(w)}$, thus $u_2 = v \cdot
\overleftarrow{\phi(w)}\cdot v' \in \Astar \cdot \overleftarrow{w'}
\cdot \Astar \cdot \overleftarrow{{\mathcal L}_{\rho'}} \subseteq
\overleftarrow{{\mathcal L}_{\rho}}$, concluding the proof.
\end{proof}

\begin{rem}\label{rem:versionopt2}
With the optimized construction of ${\mathcal A}_{\pi}$, we prove
similarly that ${\mathcal A}_{\pi}$ recognize a language $\mathcal{L}'_{\pi}$ 
such that   $\mathcal{L}'_{\pi}\cap {\mathcal M} =
\overleftarrow{\mathcal{L}_{\pi}} \cap {\mathcal M}$.
\end{rem}

\paragraph*{If $\pi$ satisfies condition ($\mathcal C$) and $|T|=2$}\label{page:automaton_conditionC_T=2} \hspace{-0.4cm} Then the construction is no more recursive.
Permutation $\pi$ and its pin words are explicit.
More precisely from Theorem~\ref{thm:conditionc}
(p.\pageref{thm:conditionc}), $P(\pi) = P_{\{1,n\}}(\pi) \cup
P_{\{2,n\}}(\pi) \cup P(T) \cdot Q_x(\alpha)$.
Thus from Remark~\ref{rem:w_2} (p.\pageref{rem:w_2}),
$$\overleftarrow{{\mathcal L}_{\pi}} = \Big( \bigcup\limits_{u \in
  P(\pi) \atop u \text{ strict or quasi-strict}}
\overleftarrow{\mathcal{L}(u)} \Big) \quad \bigcup \quad \Astar
\cdot \overleftarrow{\phi(Q_x(\alpha))} \cdot \overleftarrow{{\mathcal
    L}_{\rho}} \text{.}$$ Therefore $\mathcal{A}_{\pi}$ is the automaton
${\mathcal U}^{\circlearrowleft}(\mathcal{A}_{\pi}^{\textsc{sqs}} \ ,\
{\mathcal {AC}}(\overleftarrow{\phi(Q_x(\alpha))}) \cdot \mathcal{A}_{\rho} )$.

\begin{lem} \label{lem:complexite_primitif_rec}
Let $\pi = \alpha[1, \ldots, 1, \rho, 1,\ldots, 1]$ where $\alpha$ is a simple permutation whose set $P(\alpha)$ of pin words is given.
Then the construction of the automaton
$\mathcal{A}_{\pi}$
is done in time and space
$\O\left(|\pi|-|\rho|\right)$ plus the additional time and space
due to the construction of $\mathcal{A}_{\rho}$, except when $\pi$ satisfies condition ($\mathcal{C}$) and $|T|=2$. In this latter case,
the complexity is $\O\left(|\pi|^3\right)$ with the classical construction and $\O\left(|\pi|^2\right)$ in the optimized version.
\end{lem}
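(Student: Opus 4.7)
The proof is by case analysis on the three constructions of $\mathcal{A}_{\pi}$ presented above, each tied to a case of Theorem~\ref{thm:conditionc}. Recall that since $\pi = \alpha[1,\ldots,1,\rho,1,\ldots,1]$, we have $|\alpha| = |\pi|-|\rho|+1$, so $\O(|\alpha|) = \O(|\pi|-|\rho|)$; moreover, by Remark~\ref{rem:compute_Q_x(alpha)} (p.\pageref{rem:compute_Q_x(alpha)}) we have $|Q_x(\alpha)| \leq 8$, each word of $Q_x(\alpha)$ has length $|\alpha|-1$, and $Q_x(\alpha)$ is computed in time $\O(|\alpha|)$ once $P(\alpha)$ is given. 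Consequently the automaton $\mathcal{AC}(\overleftarrow{\phi(Q_x(\alpha))})$ can be built in time and space $\O(|\alpha|) = \O(|\pi|-|\rho|)$ by the Aho--Corasick variant of Subsection~\ref{sec:ac}.

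In the case where $\pi$ does not satisfy $(\mathcal{C})$, the automaton $\mathcal{A}_{\pi}$ is the concatenation of $\mathcal{AC}(\overleftarrow{\phi(Q_x(\alpha))})$ with $\mathcal{A}_{\rho}$. Since the former has a unique final state without outgoing transitions, the concatenation is performed in constant time by merging states. Thus $\mathcal{A}_{\pi}$ is built in time and space $\O(|\pi|-|\rho|)$ in addition to the construction of $\mathcal{A}_{\rho}$. This bound is insensitive to whether the optimized variant is used, since the change only affects the recursive construction of $\mathcal{A}_{\rho}$.

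In the case where $\pi$ satisfies $(\mathcal{C})$ and $|T|\geq 3$, the skeleton is the same concatenation, costing $\O(|\pi|-|\rho|)$. The construction further adds a path of length $|\overleftarrow{\phi(w)}| = |w| = |\alpha|$ from $q_0$ to a new final state $f'$ (using Lemma~\ref{lem:w_at_least_3} p.\pageref{lem:w_at_least_3}), then redirects every $Z$-labeled transition leaving the pre-existing states of $\mathcal{AC}(\overleftarrow{\phi(Q_x(\alpha))})$ to the appropriate new state, and finally completes the automaton with transitions leaving the new states. The number of new states and transitions is $\O(|\alpha|)$, and completing the transitions from the new states can be done in overall linear time using failure links in Aho--Corasick fashion, so that each of the four labels at each new state is resolved in amortized constant time. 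Merging $f$ with $q_T$ and $f'$ with the marked state $q_{T'}$ is done in constant time. Altogether, the overhead is $\O(|\alpha|) = \O(|\pi|-|\rho|)$ in addition to $\mathcal{A}_{\rho}$, again independently of whether the optimized variant is used. This step is the main technical point to check: one must verify that the failure-link bookkeeping on the added path does not degrade the linearity.

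In the remaining case where $\pi$ satisfies $(\mathcal{C})$ and $|T|=2$, we have $|\rho|=2$ so $\mathcal{A}_{\rho}$ has constant size (Remark~\ref{rem:ascquadratique} p.\pageref{rem:ascquadratique}), and $\mathcal{A}_{\pi} = \mathcal{U}^{\circlearrowleft}\bigl(\mathcal{A}_{\pi}^{\textsc{sqs}},\ \mathcal{AC}(\overleftarrow{\phi(Q_x(\alpha))}) \cdot \mathcal{A}_{\rho}\bigr)$. By Remark~\ref{rem:w_2} (p.\pageref{rem:w_2}), $P_{\textsc{sqs}}(\pi)$ contains exactly $12$ pin words of length $|\pi|$, so $|E_{\pi}^{\textsc{s}}|$ and $|E_{\pi}^{\textsc{qs}}|$ are bounded by constants. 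Applying Lemma~\ref{lem:ascsquadratique} (resp.~Lemma~\ref{lem:sqs_linear_complexity} for the optimized variant), $\mathcal{A}_{\pi}^{\textsc{sqs}}$ is built in time and space $\O(|\pi|^{2})$ (resp.~$\O(|\pi|)$) and has size bounded accordingly. The automaton $\mathcal{AC}(\overleftarrow{\phi(Q_x(\alpha))}) \cdot \mathcal{A}_{\rho}$ has size $\O(|\alpha|) = \O(|\pi|)$. The deterministic union of two automata of sizes $n_1$ and $n_2$ is performed in time and space $\O(n_1 \cdot n_2)$, hence the union costs $\O(|\pi|^{2} \cdot |\pi|) = \O(|\pi|^{3})$ in the classical construction and $\O(|\pi| \cdot |\pi|) = \O(|\pi|^{2})$ in the optimized version. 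This yields the announced bounds.
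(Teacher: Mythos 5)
Your proposal is correct and follows essentially the same route as the paper's proof: constant-size $Q_x(\alpha)$ with words of length $|\alpha|-1$ gives the $\O(|\pi|-|\rho|)$ cost of $\AC{\overleftarrow{\phi(Q_x(\alpha))}}$ and of the added path when $(\mathcal{C})$ holds with $|T|\geq 3$, and in the case $(\mathcal{C})$ with $|T|=2$ the bounds come from Lemma~\ref{lem:ascsquadratique} (resp.~Lemma~\ref{lem:sqs_linear_complexity}) together with the Cartesian-product cost of $\mathcal{U}^{\circlearrowleft}$, exactly as in the paper. The only cosmetic difference is that you bound $|Q_x(\alpha)|$ by $8$ via Remark~\ref{rem:compute_Q_x(alpha)} while the paper uses the bound $48$ from Theorem~\ref{thm:nbpinwords}; both suffice.
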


\begin{proof}
Recall that $Q_x(\alpha)$  contains words of length $|\alpha|-1$.
Its cardinality is smaller than the one of $P(\alpha)$, hence smaller
than $48$ (see Theorem~\ref{thm:nbpinwords}
p.\pageref{thm:nbpinwords}). Moreover $Q_x(\alpha)$ can be determined
in linear time w.r.t.~$|\alpha|$ as described in Remark~\ref{rem:compute_Q_x(alpha)} (p.\pageref{rem:compute_Q_x(alpha)}).
Consequently, ${\mathcal{AC}} (\overleftarrow{\phi(Q_x(\alpha))})$ is built in time and space $\O\left(|\alpha|\right) = \O\left(|\pi|-|\rho|\right)$.

If $\pi$ does not satisfy condition ($\mathcal{C}$) then
$\mathcal{A}_{\pi} = {\mathcal{AC}}(\overleftarrow{\phi(Q_x(\alpha))}) \cdot \mathcal{A}_{\rho}$,
so that $|\mathcal{A}_{\pi}| = |{\mathcal{AC}}(\overleftarrow{\phi(Q_x(\alpha))})| + |\mathcal{A}_{\rho}|$
and the time complexity of this construction is $\O\left(|\pi|-|\rho|\right)$ plus the additional time to build $\mathcal{A}_{\rho}$.

If $\pi$ satisfies condition ($\mathcal{C}$) and $|T|\geq 3$, then $|w|=|\alpha|$ and
by Remark~\ref{rem:w_explicit} (p.\pageref{rem:w_explicit}),
$w$ is explicitly determined.
Consequently, so is the additional path labeled by $\overleftarrow{\phi(w)}$ added to the automaton (see Figure~\ref{fig:automate_primitif_rec}).
The modifications of the transitions between this path and ${\mathcal{AC}}(\overleftarrow{\phi(Q_x(\alpha))})$
are performed in linear time w.r.t.~the length of this path and $|{\mathcal{AC}}(\overleftarrow{\phi(Q_x(\alpha))})|$,
\emph{i.e.}, in  $\O(|\phi(w)|+|\alpha|) = \O(|\pi|-|\rho|)$.
We conclude that $\mathcal{A}_{\pi}$ is built in $\O\left(|\pi|-|\rho|\right)$ time and space plus the additional time and space to build $\mathcal{A}_{\rho}$.

If $\pi$ satisfies condition ($\mathcal{C}$) and $|T|=2$, then
$\mathcal{A}_{\pi} =  {\mathcal U}^{\circlearrowleft}(\mathcal{A}_{\pi}^{\textsc{sqs}},
{\mathcal{AC}} (\overleftarrow{\phi(Q_x(\alpha))}) \cdot \mathcal{A}_{\rho} )$. 
Recall that $P_{\textsc{sqs}}(\pi)$ is given in Remark~\ref{rem:w_2} (p.\pageref{rem:w_2}) and contains $12$ pin words.
Hence, with the classical construction (resp. in the optimized version), from Lemma~\ref{lem:ascsquadratique} (p.\pageref{lem:ascsquadratique})
(resp. Lemma~\ref{lem:sqs_linear_complexity} p.\pageref{lem:sqs_linear_complexity}) and Remark~\ref{rem:w_2},
we can build $\mathcal{A}_{\pi}^{\textsc{sqs}}$ in time and space $\O\left(|\pi|^2\right)$ (resp. $\O\left(|\pi|\right)$).
Moreover 
since $|\rho|=2$, $\mathcal{A}_{\rho}$ is obtained in constant time,
so that ${\mathcal{AC}} (\overleftarrow{\phi(Q_x(\alpha))}) \cdot \mathcal{A}_{\rho}$ is obtained in time and space $\O(|\pi|-|\rho|) = \O\left(|\pi|\right) $.
Finally, $\mathcal{A}_{\pi}$ is built in time and space $\O\left(|\pi|^3\right)$ (resp. $\O\left(|\pi|^2\right)$) with the classical (resp. optimized) construction. 
\end{proof}

\subsubsection{Two children are not leaves.} \label{sec:tcl}

Up to symmetry this means that $\pi = $ \begin{tikzpicture}[level
    distance=17pt,sibling distance=6pt,baseline=-10pt,inner sep=0]
  \node[simple,inner sep=0] (X) {$\beta^{+}$} child {[fill] circle
    (2pt)} child [missing] child {[fill] circle (2pt) node (xx1){}}
  child [missing] child [sibling distance=0pt] {node (xx2){} edge from
    parent [draw=none] }child[thick, dotted] {node[thin,
      shape=isosceles triangle, solid, draw, shape border
      rotate=90,anchor=apex, minimum height=5mm,inner
      sep=1pt,isosceles triangle apex angle=110] {$T$}} child [sibling
    distance=0pt] {node (xx3){} edge from parent [draw=none] } child
  [missing] child[dash pattern=on 3pt off 2pt on 1pt off 2pt] {node
    (xx4) {$12$}} child [missing] child {[fill] circle (2pt)}; \draw
  [dotted] (xx1) -- (xx2); \draw [dotted] (xx3) -- (xx4);
  \end{tikzpicture}, 
where $\beta^+$ is an increasing quasi-oscillation, the
permutation $12$ expands an auxiliary point of $\beta^{+}$ and $T$ 
expands the corresponding main substitution point of $\beta^{+}$.

Theorem~\ref{thm:primeRoot_cas_special}
(p.\pageref{thm:primeRoot_cas_special}) ensures that the pin words
encoding $\pi$ are of the form $v.w$ where $v \in P(T)$ and $w$ is a
strict pin word of length $|\beta^{+}|$ uniquely determined by $\beta^+$ and the two points expanded in $\beta^{+}$,
and known explicitly from Remark~\ref{rem:w_cas_2_non_feuilles} (p.\pageref{rem:w_cas_2_non_feuilles}).

Therefore
$\overleftarrow{\mathcal{L}_{\pi}}=
A^\star\overleftarrow{\phi(w)}\overleftarrow{\mathcal{L}_{\rho}}$
where $\rho$ is the permutation whose decomposition tree is $T$.
The automaton $\mathcal{A}_{\pi}$ recognizing
$\overleftarrow{\mathcal{L}_{\pi}}$ is the concatenation of
$\mathcal{AC}(\{\overleftarrow{\phi(w)}\})$ with $\mathcal{A}_{\rho}$, which is recursively obtained.

This construction is done in $\O\left(|\overleftarrow{\phi(w)}|\right) = \O\big(|\pi|-|\rho|\big)$ time
and space in addition to the time and space complexity of the construction of $\mathcal{A}_{\rho}$.

\subsection{Marking states}
\label{subsection:marquage}
In our constructions of Subsections~\ref{ssec:rec_linear} and
\ref{sec:decomp_simple} we need transitions going to initial states of
subautomata.  We could duplicate the corresponding subautomata. But
when these are recursively obtained an exponential blow-up can
occur. To keep a polynomial complexity we replace duplication by the
marking of these special states. The marking is made on the fly during
the construction and we explain how in this subsection.

The need of creating a transition going to a marked state (of a
subautomaton) happens only when building the automaton
$\mathcal{A}_{\pi}$
in Subsection~\ref{ssec:rec_linear} for a
permutation $\pi$ whose decomposition tree has a linear root and
satisfies a condition $\horse{i}{j}$ of Figure~\ref{fig:H} (p.\pageref{fig:H}),
or in Subsection~\ref{sec:decomp_simple}
for a permutation $\pi$ whose decomposition tree
has a prime root and satisfies condition ($\mathcal{C}$) (see
Definition~\ref{def:c} p.\pageref{def:c}) with $|T|\geq 3$.

In both cases we need to mark in the subautomaton $\mathcal{A}_{\rho}$
with $\rho \leq \pi$ some states $q_Y$ such that
the language recognized taking $q_Y$ as initial state is $\overleftarrow{{\mathcal L}_{\sigma}}$,
where $\sigma \leq \rho$ is the permutation whose diagram (or decomposition tree) is $Y$.

As it appears in Figure~\ref{fig:H} and in condition ($\mathcal{C}$), in
almost all such situations, the marked state belongs to a subautomaton
corresponding to a permutation $\rho$ whose decomposition tree $R$ has a
linear root.  There is only one situation where this root is prime:
when $\pi$ satisfies condition $(1H1+)$.
We first focus on this case.

\vspace{-0.5cm}

\paragraph*{Prime root}
Let $\theta$ be a permutation of decomposition tree
 $R = $\begin{tikzpicture}[sibling
    distance=10pt,level distance=13pt,baseline=-15pt] \node[simple]
  {$\xi^{+}$} child {[fill] circle (2 pt) node(x1) {}} child[missing]
  child {[fill] circle (2 pt) node(xk) {}} child[child anchor=north]
  {node[draw,shape=isosceles triangle, shape border
      rotate=90,anchor=north,inner sep=0, isosceles triangle apex
      angle=90] {$S$}} child {[fill] circle (2 pt) node(y1) {}}
  child[missing] child {[fill] circle (2 pt) node(yk) {}};
  \draw[dotted] (x1) -- (xk); \draw[dotted] (y1) -- (yk);
\end{tikzpicture} where $\xi^{+}$ is an increasing oscillation,
and let $\gamma$ be the permutation whose decomposition tree is $S$.
In the case where $\pi$ satisfies condition $(1H1+)$, we need to mark in the
automaton $\mathcal{A}_{\theta}$ the state $q$ such that when starting from $q$ the
language recognized is the one recognized by $\mathcal{A}_{\gamma}$. 
(Notice that w.r.t.~the previous paragraph, we have changed the notations $\rho$ to $\theta$ and $\sigma$ to $\gamma$
to avoid confusions with the notations used in Subsection~\ref{sec:decomp_simple}.)

The automaton $\mathcal{A}_{\theta}$ is obtained as described in
Subsection~\ref{sec:decomp_simple}, when exactly one child of the root is
not a leaf (indeed $|S| \geq 2$). The marking of state $q$ depends on
how the automaton $\mathcal{A}_{\theta}$ is built and in particular on
whether $\theta$ satisfies condition ($\mathcal{C}$) or not.

Recall that $\xi^{+}$ is an increasing oscillation. If $\xi^{+}$ has
a size at least $5$, it is not a quasi-oscillation, and $\theta$ does not
satisfy condition (${\mathcal C}$). Therefore $\mathcal{A}_{\theta}$ is the
concatenation of two automata the second of which is $\mathcal{A}_{\gamma}$
whose initial state can be readily marked.

If $\xi^{+}$ has size $4$, then $\xi^{+}=2\, 4\, 1\, 3$ or $3\, 1\,
4\, 2$ is a quasi-oscillation and $\theta$ may satisfy condition ($\mathcal{C}$).
If it is not the case, $\mathcal{A}_{\theta}$ is obtained as
above and so is the marking of state $q$.
If on the contrary $\theta$ satisfies condition ($\mathcal{C}$),
the construction of $\mathcal{A}_{\theta}$ depends on $|S|$.
If $|S| \geq 3$, $\mathcal{A}_{\theta}$ is again the
concatenation of two automata the second one being
$\mathcal{A}_{\gamma}$, but with some states and transitions added.
As these transitions are not reachable from the initial state of $\mathcal{A}_{\gamma}$,
we mark it as above.
If $|S|=2$, then $R$ has size $5$ and the construction
is not recursive anymore. We want to mark in $\mathcal{A}_{\theta}$ a
state $q$ corresponding to the initial state of
$\mathcal{A}_{\gamma}$. But
in the construction of $\mathcal{A}_{\theta}$ in Subsection~\ref{sec:decomp_simple},
we have built an automaton $\mathcal{A}'$
such that $\mathcal{A}_{\theta}= {\mathcal
  U}^{\circlearrowleft}(\mathcal{A}^{\textsc{sqs}}_{\theta},
\mathcal{A}'\cdot \mathcal{A}_{\gamma}).$ Therefore
$\mathcal{A}_{\theta}$ is a Cartesian product and the state $q$
has been replicated several times.  As $|S|=2$, $\mathcal{A}_{\gamma}$ has a constant size,
hence in this particular case we just duplicate it and mark its initial state instead of
marking a state inside $\mathcal{A}_{\theta}$.

\paragraph*{Linear root}
Consider now the case where the decomposition tree $R$ of the permutation $\rho$ has a linear root.
The need of a marked state in $\mathcal{A}_{\rho}$
happens only when  the leftmost (resp. rightmost) child of
$R$ is a leaf $z$.

In almost all cases, the marked state $q$ is such that the language
accepted starting from $q$ is the set of words encoding the readings
of all nodes of $R$ except the leaf $z$. There are at most two
such leaves and from Remarks~\ref{rem:pattern} and \ref{rem:pattern_rec}
(p.\pageref{rem:pattern} and \pageref{rem:pattern_rec}),
the corresponding marked states of $\mathcal{A}_{\rho}$
(which is built as described in Subsection~\ref{ssec:non-rec_linear} or \ref{ssec:rec_linear})
are $q_{1(r-1)}$ and $q_{2r}$ in Figure~\ref{fig:automate_cas_oplus_non_recursif}
(p.\pageref{fig:automate_cas_oplus_non_recursif}) or
\ref{fig:automate_cas_oplus_recursif} (p.\pageref{fig:automate_cas_oplus_recursif}) -- with $\rho$ instead of $\pi$.
There is however one exception, corresponding to the special case described in Remark~\ref{rem:pattern}:
when $R$ has exactly two children, which are $z$
and an increasing oscillations $\xi$.
In this special case the construction of $\mathcal{A}_{\rho}$ is not recursive anymore.
Instead of marking in $\mathcal{A}_{\rho}$ a state $q$ corresponding to the initial state of $\mathcal{A}_{\xi}$,
we just duplicate $\mathcal{A}_{\xi}$ and mark its initial state.
If $|\xi| < 4$ then $|\mathcal{A}_{\xi}| = \mathcal{O}(1)$.
Otherwise $\xi$ is a simple permutation and
$|\mathcal{A}_{\xi}|$ is quadratic (or linear in the optimized complexity)
w.r.t.~$|\xi|$.
In both cases $|\mathcal{A}_{\xi}|+|\mathcal{A}_{\rho}|$ has the same order as
$|\mathcal{A}_{\rho}|$ and since the construction is not recursive,
this does not change the overall complexity of the construction of $\mathcal{A}_{\pi}$.

The few cases where the marked stated $q$ is not as above
(\emph{i.e.}, is not such that the language
accepted starting from $q$ is the set of words encoding the readings
of all nodes of $R$ except $z$) correspond to
state $q_S$ of conditions ($2H2\star$) and ($1H2\star$) and
states $q_{T \cup a}$ and  $q_{T \cup b}$ of condition ($2H3$).
In these cases, $R$ has exactly two children: $z$ and a subtree $R'$ whose root is linear.
Then the leftmost (resp. rightmost) child of $R'$ is a leaf $z'$
and the marked state $q$ is such that the language
accepted starting from $q$ is the set of words encoding the readings
of all nodes of $R'$ except the leaf $z'$. 
We are in the same situation as above, 
so the states can be marked in the same way, 
except that now we have to mark states in
$\mathcal{A}_{\rho'}$ instead of $\mathcal{A}_{\rho}$,
where $\rho'$ is the permutation whose decomposition tree is $R'$
and $\mathcal{A}_{\rho'}$ is a subautomaton of $\mathcal{A}_{\rho}$ built recursively.

Notice that we never create transitions going to marked states
belonging to automata built more than two levels of recursion deeper.
Indeed in all conditions above the created transitions go to the
automaton build in the previous step of recursion, except for
conditions ($2H3$), ($2H2\star$) and ($1H2\star$) where two levels of
recursion are involved.

\subsection{Complexity analysis} \label{ssec:complexity}

\begin{theo}\label{thm:complexity-A_pi}
For every pin-permutation $\pi$ of size $n$, 
$\mathcal{A}_{\pi}$ is built in time and space $\mathcal{O}(n^2)$ in the optimized version and $\mathcal{O}(n^4)$ in the classical version.
\end{theo}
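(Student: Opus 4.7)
The plan is to proceed by structural induction on the decomposition tree of $\pi$, following the case partition given by Equation~\eqref{eq:pin_perm_trees}. For each of the cases treated in Subsections~\ref{pw:non-recursive}--\ref{sec:decomp_simple} I would invoke the complexity lemma established there, and then argue that the contributions accumulate to at most $\mathcal{O}(n^2)$ (optimized) or $\mathcal{O}(n^4)$ (classical). Concretely, I would fix a constant $K$ (depending only on the constants hidden in Lemmas~\ref{lem:complexite_linear}, \ref{lem:complexite_linear_rec}, \ref{lem:complexite_linear_rec_2}, \ref{lem:complexite_primitif_rec}, Remarks~\ref{rem:ascquadratique} and~\ref{rem:simple_linear}, and the linear bound for the quasi-oscillation case of Paragraph~\ref{sec:tcl}), and establish the bound $T(n) \leq K n^2$ (resp.~$K n^4$) for the time and space of the construction.

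The non-recursive cases (pin-permutation of size $1$, simple pin-permutation, and $\oplus[\xi_1,\ldots,\xi_r]$ with all $\xi_i$ increasing oscillations) are immediate from the cited complexity statements, since each of them already produces an automaton of size at most $\mathcal{O}(n^2)$ (optimized) or $\mathcal{O}(n^4)$ (classical). Similarly the special non-recursive case in Lemma~\ref{lem:complexite_primitif_rec} ($\pi$ satisfies $(\mathcal{C})$ with $|T|=2$) gives directly $\mathcal{O}(n^2)$ (resp.~$\mathcal{O}(n^3)$), which fits inside the target bound. The recursive cases with prime root (either $\pi = \alpha[1,\ldots,1,\rho,1,\ldots,1]$ outside the special sub-case, or $\pi = \beta^+[\ldots]$ with two non-leaf children) add only $\mathcal{O}(|\pi|-|\rho|)$ on top of the recursive construction of $\mathcal{A}_\rho$, which is clearly absorbed in both target bounds since $K m^2 + c(n-m) \leq K n^2$ and $K m^4 + c(n-m) \leq K n^4$ for $K$ large enough.

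The main -- though mild -- obstacle is the linear recursive case, where Lemmas~\ref{lem:complexite_linear_rec} and \ref{lem:complexite_linear_rec_2} yield an added cost $\mathcal{O}((|\pi|-|\rho|)^2)$ at each level of recursion, in both the classical and optimized versions. To see that these quadratic increments telescope correctly, I would use the elementary inequality $m^2 + (n-m)^2 \leq n^2$ valid for $0 \leq m \leq n$ (optimized case), and its analogue $m^4 + (n-m)^2 \leq n^4$ valid for $n \geq m \geq 1$ (classical case, obtained by writing $n^4 - m^4 \geq (n-m)(n+m) \geq (n-m)^2$). Together with the inductive hypothesis, these give $K m^2 + c(n-m)^2 \leq K n^2$ (resp.~$K m^4 + c(n-m)^2 \leq K n^4$) for $K$ large enough, closing the induction. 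Iterating the recursion, this corresponds morally to bounding $\sum_i d_i^2 \leq (\sum_i d_i)^2 \leq n^2$ where $d_i$ are the successive size drops along the recursion.

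Finally, I would note that nothing is lost through the marking of states required in Subsections~\ref{ssec:rec_linear} and~\ref{sec:decomp_simple}: Subsection~\ref{subsection:marquage} explains that every needed mark can be placed on the fly along the construction, either by direct pointer bookkeeping or (in the few non-generic situations, $|\xi|<4$ or $|S|=2$) by duplicating an automaton of constant or otherwise already-accounted-for size, so that the complexity figures stated in the cited lemmas indeed cover the full construction of $\mathcal{A}_\pi$ with its marked states. This completes the plan.
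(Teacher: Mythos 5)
Your plan is correct and follows essentially the same route as the paper's proof: both rely on the same table of per-case complexity lemmas and then bound the accumulated recursive costs, your inductive inequalities $m^2+(n-m)^2\leq n^2$ and $m^4+(n-m)^2\leq n^4$ being just the telescoped form of the paper's direct estimate $\sum_i d_i^2 \leq n\sum_i d_i \leq n^2$ (and likewise for $n^4$), with the marking of states dismissed the same way. The only detail the paper adds is a one-line remark that determining which shape of Equation~\eqref{eq:pin_perm_trees} applies (and whether $\pi\in\setH$ or satisfies $(\mathcal{C})$) costs only linear time at each level, which your constant-absorption argument would also cover.
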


\begin{proof}
To build $\mathcal{A}_\pi$, we first need to decide which shape of Equation~\eqref{eq:pin_perm_trees}
is matched by the decomposition tree of $\pi$, and whether $\pi \in \setH$ or whether $\pi$ satisfies condition $\mathcal C$.
The reader familiar with matching problems will be convinced that this can be done in $\mathcal{O}(n)$ time.
In any case, a linear algorithm for this tree matching problem is detailed in Subsection~\ref{ssec:finding_pin-perm}
as a sub-procedure of the global algorithm of Section~\ref{sec:polynomial}.

Then Theorem~\ref{thm:complexity-A_pi} follows from the complexities of
the previous constructions that are summarized in Table~\ref{tab:complexity}
in which we denote by $\rho$ the permutation whose decomposition tree is $T$.

\begin{table}[ht]
\begin{center}
{\small 
\begin{tabular}{|l|l|l|l|}
  \hline
  pin-permutation of size $n$& Complexity & Optimized & Lemma \\
  \hline
  size 1 & $\mathcal{O}(1)$ & $\mathcal{O}(1)$ & \\
  \hline
  simple & $\mathcal{O}(n^2)$ & $\mathcal{O}(n)$ & \ref{rem:ascquadratique}, \ref{rem:simple_linear} \\
  \hline
  root $\oplus$ non-recursive & $\mathcal{O}(n^4)$ & $\mathcal{O}(n^2)$ & \ref{lem:complexite_linear} \\
  \hline
  root $\oplus$ recursive, & $\mathcal{O}(\,(n-|\rho|)^2\,) \ +$ &  $\mathcal{O}(\,(n-|\rho|)^2\,) \ +$ & \ref{lem:complexite_linear_rec}, \\
  one child $T$ is not an & contribution for $\mathcal{A}_{\rho}$ & contribution for  $\mathcal{A}_{\rho}$ & \ref{lem:complexite_linear_rec_2} \\
  increasing oscillation & & & \\
  \hline
  root is prime recursive, & & & \\
  ${\mathcal C}$ is satisfied, & $\mathcal{O}(n^3)$ & $\mathcal{O}(n^2)$ & \ref{lem:complexite_primitif_rec} \\
  and $T$ has size $2$ & & & \\
  \hline
  root is prime recursive & $\mathcal{O}(\,n-|\rho|\,) \ +$ &$\mathcal{O}(\,n-|\rho|\,) \ +$ & \ref{lem:complexite_primitif_rec}, \\
   (if not preceding case) & contribution for $\mathcal{A}_{\rho}$ & contribution for $\mathcal{A}_{\rho}$  & \S \ref{sec:tcl} \\
  \hline
\end{tabular}
}
\end{center}
\caption{Complexities of the automata constructions, in all possible cases.}
\label{tab:complexity}
\end{table}
 
In the optimized version (resp. in the classical version)
the complexity is at most in $\O(n^2)$ (resp. $\O(n^4)$) in the
non-recursive cases and at most in $\mathcal{O}((n-|\rho|)^2)$
plus the additional complexity of the construction of $\mathcal{A}_{\rho}$
in the recursive cases. Notice that no extra time is needed to mark
the states of the automaton, as they are marked when they are built.
Consequently in the optimized version (resp. in the classical version)
the automaton $\mathcal{A}_{\pi}$ can be built in time and space $\O(n^2)$
(resp. $\O(n^4)$), $n$ being the size of $|\pi|$.

Indeed let $K$ be the number of levels of recursion needed in the construction of $\mathcal{A}_{\pi}$.
Then we can set $\rho_1=\pi$ and define recursively permutations $\rho_i$ for $2 \leq i \leq K$,
$\rho_i$ being the permutation $\rho$ that appears recursively when building $\mathcal{A}_{\rho_{i-1}}$. 
From Table~\ref{tab:complexity}, we deduce that, in the optimized version, the time and space complexity for building $\mathcal{A}_{\pi}$ is:
$$\O \left((|\rho_1|-|\rho_{2}|)^2 \right) + |\mathcal{A}_{\rho_2}| = \ldots = \O \left(\sum_{i=1}^{K-1} (|\rho_i|-|\rho_{i+1}|)^2 + |\rho_K|^2\right)\text{.}$$
Since every $\rho_i$ is a pattern of $\pi$, we have $|\rho_i|-|\rho_{i+1}|\leq n$ and $|\rho_K|\leq n$. Hence, the time and space complexity for building $\mathcal{A}_{\pi}$ is:
$$\O \left( n \cdot \Big( \sum_{i=1}^{K-1} (|\rho_i|-|\rho_{i+1}|) + |\rho_K| \,\Big) \right) = \O \left( n \cdot |\rho_1| \right) =\O(n^2).$$
In the same way we get the complexity $\O(n^4)$ for the classical version.
\end{proof}

\end{appendices}

\bibliographystyle{plain}

\end{document}